\definecolor{myred}{RGB}{255, 61, 65}
\definecolor{myred}{RGB}{255, 61, 65}
\def\ph{\vphantom{$\DiffMod {\Q(x)}$}}
\newcommand*{\longeq}{\ratio\Longleftrightarrow}
\tikzstyle{startstop} = [rectangle, rounded corners, minimum width=3cm, minimum height=1cm,text centered, draw=black, fill=brown!30]
\tikzstyle{decision} = [diamond, aspect=1.5, inner xsep=0pt, text centered, draw=black, fill=lime!30, text width=1.8cm]
\tikzstyle{processyes} = [rectangle, minimum width=3cm, minimum height=1cm, text centered, draw=black, fill=green!30]
\tikzstyle{processno} = [rectangle, minimum width=3cm, minimum height=1cm, text centered, draw=black, fill=red!30]
\tikzstyle{arrow} = [thick,-latex,>=stealth]
\newcommand{\BeHe}[6]{
\begin{tikzpicture}[scale=0.8,cap=round,>=latex]
\pgfmathparse{0};
\edef\shiftvar{\pgfmathresult};

\foreach \n in {1,...,#1}{

    \pgfmathparse{gcd(\n,#1)};
    
    \ifthenelse{1=\pgfmathresult}{
    \draw[thick,xshift=3*\shiftvar cm] (0cm,0cm) circle(1cm);
    \filldraw[myred,xshift=3*\shiftvar cm] (\n*#2*360:1cm) circle(3pt);
    \filldraw[myred,xshift=3*\shiftvar cm] (\n*#3*360:1cm) circle(3pt);
    \filldraw[myred,xshift=3*\shiftvar cm] (\n*#4*360:1cm) circle(3pt);
    \filldraw[blue,xshift=3*\shiftvar cm] (\n*#5*360:1cm) circle(3pt);
    \filldraw[blue,xshift=3*\shiftvar cm] (\n*#6*360:1cm) circle(3pt);
    \filldraw[blue,xshift=3*\shiftvar cm] (\n*1*360:1cm) circle(3pt);
    \node[scale=0.8] at (3*\shiftvar cm, -1.5) { $\lambda = \n$ };
    \pgfmathparse{\shiftvar+1} \xdef\shiftvar{\pgfmathresult}
    }{};
}
\end{tikzpicture}
}
\definecolor{vertex}{HTML}{DDDD55}
\definecolor{vertex2}{HTML}{7DCEA0}
\definecolor{vertex3}{HTML}{85C1E9}
\renewcommand{\i}{\mathbf i}
\renewcommand{\phi}{\varphi}
\renewcommand{\epsilon}{\varepsilon}
\renewcommand{\theta}{\vartheta}
\newcommand{\ua}{\boldsymbol{\alpha}}
\newcommand{\ub}{\boldsymbol{\beta}}
\newcommand{\ug}{\boldsymbol{\gamma}}
\newcommand{\um}{\boldsymbol{\mu}}
\newcommand{\sa}{\underline{a}}
\newcommand{\su}{\underline{u}}
\newcommand{\N}{\mathbb{N}}
\newcommand{\R}{\mathbb{R}}
\newcommand{\C}{\mathbb{C}}
\newcommand{\Z}{\mathbb{Z}}
\newcommand{\Hyp}{\mathcal{H}}
\newcommand{\Zp}{\mathbb{Z}_{(p)}}
\newcommand{\Q}{\mathbb{Q}}
\newcommand{\Qp}{\mathbb{Q}_p}
\newcommand{\F}{\mathbb{F}}
\newcommand{\Fp}{\mathbb{F}_p}
\newcommand{\Fpbar}{\bar{\mathbb{F}}_p}
\renewcommand{\O}{\mathcal{O}}
\newcommand{\Opp}{\O_{(\pp)}}
\newcommand{\Vp}{\mathcal{V}_p}
\newcommand{\D}{\mathfrak{D}}
\newcommand{\Dp}{\mathfrak{D}_p}
\newcommand{\Dpr}{\mathfrak{D}_{p,r}}
\newcommand{\pp}{\mathfrak p}
\newcommand{\qq}{\mathfrak q}
\newcommand{\kpt}{\GL_1(k_\pp)}
\newcommand{\kppt}{\GL_1(k_\pp^+)}
\newcommand{\Mod}{\mathrm{Mod}}
\DeclareMathOperator{\im}{im}
\DeclareMathOperator{\val}{val}
\newcommand{\ps}[1]{[\![#1]\!]}
\DeclareMathOperator{\GL}{GL}
\DeclareMathOperator{\Gal}{Gal}
\newcommand{\Rep}{\mathrm{Rep}}
\newcommand{\et}{\text{\rm ét}}
\newcommand{\sep}{\text{\rm sep}}
\newcommand{\alg}{\text{\rm alg}}
\newcommand{\red}{\text{\rm red}}
\newcommand{\FrobMod}[1]{\Mod^{\phi}_{#1}}
\newcommand{\FrobModp}[1]{\Mod^{p}_{#1}}
\newcommand{\FrobModq}[1]{\Mod^{q}_{#1}}
\newcommand{\FrobModet}[1]{\Mod^{\phi,\et}_{#1}}
\newcommand{\FrobModqet}[1]{\Mod^{q,\et}_{#1}}
\newcommand{\DiffMod}[1]{\Mod^{\nabla}_{#1}}
\newcommand{\Rp}{\mathcal{R}_p}
\newcommand{\Cp}{\mathcal{C}_p}
\newcommand{\ls}[1]{(\!(#1)\!)}
\newcommand{\dpz}[1]{\{ #1 \}_{\!{}_0}}
\newcommand{\dpzbig}[1]{\big\{ #1 \big\}_{\!{}_0}}
\newcommand{\dpc}[1]{\{ #1 \}_{\!{}_1}}
\newcommand{\Zdc}{(\Z/d\Z)^\times}
\def\pFq#1#2#3#4#5{{}_{#1}F_{#2}\left(#3, #4; #5\right)}
\newcommand{\pFqab}{\pFq{n}{n-1}{\ua}{\ub}{x}}
\def\pFqv#1#2{{}_{n}F_{n-1}\left(#1; #2\right)}
\DeclareMathOperator{\id}{id}
\DeclareMathOperator{\image}{image}
\numberwithin{equation}{subsection}
\theoremstyle{definition}
\newtheorem{defi}{Definition}[subsection]
\newtheorem{ex}[defi]{Example}
\newtheorem{quest}[defi]{Question}
\theoremstyle{plain}
\newtheorem{thmintro}{Theorem}[section]
\newtheorem{thm}[defi]{Theorem}
\newtheorem{lem}[defi]{Lemma}
\newtheorem{cor}[defi]{Corollary}
\newtheorem{prop}[defi]{Proposition}
\newtheorem{conj}[defi]{Conjecture}
\newtheorem{heuristic}[defi]{Heuristic observation}
\theoremstyle{remark}
\newtheorem{rem}[defi]{Remark}
\newtheorem{notation}[defi]{Notation}
\title{Galois groups of reductions modulo $p$ of $D$-finite series}
\author{Xavier Caruso, Florian Fürnsinn and Daniel Vargas-Montoya}
\date\today
\begin{document}
\maketitle

\setcounter{tocdepth}{2}
\begin{abstract}
The aim of this paper is to investigate the algebraicity behavior of reductions of $D$-finite power series modulo prime numbers. For many classes of $D$-finite functions, such as diagonals of multivariate algebraic series or hypergeometric functions, it is known that their reductions modulo prime numbers, when defined, are algebraic. We formulate a conjecture that uniformizes the Galois groups of these reductions across different prime numbers.

We then focus on hypergeometric functions, which serves as a test case for our conjecture. Refining the construction of an annihilating polynomial for the reduction of a hypergeometric function  modulo a prime number $p$, we extract information on the respective Galois groups and show that they behave nicely as $p$ varies.
\end{abstract}

\tableofcontents

\section{Introduction} 

\renewcommand{\theequation}{\thesection.\arabic{equation}}

The starting point of this paper is the observation that there exist many interesting series in $\Q\ps{x}$ which are transcendental over $\Q(x)$ 
but whose reductions modulo $p$ (when they exist) are algebraic over $\Fp(x)$ for a large set of primes~$p$.
A typical example is provided by ``diagonals''.
By definition, the \emph{diagonal} of a multivariate series
$$f(t_1, \ldots, t_k) = \sum_{\mathclap{n_1, \ldots, n_k \in \N}} a_{n_1, \ldots, n_k} t_1^{n_1} \cdots t_k^{n_k}$$
is the univariate series
$$\Delta f(x) = \sum_{n \in \N} a_{n,\ldots, n} x^n.$$
In characteristic zero, the diagonal construction usually does not preserve algebraicity.
However this astonishing property does hold in positive characteristic: the diagonal of an algebraic function over~$\Fp(t_1, \ldots, t_k)$ is algebraic over~$\Fp(x)$!
This result was originally observed and proved by Furstenberg~\cite{Fu67} when $f$ is a rational function;
it was then extended by Deligne~\cite{De84} to algebraic functions and reproved many times by different authors~\cite{Sa86,Sa87,DL87,Ha88,SW88} throughout the years.
Deligne's theorem shows that diagonals exhibit the phenomenon that interests us in this article:
the diagonal $\Delta f(x)$ of an algebraic function $f(t_1, \ldots, t_k) \in \Q\ps{t_1, \ldots, t_k}$ is usually transcendental
but its reductions modulo the primes, when they are defined, are algebraic (since they agree with the diagonals of $f \bmod p$).

The community then got interested in finding upper bounds on the degree of algebraicity of $\Delta f(x) \bmod p$.
When $f$ is a rational function, Furstenberg's original proof leads to a bound of the form $p^n$ where $n$ depends only on~$f$.
It turns out that Deligne's proof leads to a similar bound when $f(t_1, t_2)$ is a bivariate algebraic function.
The general case was solved by Adamczewski and Bell~\cite{AB13} who obtained again a general bound of the form $p^n$ where $n$ only depends on~$f$.
In~\cite{ABC23}, after providing sharper bounds on the exponent $n$, the authors observed that, not only $\Delta f(x) \bmod p$ is annihilated by a polynomial $Z_p(Y)$ of degree at most $p^n$,
but that there always exists such an annihilating polynomial of the form
\begin{equation}
\label{eq:Frobenius}
Z_p(Y) = c_0(x) Y + c_1(x) Y^p + \cdots + c_n(x) Y^{p^n} \qquad (c_i(x) \in \Fp[x]).
\end{equation}
They noticed moreover (see Remark~5.3 of \emph{loc.~cit.}) that this property implies that all the Galois conjugates of $\Delta f(x) \bmod p$ lie in a $\Fp(x)$-vector space of dimension $n$,
which further shows that the Galois group of $\Delta f(x) \bmod p$ canonically embeds, up to conjugacy, into $\GL_n(\Fp)$.
Going even further, they suggested that all those Galois groups could have a common origin living in characteristic zero.

In the present article, we explore this question in slightly different contexts.
To start with, we consider the class of $D$-finite functions.
By definition the series $f(x) \in \Q\ps{x}$ is called \emph{$D$-finite} if it is a solution of a linear differential equation of the form
\begin{equation}
\label{eq:Dfinite}
f^{(r)}(x) + a_{r-1}(x) f^{(r-1)}(x) + \cdots + a_1(x) f'(x) + a_0(x) f(x) = 0
\end{equation}
where the coefficients $a_i(x)$ are rational functions.
It is a standard fact that diagonals are $D$-finite but it turns out that the class of $D$-finite series is much larger;
for instance, it includes the function $f(x) = \exp(\arctan(x))$ whose reduction modulo~$p$ only makes sense for primes $p$ which are congruent to $1$ modulo $4$
(whereas diagonals can always be reduced modulo~$p$ for almost all primes~$p$).

In Section~\ref{sec:picture}, we formulate several hypotheses predicting the behavior of the Galois groups of $f(x) \bmod p$ when $f(x)$ is $D$-finite.
Roughly speaking, if $n$ denotes the minimal order of a differential equation satisfied by $f(x)$, 
what we expect is the existence of a number field $K$ and a finite family $G_1, \ldots, G_t$ of subgroups of $\GL_n(K)$ with the following property:
for any prime $\pp$ of $K$ for which $f(x) \bmod \pp$ is well defined and algebraic over $\Fp(x)$, there exists an integer $i \in \{1, \ldots, t\}$ such that
$$\Gal\big(f(x) \bmod p\mid\Fpbar(x)\big) \simeq \image\big(G_i\cap\GL_n(\mathcal O_{(\pp)})\rightarrow\GL_n(k_\pp)\big).$$
Here $\O_{(\pp)}$ denotes the localization of the ring of integers of $K$ at $\pp$, that is the ring consisting of fractions of the form $\frac a b$ with $b \not\in \pp$, 
and $k_\pp$ is the residue field. We refer to Conjecture~\ref{conj:weak} in the body of the text for a more precise statement.

We then elaborate further on this conjecture, trying to predict what could be the field $K$, the groups $G_i$, and the dependence of the index $i$ with respect to~$p$.
This leads us to formulate stronger forms (see Conjectures~\ref{conj:galoisequiv} and~\ref{conj:withL}) which, despite being technical, capture all what we have glimpsed at so far. Finally, in Subsection~\ref{sssec:tannakian}, we investigate a possible relationship between the groups $G_i$ and the differential Galois group of the differential equation~\eqref{eq:Dfinite}.

In Section~\ref{sec:hypergeom}, we provide evidence towards our conjectures by studying in more details the special case of hypergeometric functions.
We recall that, given two tuples $\ua = (\alpha_1, \ldots, \alpha_n)$ and $\ub = (\beta_1, \ldots, \beta_m)$ of rational numbers which are not nonpositive integers, the corresponding \emph{hypergeometric function} is
$$\pFq n m \ua \ub x \coloneqq \sum_{k=0}^\infty \frac{(\alpha_1)_k (\alpha_2)_k \cdots (\alpha_n)_k}{(\beta_1)_k (\beta_2)_k \cdots (\beta_m)_k} \cdot \frac{x^k}{k!}$$
where $(\alpha)_k \coloneqq \alpha \cdot (\alpha+1) \cdots (\alpha + k - 1)$ is the \emph{Pochhammer symbol}.
It is a standard fact that hypergeometric functions are $D$-finite (but not necessarily diagonals).
In Subsection~\ref{ssec:hypergeommodp}, refining a former result of Christol, we classify the primes $p$ for which the series $\pFq n {n-1} \ua \ub x$ reduces properly modulo~$p$.
In Subsection~\ref{ssec:algmodp}, we move to the main interest of this article and, following the strategy introduced in~\cite{Var24}, we exhibit an annihilating polynomial for the function $\pFq n {n-1} \ua \ub x \bmod p$ (when it is defined) and study the corresponding Galois group.
More precisely, we prove the following theorem.

\begin{thmintro}
\label{thintro:hypergeom}
Let $n$ be a positive integer. Let $\ua = (\alpha_1, \ldots, \alpha_n)$ and $\ub = (\beta_1, \ldots, \beta_{n-1})$ be tuples of rational numbers with $\alpha_i, \beta_j \not\in -\N$ for all $i,j$.
Let $d$ be the smallest common denominator of the $\alpha_i$ and $\beta_i$ and let $\Q(\zeta_d)$ be the $d$-th cyclotomic extension of $\Q$.
Let $K$ be the subextension of $\Q(\zeta_d)$ corresponding under Galois correspondence to the group
\begin{multline*}
D \coloneqq
\big\{\,\lambda\in (\Z/d\Z)^\times \,:\, \lambda{\cdot} \ua\equiv \ua \!\!\!\!\pmod \Z \text{ and } \lambda{\cdot} \ub \equiv\ub \!\!\!\!\pmod \Z\text{ as sets}\,\big\} \\
\subseteq (\Z/d\Z)^\times \simeq \Gal\big(\Q(\zeta_d)/\Q\big).
\end{multline*}
Let $p>2d{\cdot}\max\{|\alpha_i|+1, |\beta_j|+1\}$ be a prime number such that $\pFq n {n-1} \ua \ub x \in \Zp\ps{x}$.
Then, letting $k_\pp$ be the residue field of $K$ at a place $\pp$ above $p$ and writing $q \coloneqq \mathrm{Card}(k_\pp)$, the function $\pFq n {n-1} \ua \ub x \bmod p$ is annihilated by a nonzero polynomial of the form
$$Z_p(Y) = c_0(x) Y + c_1(x) Y^q + \cdots + c_n(x) Y^{q^n} \quad \text{with} \quad c_i(x) \in k_\pp(x).$$
In particular, the Galois group of $\pFq n {n-1} \ua \ub x \bmod p$ \emph{over $k_\pp(x)$} embeds into $\GL_n(k_\pp)$.
\end{thmintro}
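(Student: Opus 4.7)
The plan is to build the annihilating polynomial $Z_p$ directly, refining the approach of \cite{Var24}, with particular attention to tracking the field in which the coefficients live.

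\textbf{Arithmetic setup.} The first step is to identify the residue degree $f \coloneqq [k_\pp : \F_p]$ (so that $q = p^f$) as the order of the class of $p$ in $(\Z/d\Z)^\times / D$. By construction of $K$ and of $D$, this gives $p^f \in D$, hence multiplication by $p^f$ permutes both $\ua$ and $\ub$ as sets modulo $\Z$. This periodicity is the arithmetic input that ultimately forces $F \coloneqq \pFqab$ and its Frobenius iterates to satisfy a short linear relation modulo $p$.

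\textbf{Frobenius structure.} For each $0 \leq k < f$, I would introduce the companion hypergeometric series $F_k(x) \coloneqq \pFq{n}{n-1}{p^k \ua \bmod \Z}{p^k \ub \bmod \Z}{x}$, with parameters chosen in $[0,1) \cap \Q$; each $F_k$ lies in $\Zp\ps{x}$ thanks to the largeness hypothesis on $p$. Next, one proves via Dwork-Christol style congruences on Pochhammer symbols---refined in Subsection~\ref{ssec:algmodp}---a chain of Frobenius twist relations of the form $F_{k-1}(x)^p \equiv R_k(x)\, F_k(x) \pmod p$ with $R_k(x) \in k_\pp(x)$. Composing the $f$ such congruences and invoking the periodicity $F_f = F_0 = F$ (up to reordering of parameters, which the hypergeometric series does not see) yields a functional equation of the form $F(x)^q \equiv \rho(x)\, F(x) \pmod p$ up to corrections controlled by the order-$n$ hypergeometric ODE.

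\textbf{Extracting $Z_p$.} Since $F \bmod p$ satisfies the hypergeometric ODE, whose solution space has dimension $n$, and since each Frobenius iterate $F^{q^i}$ satisfies a congruent ODE by the periodicity above, the $n{+}1$ functions $F, F^q, F^{q^2}, \ldots, F^{q^n}$ all lie in a common $n$-dimensional $k_\pp(x)$-subspace and must therefore be linearly dependent over $k_\pp(x)$. Clearing denominators from any such dependence furnishes $Z_p(Y) = \sum_{i=0}^n c_i(x) Y^{q^i}$. The resulting embedding of the Galois group into $\GL_n(k_\pp)$ is then a formal consequence of the shape of $Z_p$, exactly as in Remark~5.3 of \cite{ABC23} adapted from $\F_p$ to $k_\pp$.

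\textbf{Main obstacle.} The hardest part will be descent of the coefficients: a naive construction easily produces $c_i(x)$ in a much larger extension such as $\F_{p^{nf}}(x)$, and cutting down to $k_\pp(x)$ requires carefully tracking the action of $\Gal(k_\pp/\F_p)$ on the family $(F_k)_{k=0}^{f-1}$ and exploiting that $D$ stabilizes $\ua$ and $\ub$ only \emph{as sets}, with nontrivial permutation of the entries. The secondary technicality is ensuring that the Frobenius congruences in the previous step hold modulo $p$ in the strict sense required; this is precisely where the lower bound $p > 2d \cdot \max\{|\alpha_i|+1, |\beta_j|+1\}$ is indispensable, as it keeps all Pochhammer factors that show up $p$-adic units and prevents spurious denominators from entering the $R_k(x)$.
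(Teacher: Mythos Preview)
Your proposal has a real gap. The single-term relation $F_{k-1}^p \equiv R_k\,F_k$ that you posit holds only when the hypergeometric relation graph has width~$1$ (for instance when all $\beta_j=1$). In general, Proposition~\ref{prop:daniel} yields a \emph{multi-term} relation
\[
\pFqv{v}{x}\;\equiv\;\sum_{v\stackrel{e}{\longrightarrow}v'} Q(e)\cdot\pFqv{v'}{x}^p \pmod p,
\]
summed over several hypergeometric series whose parameters differ from those of $v$ by integers; composing $f$ of these does not collapse to $F^q=\rho(x)F$. Your fallback to the ODE also fails: in characteristic~$p$ one has $\partial(F^q)=0$, so applying $\Hyp(\ua,\ub)$ to $F^q$ just multiplies by the order-zero part $-x\prod_i\alpha_i\neq 0$, and $F^q$ is \emph{not} a solution of the hypergeometric equation. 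The periodicity $p^f\!\cdot\ua\equiv\ua$ only says that the \emph{series} $F_f$ coincides with $F_0$; it says nothing about the $q$-th \emph{power} $F^q$. Hence the claim that $F,F^q,\ldots,F^{q^n}$ lie in a common $n$-dimensional $k_\pp(x)$-space is unjustified.

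The paper's route is to absorb the multi-term relations into a Frobenius module $M=\bigoplus_v \Fp(x)\,m_v$ indexed by the vertices of the relation graph (proof of Theorem~\ref{thm:hypalg}). Its level decomposition $M=M_0\oplus\cdots\oplus M_{\ell_p-1}$ makes $M_0$ a $q$-Frobenius module, and the bound $\dim_{\F_q}\mathbf V(M_0)\le w\le n$ (with $w$ the graph width) is obtained by comparing the étale parts of the levels through the linearized Frobenius maps $\psi_k$---not through any ODE solution space. The annihilating polynomial $Z(Y)=\prod_{y\in V}(Y-y)$ then has coefficients in $\F_q(x)=k_\pp(x)$ automatically, because $V\subset k_\pp(x)^\sep$ is an $\F_q$-vector space stable under $G_{k_\pp(x)}$; so the descent you flag as the main obstacle is in fact free once the correct $\F_q$-structure is in place.
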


We notice in particular the emergence of a number field $K$, which is perfectly in line with our formulation of Conjecture~\ref{conj:weak}.
Moreover, we underline that the integer $n$ appearing in $\GL_n(k_\pp)$, say the \emph{embedding dimension}, is truly the same as the integer $n$ we started with,
which is also the order of the minimal-order differential equation satisfied by $\pFq n {n-1} \ua \ub x$. Again, this is in perfect accordance with Conjecture~\ref{conj:weak}.
We mention nevertheless that, in this special case, the embedding dimension can even be lowered and looks more closely related to the dimension of the space of solutions of the hypergeometric equation in characteristic~$p$.

Finally, in Subsection~\ref{ssec:2F1}, we consider the special case of Gaussian hypergeometric functions $\pFq 2 1 \ua {(1)} x$ with $\ua = (\alpha_1, \alpha_2) \in \big(\Q \setminus (-\N)\big)^2$.
In this situation, two important facts occur. 
Firstly, keeping the notation and assumptions of Theorem~\ref{thintro:hypergeom}, the hypergeometric series $\pFq 2 1 \ua {(1)} x$ can be reduced modulo~$p$ for all primes $p$ not dividing $d$.
Secondly, the Galois group of $\pFq 2 1 \ua {(1)} x \bmod p$ over $k_\pp(x)$ always embeds into $\GL_1(k_\pp) = k_\pp^\times$; in particular, it is commutative.
The following theorem exhibits explicit subgroups of $\GL_1(K) = K^\times$, which are serious candidates for uniformizing the above Galois groups when $p$ varies.

\begin{thmintro}
\label{thintro:2F1}
We keep the notation and assumptions of Theorem~\ref{thintro:hypergeom} and assume moreover that $n = 2$, $\alpha_1, \alpha_2 \not\in \Z$ and $\beta_1 = 1$.
We set $K^+ \coloneqq K \cap \R$, we let $m$ be the denominator of an irreducible fraction representing $\alpha_1 + \alpha_2$, and we define the subgroup $G$ of $\GL_1(K)$ by
\[ G\coloneqq 
\begin{cases} 
  \GL_1(K) & \text{if $K = K^+$} \\
  \GL_1(K^+) \cdot \langle \xi - \bar\xi \rangle & \text{if $K \neq K^+$ and $m = 2$} \\
  \GL_1(K^+) \cdot \langle 1 + \zeta_m \rangle & \text{if $K \neq K^+$ and $m \neq 2$}
\end{cases}\]
where $\xi$ is an element of $K\setminus K^+$, $\bar\xi$ is its complex conjugate,
$\zeta_m = \exp(2\i\pi/m)$ is a primitive $m$-th root of unity and the notation $\langle\,\cdot\,\rangle$ stands for the generated subgroup.

Then, for any prime number $p$ which does not divide $d$ and any prime $\pp$ of $K$ above $p$, we have
\[
  \Gal\big(\pFq 2 1 \ua {(1)} x \bmod p \mid k_\pp(x)\big) \subseteq \image\big(G\cap\GL_1(\O_{(p)})\rightarrow\GL_1(k_\pp)\big),
\]
where $\O_{(\pp)}$ is the localization of the ring of integers of $K$ at $\pp$.

Moreover, the above inclusion is an equality when $d \in \{2, 3, 4, 6, 8, 12, 24\}$.
\end{thmintro}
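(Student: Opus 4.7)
The plan is to exploit the specific structure of the $n=2$, $\beta_1 = 1$ case to pin down the annihilating polynomial precisely and then track its coefficients back to an element of $K(x)^\times$. First I would apply Theorem~\ref{thintro:hypergeom}, and using the $\GL_1$-refinement outlined in Subsection~\ref{ssec:2F1}, argue that $f := \pFq 2 1 \ua {(1)} x \bmod p$ satisfies a two-term relation $c_0(x) f + c_1(x) f^q \equiv 0$ with $c_0, c_1 \in k_\pp(x)$. It follows that $f^{q-1} = u(x)$ for $u(x) := -c_0(x)/c_1(x) \in k_\pp(x)^\times$, and the Galois group of~$f$ over $k_\pp(x)$ embeds as the cyclic subgroup of $k_\pp^\times$ of order equal to the order of the class of $u(x)$ in $k_\pp(x)^\times/(k_\pp(x)^\times)^{q-1}$. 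For primes~$p$ not covered by the hypothesis of Theorem~\ref{thintro:hypergeom} (i.e.\ the small primes with $p\nmid d$), a separate argument, presumably by direct computation using Dwork-style congruences for $\pFq 2 1 \ua {(1)} x$, would extend the relation.

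Next I would lift $u(x)$ to characteristic zero. The construction from~\cite{Var24} underlying Theorem~\ref{thintro:hypergeom} builds $c_0, c_1$ as reductions modulo~$\pp$ of explicit rational functions in $K(x)$ involving Pochhammer symbols and powers of $1-x$; this identifies $u(x)$ as the reduction of a specific $U(x) \in K(x)^\times$. The image of the Galois group in $k_\pp^\times$ then corresponds to the class of $U(x)$ modulo $(q-1)$-th powers, and I would verify that this class is represented by an element of~$G$. The trichotomy in the definition of~$G$ should reflect how $U(x)$ transforms under complex conjugation on~$K$: in the totally real case $K = K^+$ no constraint appears; otherwise the ``imaginary part'' of $U(x)$ traces back to either $\xi - \bar\xi$ (when $m = 2$) or $1 + \zeta_m$ (when $m \neq 2$), these specific elements emerging from the $p$-adic expansion of $\alpha_1 + \alpha_2$.

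For the equality statement when $d \in \{2,3,4,6,8,12,24\}$, I would exploit the fact that these are precisely the values for which $\Zdc$ is an elementary abelian $2$-group, which forces the CM-structure of $K/K^+$ to be particularly transparent and, moreover, $\Q(\zeta_d)$ to have class number one. The unit group of~$\O_K$ then admits a short explicit description via Dirichlet's theorem, and $G$ is generated by a small number of elements whose reductions are easy to track. The reverse inclusion would follow by checking case by case that each generator of the image of $G \cap \GL_1(\O_{(p)})$ in $k_\pp^\times$ is actually realized as a Galois action on~$f$, by a direct analysis of the structure of $u(x)$ modulo $(q-1)$-th powers.

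The main obstacle I anticipate is the second paragraph: isolating the exact element $U(x) \in K(x)^\times$ and showing that its class lies in~$G$ rather than in an a priori larger subgroup of~$K^\times$. The specific appearance of $\xi - \bar\xi$ versus $1 + \zeta_m$ is delicate and would seem to require a careful analysis of how complex conjugation permutes the parameters $\alpha_i$ relative to the subgroup~$D$ defining~$K$, together with a precise bookkeeping of cancellations in the Pochhammer products $(\alpha_1)_k (\alpha_2)_k / k!$ that are peculiar to the $\beta_1 = 1$ setting.
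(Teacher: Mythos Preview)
Your first paragraph is on target and matches the paper: one does obtain a relation $f \equiv A(x)\, f^{q}$ with $A(x)\in\Fp[x]$, and Kummer theory identifies the Galois group over $k_\pp(x)$ with the cyclic group of order $(q-1)/\gcd(r,q-1)$, where $r$ is the maximal exponent making $A(x)$ an $r$-th power.

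The second paragraph, however, rests on a misconception that would block the argument. The polynomial $A(x)$ is \emph{not} the reduction of a fixed element of $K(x)^\times$: it is the product $B_0(x)\,B_1(x)^{p}\cdots B_{\ell_p-1}(x)^{p^{\ell_p-1}}$ of truncations at $x^p$ of hypergeometric series, so its degree is of order $p^{\ell_p}$ and there is no meaningful lift $U(x)\in K(x)^\times$. What the paper does instead is work entirely in characteristic~$p$ and exploit \emph{Euler's transformation}
\[
\pFq 2 1 {\ug}{(1)}{x} = (1-x)^{1-\gamma_1-\gamma_2}\,\pFq 2 1 {1-\ug}{(1)}{x}.
\]
When $\ell_p$ is even and $-p^{e_p}\in D$, the Dwork map satisfies $\Dp^{e_p}(\ua)=1-\ua$, so the truncations obey $B_k(x)=(1-x)^{u_k}\,B_{k+e_p}(x)$. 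Substituting into the product defining $A(x)$ yields
\[
A(x)=(1-x)^{(1+p^{e_p})(1-\alpha_1-\alpha_2)}\Bigl(\prod_{k=0}^{e_p-1}B_{k+e_p}(x)^{p^k}\Bigr)^{1+p^{e_p}},
\]
exhibiting $A(x)$ as a perfect $(1+p^{e_p})/m$-th power. Separately and independently, one computes the image $G_\pp$ of the abstract group $G$ in $k_\pp^\times$ (Proposition~\ref{prop:GaussGal}) and checks that it is the cyclic subgroup of order $(p^{e_p}-1)m$ in this case, and all of $k_\pp^\times$ otherwise. The inclusion follows by comparing these two numbers. The elements $1+\zeta_m$ and $\xi-\bar\xi$ enter only in this group-theoretic computation of $G_\pp$, not through any lift of $A(x)$.

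Your plan for the equality statement is also off course. The paper does not use class numbers or Dirichlet's unit theorem. Instead it bounds $r$ from \emph{above} by computing the valuations of $A(x)$ at $1$, at $\infty$, and at suitable ordinary roots; these valuations are shown to be polynomials in $s=\lfloor p/d\rfloor$ for each residue class $t=p\bmod d$, and one then checks by a SageMath computation that for $d\in\{2,3,4,6,8,12,24\}$ the gcd of these polynomials forces $r$ to equal the predicted value for every $p$. Your observation that these $d$ are exactly those with $(\Z/d\Z)^\times$ elementary abelian $2$-torsion is correct and suggestive, but the paper's proof is computational rather than structural.
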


We actually expect that the inclusion of Theorem~\ref{thintro:2F1} is (almost) always an equality but, unfortunately, we were not able to prove it in full generality.
If our expectation is correct, this would say that the group $G$ of Theorem~\ref{thintro:2F1} uniformizes the Galois groups of $\pFq 2 1 \ua {(1)} x \bmod p$.
For this reason, it appears as a very important invariant attached to the hypergeometric function $\pFq 2 1 \ua {(1)} x$.
Nonetheless, it remains quite mysterious to us: it looks like it is built from several pieces of different nature but, so far, we do not have a clear understanding of where these parts come from.

\paragraph{Notation.}

Throughout the article, we let $\Z$, $\Q$, $\R$ and $\C$ denote the set of integers, rational numbers, real numbers and complex numbers respectively.
We also write $\N$ for the set of nonnegative integers, and we denote accordingly the set of nonpositive integers by $-\N$.
Besides, we choose and fix once for all a square root of $-1$ in $\C$, that we denote by the bold letter $\i$.

If $a$ is an element in a ring $A$ and $I$ is an ideal of $A$, we use the notation $a \bmod I$ to denote the image of $a$ in the quotient of $A/I$.
We will often abuse notation and simply write $a \bmod d$ for $a \bmod dA$ when $d$ is an element of $A$.

If $g_1, \ldots, g_n$ are some elements in a group $G$, we write $\langle g_1, \ldots, g_n \rangle$ for the subgroup they generate in $G$.
In particular, if $a$ and $d$ are two coprime integers, the notation $\langle a \bmod d \rangle$ will refer to the \emph{multiplicative} subgroup of $(\Z/d\Z)^\times$ generated by $a$.

\paragraph{Acknowledgements.}
The authors thank Boris Adamczewski, Alin Bostan, Gilles Christol, and Wadim Zudilin for their interest in this work and helpful comments. They also thank Alin Bostan for carefully reading a manuscript version of this text.

The second-named author was funded by the Austrian Science Fund FWF, grant \href{https://doi.org/10.55776/P34765}{10.55776/P34765}, and a DOC Fellowship (27150) of the \href{https://www.oeaw.ac.at/en/}{Austrian Academy of Sciences} at the University of Vienna. Further he thanks \href{https://federation-margaux.math.cnrs.fr/}{Fédération MARGAUx} for supporting a one month research stay at the University of Bordeaux and the French–Austrian project EAGLES (ANR-22-CE91-0007 \& FWF grant \href{https://doi.org/10.55776/I6130}{10.55776/I6130}) for financial support.

The authors thank \href{https://oead.at/en/}{Austria’s Agency for Education and Internationalisation (OeAD)} and Campus France for providing funding for research stays via WTZ collaboration project/Amadeus project FR02/2024.  

\renewcommand{\theequation}{\thesubsection.\arabic{equation}}

\section{A general picture of what we expect}
\label{sec:picture}

In this section, we consider the general class of $D$-finite series $f(x)$ and 
aim at drawing a general picture of the behavior of the Galois groups of $f(x) \bmod p$ (when it is defined) when $p$ varies.
Let us first recall from the introduction that a series $f(x) \in \Q\ps{x}$ is \emph{$D$-finite} (or \emph{differentiably finite}) if it is solution of a linear differential equation of the form
\begin{equation}
\label{eq:Dfinite2}
f^{(r)}(x) + a_{r-1}(x) f^{(r-1)}(x) + \cdots + a_1(x) f'(x) + a_0(x) f(x) = 0
\end{equation}
with $a_i(x) \in \Q(x)$ for all $i$.
It is a standard fact that $D$-finiteness translates to a recurrence relation on the coefficients of $f(x)$.
It is also well known that the class of $D$-finite series is closed under sums and products; in other words, it forms a subring of $\Q\ps{x}$~\cite{Sta80}.

On the Galois side, we introduce the following notation.

\begin{defi}
Let $K$ be a field and let $K^\sep$ be a separable closure of $K$.
For an element $a \in K^\sep$, we set $\Gal(a \mid K) \coloneqq \Gal(L/K)$ where $L$ denotes the extension of $K$ generated by $a$ and its Galois conjugates.
\end{defi}

From now on, we fix a separable closure $\Fp(x)^\sep$ of $\Fp(x)$.
Since the extension $\Fp(\!(x)\!)/\Fp(x)$ is itself separable, any algebraic series $f(x) \in \Fp\ps{x}$ is automatically separable over $\Fp(x)$
and thus can be considered (noncanonically) as an element of $\Fp(x)^\sep$.
It then makes sense to talk about $\Gal\big(f(x)\mid\Fp(x)\big)$ and we check that this Galois group does not depend, up to isomorphism, on the choice of the representative of $f(x)$ in $\Fp(x)^\sep$.

In what follows, if $f(x)$ is a series with coefficients in $\Zp$ and $f(x) \bmod p$ is algebraic, we shall often slightly abuse notation and write $\Gal\big(f(x)\mid\Fp(x)\big)$ for $\Gal\big(f(x) \bmod p\mid\Fp(x)\big)$.

\subsection{Getting intuition by examples}
\label{ssec:examples}

In this introductory subsection, we present and discuss various examples, illustrating the most important phenomena we have observed.
For each example $f(x)$, we will determine the primes~$p$ for which $f(x) \bmod p$ is well-defined and algebraic and compute (or at least estimate) the corresponding Galois groups.

In the next subsections, we will rely on these results to elaborate our conjectures and justify their statements.

\subsubsection{$f(x) = (1-x)^{a/d}$}
\label{sssec:ex:alg}

We begin with a simple example, which is already algebraic over $\Q(x)$.
Before proceeding, we underline that the expansion
$$f(x) = \sum_{k=0}^\infty (-1)^k \cdot \frac a d \cdot \left(\frac a d - 1\right) \cdots \left(\frac a d - k + 1\right) \cdot \frac{x^k}{k!}$$
shows that $f(x)$ is indeed in $\Q\ps{x}$.
We also note that it is also $D$-finite\footnote{In fact, it is a standard result that algebraic functions are always $D$-finite.} since it is solution of the differential equation
$d\cdot(1-x)f'(x) - a f(x) = 0$.

As $f(x)$ is itself algebraic over $\Q(x)$, we start by computing its Galois group over $\Q(x)$. 
For this, assuming that $a$ and $d$ are coprime, we observe that the conjugates of $f(x)$ are the functions $\zeta_d^v f(x)$ ($0 \leq v < d$) where $\zeta_d$ is a primitive $d$-th root of unity, namely $\zeta_d = \exp(\frac{2\i\pi} d) \in \C$.
The corresponding splitting field is $\Q(x)\big(\zeta_d, f(x)\big)$, which can be viewed as a Kummer extension on top of a cyclotomic extension:

\begin{center}
\begin{tikzpicture}[yscale=1.4]
\node (Qx) at (0,0) { $\Q(x)$ };
\node (cyc) at (0,1) { $\Q(x)\big(\zeta_d\big)$ };
\node (kum) at (0,2) { $\Q(x)\big(\zeta_d, f(x)\big)$ };
\draw (Qx)--(cyc)
  node[midway, right] { cyclotomic extension };
\draw (cyc)--(kum)
  node[midway, right] { Kummer extension };
\end{tikzpicture}
\end{center}
The Galois group of the cyclotomic extension is $(\Z/d\Z)^\times$, while that of the Kummer extension is cyclic of order $d$, and hence isomorphic to $\Z/d\Z$.
We deduce from this that
$$\Gal\big(f(x) \mid \Q(x)\big) \simeq \Z/d\Z \rtimes (\Z/d\Z)^\times$$
where $(\Z/d\Z)^\times$ acts on $\Z/d\Z$ by multiplication. 
Concretely, the action of the Galois group on $\Q(x)\big(\zeta_d, f(x)\big)$ is given by $\zeta_d\mapsto \zeta_d^u$, $f(x)\mapsto \zeta_d^v f(x)$ for $u\in (\Z/d\Z)^\times$ and $v\in \Z/d\Z$.

The series $f(x)$ can be properly reduced modulo $p$ for any prime number $p$ which does not divide $d$.
Assuming this, we can compute $\Gal\big(f(x) \mid \Fp(x)\big)$ by applying the same strategy as in characteristic zero.
The only difference is that the Galois group of the cyclotomic part can now be a \emph{strict} subgroup of $(\Z/d\Z)^\times$.
More precisely, it is the same as the Galois group of $\Fp(\xi_d)/\Fp$ where $\xi_d$ is a primitive $d$-th root of unity in an algebraic closure of $\Fp$.
This Galois group being generated by the Frobenius $z \mapsto z^p$, we conclude that $\Gal\big(\Fp(x)(\xi_d)/\Fp(x)\big)$ is the subgroup of $(\Z/d\Z)^\times$ generated by $p$.
Therefore, we finally find
$$\Gal\big(f(x) \mid \Fp(x)\big) \simeq \Z/d\Z \rtimes \langle p \bmod d\rangle.$$
Altogether this shows that there are only finitely many possibilities for $\Gal\big(f(x) \mid \Fp(x)\big)$ when $p$ varies.
More precisely, this Galois group is entirely determined by the class of $p$ modulo $d$.

\subsubsection{$f(x) = \exp(\arctan x)$}
\label{sssec:ex:expatan}

In this example, the function $f(x)$ is transcendental over $\Q(x)$ but still it is the solution of a linear differential equation of order~$1$, given that
\begin{equation}
\label{eq:diffatan}
\frac{f'(x)}{f(x)} = \arctan'(x) = \frac 1 {x^2 + 1}.
\end{equation}
Using Dwork's criterion~\cite[p.409]{Ro00}, one proves that $f(x) \in \Zp\ps{x}$ if and only if $p \equiv 1 \pmod 4$ (see also \cite[Example~3.4]{BCR24}).
For those primes, it turns out that the differential equation~\eqref{eq:diffatan} can be solved over the $p$-adics.

Let $\Z_p$ be the ring of $p$-adic integers and let $i\in \Z_p$ be a square root of $-1$ (which exists because $p$ is congruent to $1$ modulo $4$).
Over $\Z_p(x)$, we have the following partial fraction decomposition:
$$\frac 1{x^2 + 1} = \frac 1 2 \cdot \left( \frac 1 {1 + ix} + \frac 1 {1-ix} \right).$$
Solving Equation~\eqref{eq:diffatan} and taking care of the initial condition $f(0) = 1$, we obtain the following closed formula for $f(x)$:
\begin{equation}
\label{eq:solatan}
f(x) = \left(1 + ix\right)^{-i/2} \cdot \left(1 - ix\right)^{i/2} \in \Z_p\ps{x}.
\end{equation}
Here the $p$-adic exponentiation is defined through its $p$-adic expansion; namely, for $a \in \Z_p$ and a formal expression $u$, we set
$$(1+u)^a \coloneqq \sum_{k=0}^\infty \frac{a\cdot (a-1) \cdots (a-k+1)}{k!} \cdot u^k.$$
We now write the $p$-adic expansions of the exponents $-i/2$ and $i/2$ as follows:
$$\textstyle 
-\frac i 2 = a_0 + p a_1 + p^2 a_2 + \cdots 
\quad \text{and} \quad
\frac i 2 = b_0 + p b_1 + p^2 b_2 + \cdots$$
with $a_k, b_k \in \{0, 1, \ldots, p{-}1\}$.
Reducing Equation~\eqref{eq:solatan} modulo $p$ then gives
$$f(x) \bmod p = \prod_{k=0}^\infty \big(1+ i x^{p^k}\big)^{a_k} \cdot \big(1- i x^{p^k}\big)^{b_k} \in \Fp\ps{x},$$
where $i \equiv 2 b_0\pmod p$. 

\begin{prop}
For all primes $p$ congruent to $1$ modulo $4$, the function $f(x) \bmod p$ is transcendental over $\Fp(x)$.
\end{prop}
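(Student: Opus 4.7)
The plan is to derive a contradiction from the assumption that $f(x) \bmod p$ is algebraic over $\F_p(x)$. My first step would be to rewrite the explicit product formula above as
\[
f(x) \bmod p = w(x)^{i/2},
\]
where $w(x) \coloneqq (1-ix)/(1+ix) \in \F_p(x)^\times$ and the exponentiation is interpreted via the formal binomial expansion with the $p$-adic integer exponent $i/2 \in \Z_p$. The crucial point is that $i \notin \Q$ (it is a $p$-adic square root of $-1$), so $i/2 \in \Z_p \setminus \Q$; in particular, its $p$-adic expansion $\sum_k b_k p^k$ is not eventually periodic. Note moreover that $w'/w = -2i/(1+x^2) \neq 0$, so $w$ is not a $p$-th power in $\F_p(x)$.

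The problem then reduces to the following general statement, which I would isolate as a lemma: if $w \in \F_p(x)^\times$ is not a $p$-th power and $w(0) = 1$, then for any $\alpha \in \Z_p$ the formal power series $w^\alpha \bmod p$ is algebraic over $\F_p(x)$ if and only if $\alpha \in \Q$. The easy direction is immediate: if $\alpha = a/b$ with $\gcd(b, p) = 1$, then $w^\alpha$ is a root of $Y^b - w^a$. Applied to our setting with $\alpha = i/2 \notin \Q$, the hard direction gives the desired transcendence.

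For the hard direction, I would use a Frobenius descent. Writing $\alpha = a_0 + p\alpha_1$ with $a_0 \in \{0, \ldots, p-1\}$ yields the identity $w^\alpha \bmod p = w^{a_0}(x) \cdot \bigl((w^{\alpha_1} \bmod p)(x)\bigr)^p$, which iterates to exactly the infinite product structure displayed above. Then, combining Christol's automaticity theorem (an algebraic series over $\F_p(x)$ has $p$-automatic coefficients) with the generalized Lucas congruence $\binom{\alpha}{n} \equiv \prod_j \binom{a_j}{n_j} \pmod p$ expressing these coefficients explicitly in terms of the digits of $n$ and of $\alpha$, algebraicity of $w^\alpha$ forces the digit sequence $(a_m)_m$ of $\alpha$ to be eventually periodic, hence $\alpha \in \Q$.

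The main obstacle lies in this last implication. Heuristically, a finite automaton reading the base-$p$ digits of $n$ cannot track the aperiodic digits of an irrational $\alpha$; but making this rigorous requires either the Christol--Lucas combinatorial analysis sketched above, or a direct Cartier-operator argument showing that the iterates $w^{\alpha_m} \bmod p$ produced by the descent cannot all fit into a fixed finite-dimensional $\F_p(x)$-subspace of $\F_p\ps{x}$ unless the sequence $(\alpha_m)_m$ eventually cycles. Either route ultimately yields $i/2 \in \Q$, contradicting $i \notin \Q$.
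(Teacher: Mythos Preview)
Your plan is sound and the general lemma you isolate is genuinely true: for $w\in\F_p(x)^\times$ with $w(0)=1$ and $w$ not a $p$-th power, the series $w^\alpha$ is algebraic over $\F_p(x)$ if and only if $\alpha\in\Q\cap\Z_p$.  One caution, though: your first suggested route via the Lucas congruence does not apply as written, since $\binom{\alpha}{n}$ are the coefficients of $(1+x)^\alpha$, not of $w^\alpha$ for a general rational $w$; the convolution with powers of $w-1$ destroys the clean digit-by-digit structure that Lucas provides.  The Cartier-operator route, by contrast, does go through.  Iterating $\sigma_0$ one finds $\sigma_0^k(w^\alpha)=h_k\cdot w^{\alpha_k}$ with $h_k\in\F_p(x)$ and $h_k(0)=1$; if the orbit is finite (as Christol's theorem demands for algebraicity), then $h_kw^{\alpha_k}=h_\ell w^{\alpha_\ell}$ for some $k<\ell$, so $w^{\alpha_k-\alpha_\ell}\in\F_p(x)$.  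The missing ingredient you did not supply is then: if $w^\gamma\in\F_p(x)$ for some $\gamma\in\Z_p$, then $\gamma\in\Q$.  This follows from a valuation argument: pick $a\in\mathbb P^1(\bar\F_p)$ with $e:=v_a(w)\not\equiv 0\pmod p$ (such $a$ exists since $w$ is not a $p$-th power), set $V_m:=v_a(w^{\gamma_m})\in\Z$, and the relation $(w^{\gamma_{m+1}})^p=w^{\gamma_m}w^{-c_m}$ gives $V_m=c_me+pV_{m+1}$, whence $V_0=e\gamma$ in $\Z_p$, forcing $\gamma=V_0/e\in\Q$.  From $\alpha_k-\alpha_\ell\in\Q$ one then easily gets $\alpha\in\Q$.

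The paper proceeds differently and more concretely.  Rather than proving your general lemma, it exploits the M\"obius form of $w=(1-ix)/(1+ix)$: it renormalizes by writing $f\equiv(1-ix)\,g_{\underline a}$ with $g_{\underline a}=\prod_k g_{a_k}(x)^{p^k}$ and $g_u(x)=(1+ix)^u(1-ix)^{p-1-u}$ a polynomial of degree exactly $p-1$.  This normalization makes the prefactor $\sigma_0(g_{a_0})$ equal to the scalar $1$, so $\sigma_0^k(g_{\underline a})=g_{S^k\underline a}$ exactly, and distinctness of the iterates is then checked by an explicit truncation computation together with the invertible change of variables $y=\frac{1+ix}{1-ix}$.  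Your route buys a cleaner and reusable statement; the paper's route avoids the valuation step by leveraging the specific structure of $w$.
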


\begin{proof}
For an integer $u \in \{0, \ldots, p{-}1\}$, we define $g_u(x) \coloneqq (1+ i x)^u \cdot (1- i x)^{p-1-u}$ and,
for a sequence $\su = (u_k)_{k \geq 0}$, we set
$$g_{\su}(x) \coloneqq \prod_{k=0}^\infty g_{u_k}\big(x^{p^k}\big) = \prod_{k=0}^\infty g_{u_k}(x)^{p^k}.$$
Noticing that $a_0 + b_0 = p$ and $a_k + b_k = p{-}1$ as soon as $k \geq 1$, 
we find $f(x) = (1-i x) \cdot g_{\sa}(x)$ in $\Fp\ps{x}$ where $\sa = (a_k)_{k \geq 0}$.
Hence it is enough to prove that $g_{\sa}(x)$ is transcendental.

For $r \in \{0, \ldots, p{-}1\}$, we introduce the section operator
$$\sigma_r : \Fp\ps{x} \to \Fp\ps{x}, \quad
\sum_{n=0}^\infty a_n x^n \mapsto \sum_{n=0}^\infty a_{pn+r} x^n.$$
The action of $\sigma_r$ on the series $g_{\su}(x)$ is easily described.
Indeed, remarking that $\sigma_r(g(x)^p h(x)) = g(x) \sigma_r(h(x))$ for all series $g(x)$ and $h(x)$, we obtain the relation
$$\sigma_r\big(g_{\su}(x)\big) = \sigma_r\big(g_{u_0}(x)\big) \cdot g_{S \su}(x)$$
where $S \su$ is the shifted sequence defined by $(S \su)_k = u_{k+1}$ for $k \in \N$.
We notice in addition that the prefactor $\sigma_r(g_{u_0}(x))$ is a scalar in $\Fp$ given that $g_{u_0}(x)$ is a polynomial of degree $p{-}1$.
Moreover, when $r = 0$, this scalar is $1$ and we just have $\sigma_0(g_{\su}(x)) = g_{S \su}(x)$.

We now use the following characterization of algebraicity, known as Christol's theorem~\cite{Ch79, CKMR80}:
a series $g(x) \in \Fp\ps{x}$ is algebraic over $\Fp(x)$ if and only if there exists a finite set which contains $g(x)$ and is stable by the $\sigma_r$.
In our case of interest, it is then enough to prove that the series $\sigma_0^k(g_{\sa}(x)) = g_{S^k \sa}(x)$ are pairwise distinct.

Let $\ell > k$ be two nonnegative integers.
We first notice that the sequences $S^k \sa$ and $S^\ell \sa$ cannot be equal.
Indeed, from their coincidence, we would deduce that $\sa$ is ultimately periodic (with period $\ell{-}k$), which is not the case because $i \not\in \Q$.
We now fix an integer $n$ such that restricting to the $n$ first terms is enough to distinguish between $S^k \sa$ and $S^\ell \sa$.
An easy computation shows that
\begin{align*}
g_{S^k \sa}(x) & \equiv \frac 1 {1 - i x} \cdot \left(\frac{1+i x}{1-i x}\right)^{a_k + p a_{k+1} + \cdots + p^{n-1} a_{k+n-1}} \pmod {x^{p^n}} \\
g_{S^\ell \sa}(x) & \equiv \frac 1 {1 - i x} \cdot \left(\frac{1+i x}{1-i x}\right)^{a_\ell + p a_{\ell+1} + \cdots + p^{n-1} a_{\ell+n-1}} \pmod {x^{p^n}}
\end{align*}
Thanks to our choice of $n$, the exponents appearing in the above expressions are distinct.
Observing moreover that the change of variables $y = \frac{1+i x}{1-i x}$ is invertible in $\Fp\ps{x}$ (with inverse $x = i {\cdot} \frac{y-1}{y+1}$),
we conclude that $g_{S^k \sa}(x)$ and $g_{S^\ell \sa}(x)$ are not congruent modulo $x^{p^n}$, and so that the are not equal.
\end{proof}

\begin{rem}
\label{rem:order1}
Examples~\ref{sssec:ex:alg} and~\ref{sssec:ex:expatan} are representative of all what can happen with differential equations of order~$1$.
Indeed, let us consider a series $f(x)$ which is solution of the equation $f'(x) = a(x) f(x)$ for some $a(x) \in \Q(x)$.
If $a(x)$ exhibits a multiple pole (possibly at infinity), then $f(x)$ is transcendental and cannot be reduced modulo $p$ for any prime $p$.
Otherwise we have the following dichotomy:
\begin{itemize}
\item (Case of Example~\ref{sssec:ex:alg}) 
  If the residues of $a(x)$ all lie in $\Q$, then $f(x)$ is algebraic over $\Q(x)$.
  In this case, $f(x)$ can be reduced modulo almost all primes~$p$ and all those reductions are algebraic with Galois groups closely related to the Galois group of $f(x)$ over $\Q(x)$.
\item (Case of Example~\ref{sssec:ex:expatan}) 
  If one residue of $a(x)$ is not in $\Q$, then $f(x)$ is transcendental over $\Q(x)$.
  In this case, $f(x)$ can be reduced modulo $p$ if and only if $a(x) \bmod p$ continues to have only simple poles in $\mathbb P^1(\bar \F_p)$ and all the corresponding residues are in $\Fp$.
  Moreover, when this occurs, $f(x) \bmod p$ is always transcendental.
\end{itemize}
\end{rem}

\begin{rem}
    Adamczewski and Delaygue conjectured (see \cite[Conjecture~1.1]{Var21}) that if a \emph{G-function}\footnote{Roughly speaking, a G-function is a $D$-finite series whose coefficients and their denominators grow at most like a geometric sequence.} $f(x)$ can be reduced modulo infinitely many primes $p$, then these reductions are algebraic for almost all of them. This example shows that the condition on $f(x)$ being a G-function cannot be dropped.
\end{rem}

\subsubsection{$\displaystyle f(x) = \sum_{n=0}^\infty \binom{2n}{n}^{\!2} \cdot x^n$}
\label{sssec:ex:binom2}

Before computing Galois groups, we briefly mention that the series $f(x)$ is indeed $D$-finite as the sequence of its coefficients obviously satisfies a recurrence relation with polynomial coefficients.
Concretely, $f(x)$ is a solution of the differential equation
\begin{equation}
\label{eq:diffbinom2}
x (16x - 1) f''(x) + (32x - 1) f'(x) + 4 f(x) = 0.
\end{equation}
Apart from this, the function $f(x)$ is also one of the simplest examples of a series exhibiting the $p$-Lucas property, which will play a very important role throughout this paper.
To explain what it means, we pick a nonnegative integer $n$ and write the Euclidean division of $n$ by $p$ as follows: $n = u + pv$.
We then have the congruence
$$(1+x)^{2n} \equiv (1+x)^{2u} \cdot (1 + x^p)^{2v} \pmod p,$$
which, after extracting the coefficient of $x^n$, leads to
$$\binom{2n}n \equiv \binom{2u}u \cdot \binom{2v} v \pmod p.$$
Raising this congruence to the square, we deduce that the coefficients $a_n$ of $f(x)$ satisfy the congruence
$a_n \equiv a_u a_v \pmod p$ as well; we say that the series $f(x)$ is \emph{$p$-Lucas}.

The $p$-Lucas property has strong consequences on the Galois groups we are interested in.
Indeed, defining the truncation
$$A_p(x) = \sum_{n=0}^{p-1} a_n x^n \in \Fp[x],$$
being $p$-Lucas translates to the algebraic relation 
$$f(x) \equiv A_p(x) \cdot f(x)^p \pmod p.$$
In particular, $f(x) \bmod p$ is algebraic and, even better, it appears as a $(p{-}1)$-th root of the polynomial $A_p(x)$.
Since $\Fp$ contains all the $(p{-}1)$-th roots of unity, Kummer's theory ensures that 
$\Gal\big(f(x) \mid \Fp(x)\big)$ embeds into $\um_{p-1}(\Fp) = \Fp^\times$ and that equality holds as soon as $A_p(x)$ is a not a power (with exponent at least $2$) of a rational function.

The following lemma will be used many times throughout the rest of the paper. It helps to determine the multiplicity of certain polynomials tied to solutions of differential equations. 

\begin{lem}\label{lem:vals} 
    Let $g(x)\in \Fp\ps{x}$ be a power series, which satisfies a linear differential equation of order $r<p$ with polynomials coefficients.
    Let $\bar \F_p$ denote a fixed algebraic closure of $\F_p$ and let $\xi\in \bar \F_p$ be an ordinary point of that differential equation.
    If $g(x) = A(x)h(x^p)$ for some polynomial $A(x)$ of degree less than $p$ and some power series $h(x)\in \F_p\ps{x}$, then $\val_\xi(A(x))<r$.

    In particular, if $g(x)$ is $p$-Lucas, then $\val_\xi(g(x)\bmod x^p)<r$.
\end{lem}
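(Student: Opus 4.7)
The plan combines two ingredients. First, in characteristic $p$ the derivative of $h(x^p)$ vanishes, so the Leibniz rule yields $g^{(k)}(x) = A^{(k)}(x) \cdot h(x^p)$ for every $k \geq 0$. Second, at the ordinary point $\xi$, I view $g$ through its formal Taylor expansion $\sum_n c_n (x-\xi)^n$ in $\bar\F_p\ps{x-\xi}$; the recurrence for the $c_n$ derived from the ODE has leading coefficient $a_r(\xi) \cdot \prod_{j=1}^r(N+j)$ at position $N$, which is a unit modulo $p$ for $0 \leq N \leq p-r-1$ thanks to the hypothesis $r < p$. In particular, if $c_0 = \cdots = c_{r-1} = 0$, then by induction $c_k = 0$ for all $k < p$.

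Before proceeding I reduce to the case $h(\xi^p) \neq 0$. If $h(\xi^p) = 0$, I write $h(y) = (y-\xi^p)^e h_1(y)$ with $e \geq 1$ and $h_1(\xi^p) \neq 0$, and use the characteristic-$p$ identity $x^p - \xi^p = (x-\xi)^p$ to obtain $h(x^p) = (x-\xi)^{ep} h_1(x^p)$. Setting $g_1 \coloneqq g/(x-\xi)^{ep} = A \cdot h_1(x^p)$ and observing that $(x-\xi)^{ep}$ is annihilated by $d/dx$ in characteristic $p$, a Leibniz calculation shows $g_1$ still satisfies the original ODE. Since $\val_\xi A$ is unchanged by this reduction, I may henceforth assume $h(\xi^p) \neq 0$.

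Now suppose for contradiction that $m \coloneqq \val_\xi A \geq r$, and write $A(x) = (x-\xi)^m B(x)$ with $B(\xi) \neq 0$. A direct computation yields $A^{(k)}(\xi) = 0$ for $k < m$ and $A^{(m)}(\xi) = m! \, B(\xi)$, which is nonzero since $m \leq \deg A < p$ implies $m! \not\equiv 0 \pmod p$. From the Leibniz identity I deduce $g^{(k)}(\xi) = 0$ for $k < r$, and the recurrence above propagates this vanishing to all $k < p$. Specializing to $k = m$ gives $0 = g^{(m)}(\xi) = A^{(m)}(\xi) \cdot h(\xi^p)$, contradicting the non-vanishing of both factors.

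The ``in particular'' clause is just the case $h = g$, $A(x) = g(x) \bmod x^p$ of the main statement, for the $p$-Lucas property provides precisely the factorization $g(x) = A(x) \, g(x^p)$ with $\deg A < p$. The main technical subtlety is setting up the formal Taylor expansion of $g$ at $\xi$ and the attendant meaning of $h(\xi^p)$; once this is in place (and it is standard for $D$-finite objects at ordinary points), the rest is a clean bookkeeping of the Leibniz identity and recurrence described above.
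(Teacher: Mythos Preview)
Your argument is almost the paper's, but it introduces an unnecessary gap. You cannot Taylor-expand a formal power series $g \in \F_p\ps{x}$ at a point $\xi \neq 0$, nor evaluate $h \in \F_p\ps{y}$ at $\xi^p \neq 0$; there is no analytic continuation for formal power series, and $D$-finiteness alone does not provide one. Your parenthetical that this is ``standard for $D$-finite objects at ordinary points'' is not correct in the formal setting. Consequently both your reduction step (factoring $(y-\xi^p)^e$ out of $h$) and your definition of the Taylor coefficients $c_n$ of $g$ at $\xi$ are ill-posed as stated.

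The fix is a single observation, and it is exactly what the paper uses: the Leibniz identity $g^{(k)} = A^{(k)} h(x^p)$ that you already wrote shows $L(g) = h(x^p)\, L(A)$, hence $L(A)=0$ (assuming $h\neq 0$). Now $A$ is a \emph{polynomial} solution of the same ODE, so its Taylor expansion at $\xi$ is perfectly well-defined, and your recurrence argument applies to $A$ verbatim; the reduction to $h(\xi^p)\neq 0$ becomes entirely superfluous. The paper then concludes in one line via the indicial polynomial: at an ordinary point the local exponents are $0,1,\ldots,r-1$, so $\val_\xi(A)\equiv j \pmod p$ for some $j<r$, and since $\val_\xi(A)\leq \deg A < p$ this forces $\val_\xi(A)<r$. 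Your recurrence (leading coefficient $a_r(\xi)\prod_{j=1}^r(N+j)$, invertible for $0\le N\le p-r-1$) is precisely an explicit unpacking of this indicial-polynomial fact, applied to $A$ rather than to $g$.
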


\begin{proof}
    From $g(x)=A(x)h(x^p)$, it is clear that $A(x)$ is a solution of the same differential equation as $g(x)$. The local exponents of this equation at $\xi$ are $0,1,\ldots, r-1$. Thus, $\val_\xi(A(x))\in \{0, 1, \ldots, r-1\}\bmod p$. Because $\deg(A(x))<p$, we conclude $\val_\xi(A(x))\in \{0, 1, \ldots, r-1\}$.
\end{proof}

Back to our example of the generating function of the squares of the central binomial coefficients, we can use Lemma~\ref{lem:vals} to obtain the following.

\begin{cor}
\label{cor:binom2}
If $p > 2$,
the polynomial $A_p(x)$ cannot be written as $B(x)^e$ with $B(x) \in \Fp(x)$ and $e > 1$.
\end{cor}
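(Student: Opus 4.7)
The plan is to use Lemma~\ref{lem:vals} to confine the possible roots of $B$ to the singularities of the differential equation~\eqref{eq:diffbinom2}, and then rule out the remaining rigid shape by a short coefficient comparison. Applied to $f(x) \bmod p$, which is $p$-Lucas and satisfies an order-$r=2$ equation whose singular points in $\Fpbar$ are $0$ and $1/16$ (the third singularity lying at $\infty$), the lemma gives $\val_\xi(A_p) < 2$ at every other $\xi \in \Fpbar$.

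Now suppose for contradiction that $A_p = B^e$ with $B \in \Fp(x)$ and $e \geq 2$. Writing $B = P/Q$ with $\gcd(P,Q) = 1$, the identity $Q^e A_p = P^e$ forces $Q$ to be a constant, so we may assume $B \in \Fp[x]$. Any root $\xi \in \Fpbar$ of $B$ then satisfies $\val_\xi(A_p) \geq e \geq 2$, and thus by the preceding paragraph must be a singular point, i.e.\ $\xi \in \{0, 1/16\}$. Since $A_p(0) = \binom{0}{0}^2 = 1 \neq 0$, the root $0$ is excluded, leaving
\[ A_p(x) = c \cdot (16x - 1)^N \]
for some $c \in \Fp^\times$ and some integer $N = be$.

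It remains to rule out this shape. Kummer's theorem tells us that for $0 \leq n \leq p-1$, $p \mid \binom{2n}{n}$ precisely when $n > (p-1)/2$, so $\deg A_p = (p-1)/2$ and hence $N = (p-1)/2$. On the other hand, $A_p(0) = 1$ gives $c = (-1)^N$, and then comparing the coefficient of $x^1$ on both sides of the displayed equality forces $-16 N \equiv 4 \pmod p$, i.e.\ $N \equiv -1/4 \pmod p$. Combining the two determinations of $N$ yields $2(p-1) \equiv -1 \pmod p$, i.e.\ $-2 \equiv -1 \pmod p$, which is absurd for $p > 2$.

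The real content is the first step: Lemma~\ref{lem:vals} does almost all of the work by reducing the problem to the rigid one-parameter family $c(16x-1)^N$. Everything after that is elementary. The main subtlety I expect, more than an obstacle, is that Lemma~\ref{lem:vals} gives no information at $\xi = \infty$, so we cannot directly constrain $\deg B$ from the lemma alone; this is why we still need the explicit computation of $\deg A_p$ via Kummer to close the argument.
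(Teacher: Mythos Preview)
Your proof is correct and follows essentially the same approach as the paper: use Lemma~\ref{lem:vals} to confine possible multiple roots to the singular points $\{0,1/16\}$, eliminate $0$ via $A_p(0)=1$, compute $\deg A_p=(p-1)/2$, and then kill the remaining case $A_p(x)=(1-16x)^{(p-1)/2}$ by comparing the coefficient of $x$. The paper phrases the last step as ``$4\neq 8\pmod p$'' rather than your equivalent reformulation $N\equiv -1/4\pmod p$ versus $N=(p-1)/2$, and you add the small clarification that $B$ must be a polynomial, but these are cosmetic differences.
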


\begin{proof}
To start with, we observe that $A_p(x)$ is a polynomial of degree exactly $\frac{p-1} 2$, because, when $\frac p 2 \leq n < p$, the central binomial coefficient $\binom{2n} n$ is divisible by $p$.
Moreover, any element $\xi \in \bar \F_p \setminus \big\{0, \frac 1{16}\big\}$ is an ordinary point of the differential equation~\eqref{eq:diffbinom2}. By Lemma~\ref{lem:vals}, we find that $\xi$ cannot be a double root of $A_p(x)$.
Clearly $A_p(0) = 1$, so $0$ is also not a (double) root of $A_p(x)$.
To conclude, it then remains to exclude the case $A_p(x) = (1 - 16x)^{\frac{p-1} 2}$.
This can be done easily by looking at the coefficient of $x$: in $A_p(x)$, it is equal to $4$, while in the right-hand side, it is equal to $-8(p-1)$, which is $8$ modulo $p$.
\end{proof}

It follows from all what precedes that $\Gal\big(f(x) \mid \Fp(x)\big) \simeq \F_p^\times$ for $p > 2$. 
The isomorphism is actually also correct when $p = 2$ because $f(x) \bmod p = 1$ in this case.

\begin{rem} \label{rem:transcendental}
    Our computations of the Galois group also give a proof that $f(x)$ is transcendental over $\Q(x)$. Indeed, assume it were algebraic of given algebraicity degree $d$. Then also $f(x)\bmod p$ would have algebraicity degree at most $d$, hence the size of its Galois group would be bounded independently of $p$. However, we have just shown that this is not the case. The same argument will also apply to show the transcendence of the next examples we are going to consider.

    Another proof of the transcendence of $f(x)$ can be easily deduced by rewriting it as a hypergeometric function and applying the classification of algebraic hypergeometric functions which we will recall in Theorem~\ref{thm:BH}. 
\end{rem}

\subsubsection{$\displaystyle f(x) = \sum_{n=0}^\infty \binom{2n}{n}^{\!3} \cdot x^n$}
\label{sssec:ex:binom3}

This example looks similar to the previous one (the only difference is that the exponent~$2$ has been replaced by~$3$) 
but we will see that the behavior of the Galois groups exhibits a new interesting phenomenon.

The function $f(x)$ is still $D$-finite and is now a solution of a differential equation of order $3$:
$$x^2 (64x - 1) f'''(x) + x (288x - 3) f''(x) + (208x - 1) f'(x) + 8 f(x) = 0.$$
Of course, $f(x)$ is again $p$-Lucas, from which we get the algebraic relation
$$f(x) \equiv A_p(x) \cdot f(x)^p \pmod p,$$
where $A_p(x)$ is the truncation of $f(x)$ defined by
$$A_p(x) = \sum_{n=0}^{p-1} \binom{2n}{n}^{\!3} \cdot x^n \in \Fp[x].$$
Again, this implies that $\Gal\big(f(x) \mid \Fp(x)\big)$ embeds into $\F_p^\times$ but it might happen now that this inclusion is strict.

\begin{lem} \label{lem:Clausen}
Let $p$ be prime number with $p \equiv 1 \pmod 4$.
Then $A_p(x)$ is a square in $\Fp[x]$.
\end{lem}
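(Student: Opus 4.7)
The plan is to apply Clausen's classical identity
$${}_2F_1\!\left(a,b;\,a{+}b{+}\tfrac12;\,x\right)^{\!2} \;=\; {}_3F_2\!\left(2a,2b,a{+}b;\,a{+}b{+}\tfrac12,2a{+}2b;\,x\right),$$
specialized at $a=b=\tfrac14$. Using the classical identity $\binom{2n}{n} = 4^n(1/2)_n/n!$, one rewrites $f(x)={}_3F_2(1/2,1/2,1/2;\,1,1;\,64x)$, and Clausen's identity then yields the factorization
$$f(x) = g(x)^2 \quad \text{in } \Q\ps{x}, \qquad g(x) \coloneqq {}_2F_1(1/4,1/4;\,1;\,64x).$$
Since $p \equiv 1 \pmod 4$, we have $1/4 \in \Z_{(p)}$, so the coefficients of $g(x)$ have well-defined $p$-adic valuations.

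Next I would pin down the structure of $g(x)$ modulo $(p, x^p)$. Writing $c_n = (1/4)_n^{\,2}\cdot 64^n/(n!)^2$, one has $v_p(c_n) = 2v_p((1/4)_n) - 2v_p(n!)$. For $n < p$, the term $v_p(n!) = 0$, while the factorization $(1/4)_n = 4^{-n}\prod_{k=0}^{n-1}(4k+1)$ shows that $v_p((1/4)_n)$ equals the number of $k \in [0, n-1]$ with $p \mid 4k+1$. Writing $p = 4m+1$ with $m = (p-1)/4$, the unique such $k$ in the range $[0, p-1]$ is $k = m$. Hence $v_p(c_n) = 0$ for $0 \le n \le m$ and $v_p(c_n) \ge 2$ for $m+1 \le n \le p-1$, so there is a polynomial $B(x) \in \F_p[x]$ of degree at most $(p-1)/4$ such that $g(x) \equiv B(x) \pmod{(p, x^p)}$.

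Finally, since $\deg(B(x)^2) \le (p-1)/2 < p$, reducing the identity $f(x) = g(x)^2$ modulo $(p, x^p)$ gives
$$A_p(x) \;=\; f(x) \bmod (p, x^p) \;=\; \bigl(g(x) \bmod (p, x^p)\bigr)^{\!2} \;=\; B(x)^2,$$
which exhibits $A_p(x)$ as a square in $\F_p[x]$. The only step requiring genuine care is the valuation bookkeeping above; once that is in place, the rest of the argument is purely formal.
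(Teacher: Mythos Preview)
Your proof is correct and follows essentially the same approach as the paper: both invoke Clausen's identity to write $f(x)=g(x)^2$ with $g(x)={}_2F_1(1/4,1/4;1;64x)$, then use the congruence $p\equiv 1\pmod 4$ to bound the degree of the mod-$p$ truncation of $g$ by $(p-1)/4$, so that squaring stays below degree $p$ and the congruence $A_p(x)\equiv B(x)^2\pmod{x^p}$ is an actual equality in $\F_p[x]$. Your valuation computation is slightly more explicit than the paper's (you even get $v_p(c_n)\ge 2$ rather than just $\ge 1$ in the relevant range), but the structure of the argument is identical.
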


\begin{proof}
We reinterpret the function $f(x)$ as an hypergeometric series as follows:
$$\textstyle f(x) = \pFq 3 2 {\big(\frac 1 2, \frac 1 2, \frac 1 2\big)} {(1,1)} {64 x}.$$
It then follows from Clausen's formula~\cite{Cl28} that $f(x) = g(x)^2$ with
$\textstyle g(x) = \pFq 2 1 {\big(\frac 1 4, \frac 1 4\big)} {(1)} {64 x}$.
We now let $B_p(x)$ denote the truncation at $x^p$ of $g(x) \bmod p$, so that
\begin{equation}
\label{eq:ApBp2}
A_p(x) \equiv B_p(x)^2 \pmod{x^p}.
\end{equation}
As in the proof of Corollary~\ref{cor:binom2}, we find that $A_p(x)$ has degree $\frac{p-1}2$.
Similarly, using the congruence $p \equiv 1 \pmod 4$, we prove that $\deg B_p(x) = \frac{p-1}4$.
Hence the congruence~\eqref{eq:ApBp2} has to be an equality in $\Fp[x]$, which proves the lemma.
\end{proof}

By Lemma~\ref{lem:vals}, we have that $A_p(x)$ is not an $e$-th power for any $e > 2$.
Moreover, when $p \equiv 3 \pmod 4$, it is not a square either because it has odd degree.
We conclude that
\begin{equation} \label{eq:ex:binom3} \Gal\big(f(x) \mid \Fp(x)\big) \simeq
\begin{cases}
(\F_p^\times)^\square & \text{if } p \equiv 1 \pmod 4 \\
\F_p^\times & \text{if } p \equiv 3 \pmod 4
\end{cases},
\end{equation}
where, when $G$ is a commutative group, the notation $G^\square$ refers to its subgroup of squares.

\subsubsection{$\displaystyle f(x) = \sum_{n=0}^\infty \sum_{k=0}^n \binom{n+k}{k}^{\!2} \binom{n}{k}^{\!2} \cdot x^n$ and related functions}
\label{sssec:ex:apery}

The series $f(x)$ is the so-called \emph{Apéry series} because the sequence of its coefficients appears as a key ingredient in Apéry's proof of the irrationality of $\zeta(3)$~\cite{Ap79,Fi04}.
It is solution of the differential equation
\begin{equation}
\label{eq:diffapery}
x^2 (x^2 - 34x + 1) f'''(x) + 3x (2x^2 - 51x + 1) f''(x) + (7x^2 - 112x + 1) f'(x) + (x-5) f(x) = 0
\end{equation}
and it turns out that the reductions of $f(x)$ modulo the primes behave quite similarly to what we described in the previous example.
First of all, it is known that $f(x)$ is $p$-Lucas~\cite{Ge82}. Therefore, once again, its reductions modulo $p$ satisfy the algebraic relation
$$f(x) \equiv A_p(x) \cdot f(x)^p \pmod p$$
where $A_p(x)$ is the truncation of $f(x)$:
$$A_p(x) = \sum_{n=0}^{p-1} \sum_{k=0}^n \binom{n+k}{k}^{\!2} \binom{n}{k}^{\!2} \cdot x^n \in \Fp[x].$$
As a consequence the Galois group $\Gal\big(f(x) \mid \Fp(x)\big)$ appears as a subgroup of $\F_p^\times$.
Moreover since the order of the differential equation~\eqref{eq:diffapery} is $3$, Lemma~\ref{lem:vals} shows that $A_p(x)$ cannot be an $e$-th power with $e > 2$ (except if it is a constant polynomial).
However it still can be a square and, as we will see in the sequel, this actually happens from time to time.

Computational experiments suggest that whether $A_p(x)$ is a square or not is governed by the congruence class of $p$ modulo $24$.
More precisely, we observed the following.

\begin{heuristic}
\label{heu:Ap}
There exists a polynomial $B_p(x) \in \Fp[x]$ such that
\begin{itemize}
\item $A_p(x) = B_p(x)^2$ if $p \equiv 1, 5, 7, 11 \pmod{24}$,
\item $A_p(x) = (x^2 - 34x + 1) \cdot B_p(x)^2$ if $p \equiv 13, 17, 19, 23 \pmod{24}$.
\end{itemize}
\end{heuristic}

\noindent
We leave the above heuristic as an open question for the time being and explore its consequences.
First of all, at the level of Galois groups, it implies
$$\Gal\big(f(x) \mid \Fp(x)\big) \simeq
\begin{cases}
(\F_p^\times)^\square & \text{if } p \equiv 1, 5, 7, 11 \pmod{24} \\
\F_p^\times & \text{if } p \equiv 13, 17, 19, 23 \pmod{24}
\end{cases}$$
given that the polynomial $x^2 - 34x + 1$ is not a square in $\Fp[x]$ as soon as $p \geq 5$.

Going further, Heuristic~\ref{heu:Ap} suggests to introduce the following functions:
$$g(x) = \sqrt{f(x)} \quad \text{and} \quad h(x) = \sqrt{\frac{f(x)}{x^2 - 34x + 1}}.$$
Those can be expanded as series in the variable $x$, and so they are elements of $\Q\ps{x}$.
Moreover, it turns out that they are both $D$-finite, being solutions of the differential equations
\begin{align*}
4x(x^2 - 34x + 1) g''(x) + 4(2x^2 - 51x + 1) g'(x) + (x - 10) g(x) & = 0, \\
4x(x^2 - 34x + 1) h''(x) + 4(4x^2 - 85x + 1) h'(x) + (9x - 78) h(x) & = 0.
\end{align*}
Of course, $g(x) \bmod p$ and $h(x) \bmod p$ are also algebraic and one can be interested in their Galois groups.
When $A_p(x) = B_p(x)^2$, we have
$$g(x) = \frac{g(x)^p}{\sqrt{f(x)^{p-1}}} \equiv B_p(x)\cdot g(x)^p \pmod p$$
and similarly
$$h(x) \equiv (x^2 - 34x +1)^{\makebox{${}^\frac{p{-}1}2$}} B_p(x) \cdot h(x)^p \pmod p.$$
In this case, we see that the series $g(x) \bmod p$ and $h(x) \bmod p$ satisfy algebraic relations of the same shape and,
using that $B_p(x)$ is a not an $e$-th power for any $e>1$ (because $A_p(x)$ has no ordinary points as roots of multiplicity $2e$), we conclude that
$$\Gal\big(g(x) \mid \Fp(x)\big) \simeq \Gal\big(h(x) \mid \Fp(x)\big) \simeq \Fp^\times.$$
On the contrary, when $A_p(x) = (x^2 - 34x +1) \cdot B_p(x)^2$, we get the relations
\begin{align}
g(x) & \equiv (x^2 - 34x +1)^{\makebox{${}^\frac{p{+}1}2$}} B_p(x) \cdot h(x)^p \pmod p \label{eq:algapery:start} \\
h(x) & \equiv B_p(x) \cdot g(x)^p \pmod p
\end{align}
which now intertwine the functions $g(x)$ and $h(x)$.
Combining these equations, we deduce the untangled algebraic relations
\begin{align}
g(x) & \equiv (x^2 - 34x +1)^{\makebox{${}^\frac{p{+}1}2$}} B_p(x)^{p+1} \cdot g(x)^{p^2} \pmod p \\
h(x) & \equiv (x^2 - 34x +1)^{\makebox{${}^\frac{p(p{+}1)}2$}} B_p(x)^{p+1} \cdot h(x)^{p^2} \pmod p. \label{eq:algapery:end}
\end{align}
We observe that the coefficients of $g(x)^{p^2}$ and of $h(x)^{p^2}$ on the right-hand sides of these equations both are $\frac{p+1}{2}$-th powers, but not higher ones. So, at the level of Galois groups, we obtain
$$\Gal\big(g(x) \mid \Fp(x)\big) \simeq \Gal\big(h(x) \mid \Fp(x)\big) \simeq \sqrt{\Fp^{\times}} \rtimes \Z/2\Z,$$
where by definition 
$$\sqrt{\Fp^{\times}} \coloneqq \Big\{ \, x \in \F_{p^2}^\times \,\,\text{such that}\,\, x^2 \in \Fp^\times \, \Big\}$$
and where $\Z/2\Z$ acts on it through the Frobenius.

\begin{rem}
In these examples, we observe that both of the Galois groups $\Gal\big(g(x)\mid\Fp(x)\big)$ and $\Gal\big(h(x) \mid \Fp(x)\big)$ might be commutative for infinitely many primes~$p$ and noncommutative for another infinity of primes~$p$ at the same time (at least if we believe in Heuristic~\ref{heu:Ap}).
\end{rem}

\begin{rem}
We thank Alin Bostan for pointing out to us that similar phenomena occur with several other series of interest.
Here are two examples:
\begin{itemize}
\item The generating series of Domb numbers~\cite{Do60}
$$f(x) = \sum_{n=0}^\infty \sum_{k=0}^n \binom{2k} k \binom{2n-2k}{n-k} \binom n k ^{\!2} \cdot x^n.$$
In this case, computations suggest that the Galois group $\Gal\big(f(x) \mid \Fp(x)\big)$ is $\Fp^\times$ when $p \equiv -1 \pmod 6$ and its subgroup of squares when $p \equiv 1 \pmod 6$.
\item The generating series of Almkvist--Zudilin numbers~\cite{Zag09, AZ06} 
$$f(x) = \sum_{n=0}^\infty \sum_{k=0}^n (-1)^{n-k} 3^{n-3k} \frac{(3k)!}{k!^3} \binom n {3k} \binom{n+k} k \cdot x^n.$$
In this case, our computations tend to imply that the Galois group $\Gal\big(f(x) \mid \Fp(x)\big)$ is $\Fp^\times$ when $p \equiv 5, 7 \pmod 8$ and its subgroup of squares when $p \equiv 1, 3 \pmod 8$.
\end{itemize}
These examples, together with the Apéry numbers, are appearing as sporadic examples of differential equations having solutions with integral coefficients, studied for example in \cite{Beu02,Zag09, AVZ11}. Especially \cite[Theorem~4.1]{AVZ11} is reminiscent of our argument involving Clausen's formula in the proof of Lemma~\ref{lem:Clausen}. It expresses the Apéry series (where the variable is substituted by a rational function) as the square of another, explicitly determined, power series times a polynomial, and could provide direction in the quest to prove our Observation~\ref{heu:Ap}.
\end{rem}

\subsubsection{$f(x) = \pFq 3 2 {\big(\frac 1 9, \frac 4 9, \frac 5 9\big)} {\big(\frac 1 3, 1\big)} x$}
\label{sssec:ex:3F2}

We conclude our tour by an example of a hypergeometric function, anticipating on the forthcoming results of Section~\ref{sec:hypergeom}.
Here, the series $f(x)$ is a solution of the differential equation
$$729x^2(x - 1) f'''(x) + 81 x (37 x - 21) f''(x) + 9 (200x-27) f'(x) + 20 f(x) = 0.$$
In Section \ref{sec:hypergeom} (see Example \ref{ex:3F2}), we will show that $f(x)$ can be reduced modulo any prime $p \neq 3$ and that the Galois group of $f(x) \bmod p$ satisfies
\begin{equation} \label{eq:GalGrpsEx}\Gal\big(f(x)\mid\Fp(x)\big) \subset
\begin{cases}
\GL_2(\Fp) & \text{if } p \equiv 1 \pmod 9 \\
\F_{p^6}^\times \rtimes \Z/6\Z & \text{if } p \equiv 2 \text{ or } 5 \pmod 9 \\
\GL_2(\F_{p^3}) \rtimes \Z/3\Z & \text{if } p \equiv 4 \text{ or } 7 \pmod 9 \\
\F_{p^2}^\times \rtimes \Z/2\Z & \text{if } p \equiv 8 \pmod 9 
\end{cases}
\end{equation}
where the element $1 \in \Z/d\Z$ ($d \in \{2, 3, 6\}$) always acts through the Frobenius on the left-hand factor of the semi-direct product.
We actually expect that all these inclusions are equalities but were not able to prove it in full generality.

This example is novel in the sense that, for the first time, it goes beyond cyclic groups (or simple semi-direct products of them) and involves groups of matrices of higher rank.
We see moreover that the latter does not show up in each case: for some classes of congruences, the expected Galois group remains resoluble while it suddenly becomes more complicated for other congruences classes.

\subsection{The conjectures: weak and strong forms}

The aim of this subsection is to formulate plausible conjectures describing the behavior of $\Gal\big(f(x)\mid\Fp(x)\big)$ when $p$ varies.

\subsubsection{Discarding too naive expectations}

Since $\Gal\big(f(x)\mid\Fp(x)\big)$ always appears as a subgroup of $\GL_n(\Fp)$ for some integer $n$ (depending possibly on $p$),
the first naive question we may wonder about is the following: does there exist a subgroup of $\GL_n(\Z)$ whose reduction modulo $p$ agrees with $\Gal\big(f(x)\mid\Fp(x)\big)$ for (almost) all primes~$p$?
Unfortunately, the answer is negative, even if we replace $\Z$ by $\Z[\frac 1 N]$ for a positive integer $N$.
Indeed, Subsection~\ref{sssec:ex:apery} exhibits a situation where $\Gal(g(x)\mid\Fp(x))$ is commutative for half of the primes and noncommutative for the other half;
the next lemma shows that this cannot occur for the reductions modulo $p$ of a single subgroup of $\GL_n\big(\Z[\frac 1 N]\big)$.

\begin{lem}
\label{lem:redcommutative}
Let $G$ be a subgroup of $\GL_n\big(\Z[\frac 1 N]\big)$ for some positive integers $n$ and $N$.
We assume that, for infinitely many primes~$p$, the image of $G$ in $\GL_n(\Fp)$ is commutative.
Then $G$ is commutative.
\end{lem}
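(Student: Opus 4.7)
The plan is to argue entry-by-entry on commutators. For any two elements $g,h \in G$, commutativity of $G$ is equivalent to $[g,h] \coloneqq ghg^{-1}h^{-1} = I_n$, that is, to every entry of the matrix $[g,h] - I_n \in \mathrm{M}_n(\Z[\tfrac 1 N])$ being zero. I want to exploit that, by hypothesis, many reductions of this matrix vanish.

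First I would observe that reducing modulo a prime $p$ only makes sense for primes $p \nmid N$; discarding the finitely many primes dividing $N$, we still have infinitely many primes $p$ for which the image of $G$ in $\GL_n(\F_p)$ is commutative. For each such prime $p$, every commutator $[g,h]$ (with $g,h$ ranging over all of $G$) reduces to $I_n$, so each coefficient of $[g,h] - I_n$ lies in $p\Z[\tfrac 1 N]$.

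Now I fix a pair $(g,h) \in G \times G$ and focus on one entry $a/N^k \in \Z[\tfrac 1 N]$ of $[g,h] - I_n$. The key elementary fact is that for $p\nmid N$, belonging to $p\Z[\tfrac 1 N]$ forces $p$ to divide the numerator $a$ in $\Z$. Since by hypothesis this happens for an infinite set of primes $p$, and a nonzero integer $a$ only has finitely many prime divisors, we must have $a = 0$. Hence every entry of $[g,h] - I_n$ vanishes, so $[g,h] = I_n$. Since $g,h$ were arbitrary, $G$ is commutative.

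The argument is entirely elementary; there is essentially no obstacle beyond noticing that the infinite set of good primes in the hypothesis is a single set working uniformly for all pairs $(g,h)$ — which is exactly what allows one to fix $(g,h)$ and still apply the finiteness of prime divisors of a nonzero integer.
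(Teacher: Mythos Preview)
Your proof is correct and follows essentially the same approach as the paper: fix two elements, form their commutator, and use that a nonzero element of $\Z[\tfrac1N]$ can be divisible by only finitely many primes $p\nmid N$. You spell out the entry-by-entry divisibility argument a bit more explicitly, but the idea is identical.
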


\begin{proof}
We pick $A, B \in G$ and form the commutator $C = A B A^{-1} B^{-1}$.
For any prime $p$ not dividing~$N$, the matrix $C \bmod p$ is the commutator of $A \bmod p$ and $B \bmod p$.
Thanks to our assumption, these commutators are trivial for infinitely many such primes~$p$.
Hence, denoting by $I_n$ the identity matrix, we get $C \equiv I_n \pmod p$ for infinitely many primes~$p$.
Therefore $C = I_n$ and we conclude that $A$ and $B$ commute in $G$.
\end{proof}

\begin{rem}
One may argue that the issue we just raised with the function $g(x)$, defined as the square root of the Apéry series in Example~\ref{sssec:ex:apery}, can be easily fixed by considering Galois groups over $\F_{p^2}(x)$ instead of $\Fp(x)$:
doing this, the factors $\Z/2\Z$ disappear and all the Galois groups $\Gal(g(x)\mid\F_{p^2}(x))$ are commutative. This is indeed correct.
However, Example~\ref{sssec:ex:3F2} shows an even worse situation where half of the groups are resoluble, while the other half of them are not; in this case, the issue cannot be solved by changing the base field.
\end{rem}

If one unique subgroup of $\GL_n\big(\Z[\frac 1 N]\big)$ is not enough to uniformize, one may now wonder if a finite family of them could do the job.
Unfortunately, this is again not the case: a counterexample is given by Example~\ref{sssec:ex:binom2}, as shown by the next proposition.

\begin{prop}
Let $t, n$, and $N$ be positive integers and let $G_1, \ldots, G_t$ be subgroups of $\GL_n\!\big(\Z[\frac 1 N]\big)$.
Then, there exist infinitely many prime numbers~$p$, not dividing $N$, for which none of the groups $G_1 \bmod p, \ldots, G_t \bmod p$ are cyclic of order $p{-}1$.
\end{prop}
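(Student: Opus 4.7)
The plan is to combine Chebotarev's density theorem with Kummer theory, forcing all relevant eigenvalues to become squares modulo suitable primes $p$, so as to preclude the existence of elements of order $p-1$ in any $G_i \bmod p$.

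I first dispose of the easy cases. If $G_i$ is finite, then $|G_i \bmod p| \leq |G_i|$ stays bounded, so $G_i \bmod p$ cannot be cyclic of order $p - 1$ once $p$ is large. If $G_i$ is non-commutative, then by the contrapositive of Lemma~\ref{lem:redcommutative}, $G_i \bmod p$ is non-commutative, hence not cyclic, for all but finitely many primes $p$. Such $G_i$ contribute only finitely many primes to the union $\bigcup_i \{p : G_i \bmod p \cong \Z/(p-1)\Z\}$, so I may discard them and assume that every remaining $G_i$ is infinite and commutative.

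After the reduction step addressed in the last paragraph, I may assume each $G_i$ is finitely generated, say by commuting elements $g_{i,1}, \ldots, g_{i,r_i}$. Simultaneous upper-triangularization over $\bar\Q$ shows that the eigenvalues of these generators form a finite set $\Lambda \subset \bar\Q^\times$ (taking the union over all $i$). Setting $L \coloneqq \Q(\Lambda)$ and $K \coloneqq L\big(\{\sqrt\lambda : \lambda \in \Lambda\}\big)$, the Chebotarev density theorem produces infinitely many primes $p$ (not dividing $N$ nor the discriminant of $K$) that split completely in $K$. Fix such a $p$ and any $g \in G_i$. Since the generators are simultaneously upper-triangular, the eigenvalues of $g$ are monomials in elements of $\Lambda$, so their reductions modulo any prime $\mathfrak p$ of $K$ above $p$ lie in the subgroup of squares $(\F_p^\times)^2$. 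Via the Jordan decomposition, the semisimple part of $g \bmod p$ then has order dividing $(p-1)/2$, while its unipotent part has order a power of $p$; since $\gcd(p, p-1) = 1$, the total order of $g \bmod p$ cannot equal $p - 1$. Therefore $G_i \bmod p$ has no element of order $p - 1$, and in particular is not cyclic of order $p - 1$.

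The main technical obstacle is the reduction to finitely generated $G_i$, since a subgroup of $\GL_n\big(\Z[\frac 1 N]\big)$ need not inherit finite generation. To bypass this, I would enlarge $G_i$ to the group $T_i\big(\Z[\frac 1 N]\big)$ of integral points of the Zariski closure $T_i \subset \GL_{n,\Q}$ of $G_i$. As $T_i$ is a commutative linear algebraic group over $\Q$, its identity component decomposes as a direct product $S \times U$ of a torus $S$ and a unipotent group $U$; the group $S\big(\Z[\frac 1 N]\big)$ is finitely generated by the Dirichlet-type unit theorem for $S$-integer points of tori, the finite component group contributes only finitely many coset representatives, and the unipotent part is harmless for the eigenvalue argument since its elements reduce to unipotent matrices of $p$-power order. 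Running the above Chebotarev argument on this finitely generated overgroup then suffices, since the statement ``some element of $G_i \bmod p$ has order $p - 1$'' immediately transfers to the overgroup.
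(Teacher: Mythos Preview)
Your argument is correct and shares the paper's core strategy: use Chebotarev to produce primes for which every relevant eigenvalue becomes a square in $\F_p^\times$, so that no element of $G_i \bmod p$ can have order $p-1$. The substantive difference lies in how the eigenvalues are shown to form a finitely generated group. The paper first reduces to the case where each $\mathcal P_i = \{p : G_i \bmod p \cong \Z/(p{-}1)\Z\}$ is infinite, and then uses this infinitude to prove that every matrix in $G_i$ is semisimple: a nontrivial Jordan block with eigenvalue $\lambda$ would force $\lambda \equiv 0$ modulo infinitely many primes, hence $\lambda = 0$, contradicting invertibility. Semisimplicity plus commutativity then gives simultaneous diagonalization over a single number field $K_i$, so all eigenvalues sit inside $\O_{K_i}[\tfrac{1}{N}]^\times$, which is finitely generated by the ordinary Dirichlet $S$-unit theorem---no algebraic groups needed. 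Your route via the Zariski closure $T_i$ and the decomposition $T_i^0 \cong S \times U$ also works, but is heavier machinery, and your phrase ``this finitely generated overgroup'' is slightly loose (only the quotient $T_i(\Z[\tfrac{1}{N}])/U(\Z[\tfrac{1}{N}])$ is finitely generated, not $T_i(\Z[\tfrac{1}{N}])$ itself). In exchange, your Jordan-decomposition-mod-$p$ step handles the unipotent part directly, so you never need the semisimplicity argument in characteristic zero.
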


\begin{proof}
For $i \in \{1, \ldots, t\}$, we let $\mathcal P_i$ denote the set of primes~$p$, not dividing $N$, for which $G_i \mod p$ is cyclic of order $p{-}1$.
We also set $\mathcal P \coloneqq \mathcal P_1 \cup \cdots \cup \mathcal P_t$ and denote by $\mathcal P'$ the complement of $\mathcal P$ in the set of all primes.
We have to show that $\mathcal P'$ is infinite.

Without loss of generality, we may assume that each $\mathcal P_i$ is infinite.
Indeed, when $\mathcal P_i$ is finite, the lemma for the family $(G_1, \ldots, G_{i-1}, G_{i+1}, \ldots, G_t)$ implies the lemma for $(G_1, \ldots, G_t)$.

We fix an index $i$. We claim that all matrices in $G_i$ are semi-simple, \emph{i.e.}, diagonalizable over an algebraic closure $\bar \Q$.
Indeed, pick $M \in G_i$ and a change-of-basis matrix $A \in \GL_n(\bar\Q)$ such that $T \coloneqq A^{-1} M A$ is in Jordan form.
We notice that $A$ has all its entries in a certain number field~$K$. Besides, $A$ and $M$ can be reduced modulo~$p$ for almost all prime numbers~$p$.
We assume by contradiction that $T$ has a nontrivial Jordan block of the form
$$T_0 \coloneqq \left(\begin{matrix}
\lambda & 1 \\
& \ddots & \ddots \\
& & \lambda & 1 \\
& & & \lambda
\end{matrix}\right).$$
For almost all primes $p \in \mathcal P_i$, we have
$$T_0^{p-1} \equiv \left(\begin{matrix}
\lambda^{p-1} & -\lambda^{p-2} & \star & \star \\
& \ddots & \ddots & \star \\
& & \lambda^{p-1} & -\lambda^{p-2} \\
& & & \lambda^{p-1}
\end{matrix}\right) \pmod p.$$
From the fact that $G_i \bmod p$ is cyclic of order $p{-}1$, we deduce that $\lambda$ must vanish modulo $p$. Since this holds for infinitely many primes~$p$, we finally obtain $\lambda = 0$, which is a contradiction.

On the other hand, we deduce from Lemma~\ref{lem:redcommutative} that $G_i$ is commutative.
Consequently, all matrices in $G_i$ are simultaneously diagonalizable,
\emph{i.e.}, there exists a matrix $A \in \GL_n(K_i)$ (for a certain number field~$K_i$) such that $A^{-1} M A$ is diagonal for all $M \in G_i$.
Up to enlarging $N$, we may further assume that $A \in \GL_n\!\big(\O_{K_i}[\frac 1 N]\big)$ where $\O_{K_i}$ denotes the ring of integers of $K_i$.
Let $H_i$ be the subgroup of $\O_{K_i}[\frac 1 N]^\times$ generated by all the eigenvalues of all the matrices $M \in G_i$.
By Dirichlet's theorem, we know that $\O_{K_i}[\frac 1 N]^\times$ is of finite type. Hence $H_i$ is also and we can choose a finite set of generators $S_i$ of $H_i$.

We now uniformize this construction over~$i$: we let $K$ be the number field generated by all the $K_i$ and set $S \coloneqq S_1 \cup \cdots \cup S_t$.
By Chebotarev's density theorem applied to the number field $L = K(\sqrt s, s \in S)$, 
there exist infinitely many primes~$p$ such that $p$ splits in $K$ and $s \bmod \pp$ is a square in $\Fp$ for all $s \in S$ and all primes $\pp$ of $K$ above $p$.
For those primes, all groups $G_i$ reduce modulo~$p$ to a subgroup of $(\Fp^\times)^\square$ and hence cannot have order $p{-}1$.
\end{proof}

\subsubsection{First formulation of the conjecture}

The previous discussion leaves open the possibility that the Galois groups $\Gal\big(f(x)\mid\Fp(x)\big)$ are uniformized by a finite family $(G_1, \ldots, G_t)$ of subgroups of $\GL_n\!(\Q)$ in the following sense:
for each prime $p$, there exists an index $i$ such that
$$\Gal\big(f(x)\mid\Fp(x)\big)\simeq \image\big (G_i\cap \GL_n(\Zp) \to \GL_n(\Fp)\big),$$
where we recall that $\Zp$ denotes the localization of $\Z$ at $p$.
At least, this formulation clearly allows for catching $\Fp^\times = \GL_1(\Fp)$ for all~$p$, as we can simply take $G = \GL_1(\Q)$.

Although this expectation looks rather reasonable, many examples encountered in Subsection~\ref{ssec:examples} involve the multiplicative group of some finite extension of $\Fp$.
It then sounds natural to look for the groups $G_i$, not inside $\GL_n(\Q)$, but rather inside $\GL_n(K)$ where $K$ is a number field.

In a similar fashion, we observe that most of the Galois groups computed in Subsection~\ref{ssec:examples} are not exactly subgroups of some $\GL_n$ but rather
semi-direct products of groups of this type with cyclic groups, the latter corresponding to an unramified initial extension of the form $\F_q(x)/\Fp(x)$.
This phenomenon is actually a general fact about splitting fields of series in $\Fp\ps x$ and can be isolated.

\begin{defi}
The \emph{residual splitting field} of an algebraic series $f(x) \in \Fp\ps{x}$ is $L \cap \Fpbar,$ where $L \subseteq \Fp(x)^\sep$ is the splitting field of $f(x)$.
\end{defi}

\begin{lem}
\label{lem:splitting}
Let $f(x) \in \Fp\ps{x}$ be an algebraic series and let $\ell$ be its residual splitting field.
For any subfield $k$ of $\Fpbar$, we have an exact sequence
\begin{equation}
\label{eq:splitting}
1 \longrightarrow \Gal \big(f(x)\mid k(x)\big) \longrightarrow \Gal \big(f(x)\mid\Fp(x)\big) \longrightarrow \Gal\big(k \cap \ell/\Fp\big) \longrightarrow 1.
\end{equation}
\end{lem}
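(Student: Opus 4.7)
My plan is to construct the exact sequence via standard Galois correspondence, with the technical crux being the identification of $L \cap k(x)$ inside the ambient separable closure. Fix the splitting field $L \subseteq \Fp(x)^{\sep}$ of $f(x)$, so by definition $\ell = L \cap \Fpbar$. The splitting field of $f(x)$ over $k(x)$ is then the compositum $L_k := L \cdot k(x) \subseteq \Fp(x)^{\sep}$, and $\Gal\big(f(x)\mid k(x)\big) = \Gal(L_k/k(x))$.

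First I would set up the two maps. Since $L/\Fp(x)$ is Galois and $\sigma \in \Gal(L_k/k(x))$ fixes $\Fp(x) \subseteq k(x)$, restriction to $L$ produces an injective homomorphism $r \colon \Gal(L_k/k(x)) \to \Gal(L/\Fp(x))$ whose image is $\Gal(L/L \cap k(x))$ by standard Galois theory of compositums. On the other hand, $\ell/\Fp$ is Galois and stable under $\Gal(L/\Fp(x))$ (its elements are algebraic over $\Fp$, and this property is preserved), so restriction gives a surjective map $\Gal(L/\Fp(x)) \twoheadrightarrow \Gal(\ell/\Fp)$. Composing with the further surjection $\Gal(\ell/\Fp) \twoheadrightarrow \Gal(\ell \cap k/\Fp)$ (all extensions of finite fields are Galois, so $\ell \cap k$ is stable) yields a surjection $q\colon \Gal(L/\Fp(x)) \twoheadrightarrow \Gal(\ell \cap k/\Fp)$ whose kernel is exactly $\{\sigma : \sigma|_{\ell \cap k} = \id\}$.

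The heart of the argument is the identification
\[
L \cap k(x) = (\ell \cap k)(x).
\]
The inclusion $\supseteq$ is immediate. For $\subseteq$, I would first show $L \cap \Fpbar(x) = \ell(x)$: this intersection is a finite extension of $\Fp(x)$ contained in $\Fpbar(x)$, and the Galois correspondence for $\Fpbar(x)/\Fp(x)$ (whose group equals $\Gal(\Fpbar/\Fp) = \hat\Z$) forces every such subextension to have the form $F(x)$ with $F \subseteq \Fpbar$ finite; intersecting with $\Fpbar$ identifies $F = \ell$. Applying the same argument to $\ell(x) \cap k(x)$ inside $\Fpbar(x)$ gives $\ell(x) \cap k(x) = (\ell \cap k)(x)$, and combining the two yields the claim.

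Once this identification is in hand, everything lines up: the image of $r$ equals $\Gal(L/(\ell \cap k)(x))$, and since every $\sigma \in \Gal(L/\Fp(x))$ automatically fixes $x$, fixing $(\ell \cap k)(x)$ pointwise is equivalent to fixing $\ell \cap k$ pointwise, so this image coincides with $\ker q$. Surjectivity of $q$ and injectivity of $r$ were already verified, producing the exact sequence~\eqref{eq:splitting}. The main obstacle is the intersection lemma $L \cap k(x) = (\ell \cap k)(x)$; everything else is routine Galois-theoretic bookkeeping, and I expect it to be a short, self-contained proof.
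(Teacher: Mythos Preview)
Your approach matches the paper's: both use the compositum and identify $L \cap k(x) = (\ell \cap k)(x)$ via standard Galois theory. Your treatment of this intersection is in fact more detailed than the paper's, which simply asserts the identity.

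However, there is a genuine gap. You assert that the splitting field of $f(x)$ over $k(x)$ equals $L \cdot k(x)$, but this is precisely where the hypothesis $f(x) \in \Fp\ps{x}$ enters, and you never use it. The $k(x)$-conjugates of $f$ are \emph{a priori} only a subset of the $\Fp(x)$-conjugates, so the splitting field over $k(x)$ is only \emph{contained} in $L \cdot k(x)$; equality requires that the minimal polynomial of $f$ over $\Fp(x)$ remain irreducible over $k(x)$. The paper supplies this: since $\Fp(x)[f] \subseteq \Fp\ls{x}$ and $k \cap \Fp\ls{x} = \Fp$, the field $\Fp(x)[f]$ is linearly disjoint from $k$ over $\Fp$, hence the minimal polynomial stays irreducible. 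Without this step your argument would apply verbatim to any algebraic $f \in \Fp(x)^\sep$, but there the conclusion can fail: if the minimal polynomial factors over $k(x)$, the Galois closure of $k(x)(f)/k(x)$ can be strictly smaller than $L \cdot k(x)$ and the orders in the sequence no longer match.
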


\begin{proof}
For a subfield $k$ of $\Fpbar$, we let $S_k \subseteq \Fp(x)^\sep$ be the splitting field of $f(x)$ over $k(x)$.
By definition, $\Gal(f(x)\mid k(x))$ is then the Galois group of $S_k$ over $k(x)$.
For convenience, we also set $S \coloneqq S_{\Fp}$.

We consider the field $\Fp(x)[f]$. It is a subfield of $\Fp(\!(x)\!)$ and so it is linearly disjoint from~$k$.
As a consequence, the minimal polynomial of $f(x)$ over $\Fp(x)$ remains irreducible over $k(x)$.
Therefore, all the Galois conjugates of $f(x)$ over $\Fp(x)$ are also Galois conjugates over $k(x)$.
It follows that $S_k$ is the compositum of $S$ and $k(x)$, \emph{i.e.}, $S_k = S \cdot k(x)$ as subfields of $\Fp(x)^\sep$.
Noticing moreover that $S \cap k(x) = (k \cap \ell)(x)$, the exact sequence~\eqref{eq:splitting} follows from classical Galois theory.
\end{proof}

It follows from Lemma~\ref{lem:splitting} that $\Gal (f(x)\mid k(x))$ is independent of $k$ as soon as $k$ contains the residual splitting field of $f(x)$.
This is the part of the Galois group we will be mostly interested in and which is depicted by our conjecture.

\begin{conj}
\label{conj:weak}
Let $f(x)\in \Q\ps{x}$ be a $D$-finite series and let $n$ be the minimal order of a differential operator that annihilates $f(x)$.\\
Then, there exist a number field $K$ and a family $(G_1, \ldots, G_t)$ of subgroups of $\GL_n(K)$ such that the following holds:
for almost all rational primes $p$ for which $f(x) \bmod p$ is well defined and algebraic over $\Fp(x),$ and for any prime $\pp$ of $K$ above $p$,
there exists an index $i \in \{1, \ldots, t\}$ such that
\begin{enumerate}[label=(\alph{enumi})]
\item \label{item:conj:rsf} the residual splitting field of $f(x) \bmod p$ is contained in $k_\pp$, and
\item \label{item:conj:iso} there exists an (abstract) isomorphism of groups
$$\Gal\big(f(x)\mid k_\pp(x)\big)\simeq \image\big (G_i\cap \GL_n(\mathcal{O}_{(\pp)}) \to \GL_n(k_\pp)\big),$$
\end{enumerate}
where, in what precedes, $\mathcal{O}_{(\pp)}$ denotes the localization of the ring of integers of $K$ at $\pp$ and $k_\pp$ is the residue field of $K$ at $\pp$.
\end{conj}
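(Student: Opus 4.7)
The plan is to construct $K$ and the family $(G_1, \ldots, G_t)$ from the Picard--Vessiot theory of the minimal-order differential operator annihilating $f(x)$, then to transfer the structure prime by prime via a Frobenius lift on a $p$-adic differential module.

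First I would let $L$ be the minimal monic operator that kills $f(x)$ and form the associated differential module $M$ of rank $n$ over $\Q(x)$. To $M$ is attached a differential Galois group $\mathcal G \subseteq \GL_n$ acting on the space of solutions of $L$. The candidate number field $K$ would be the smallest field over which $\mathcal G$, together with an integral $\Q(x)$-structure, admits a suitable model; in the examples of Subsection~\ref{ssec:examples} such a field is closely tied to the local exponents of $L$ (it is $\Q(\zeta_d)$ or a subfield in the hypergeometric setting, as is already apparent in Theorem~\ref{thintro:hypergeom}). The candidate list $(G_1, \ldots, G_t)$ would then be obtained as the finitely many $\mathcal G(K)$-conjugacy classes of stabilizers of a Frobenius-fixed line in the solution space, reproducing in the general setting the classifying role played by the subgroup $D \subseteq \Zdc$ in Theorem~\ref{thintro:hypergeom}.

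Next I would establish that $f(x) \bmod p$ satisfies an annihilating polynomial of the Frobenius-type shape \eqref{eq:Frobenius}, or more precisely one whose exponents are powers of $q = \mathrm{Card}(k_\pp)$ as in Theorem~\ref{thintro:hypergeom}. The route I have in mind is to exhibit a Frobenius structure on a $p$-adic lift of $M$, along the lines of Dwork--Christol--Mebkhout, providing a canonical isomorphism between $M$ and its Frobenius pullback; the reduction of this isomorphism modulo $\pp$ supplies the polynomial $Z_p(Y)$, and the Galois group is then read off from the induced action on a chosen lattice, exactly as in the strategy of Section~\ref{sec:hypergeom}. The index $i$ realized by a given prime $\pp$ should be selected by the conjugacy class of the reduced Frobenius inside $\mathcal G(k_\pp)$, which is why only finitely many $G_i$ are needed.

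The main obstacle is that an arbitrary $D$-finite operator need not carry any Frobenius structure --- indeed, whenever $f(x) \bmod p$ is transcendental, no such structure can exist at $p$, as illustrated by Example~\ref{sssec:ex:expatan}. The conjecture is therefore implicitly about the primes of ``good reduction'', and even on those primes the existence of a single number field $K$ witnessing a compatible family of Frobenius lifts for almost all $p$ is intertwined with the deep conjectures of André--Bombieri--Christol on globally nilpotent $G$-operators. A realistic staged plan would be: (i) verify the mechanism on hypergeometric $f(x)$ by leveraging Theorem~\ref{thintro:hypergeom}; (ii) extend to diagonals by combining the annihilating polynomial \eqref{eq:Frobenius} of Adamczewski--Bell--Christol with the overconvergent $F$-isocrystal attached to a diagonal; (iii) attempt the general case by identifying $K$ and the $G_i$ with invariants of the motivic Galois group of $L$, conditional on the crystalline realizations needed to produce the Frobenius structure.
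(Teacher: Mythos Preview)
The statement you are addressing is a \emph{conjecture}, not a theorem: the paper explicitly labels it as such and does not claim to prove it. There is therefore no ``paper's own proof'' to compare against. What the paper does provide is (i) evidence in the form of the worked examples of Subsection~\ref{ssec:examples}, (ii) a heuristic roadmap in Subsection~\ref{ssec:category} explaining how one might hope to connect the differential Galois group of the minimal operator to the Galois groups of $f(x)\bmod p$ via Frobenius structures, lattices, reduction, and Katz' functor (see Figure~\ref{fig:steps} and Subsection~\ref{sssec:tannakian}), and (iii) partial results in the hypergeometric case in Section~\ref{sec:hypergeom}.

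Your outline is in very close agreement with the paper's own heuristic discussion: the choice of $K$ tied to the differential (or monodromy) Galois group, the use of a Frobenius structure on a $p$-adic lift of the differential module, and the identification of the $G_i$ as coming from subquotients of $\mathcal G(K)$ are precisely the ideas sketched in Subsection~\ref{sssec:tannakian}. You also correctly single out the two genuine obstructions the paper itself flags: the possible nonexistence of a strong Frobenius structure (step~\ref{item:step2}) and of a suitable lattice (step~\ref{item:step3}), and the fact that uniformity in $p$ of the whole construction is the crux of the conjecture and is not known in general.

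That said, your proposal is not a proof and should not be presented as one. Several of its steps are either conditional or underspecified: the claim that finitely many $\mathcal G(K)$-conjugacy classes of ``stabilizers of a Frobenius-fixed line'' suffice is not justified (nor is it clear what the correct objects are in the general, non-hypergeometric setting); the existence of a single number field $K$ working for almost all $p$ is asserted but not established; and your final stage~(iii) is explicitly conditional on deep conjectures. This matches the paper's stance: Conjecture~\ref{conj:weak} remains open, and your write-up is best understood as a strategy sketch aligned with the paper's Subsection~\ref{ssec:category}, not as a proof.
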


We briefly comment Conjecture~\ref{conj:weak} on several important points.

First, we remark that the assertion~\ref{item:conj:rsf} above indicates that the Galois group $\Gal(f(x)\mid k_\pp(x))$ has attained the limit, 
in the sense that it is equal to $\Gal(f(x)\mid k(x))$ for any extension $k$ of $k_\pp$ sitting inside $\Fpbar$.
We deduce from this observation that, if Conjecture~\ref{conj:weak} holds with the number field $K$, then it holds as well for any finite extension $K'$ of $K$ with the same family of groups $(G_1, \ldots, G_t)$.

Second, we underline that the $n$ in $\GL_n(K)$, \emph{i.e.}, the size of matrices, is expected to be controlled by the order of a differential operator $\mathcal L$ with $\mathcal L(f(x)) = 0$.
In fact, more is expected: for a given prime $p$, we actually believe that $\Gal\big(f(x)\mid k_\pp(x)\big)$ embeds into $\GL_{n_p}(k_\pp)$ where $n_p$ is the $\Fp(x^p)$-dimension of the space of solutions of $\mathcal L \bmod p$
(observe that $n_p \leq n$).
We will come back to these dimension bounds and give more evidences and arguments supporting them in Subsection~\ref{ssec:category}.

Finally, we mention that it is still unclear to us what could be the (minimal) number field $K$ in general, although we expect it to be somehow related to the differential Galois group (or maybe the monodromy group) of the minimal differential equation satisfied by $f(x)$. Again, we will give more evidences supporting this hope in Subsection~\ref{ssec:category}.

\paragraph{Back to the examples.}

We now come back to the examples of Subsection~\ref{ssec:examples} and show that Conjecture~\ref{conj:weak} perfectly fits with all of them.

In Example~\ref{sssec:ex:alg}, the part of the Galois group that the conjecture is concerned with is the factor $\Z/d\Z$, which is always present regardless of the choice of the prime~$p$.
Since this factor corresponds to the Galois group over $\Fp(x)(\zeta_d)$, the natural choice for the number field $K$ is the cyclotomic extension $K = \Q(\zeta_d)$.
For any prime $\pp$ of $K$, we have $\Gal(f(x)\mid k_\pp(x))\simeq \Z/d\Z$ and we can simply take $t = 1$ and $G_1 = \um_d(K) \subseteq \GL_1(K)$.

In Example~\ref{sssec:ex:expatan}, there is nothing to comment on, since $f(x) \bmod p$ is never algebraic.

In Example~\ref{sssec:ex:binom2}, we can simply take $t = 1$ and $G = \Q^\times = \GL_1(\Q)$.

In Example~\ref{sssec:ex:binom3}, one option is to take $t = 2,$ as well as $G_1 = \GL_1(\Q)^\square$, and $G_2 = \GL_1(\Q)$:
when $p \equiv 1 \pmod 4$, we choose the index $i = 1$ whereas, when $p \equiv 3 \pmod 4$, we choose $i = 2$ (and we choose either $i = 1$ or $i = 2$ when $p = 2$).
It is however a nice observation that the Galois groups can actually be uniformized by a unique subgroup $G \subseteq \GL_1(\Q)$, namely
$$G \coloneqq \GL_1(\Q)^\square \cdot \{\pm 1\} \subseteq \GL_1(\Q).$$
Indeed, one checks that $G$ reduces modulo $p$ to $\GL_1(\Fp)^\square \cdot \{\pm 1\}$ and, now, we remember that $-1$ is a square modulo $p$ if and only if $p = 2$ or $p \equiv 1 \pmod 4$.

The Apéry series $f(x)$ of Example~\ref{sssec:ex:apery} is quite similar to what precedes:
either, we can take $G_1 = \GL_1(\Q)^\square$ and $G_2 = \GL_1(\Q)$ or we can pick just one group which is
$$G \coloneqq \GL_1(\Q)^\square \cdot \langle -6 \rangle \subseteq \GL_1(\Q).$$
Indeed, it follows from the law of quadratic reciprocity that $-6$ is a square modulo a prime number $p > 3$ if and only if $p \equiv 1, 5, 7, 11 \pmod{24}$.

For the series $g(x)$ and $h(x)$, we observe that the relevant part of $\Gal(g(x)\mid\Fp(x)) \simeq \Gal(h(x)\mid\Fp(x))$ is, in some sense, the ``square root'' of $\Gal(f(x)\mid\Fp(x))$.
This observation leads us to consider the number field $K = \Q(\i\sqrt{6})$ together with the group
$$G \coloneqq \GL_1(\Q) \cdot \big\langle \i\sqrt{6}\big\rangle \subseteq \GL_1(K).$$
When $p$ is $1$, $5$, $7$ or $11$ modulo $24$, there are two places $\pp_1$ and $\pp_2$ above $p$ with residue field $\Fp$.
Moreover, in this case, the reduction of $\i\sqrt{6}$ in the residue field lies obviously in $\Fp$, and so
$$\image\big (G\cap \GL_1(\mathcal{O}_{(\pp_i)}) \to \GL_1(\Fp)\big) = \GL_1(\Fp) = \Fp^\times.$$
On the contrary, when $p \equiv 13, 17, 19, 23 \pmod {24}$, there is only one place $\pp$ above $p$ with residue field $\F_{p^2}$ and we find
$$\image\big (G\cap \GL_1(\mathcal{O}_{(\pp)}) \to \GL_1(\F_{p^2})\big) = \sqrt{\Fp^\times},$$
which is also the Galois group of $g(x)$ (resp. $h(x)$) over $\F_{p^2}(x)$.

In Example~\ref{sssec:ex:3F2} finally, getting inspired by what precedes, we are looking for a number field for which the splitting properties of a rational prime~$p$ depends on its
congruence class modulo~$9$. A natural candidate is the cyclotomic extension $K = \Q(\zeta_9)$. Regarding the groups, one checks that we can take
$$\begin{array}{r@{\hspace{0.5ex}}ll}
G_1 & \coloneqq \GL_1(K) &
  \text{(works for $p \equiv 2 \!\!\!\pmod 3$),} \smallskip \\
G_2 & \coloneqq \GL_2(K) &
  \text{(works for $p \equiv 1 \!\!\!\pmod 3$).}
\end{array}$$

\begin{rem}
    In the Conjecture~\ref{conj:weak} we consider prime numbers for which $f(x) \bmod p$ is well-defined and algebraic, without considering which prime numbers satisfy this condition. In a refined version, Conjecture~\ref{conj:withL} we ask for some structure of this set of prime numbers. Here, we mention one important case related to an old conjecture of Christol. A power series $f(x)\in \Q\ps x$ is said to be \emph{globally bounded}, if there is $a\in \Z$ such that $f(ax)-f(0)\in \Z\ps x$. In particular, in that case $f(x)$ can be reduced modulo almost all primes $p$, possibly with the exceptions of the primes dividing $a$ and the denominator of $f(0)$. Christol's conjecture \cite{Chr86} states that every D-finite power series that is globally bounded, is the diagonal of a multivariate rational function. By Fursteberg's Theorem~\cite{Fu67}, $f(x)\bmod p$ then is algebraic for almost all $p$. Christol's conjecture is still wide open, for example, for the globally bounded function $\pFq 3 2 {\left[\frac 1 9, \frac 4 9, \frac 5 9\right ]} {\left[ \frac 1 3, 1\right ]} x$, discussed in Subsection~\ref{sssec:ex:3F2}, it is not known, whether it is a diagonal \cite[p.1085, footnote]{BY22}.  
\end{rem}

\subsubsection{Two refinements}
\label{sssec:refinements}

Although Conjecture~\ref{conj:weak} might look already rather strong and difficult, there are several points on which it remains quite vague and unprecise.
The first one is that it does not say too much about the groups $G_i$, except that they exist and are contained in some $\GL_n$.
However, while this fragility is certainly the most significant one, discussing it will require some categorical language and machinery.
For this reason, we prefer postponing this to Subsection~\ref{ssec:category} and start by presenting, in this subsection, two other simpler improvements.

\paragraph{A Galois equivariant version.}

A first evident weakness of Conjecture~\ref{conj:weak} is that it does not concern the complete Galois group $\Gal(f(x)\mid\Fp(x))$ but only a part of it, even if it is quite large and arguably the most interesting one.
This issue can be solved by adding descent data in the formulation of the conjecture as follows.
We require that $K/\Q$ is Galois and that each group $G_i \subseteq \GL_n(K)$ is stable by the Galois action. For a prime $\pp$ of $K$, the group
\begin{equation}
\label{eq:Gip}
G_{i,\pp} \coloneqq \image\big (G_i\cap \GL_n(\mathcal{O}_{(\pp)}) \to \GL_n(k_\pp)\big)
\end{equation}
which appears in Conjecture~\ref{conj:weak} is then stable by the natural action of $\Gal(k_\pp/\Fp)$;
indeed this action corresponds to that of the decomposition group at $\pp$ which, by assumption, stabilizes $G_i$.
We can then form the semi-direct product $G_{i,\pp} \rtimes \Gal(k_\pp/\Fp)$ (where $\Gal(k_\pp/\Fp)$ acts on $G_{i,\pp}$ through its natural action on $\GL_n(k_\pp)$), which looks like a very good candidate to be the complete Galois group $\Gal(f(x)\mid\Fp(x))$.
This expectation is actually a bit too naive because the factor $\Gal(k_\pp/\Fp)$ may vary a lot with the choice of $K$. Instead, we propose the following formulation.

\begin{conj}
\label{conj:galoisequiv}
Let $f(x)\in \Q\ps{x}$ be a $D$-finite series and let $n$ be the minimal order of a differential operator that annihilates $f(x)$.\\
Then, there exist a Galois extension $K/\Q$ and a family $(G_1, \ldots, G_t)$ of subgroups of $\GL_n(K)$ stable by the action of $\Gal(K/\Q)$ such that the following holds:
for almost all rational primes $p$ for which $f(x) \bmod p$ is well defined and algebraic over $\Fp(x),$ and for any prime $\pp$ of $K$ above $p$,
there exists an index $i \in \{1, \ldots, t\}$ such that
\begin{enumerate}[label=(\alph{enumi})]
\item \label{item:conj2:rsf} the residual splitting field of $f(x) \bmod p$ is the field $\ell_\pp$
defined as the smallest subfield of $k_\pp$ for which $G_{i,\pp} \subseteq \GL_n(\ell_\pp)$.
\item \label{item:conj2:iso} there is an isomorphism of exact sequences:
$$\xymatrix @R=3ex {
1 \ar[r] & \Gal \big(f(x)\mid k_\pp(x)\big) \ar[r] \ar[d]^-{\sim} & \Gal \big(f(x)\mid\Fp(x)\big) \ar[r] \ar[d]^-{\sim} & \Gal\big(\ell_\pp/\Fp\big) \ar[r] \ar@{=}[d] & 1 \\
1 \ar[r] & G_{i,\pp} \ar[r] & G_{i,\pp} \rtimes \Gal\big(\ell_\pp/\Fp\big) \ar[r] & \Gal\big(\ell_\pp/\Fp\big) \ar[r] & 1}$$
where $G_{i,\pp}$ is defined by Equation~\eqref{eq:Gip} and the exact sequence on the top is that of Lemma~\ref{lem:splitting}.
\end{enumerate}
\end{conj}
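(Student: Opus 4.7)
The plan is to first establish a candidate for the number field $K$ and the groups $G_i$ meeting the requirements of Conjecture~\ref{conj:weak}, and then upgrade the construction to produce the Galois-equivariant refinement. The guiding principle, in line with what is announced for Subsection~\ref{ssec:category}, is to build $K$ and the $G_i$ from the differential Galois group $G^{\mathrm{dR}}$ of the minimal-order operator $\mathcal L$ annihilating $f(x)$: the ambient $\GL_n$ should arise from a faithful Tannakian representation of $G^{\mathrm{dR}}$, while the collection $(G_i)$ should encode, for each prime $p$ of good reduction, the image of a suitable \emph{Frobenius element} furnished by the $p$-adic comparison between the de Rham and the mod-$p$ pictures.

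First I would attach to each prime $p$ of good reduction a Frobenius structure on the $p$-adic differential module associated to $\mathcal L$; algebraicity of $f(x) \bmod p$ is known, in the cases handled so far, to force the existence of such a structure à la Dwork--Christol. Such a structure produces an $n \times n$ matrix $F_p$ with entries in $\mathcal O_{(\pp)}$ whose reduction modulo $\pp$ controls $\Gal(f(x) \mid k_\pp(x))$: concretely, the splitting field of $f(x) \bmod p$ over $\Fpbar(x)$ is generated by the coordinates of a solution of the Frobenius equation $\phi(y) = F_p \cdot y$, so the Galois group embeds into $\GL_n(k_\pp)$ via its action on such solutions. The number field $K$ would then be taken large enough to host the entries of all matrices $F_p$ simultaneously, which is possible because, up to conjugation, these matrices range over a fixed finite set of conjugacy classes inside $G^{\mathrm{dR}}(\bar\Q)$; this finite set produces the family $(G_i)$.

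To obtain the Galois-equivariant refinement, I would enlarge $K$ so that it is Galois over $\Q$ and then replace the family $(G_i)$ by the union of its $\Gal(K/\Q)$-orbits, relabeling to keep it finite; any $\sigma \in \Gal(K/\Q)$ permutes the Frobenius conjugacy classes, so after this symmetrization each $G_i$ is stable under the Galois action as required. The isomorphism of short exact sequences in~\ref{item:conj2:iso} should then follow by combining Lemma~\ref{lem:splitting} with the fact that, on the $p$-adic side, the decomposition group at $\pp$ acts on $G_{i,\pp}$ through the geometric Frobenius; this is exactly the action appearing in the bottom row of the diagram, and compatibility of the two pictures will come from the natural identification of the splitting field of $f(x) \bmod p$ with the residual fraction field of the $p$-adic orbit of a Frobenius solution.

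The main obstacle will be twofold. First, one must rigorously show, rather than merely believe, that algebraicity of $f(x) \bmod p$ for almost all $p$ actually comes from a globally coherent $p$-adic Frobenius structure with entries in a fixed number field $K$; this is known for diagonals (via Deligne) and will be established in Section~\ref{sec:hypergeom} for the hypergeometric class, but a general statement would subsume deep open problems such as Christol's conjecture. Second, even granted such a structure, one needs to promote the matrix-by-matrix reduction $F_p \bmod \pp$ to a genuine identification of $\Gal(f(x) \mid k_\pp(x))$ with the reduction of a \emph{single fixed} subgroup $G_i \subseteq \GL_n(K)$, which requires rationality results on the Frobenius traces that, at the present state of the art, we can only verify example by example, as the computations of Subsection~\ref{ssec:examples} and Theorem~\ref{thintro:hypergeom} illustrate.
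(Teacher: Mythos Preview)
The statement you are attempting to prove is labeled as a \emph{conjecture} in the paper, and the paper offers no proof of it. It is presented as an expectation, motivated by the examples of Subsection~\ref{ssec:examples} and by the categorical picture of Subsection~\ref{ssec:category}, and partially supported in the hypergeometric case in Section~\ref{sec:hypergeom}; but nowhere does the paper claim, let alone supply, a proof of Conjecture~\ref{conj:galoisequiv} in general. So there is no ``paper's own proof'' to compare your proposal against.

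As for the proposal itself: it is not a proof but a program, and you essentially concede this in your final paragraph. The two ``obstacles'' you identify are not technicalities to be filled in---they are the entire content of the conjecture. In particular, the assertion that the Frobenius matrices $F_p$ can be chosen with entries in a \emph{fixed} number field $K$, and that up to conjugation they range over \emph{finitely many} conjugacy classes inside $G^{\mathrm{dR}}(\bar\Q)$, is precisely what Conjecture~\ref{conj:weak} asks for; you have assumed the conclusion. Likewise, the step ``compatibility of the two pictures will come from the natural identification\ldots'' hides the whole difficulty of matching the abstract Galois action with the concrete reduction of a fixed $G_i$. Your outline is a reasonable description of \emph{why} one might believe the conjecture---and indeed it parallels the heuristics the paper develops in Subsection~\ref{sssec:tannakian}---but it does not constitute a proof, nor does the paper pretend to have one.
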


\begin{rem}
The point~\ref{item:conj2:iso} above is equivalent to requiring, in addition of the assertion~\ref{item:conj:iso} of Conjecture~\ref{conj:weak}, that the exact sequence of Lemma~\ref{lem:splitting} splits and that the induced action of $\Gal(\ell_\pp/\Fp)$ on $G_{i,\pp}$ agrees with its natural action on $\GL_n(\ell_\pp)$.
\end{rem}

\begin{rem}
Another interesting feature of Conjecture~\ref{conj:galoisequiv} is that it clearly highlights that all the places $\pp$ above a fixed rational prime $p$ play the same role (since they are permuted by $\Gal(K/\Q)$).
\end{rem}

Finally, we leave it as an exercise to the reader to check that all the examples of Subsection~\ref{ssec:examples} satisfy Conjecture~\ref{conj:galoisequiv}.

\paragraph{Controlling the partition of primes.}

The second point on which we would like to add precisions to Conjecture~\ref{conj:weak} concerns the dependence of the index $i$, \emph{i.e.}, the subgroup $G_i$ we choose, with respect to~$p$.
On all the examples of Subsection~\ref{ssec:examples}, we observe that this dependence is very easy to settle: the choice of $i$ always only depends on the congruence class of $p$ modulo a certain fixed number.
We would not expect however such a simple pattern to be repeated in full generality.
Our doubts come already from the case of linear differential equation of order $1$, namely
$$f'(x) + a(x) \cdot f(x) = 0.$$
More precisely, we have explained in Remark~\ref{rem:order1} that for a solution $f(x)$ of the above equation, the property to be reducible modulo~$p$ is governed by the factorization properties of $a(x) \bmod p$ which, in turn, can be read off from the decomposition of $p$ in the splitting field $L$ of $a(x)$ over $\Q$.
This behavior is indeed controlled by congruence conditions on $p$ when $L$ is an abelian extension of $\Q$; however it might be much more complicated in general.

For this reason, we prefer formulating our refined conjecture as follows.

\begin{conj}
\label{conj:withL}
Let $f(x)\in \Q\ps{x}$ be a $D$-finite series.
Let $n$ be the minimal degree of a differential operator that annihilates $f(x)$.
Then, there exist
\begin{itemize}
\item a number field $K$,
\item a finite Galois extension $L/\Q$,
\item two subsets $\mathcal{C}_{\alg}\subseteq \mathcal{C}_{\red}$ of the set of conjugacy classes of $\Gal(L/\Q)$ and
\item for each $C\in \mathcal{C}_{\alg}$, a subgroup $G_C\subseteq \GL_n(K)$
\end{itemize}
such that, for almost all primes $p$, if $C$ denotes the conjugacy class containing the Frobenius morphism at $p$ in $\Gal(L/\Q)$, we have:
\begin{enumerate}[label=(\arabic{enumi})]
    \item $C\in \mathcal{C}_{\red}$ if and only if $f(x)\in \Zp\ps{x}$,
    \item $C\in \mathcal{C}_{\alg}$ if and only if $f(x)\in \Zp\ps{x}$ and $f(x)\bmod p$ is algebraic over $\Fp(x)$,
    \item if $C\in \mathcal{C}_{\alg}$, for any prime $\pp$ in $K$ above $p$, the residue field $k_\pp$ contains the residual splitting field of $f(x) \bmod p$ and
    there is an isomorphism
    \[\Gal\big(f(x)\mid k_\pp(x)\big)\simeq \image\big (G_C\cap \GL_n(\mathcal{O}_{(\pp)}) \to \GL_n(k_\pp)\big).\]
\end{enumerate}
\end{conj}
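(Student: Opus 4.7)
The final statement is a conjecture rather than a theorem, so I present the strategy I would attempt, flagging the expected obstruction at the end.

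\emph{Step one: extract $L$ and the sets $\mathcal C_{\red}, \mathcal C_{\alg}$ from the minimal-order differential operator.} Let $\mathcal L$ be the minimal-order linear differential operator annihilating $f(x)$. I would take $L/\Q$ to be the Galois closure of the field generated by the singularities of $\mathcal L$, their local exponents, and the entries of monodromy matrices computed in a distinguished basis of solutions. The Frobenius class $C$ at an unramified prime $p$ then records the splitting type of all this local data modulo $p$. For $\mathcal C_{\red}$ one requires that the singularities and residues of $\mathcal L$ remain regular and integral after reduction --- a condition already depending only on $C$ in the order-$1$ situation of Remark~\ref{rem:order1}, and expected to remain Chebotarev-controlled in general. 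For $\mathcal C_{\alg}$ one adds the further condition that the $p$-curvature of $\mathcal L \bmod p$ is nilpotent, which by Katz-type theorems should likewise be a condition depending only on $C$.

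\emph{Step two: build the groups $G_C \subseteq \GL_n(K)$.} Here I would follow the template of Theorem~\ref{thintro:hypergeom} and the approach of~\cite{Var24}: for a prime $p$ with Frobenius class $C \in \mathcal C_{\alg}$, construct a $p$-adic Frobenius structure on the differential module associated with $\mathcal L$, and use it to extract an annihilating polynomial of the shape~\eqref{eq:Frobenius} for $f(x) \bmod p$. The number field $K$ should be read off as the field generated by the eigenvalues of these Frobenius lifts, and $G_C$ defined as the subgroup of $\GL_n(K)$ generated by the companion matrices of the Frobenius structure as $p$ varies inside $C$. The Kummer- and Artin--Schreier-type computations already carried out in the worked examples of Subsection~\ref{ssec:examples} then let one identify $\Gal\big(f(x) \mid k_\pp(x)\big)$ with $\image\big(G_C \cap \GL_n(\O_{(\pp)}) \to \GL_n(k_\pp)\big)$, provided the Frobenius lift has been chosen coherently across primes.

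\emph{The main obstacle.} Everything hinges on producing a Frobenius structure on $\mathcal L$ that is uniform in $p$ and whose coefficients lie in a single number field $K$ independent of $p$. Such a lift exists in favorable cases --- essentially those controlled by Dwork's method and rigid cohomology, which is exactly why the hypergeometric situation of Section~\ref{sec:hypergeom} is tractable --- but is a deep open problem in general. Even granting its existence, one must still rule out that the image subgroup depends on $p$ in a finer way than through its Frobenius conjugacy class in $\Gal(L/\Q)$; this amounts to a rigidity/equidistribution statement for $\GL_n$-valued Frobenius structures that is well beyond current techniques. For these reasons I do not expect a complete proof to be within reach, and the conjecture should probably be tested and refined on broader classes of $D$-finite series (hypergeometric functions, diagonals of rational functions, $G$-functions) before a general strategy crystallizes.
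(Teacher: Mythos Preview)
You correctly identify that this is a \emph{conjecture}, not a theorem: the paper does not prove it and offers no proof to compare against. Your proposal is therefore not a proof attempt but a heuristic outline, and that is the honest stance.

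Your outline is broadly consonant with the paper's own heuristics (Subsections~\ref{sssec:refinements} and~\ref{ssec:category}). A few calibrations are worth noting. For Step~one, the paper's discussion around Remark~\ref{rem:order1} already warns that the set of primes where $f(x)$ reduces well is governed by the splitting behaviour of $a(x)$, so in general $L$ should absorb more than just singularities and local exponents --- it must at least contain the splitting field of the relevant rational data. For Step~two, the paper's preferred mechanism is the one laid out in Subsection~\ref{sssec:FrobStruct} and Figure~\ref{fig:steps}: pass from the differential module to a Frobenius module via a strong Frobenius structure, reduce modulo~$p$, and apply Katz' functor; the Tannakian discussion in Subsection~\ref{sssec:tannakian} then suggests that $G_C$ should arise as a subquotient of the differential Galois group rather than being ``generated by companion matrices of Frobenius lifts'' as you phrase it. Your obstacle is exactly the one the paper leaves open under the heading \emph{Uniformity in $p$} at the end of Subsection~\ref{sssec:tannakian}: controlling the Frobenius structures coherently as $p$ varies is precisely what is missing, and the paper's contribution is to carry this out only in the hypergeometric case (Section~\ref{sec:hypergeom}).
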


We underline that Conjecture~\ref{conj:withL} also includes a criterion for recognizing whether $f(x)$ can be reduced modulo~$p$ and, when this is the case, whether the reduction is algebraic or not.
This part was not covered in the previous formulations.

On a different note, it is obvious that if Conjecture~\ref{conj:withL} holds with a number field $L$ and it holds for any extension $L'$ of $L$ (with $L'/Q$ Galois) as well.
Since a similar property also holds for the number field $K$, one can safely assume that $K = L$ in Conjecture~\ref{conj:withL}.
However, it looks important to us to make the distinction between $K$ and~$L$ as they seem to play different roles.
For instance, in many examples considered in Subsection~\ref{ssec:examples}, taking simply $L = \Q$ (which corresponding to having just one group $G_i$) is possible while it is often necessary to work with a nontrivial number field $K$.

Finally, we notice that the two refinements provided by Conjectures~\ref{conj:galoisequiv} and~\ref{conj:withL} are somehow independent and can be combined.

\subsection{The language of differential modules and Frobenius modules}
\label{ssec:category}

In this subsection, following standard ideas which go back to Dwork and Christol (see in particular \cite{Chr86a}),
we reinterpret the main protagonists we have encountered so far using the more powerful language of differential modules and Frobenius modules and use it to gain height on Conjecture~\ref{conj:weak}.
This will help us to better understand what could be the mysterious groups~$G_i$ by connecting them to the differential Galois group of the underlying differential equation.

\subsubsection{Different types of modules}

\paragraph{Differential modules.}

We begin with linear differential equations.
It is a standard fact that the categorical shadow of these equations is the notion of differential modules, whose definition is recalled below.

\begin{defi}
Let $K$ be a field equipped with a derivation $\partial: K \to K$.
A \emph{differential module} (or a \emph{module with connection}) over $(K,\partial)$ is 
a finite dimensional $K$-vector space $M$, together with an additive map $\nabla$ satisfying
$$\forall \lambda \in K,\, \forall m \in M, \quad \nabla(\lambda m)=\partial(\lambda) m + \lambda \nabla(m).$$
We write $\DiffMod K$ for the category of differential modules over $(K,\partial)$.
\end{defi}

To any differential equation of the form
\begin{equation}
\label{eq:diffeq}
\partial^r(f) + a_{r-1} \partial^{(r-1)}(f) + \cdots + a_1 \partial(f) + a_0 f=0 \qquad (a_i \in K)
\end{equation}
we attach the differential module $M = K e_0 \oplus \cdots \oplus K e_{r-1}$ with connection $\nabla$ defined by
\begin{align*}
\nabla(e_i) & = e_{i+1} \qquad \text{if } 0 \leq i \leq r{-}2,\\
\nabla(e_{r-1}) & = -(a_0 e_0 + \cdots + a_{r-1} e_{r-1}).
\end{align*}
The map $f \mapsto \sum_{i=0}^{r-1} \partial^i(f) e_i$ induces a bijection between the space of solutions in $K$ of the differential equation~\eqref{eq:diffeq}
with the space $M^{\nabla = 0}$ of \emph{horizontal vectors}, that are, by definition, vectors $m$ such that $\nabla(m) = 0$.

This framework also allows for looking at solutions in extensions.
More precisely, let $L$ be an extension of $K$ over which the derivation $\partial$ extends.
Given a differential module $M$ over $K$, one can form the tensor product $L \otimes_K M$ and endow it with a connection defined by
$$\nabla(\lambda \otimes m) = \partial(\lambda) \otimes m + \lambda \otimes \nabla(m) \qquad (\lambda \in L, m \in M).$$
The solutions of $M$ over $L$ are then defined as the space of horizontal vectors of $L \otimes_K M$, that is $(L \otimes_K M)^{\nabla = 0}$.

\paragraph{Frobenius modules.}

For the general purpose of this article, we recall from the introduction (see Equation~\eqref{eq:Frobenius}) that we are mostly interested in algebraic relations of the form
\begin{equation}
\label{eq:Frobenius2}
Y^{p^n} + c_{n-1} Y^{p^{n-1}} + \cdots + c_1 Y^p + c_0 Y = 0
\end{equation}
where $Y$ is the unknown and the coefficients $c_i$ lie in some field of characteristic $p$ which, in our case of interest, is $\Fp(x)$ or an extension of the form $\ell(x)$ where $\ell/\F_p$ is a field extension.
Fortunately, this type of equations can be handled very similarly to what we have presented above for linear differential equations.

In the algebraic situation, the relevant objects are the so-called Frobenius modules that we define now.

\begin{defi}
\label{def:frobmodules}
Let $R$ be a ring equipped with an endomorphism $\phi: R \to R$.
A \emph{Frobenius module} over $(R, \phi)$ is a finitely generated projective $R$-module $M$ equipped with an additive map $\phi_M : M \to M$ such that
$$\forall \lambda \in K,\, \forall m \in M, \quad \phi_M(\lambda m) = \phi(\lambda) \cdot \phi_M(m).$$
We let $\FrobMod R$ denote the category of Frobenius modules over $(R, \phi)$. 
\end{defi}

\begin{notation}
\label{not:FrobModq}
When the ground ring $R$ has characteristic $p$ and $\phi : R \to R$ is the map $x \mapsto x^q$ (where $q$ is a power of $p$), we write $\FrobModq R$ for $\FrobMod R$.
\end{notation}

The relationship between Frobenius equations and Frobenius modules is realized as follows.
Starting from Equation~\eqref{eq:Frobenius2} with $c_i \in R$, we define the Frobenius module $M = R e_0 \oplus \cdots \oplus R e_{r-1}$ (where $R$ is equipped with its natural Frobenius) with
\begin{align*}
\phi_M(e_i) & = e_{i+1} \qquad \text{if } 0 \leq i \leq r{-}2,\\
\phi_M(e_{r-1}) & = -(c_0 e_0 + \cdots + c_{r-1} e_{r-1}).
\end{align*}
The solutions of Equation~\eqref{eq:Frobenius2} then correspond to vectors $m \in M$ such $\phi_M(m) = m$ \emph{via} the map $y \mapsto \sum_{i=0}^{r-1} y^{p^i} e_i$.

\paragraph{Galois representations.}

The last notion we would like to encapsulate in our new language is, of course, that of Galois groups.
For this, given a field $K$, we write $G_K$ for its absolute Galois group, \emph{i.e.}, $G_K \coloneqq \Gal(K^\sep/K)$.
Given an auxiliary field $E$, we introduce the category $\Rep_E(G_K)$ of all $E$-linear finite dimensional representations of $G_K$.

The connection with the Galois groups we used to consider earlier is as follows.
Assume that $K$ contains $E$ and let then $x$ be an element in the separable closure $K^\sep$ of $K$.
We consider its Galois conjugate $x_1 = x, x_2, \ldots, x_N \in K^\sep$ and the $E$-vector space $V$ they generate.
Clearly $V$ is stable by the action of $G_K$. Hence, it is a $E$-linear representation of $G_K$, \emph{i.e.}, an object in the category $\Rep_E(G_K)$.
Besides, if $\rho : G_K \to \GL(V)$ is the morphism defining the representation, one checks that $\Gal(x\mid K) \simeq \im\rho$.

\subsubsection{Katz' functor}\label{sssec:Katz}

Katz' functor connects Frobenius modules with Galois representations;
in some sense, it is the categorical realization of the operation that takes an algebraic equation of the form~\eqref{eq:Frobenius2} and produces the Galois group of one of its solutions.

Concretely, it is defined as follows. We pick a field $K$ of characteristic~$p$, together with a finite extension $\F_q$ of $\Fp$. We then define
\begin{equation} \label{eq:katzfunctor}
\begin{array}{rcl}
\mathbf V : \quad \FrobModq K & \longrightarrow & \Rep_{\F_q}(G_K) \smallskip \\
M & \longmapsto & \big(K^\sep \otimes_K M\big)^{\varphi = 1}
\end{array}
\end{equation}
where we let $\varphi$ act on $K^\sep \otimes_K M$ by $\lambda \otimes m \mapsto \lambda^q \otimes \phi_M(m)$ and 
where the notation ``$\varphi = 1$'' means that we consider the subset of elements that are fixed by $\varphi$.
When $M$ comes from an equation of the form~\eqref{eq:Frobenius2} (and thus $q = p$), it follows directly from the definition that
the representation $\mathbf V(M)$ is the $\Fp$-vector space of the solutions in $K^\sep$ equipped with the standard Galois action.

To study Katz' functor, it is convenient to introduce the following definition.

\begin{defi}\label{def:etale}
Let $R$ be a ring equipped with an endomorphism $\phi: R \to R$.
We say that a Frobenius module $M \in \FrobMod R$ is \emph{étale} when the image of $\phi_M$ spans $M$. \smallskip \\
We write $\FrobModet K$ for the subcategory of $\FrobMod K$ consisting of étale Frobenius modules.
Moreover, when $\phi$ is the map $x \mapsto x^q$, we write $\FrobModqet K$ for $\FrobModet K$ (see also Notation~\ref{not:FrobModq}).
\end{defi}

\begin{thm}[Katz] \label{thm:Katz}
Let $K$ be a field of characteristic $p$ and let $q$ be a power of $p$.
The functor $\mathbf V$ induces an equivalence of categories $\FrobModqet K \stackrel\sim\longrightarrow \Rep_{\F_q}(G_K)$.
Moreover, it preserves the dimension in the sense that, for all $M \in \FrobModqet K$, we have
$$\dim_K M = \dim_{\F_q} \mathbf V(M).$$
\end{thm}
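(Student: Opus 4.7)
The plan is to exhibit an explicit quasi-inverse
$$\mathbf{D} : \Rep_{\F_q}(G_K) \longrightarrow \FrobModqet K, \quad V \longmapsto \bigl(K^\sep \otimes_{\F_q} V\bigr)^{G_K},$$
where the Frobenius on $K^\sep \otimes_{\F_q} V$ is $x \otimes v \mapsto x^q \otimes v$ and $G_K$ acts diagonally. Showing that $\mathbf{V}$ and $\mathbf{D}$ are mutually inverse reduces to verifying that the two natural comparison maps
$$\alpha_M : K^\sep \otimes_{\F_q} \mathbf{V}(M) \to K^\sep \otimes_K M \quad \text{and} \quad \beta_V : K^\sep \otimes_K \mathbf{D}(V) \to K^\sep \otimes_{\F_q} V$$
are isomorphisms of Frobenius modules over $K^\sep$: taking $G_K$-invariants of $\alpha_M$ gives $\mathbf{D}(\mathbf{V}(M)) \simeq M$, while taking $\varphi$-invariants of $\beta_V$ gives $\mathbf{V}(\mathbf{D}(V)) \simeq V$ (using that the fixed field of $x \mapsto x^q$ on $K^\sep$ is $\F_q$). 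The isomorphism $\beta_V$ is a standard Galois descent statement (Hilbert~90 for $\GL_n$), which also shows that $\mathbf{D}(V)$ is finite over $K$ and that its Frobenius is étale.

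The main obstacle is establishing $\alpha_M$ together with the dimension equality $\dim_{\F_q} \mathbf{V}(M) = \dim_K M$. My approach is to study directly the affine $K$-scheme of $\varphi$-fixed vectors in $M$. Fixing a basis of $M$ in which $\phi_M$ is represented by a matrix $A \in \GL_n(K)$, this scheme is
$$X_M \coloneqq \mathrm{Spec}\Bigl( K[T_1, \ldots, T_n] \,\big/\, \bigl( T_i^q - \textstyle\sum_j (A^{-1})_{ij} T_j \bigr)_{1 \leq i \leq n} \Bigr),$$
and its $R$-points, for any $K$-algebra $R$, are precisely the $\varphi$-fixed vectors in $R \otimes_K M$. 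A direct computation, using $q \equiv 0$ in characteristic~$p$, shows that the Jacobian of the defining system equals $-A^{-1}$, which is invertible since $A \in \GL_n(K)$; this is exactly where the étale hypothesis on $M$ enters. Hence $X_M$ is finite étale over $K$, and a Bezout-type degree count (the projective closure has no points at infinity, since there the equations reduce to $T_i^q = 0$) yields $\deg X_M = q^n$. Consequently $|X_M(K^\sep)| = q^n$, and since $X_M(K^\sep) = \mathbf{V}(M)$ carries a natural $\F_q$-vector space structure, we conclude $\dim_{\F_q} \mathbf{V}(M) = n = \dim_K M$.

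With the dimension equality in hand, the isomorphism $\alpha_M$ follows at once: a Dedekind-style argument (take a minimal $K^\sep$-linear relation among $\varphi$-fixed vectors, apply $\varphi$, subtract, and use injectivity of $x \mapsto x^q$ on $K^\sep$) forces the scalars of any such relation to lie in $\F_q$, so $\alpha_M$ is injective; matching $K^\sep$-dimensions upgrades this to an isomorphism. Since $\alpha_M$ is $\varphi$-equivariant by construction, it is an isomorphism in the Frobenius module category, completing the proof of the equivalence and of the claimed dimension formula.
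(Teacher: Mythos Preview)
Your proof is correct. The paper does not actually prove this theorem: it simply cites Katz's original paper \cite[Proposition~4.1.1]{Ka73}. Your argument supplies what the paper omits, and the quasi-inverse you call $\mathbf{D}$ is precisely the functor $\mathbf{M}$ that the paper records (without proof) in the remark immediately following the theorem. The route you take---analysing the fixed-point scheme $X_M$, computing its Jacobian to get \'etaleness, and counting its degree to obtain $|\mathbf{V}(M)| = q^{n}$---is the standard one; the Dedekind-style linear independence argument and the appeal to Hilbert~90 for $\beta_V$ are likewise the expected ingredients. One small comment: the ``Bezout-type'' phrasing for the degree count is fine, but you can avoid any projective geometry entirely by observing that the relations $T_i^{q} - L_i$ (with $L_i$ linear) form a Gr\"obner basis with leading terms $T_i^{q}$, so the quotient ring has the $q^{n}$ monomials $T_1^{a_1}\cdots T_n^{a_n}$ with $0 \le a_i < q$ as a $K$-basis.
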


\begin{proof}
See \cite[Proposition~4.1.1]{Ka73}.
\end{proof}

\begin{rem}
\label{rem:inversekatz}
There exists an explicit formula for the inverse functor $\mathbf M$ of $\mathbf V$:
if $V$ is a $\F_q$-linear representation of $G_K$, its corresponding étale Frobenius module is
$\mathbf M(V) \coloneqq (K^\sep \otimes_K V)^{G_K}$
where $G_K$ acts diagonally on the tensor product $K^\sep \otimes_K V$.
\end{rem}

Beyond Katz' theorem, it will be also important for our purpose to study the action of Katz' functor on nonétale Frobenius modules.
In this perspective, we set the following definition.

\begin{defi}
\label{def:etalepart}
Let $K$ be a field equipped with a ring endomorphism $\phi : K \to K$ and let $M$ be a Frobenius module over $(K, \phi)$.
The \emph{étale part} of $M$ is defined as
$$M^\et \coloneqq \bigcap_{k \geq 0} \langle \phi_M^k(M) \rangle_K$$
where the notation $\langle \, \cdot \, \rangle_K$ stands for the $K$-span.
\end{defi}

It is straightforward to check that $M^\et$ is stable by $\phi_M$, \emph{i.e.}, it is again a Frobenius module, and that it is étale.
In other words, the construction $M \mapsto M^\et$ defines a functor $\FrobMod K \to \FrobModet K$.

\begin{lem}
\label{lem:VMet}
We assume that $K$ has characteristic $p$ and that $q$ is a power of $p$.
Then, for all $M \in \FrobModq K$, we have $\mathbf V(M) = \mathbf V(M^\et)$.
\end{lem}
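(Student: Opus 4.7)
The plan is to prove the two inclusions $\mathbf{V}(M^\et) \subseteq \mathbf{V}(M)$ and $\mathbf{V}(M) \subseteq \mathbf{V}(M^\et)$ separately, where the first is essentially formal and the second is the content of the statement.

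For the easy direction, I would note that $M^\et$ is stable under $\phi_M$, so the inclusion $M^\et \hookrightarrow M$ is a morphism in $\FrobModq K$. Since $K^\sep$ is flat over $K$ (indeed, it is a field extension), the induced map $K^\sep \otimes_K M^\et \to K^\sep \otimes_K M$ is injective and commutes with the respective $\varphi$-actions. Passing to $\varphi$-fixed points yields $\mathbf{V}(M^\et) \hookrightarrow \mathbf{V}(M)$.

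For the reverse inclusion, the key idea is to iterate $\varphi$. Pick $v \in \mathbf{V}(M) = (K^\sep \otimes_K M)^{\varphi=1}$. Writing $v = \sum_i \lambda_i \otimes m_i$ with $\lambda_i \in K^\sep$ and $m_i \in M$, I compute $\varphi^k(v) = \sum_i \lambda_i^{q^k} \otimes \phi_M^k(m_i)$, which visibly lies in the image of $K^\sep \otimes_K \langle \phi_M^k(M) \rangle_K$. Since $v$ is $\varphi$-fixed, $v = \varphi^k(v)$ lies in this image for every $k \geq 0$. Now I use that $M$, being finitely generated projective over the field $K$, is finite-dimensional, so the descending chain of subspaces $N_k \coloneqq \langle \phi_M^k(M) \rangle_K$ stabilizes; by the definition of the étale part, the stable value is precisely $M^\et$. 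Thus for $k$ large enough, $v \in K^\sep \otimes_K M^\et$ (using flatness of $K^\sep$ to identify this with a subspace of $K^\sep \otimes_K M$). Since $\varphi$ restricts to the $\varphi$-action on $K^\sep \otimes_K M^\et$ and $v$ is $\varphi$-fixed in the ambient module, $v \in \mathbf{V}(M^\et)$.

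I do not expect any serious obstacle here; the only subtlety worth articulating carefully is that the intersection defining $M^\et$ really does commute with the tensor product $K^\sep \otimes_K (-)$, which is automatic from the stabilization of the chain in finite dimension and therefore does not require a general flatness-of-intersections argument.
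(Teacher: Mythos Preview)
Your proof is correct. The approach differs slightly from the paper's, though the underlying idea is the same. The paper argues via the short exact sequence
\[
0 \longrightarrow M^\et \longrightarrow M \longrightarrow M/M^\et \longrightarrow 0,
\]
observes that $\phi_M$ acts nilpotently on the quotient $M/M^\et$ (which is equivalent to your observation that the chain $N_k$ stabilizes at $M^\et$), deduces $\mathbf V(M/M^\et)=0$, and then uses left exactness of the $\varphi$-fixed-point functor to conclude. Your argument bypasses the quotient entirely: you track a fixed vector $v$ directly and use $v=\varphi^k(v)\in K^\sep\otimes_K N_k$ to place it in $K^\sep\otimes_K M^\et$. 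Your route is arguably more elementary, since it avoids invoking left exactness of taking invariants and the formation of the quotient Frobenius module; the paper's route is a bit more structural and makes the role of $M/M^\et$ explicit. Both rest on the same finite-dimensionality (stabilization) fact, and both are clean.
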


\begin{proof}
We form the quotient $M/M^\et$ which inherits an action of $\phi_M$ and thus is an object in the category $\FrobMod K$.
Besides, it follows readily from the definition that $\phi_M$ acts nilpotently on $M/M^\et$.
Therefore, it also acts nilpotently on $K^\sep \otimes_K M/M^\et$, from which follows that $\mathbf V(M/M^\et) = 0$.
We now consider the exact sequence
$$0 \longrightarrow M^\et \longrightarrow M \longrightarrow M/M^\et \longrightarrow 0$$
which remains exact after tensoring by $K^\sep$. Taking then the fixed points under $\phi$ (which is a left exact functor), we end up with
$$0 \longrightarrow \mathbf V(M^\et) \longrightarrow \mathbf V(M) \longrightarrow \mathbf V(M/M^\et) = 0.$$
The lemma follows.
\end{proof}

\begin{rem}
Lemma~\ref{lem:VMet} tells us that $M^\et$ is the étale Frobenius module which corresponds to the Galois representation $\mathbf V(M)$.
With the notation of Remark~\ref{rem:inversekatz}, we then have $M^\et = \mathbf M(\mathbf V(M))$.
\end{rem}

\subsubsection{Frobenius structures on differential modules} \label{sssec:FrobStruct}

The relationship between differential modules and Frobenius modules is far less concrete, but it can nevertheless be approached through the notion of Frobenius structures.
Basically, the idea is to equip a differential module $M$ with an extra structure which will eventually define a Frobenius module after reduction modulo~$p$.

In order to define it properly, we cannot continue working over $\Q(x)$ but need to move over the $p$-adics.
Precisely, we define the field $E_p$ as the completion of $\Q(x)$ for the $p$-adic Gauss norm (defined as the sup norm on the $p$-adic unit disc);
it is the so-called field of \emph{$p$-adic analytic elements}. It is equipped with the derivation $\frac d{dx}$ and a Frobenius map $\varphi : E_p \to E_p$ which comes from the change of variables $x \mapsto x^p$.

\begin{defi}
\label{def:frobmorphism}
Let $(M_1, \nabla_1)$ and $(M_2, \nabla_2)$ be two differential modules over $E_p$.
A \emph{Frobenius morphism} is a $\varphi$-semilinear map $\Phi : M_1 \to M_2$ such that
$\nabla_2 \circ \Phi = p x^{p-1} \cdot \Phi \circ \nabla_1$.
\end{defi}

It is instructive to understand what the effect of a Frobenius morphism at the level of solutions is.
The commutation relation of Definition~\ref{def:frobmorphism} ensures that a Frobenius morphism $\Phi : M_1 \to M_2$ induces a $\Qp$-linear mapping on horizontal vectors
$\Phi : M_1^{\nabla = 0} \longrightarrow M_2^{\nabla = 0}$.
If we equip $M_1$ and $M_2$ with distinguished bases and if $\mathrm{Mat}(\Phi)$ denotes the matrix of $\Phi$ in those bases, we deduce that
any solution $(f_1(x), \ldots, f_n(x))$ of $M_1$ produces a solution of $M_2$ \emph{via} the formula
$$\big(\varphi(f_1(x)), \ldots, \varphi(f_n(x))\big) \cdot \mathrm{Mat}(\Phi) =
  \big(f_1(x^p), \ldots, f_n(x^p)\big) \cdot \mathrm{Mat}(\Phi).$$
If we assume in addition that $\Phi : M_1 \to M_2$ is an isomorphism, then any solution of $M_2$ can be (uniquely) written in this way.
Frobenius morphisms then appear as a general machinery that permits to interpret the images under $\phi$ of a solution of a given differential equation as linear combinations of the solutions of another differential equation
and \emph{vice versa} when $\Phi$ is an isomorphism.
Phrased in this way, we understand that they could provide a powerful tool to find the algebraic relations we are looking for (remember that $f(x^p) = f(x)^p$ after reduction modulo~$p$).

In order to fully take advantage of this observation, it is convenient to use the concept of \emph{preimages under the Frobenius}.
To avoid unnecessary complications, we assume from now on that we are given a $D$-finite series $f(x) \in \Zp\ps{x}$ and that
$M$ is the differential module associated to the linear differential equation over $E_p$ of minimal order satisfied by $f(x)$.
A theorem of Christol (see~\cite[Theorem~10.4.2]{Ke10}) then implies that $M$ admits a preimage under the Frobenius,
which means that there exists a Frobenius \emph{isomorphism} $M^{(-1)} \to M$ where $M^{(-1)}$ is another differential module.
Even better, one can apply Christol's theorem repeatedly and get an infinite sequence of Frobenius isomorphisms
$$\cdots \longrightarrow
M^{(-3)} \stackrel\sim\longrightarrow
M^{(-2)} \stackrel\sim\longrightarrow
M^{(-1)} \stackrel\sim\longrightarrow M.$$

We now come to an alternative: either the sequence of $M^{(-i)}$ is periodic or it is not.
In the first case, \emph{i.e.}, when there exists a positive integer $h$ such that $M^{(-h)}$ is isomorphic to $M$ as a differential module, we say that $M$ admits a \emph{strong Frobenius structure} (of period $h$).
When this occurs, the composite
$$M \simeq M^{(-h)} \stackrel\sim\longrightarrow \cdots \stackrel\sim\longrightarrow M^{(-1)} \stackrel\sim\longrightarrow M.$$
defines a structure of Frobenius modules on $M$ for the twisting morphism $\varphi^h$.
This makes the connection we were looking for.

On the contrary, when $M$ does not admit a strong Frobenius structure, the above construction fails;
however, in this case, we suspect that $f(x) \bmod p$ is not algebraic.

\paragraph{Some examples.}

It is quite interesting to revisit some examples of Subsection~\ref{ssec:examples} in light of what precedes.
Let us start with Example~\ref{sssec:ex:expatan}.
Here $f(x) = \exp(\arctan(x))$ and we recall that, whenever $p \equiv 1 \pmod 4$, we had obtained the following alternative formula for $f(x)$ as a $p$-adic series:
$$f(x) = \left(1 + i x\right)^{-i/2} \cdot \left(1 - i x\right)^{i/2} \in \Z_p\ps{x}$$
where $i \in \Z_p$ is a square root of $-1$.
We recall that we also decomposed the exponents $\pm i/2$ in base $p$ as follows:
$$\textstyle 
-\frac i 2 = a_0 + p a_1 + p^2 a_2 + \cdots 
\quad \text{and} \quad
\frac i 2 = b_0 + p b_1 + p^2 b_2 + \cdots$$
It turns out that this decomposition actually reflects the successive preimages by Frobenius of the underlying differential module.
Indeed, setting $u_1 \coloneqq \frac {-i/2 - a_0} p = a_1 + p a_2 + \cdots$,
we can rewrite
$$\left(1 + i x\right)^{-i/2} = \left(1 + i x\right)^{a_0} \left(\frac{(1 + i x)^p}{1 + i x^p}\right)^{u_1} \left(1 + i x^p\right)^{u_1}.$$
Now we observe that the two first factors of the right-hand side lie in $E_p$:
it is obvious for the first one, which is a polynomial, and 
it holds true for the second one as well because $\frac{(1 + i x)^p}{1 + i x^p}$ is a rational function, which is congruent to $1$ modulo $p$.
Proceeding similarly for the factor $\left(1 - i x\right)^{i/2}$, we end up with a factorization of the form $f(x) = g_0(x) f_1(x^p)$ where $g_0(x) \in E_p$ and
$$f_1(x) = \left(1 + i x\right)^{u_1} \cdot \left(1 - i x\right)^{v_1}$$
with $v_1 \coloneqq \frac {i/2 - b_0} p = b_1 + p b_2 + \cdots$.
Moreover, a simple computation shows that the function $f_1(x)$ is a solution of the linear differential equation
$$y'(x) = \frac{i (u_1{-}v_1) + (u_1{+}v_1) x} {1 + x^2} \cdot y(x).$$
It is the preimage under the Frobenius of our initial differential equation for $f(x)$.
Repeating this argument, we find that the successive preimages under the Frobenius are the differential equations
$$y'(x) = \frac{i (u_k{-}v_k) + (u_k{+}v_k) x} {1 + x^2} \cdot y(x)$$
with $u_k = a_k + p a_{k+1} + p^2 a_{k+2} + \cdots$ and $v_k = b_k + p b_{k+1} + p^2 b_{k+2} + \cdots$.
The nonperiodicity of the sequences $(a_k)_{k \geq 0}$ and $(b_k)_{k \geq 0}$, or equivalently that of the sequences $(u_k)_{k \geq 0}$ and $(v_k)_{k \geq 0}$, now implies that the sequence of preimages under the Frobenius is not periodic either.
This conclusion is perfectly in line with the fact that $f(x) \bmod p$ is not algebraic.

\medskip

Another interesting example to consider is Example~\ref{sssec:ex:apery} with the Apéry series.
Here, we recall that we had defined two functions $g(x)$ and $h(x)$ whose reductions modulo $p$ satisfy interlaced algebraic relations (see Equations~\eqref{eq:algapery:start}--\eqref{eq:algapery:end}).
Those equations suggest that the differential equations that $g(x)$ and $h(x)$ satisfy are fixed by the Frobenius when $p \equiv 1, 5, 7, 11 \pmod{24}$ and exchanged by the Frobenius when $p \equiv 13, 17, 19, 23 \pmod{24}$.

Similar patterns will often occur in Section~\ref{sec:hypergeom}, when we will study hypergeometric series.

\subsubsection{The long road from $\DiffMod{\Q(x)}$ to $\Rep_{\F_q(x)}(G_{\F_q(x)})$}

\begin{figure}
\hfill%
\begin{tikzpicture}[decoration={snake, amplitude=0.2ex, post length=3pt}, yscale=-1.3]
\clip (-5,0.7) rectangle (5,6.3);
\node at (0,1) { \ph $M$ };
  \node[left] at (-0.8,1) { \ph $\DiffMod {\Q(x)}$ };
  \node[xscale=-1] at (-0.6,1) { \ph $\in$ };
\node at (0,2) { \ph $M_p$ };
  \node[left] at (-0.8,2) { \ph $\DiffMod {E_p}$ };
  \node[xscale=-1] at (-0.6,2) { \ph $\in$ };
\node at (0,3) { \ph $M_p$ };
  \node[left] at (-0.8,3) { \ph $\Mod^{\phi^h}_{E_p}$ };
  \node[xscale=-1] at (-0.6,3) { \ph $\in$ };
\node at (0,4) { \ph $M^\circ_p$ };
  \node[left] at (-0.8,4) { \ph $\Mod^{\phi^h, \et}_{E^\circ_p}$ };
  \node[xscale=-1] at (-0.6,4) { \ph $\in$ };
\node at (0,5) { \ph $\overline{M}_p$ };
  \node[left] at (-0.8,5) { \ph $\FrobModqet {\Fp(x)}$ };
  \node[xscale=-1] at (-0.6,5) { \ph $\in$ };
\node at (0,6) { \ph $V_p$ };
  \node[left] at (-0.8,6) { \ph $\Rep_{\F_q(x)}\big(G_{\F_q(x)}\big)$ };
  \node[xscale=-1] at (-0.6,6) { \ph $\in$ };
\draw[-latex, decorate] (0,1.2)--(0,1.75)
  node[right,xshift=2ex,midway] { \ref{item:step1}: Factorization over $E_p$ };
\draw[-latex, decorate, dash pattern={on 2pt off 2pt}] (0,2.2)--(0,2.75)
  node[right,xshift=2ex,midway] { \ref{item:step2}: Frobenius structure };
\draw[-latex, decorate, dash pattern={on 2pt off 2pt}] (0,3.2)--(0,3.75)
  node[right,xshift=2ex,midway] { \ref{item:step3}: Finding a lattice };
\draw[-latex, decorate] (0,4.2)--(0,4.75)
  node[right,xshift=2ex,midway] { \ref{item:step4}: Reduction modulo $p$ };
\draw[-latex, decorate] (0,5.2)--(0,5.75)
  node[right,xshift=2ex,midway] { \ref{item:step5}: Katz' functor };
\end{tikzpicture}\hfill\null
\caption{From differential equations in characteristic $0$ to Galois groups in characteristic~$p$}
\label{fig:steps}
\end{figure}

All the previous constructions provide a path for going from modules with connections over $\Q(x)$ (which encode the differential equation satisfied by $f(x)$) to Galois representations of the absolute Galois group $G_{\F_q(x)}$ of $\F_q(x)$ (which encodes the Galois group of $f(x) \bmod p$).
It consists in the following steps.
\begin{enumerate}[label=\raisebox{-0.2ex}{\ding{\theenumi}}]
\addtocounter{enumi}{171}
\item (Factorization over $E_p$) \label{item:step1}
We ``$p$-minimize'' the differential equation we have for $f(x)$, meaning that we replace it by the differential equation \emph{over $E_p$} of minimal order which is satisfied by $f(x)$;
we emphasize that this operation may result in a drop of order even if our initial differential equation was already minimal over $\Q(x)$.
At the level of modules, this consists in replacing $M$ by a submodule $M_p \subseteq E_p \otimes_{\Q(x)} M$.
\item (Frobenius structure) \label{item:step2}
Following the discussion of Subsection~\ref{sssec:FrobStruct}, we equip $M_p$ with a Frobenius structure $\Phi_h : M_p \to M_p$, obtaining this way a Frobenius module over $E_p$ for the twisting morphism $\phi^h$ (where $h$ is the period);
we underline that this operation may fail if the sequence of the Frobenius preimages of $M_p$ is not periodic.
\item (Finding a lattice) \label{item:step3}
In order to reduce modulo $p$ afterwards, we need to find an integral structure inside $M_p$.
Let $E_p^\circ$ denote the valuation ring of $E_p$, that is the subring of $E_p$ consisting of elements of nonnegative valuation.
We select a finitely generated $E_p^\circ$-submodule $M_p^\circ$ of $M_p$ having the following properties: 
it generates $M_p$ over $E_p$, it is stable by $\Phi_h$ and it defines an étale Frobenius module over $E_p^\circ$.\\
Such a module may actually not exist; if it does not, we allow ourselves to change the Frobenius structure, \emph{i.e.}, to replace $\Phi_h$ by $\Phi_h \circ \iota$ where $\iota$ is a $E_p$-linear automorphism of $M_p$ commuting with the connection.
If even after this relaxation, we are still not able to find a suitable module $M_p^\circ$, we agree that this step has failed.
\item (Reduction modulo $p$) \label{item:step4}
We reduce $M_p^\circ$ modulo~$p$, \emph{i.e.}, we set $\overline{M}_p \coloneqq M_p^\circ/p M_p^\circ$.
It is an étale Frobenius module over $\Fp(x)$ for the twisting morphism $f \mapsto f^q$ with $q = p^h$.
\item (Katz' functor) \label{item:step5}
We finally apply Katz' functor to get a representation of $G_{\F_q(x)} \coloneqq \Gal(\F_q(x)^\sep/\F_q(x))$.
Some precaution should be taken here because Katz' theorem requires that the base field of the Frobenius module contains $\F_q$, which is not the case in our setting.
We then first extend scalar to $\F_q(x)$, \emph{i.e.}, we define
$$V_p \coloneqq \mathbf V\big(\F_q(x) \otimes_{\Fp(x)} \overline{M}_p\big) 
 = \big(\F_q(x)^\sep \otimes_{\Fp(x)} \overline{M}_p\big)^{\varphi = 1}.$$
\end{enumerate}
All this process is summarized in Figure~\ref{fig:steps}.
Dashed arrows correspond to steps that may fail. However, if a failure is encountered at some point, our feeling is that $f(x) \bmod p$ cannot be algebraic.
On the contrary, when all operations succeed, it is known that $f(x) \bmod p$ is algebraic~\cite[Theorem~2.6]{Var21}. 
What we expect actually is that $f(x) \mod p$ is somehow ``contained'', together with its conjugates, in the Galois representation $V_p$ which is produced at the end of the process.
More precisely, if we let $\rho_p : G_{\F_q(x)} \to \GL(V_p)$ be the corresponding group morphism, we expect the Galois group $\Gal(f(x)\mid\F_q(x))$ to be isomorphic to a quotient of the image of $\rho_p$.

\begin{rem}
It worths noticing that Theorem~2.6 of \cite{Var21} only requires the existence of a strong Frobenius structure, and not that of a lattice. This suggests that Step~\ref{item:step3} in our construction could never fail.
\end{rem}

\subsubsection{Connecting Galois groups: insights towards our conjectures} \label{sssec:tannakian}

The Tannakian language provides efficient tools to define Galois groups at each step of our constructions and, to some extent, to follow them throughout the whole process.
We refer to \cite{DM82} for an exposition of the Tannakian theory. 
Here, we just recall that the Tannakian formalism attaches a ``Galois group'' to any object in a Tannakian category. Its definition goes through the notion of constructions.
Given $T$ in a Tannakian category $\mathcal T$, a \emph{construction} from $T$ is by definition a subquotient of an object of the form
$$C \coloneqq T^{\otimes{n_1}} \oplus T^{\otimes{n_2}} \oplus \cdots \oplus T^{\otimes{n_t}} \qquad (n_i \in \Z)$$
where, when $n$ is negative, $T^{\otimes n}$ is understood as the dual of $T^{\otimes(-n)}$.
The Galois group $\Gal_{\mathcal T}(T)$ is then the group of linear automorphisms $\sigma : T \to T$ satisfying the following condition: 
for any $C$ as above and any subobject $S \subseteq C$, the natural extension of $\sigma$ to $C$ stabilizes $S$.
One proves that $\Gal_{\mathcal T}(T)$ is an \emph{algebraic} subgroup of $\GL(T)$.

We now analyze the effect on Galois groups of each step of Figure~\ref{fig:steps} in light of what precedes.
Step~\ref{item:step1} has already quite a nontrivial impact.
To describe it properly, it is convenient to split it into two parts: firstly, we extend scalars to $E_p$ and secondly, we minimize the differential equation.
Regarding scalar extension, it follows from the definition that we have an inclusion
$$\Gal_{\DiffMod{E_p}}(E_p \otimes_{\Q(x)} M) \subseteq \Gal_{\DiffMod{\Q(x)}}(M).$$
Besides, if $E_p \otimes_{\Q(x)} M$ admits a submodule $M'$, any element of the Galois group of $E_p \otimes_{\Q(x)} M$ necessarily stabilizes $M'$.
Applying this remark with $M' = M_p$, we obtain a group morphism
$$\varpi: \Gal_{\DiffMod{E_p}}(E_p \otimes_{\Q(x)} M) \longrightarrow \GL(M_p)$$
and it follows from the Tannakian theory that the Galois group of $M_p$ is the image of $\varpi$.
As a summary, one can retain that $\Gal_{\DiffMod{E_p}}(M_p)$ appears as a subquotient of $\Gal_{\DiffMod{\Q(x)}}(M)$.

Although the steps~\ref{item:step2} and~\ref{item:step3} seem to be the most delicate ones, their effect on the Galois groups is quite harmless.
Indeed, a theorem of Matzat~\cite{Ma09a,Ma09b} shows that
$$\Gal_{\DiffMod{E_p}}(M_p) = \Gal_{\FrobModqet{E_p^\circ}}(M_p^\circ)$$
as soon as $M_p^\circ$ is defined.
More precisely, the datum of the lattice $M_p^\circ$ defines a prolongation to $\text{Spec }\Z_p$ of the algebraic group $\Gal_{\DiffMod{E_p}}(M_p)$, which was \emph{a priori} only defined over $\text{Spec }\Q_p$.

Step~\ref{item:step4} is a scalar extension and so, it has a similar effect on Galois groups as in Step~\ref{item:step1}:
the Galois group of $\overline{M}_p$ is a subgroup of the reduction modulo $p$ of the Galois group of $M_p^\circ$.
Concerning Step~\ref{item:step5}, the same phenomenon occurs when extending scalars to $\F_q(x)$, and finally, applying Katz' functor is transparent at the level of Galois groups because it is an equivalence of categories.

All in all, the conclusion is that the Galois group of $f(x) \bmod p$ appears as a subquotient of a certain scalar extension of the reduction modulo~$p$ of the differential Galois group of $M$.
This remark suggests that the groups $G_i$ of Conjecture~\ref{conj:weak} could show up as subquotients of the $K$-valued points of $\Gal_{\DiffMod{\Q(x)}}(M)$.
The fact that we are loosing dimensions when passing from $M$ to $M_p$ also explains that $G_i$ sometimes (naturally) embeds in $\GL_m(K)$ with $m < \dim M$.

\paragraph{Uniformity in $p$.}

So far, we have fixed a prime number~$p$ from the beginning and carried out all our constructions and reasonings with this particular~$p$.
In fact, the content of Conjecture~\ref{conj:weak} (and its refinements) is somehow that all of this can be achieved uniformly with respect to~$p$, with precise control encoded by certain number fields!

Recycling the same set of ideas, one could also expect that some numerical invariants that we have encountered along the way, \emph{e.g.}, the dimension of $M_p$, behave quite nicely when $p$ varies.
This leads to the following two independent questions, which sound quite interesting to us, and that we ask to the community.

\begin{quest}
\label{quest:orderEp}
Let $f(x) \in \Q\ps x$ be a $D$-finite series and,
for all prime number~$p$, let $r_p$ be the minimal order of a differential equation over $E_p$ satisfied by $f(x)$.
How does $r_p$ vary with respect to~$p$?
\end{quest}

\begin{quest}
\label{quest:pcurv}
Let $M$ be a differential module over $\Q(x)$ and,
for almost all prime number~$p$, let $d_p$ be the dimension over $\Fp(x^p)$ of the space of the solutions of $M$ over $\Fp(x)$.
By Cartier's Lemma (see~\cite[Theorem~3.19]{BCR24}), $d_p$ is also the dimension of the kernel of the $p$-curvature of $M$.
How does $d_p$ vary with respect to~$p$?
\end{quest}

If the exponents at $0$ of the minimal differential equation of a $D$-finite series $f(x) \in \Q\ps{x}$ are all $0$, the function $f(x)$ is said to have \emph{maximal unipotent monodromy (MUM)} and one proves that $r_p = d_p = 1$ for almost all $p$. This answers Questions~\ref{quest:orderEp} and~\ref{quest:pcurv} in this case.
In Section~\ref{sec:hypergeom} (see Theorem~\ref{thm:block}), we will study the case where $M$ is the differential module corresponding to a hypergeometric operator $\mathcal H$ and show, in this situation, that $d_p$ depends only on the congruence class of $p$ modulo a common denominator of the parameters of $\mathcal H$.

\section{Hypergeometric series and their reductions modulo~$p$}
\label{sec:hypergeom}

The hypergeometric differential operator is defined as
\begin{equation} \label{eq:hyp}
\Hyp(\ua, \ub)=-x\prod_{i=1}^n \big(\theta+\alpha_i\big)+\theta\prod_{j=1}^{m}(\theta+\beta_j-1) \qquad \Big(\theta\coloneqq x\cdot \frac{d}{d x}\Big)
\end{equation}
where $n$ and $m$ are positive integers, $\ua=(\alpha_1,\ldots, \alpha_n)\in (\Q\setminus (-\N))^n$ and $\ub=(\beta_1,\ldots, \beta_{m})\in (\Q\setminus (-\N))^{m}$. The solutions of the equation $\Hyp(\ua, \ub)y=0$ over the rational numbers can be expressed in terms of hypergeometric series 
\begin{equation}
\pFq{n}{m}{\ua}{\ub}{x} \coloneqq \sum_{k=0}^\infty h(\ua, \ub; k)x^k \in \Q\ps{x} ,
\end{equation}
with
\[h(\ua, \ub; k)\coloneqq \frac{(\alpha_1)_k\cdots (\alpha_n)_k}{(\beta_1)_k\cdots (\beta_{m})_k}\cdot \frac{1}{k!}\in \Q,\]
where $(\gamma)_k\coloneqq \gamma\cdot (\gamma+1)\cdots (\gamma+k-1)$ denotes the rising factorial.

In the following we will restrict to the case $m=n-1$. Moreover, if we consider a set of parameters $\ua, \ub\in \Q^{n+(n-1)}$, we set $\beta_n=1$ and write $\ub$ both for the $(n{-}1)$-tuple of rational parameters and for the $n$-tuple also including $\beta_n=1$.  For given parameters $\ua, \ub$ we denote by $d(\ua,\ub)$ the least common multiple of their denominators in reduced form. If the parameters are clear, we will omit them from the notation and will simply write $d$.

Let us recall some facts about the hypergeometric differential equation $\Hyp(\ua, \ub)y=0$. It is \emph{Fuchsian}, meaning that all of its \emph {singularities} are \emph {regular}. The singular points are $0, 1, \infty$ and the sets of \emph{local exponents} at those singular points are
$$\begin{array}{rl}
\text{at 0:} & 
  \big\{1-\beta_1,\ldots, 1-\beta_{n-1}, 0\big\}, \smallskip \\
\text{at 1:} & \textstyle
  \big\{0, 1, 2, \ldots, n-2, -1+(\sum_{j=1}^{n}\beta_j-\sum_{i=1}^n \alpha_i)\big\}, \smallskip \\
\text{at $\infty$:} &
  \big\{\alpha_1,\ldots, \alpha_n\big\}.
\end{array}$$
Finally, assuming that $\alpha_i-\beta_j\not \in \Z$ for all $i,j \in \{1,\ldots, n\}$, a $\Q$-basis of solutions of $\Hyp(\ua, \ub)y=0$ at the singularity $0$ is given by 
\begin{equation} \label{eq:basishypergeom}
z^{1-\beta_j} \pFq{n}{n-1}{1 + \ua - \beta_j}{1 + \ub - \beta_j}{x},
\end{equation}
where $1+\beta_j-\beta_j$ is omitted in the bottom parameter.

Hypergeometric functions and equations serve as test cases for many conjectures and investigations concerning $D$-finite power series. For example, Christol conjectured that any globally bounded $D$-finite power series is a diagonal of an algebraic multivariate series, and investigated this for hypergeometric series \cite{Chr86}. Adamczewski and Delaygue, as attributed by Vargas-Montoya \cite[Conjecture~1.1]{Var21}, conjectured that the reduction modulo $p$ of any \textit{G-function}, which can be reduced modulo an infinite number of primes, is algebraic over $\F_p(x)$ for almost all such primes $p$. For hypergeometric series ${}_nF_{n-1}$, this statement follows from Christol's work \cite{Chr86a} showing that solutions of equations with strong Frobenius structure are $p$-automatic, and thus they are algebraic, following earlier work by Christol \cite{Ch79, CKMR80}. This result was made explicit by Vargas-Montoya \cite{Var21, Var24}, bounding the algebraicity degree and giving an explicit shape of an annihilating polynomial in the form of Equation~\eqref{eq:Frobenius2}. As test cases to Conjecture~\ref{conj:weak} and its refinements we will study hypergeometric series for the remaining part of this article.

\subsection{Reductions of hypergeometric series modulo $p$}
\label{ssec:hypergeommodp}

In this section, we investigate how hypergeometric functions and equations behave when passing to positive characteristic.
Our first result classifies those prime numbers $p$, modulo which a given hypergeometric function can be reduced, in terms of the repartition of the values $\exp(2\i\pi \lambda \alpha_i)$ and $\exp(2\i\pi \lambda \beta_j)$ (for $\lambda$ varying in $\Z$) on the complex unit circle.
This refines a theorem of Christol \cite[Proposition~1]{Chr86}, classifying all globally bounded hypergeometric functions, that is, those where the reduction is possible modulo almost all primes. We refer to \cite[Section~5]{DRR17} for a more recent treatment in English.

We then study the solution space of a hypergeometric differential equation modulo $p$ and describe an explicit basis of it.
We connect in particular its dimension to numerical invariants attached to the parameters $\ua$ and $\ub$ defining the underlying hypergeometric operator and show that it only depends on the congruence class of $p$ modulo $d$, a common denominator for $\ua, \ub$.
Our results provide in particular a satisfactory answer to Question~\ref{quest:pcurv}, raised at the end of Section~\ref{sec:picture} in the case of hypergeometric functions.

\subsubsection{Interlacing criteria}

We denote the $p$-adic valuation of a rational number by $v_p(\cdot)$. Further, we write $\dpz{\cdot}$ (resp. $\dpc{\cdot}$) for the decimal part function (resp. the decimal part where we assign to integers the value $1$ instead of $0$).
Christol defines a total ordering $\preceq$ on $\Q$ as follows:
\begin{equation*}
a\preceq b \longeq \dpc{a}<\dpc{b} \text{ or } (\dpc{a}=\dpc{b} \text{ and } a\geq b).
\end{equation*}
Let $f(x)=\pFq{n}{n-1}{\ua}{\ub}{x}$ be a hypergeometric function with rational parameters $\ua, \ub\in \big(\Q\setminus(-\N)\big)^{2n-1}$ and set $\beta_n=1$. We denote by $d$ the common denominator of all parameters. Then Christol defines a function $\mathcal{M} : \Q\times \Zdc \to \Z$ by
\begin{equation} \label{eq:defM}
    \mathcal{M}(x, \lambda)\coloneqq \big|\{1\leq i \leq n: \lambda \alpha_i\preceq x \}\big| - \big|\{1 \leq j \leq n: \lambda \beta_j\preceq x\}\big|.
\end{equation}
We note that the function $\mathcal{M}(\cdot, \lambda)$ is piecewise constant and continuous from the left, and that its jump points are among the values $\dpz{\alpha_i\lambda}+\Z$ and $\dpz{\beta_j\lambda}+\Z$.

We say that $f(x)$ satisfies \textit{Christol's interlacing condition} for $\lambda\in \Zdc$ if $\mathcal{M}(x, \lambda)\geq 0$ for all $x\in \Q$.
To determine whether $\mathcal{M}(\cdot, \lambda)$ is always positive, it suffices to check that $\mathcal{M}(\lambda \beta_j, \lambda)\geq 0$ for all $j$, or, to order the values of $\lambda \alpha_i$ and $\lambda \beta_j$ according to $\preceq$ simultaneously and check if among the first $\ell$ entries for $1\leq \ell \leq 2n$ of the joint ordered list there are always at least as many entries of the form $\lambda \alpha_i$ as of the form $\lambda \beta_j$.
If the parameters of a hypergeometric function $f(x)$ are pairwise incongruent modulo $\Z$, Christol's interlacing condition for $\lambda$ can be interpreted graphically on the unit circle as follows. We draw the two sets $\{\exp(2\i\pi \lambda \alpha_i):1\leq i\leq n\}$ in red and $\{\exp(2\i\pi \lambda \beta_j):1\leq j\leq n\}$ in blue on the unit circle. Then $f(x)$ satisfies Christol's interlacing condition if and only if, going around the unit circle starting after $1$, one always encounters at least as many red points as blue ones. 

\begin{figure}
\centering
\begin{tikzpicture}
  \draw[very thin, ->] (-0.2,0) -- (6.3,0) node[right] {$x$};
  \draw[very thin, ->] (0,-0.2) -- (0,2.3) node[above] {$\mathcal{M}(x, 1)$};
  
  \draw[-, very thick] (0, 0) --  (0.333, 0) {};
  \draw[-, very thick] (0.333, 1) -- (1, 1) {};
  \draw[-, very thick] (1, 0) -- (1.333, 0) {};
  \draw[-, very thick] (1.333, 1) -- (2, 1) {};
  
  \draw[-, very thick] (2.0, 0) -- (2.333, 0) {};
  \draw[-, very thick] (2.333, 1) -- (3, 1) {};
  \draw[-, very thick] (3, 0) -- (3.333, 0) {};
  \draw[-, very thick] (3.333, 1) -- (4, 1) {};
  
  \draw[-, very thick] (4.0, 0) -- (4.333, 0) {};
  \draw[-, very thick] (4.333, 1) -- (5, 1) {};
  \draw[-, very thick] (5, 0) -- (5.333, 0) {};
  \draw[-, very thick] (5.333, 1) -- (6, 1) {};
  
  \foreach \x in {0.333, 2.666, 1, 1.333, 2, 2.333, 3, 3.333, 4, 4.333, 5, 5.333, 6}
    \draw[thin, dotted] (\x, 2.3) -- (\x, -0.3);
     
  \foreach \y in {0, 1, 2}
    \draw[thin, dotted] (-0.3, \y) -- (6.3, \y);
  
  \filldraw (0,0) circle (2pt);
  \filldraw (0.333,1) circle (2pt);
  \filldraw (2.666,2) circle (2pt);
  \draw[fill = white] (2.666,1) circle (2pt);
  \filldraw (1,0) circle (2pt);
  \filldraw (1.333, 1) circle (2pt);

  \filldraw (2,0) circle (2pt);
  \filldraw (2.333,0) circle (2pt);
  \filldraw (3,1) circle (2pt);
  \filldraw (3.333, 0) circle (2pt);
  
  \filldraw (4,1) circle (2pt);
  \filldraw (4.333,0) circle (2pt);
  \filldraw (5,1) circle (2pt);
  \filldraw (5.333, 0) circle (2pt);
  \filldraw (6, 1) circle (2pt);
  
  \foreach \x in {1, 2, 2.333, 3.333, 4.333, 5.333}
    \draw[fill = white] (\x,1) circle (2pt);
    
  \foreach \x in {0.333, 1.333, 3, 4, 5, 6 }
    \draw[fill = white] (\x,0) circle (2pt);
  
  \node at (0, -0.5) {\footnotesize $0$};
  \node at (0.333, -0.5) {\footnotesize $\frac{1}{6}$};
  \node at (2.666, -0.5) {\footnotesize $\frac{4}{3}$};
  \node at (1, -0.5) {\footnotesize $\frac{1}{2}$};
  \node at (1.333, -0.5) {\footnotesize $\frac{2}{3}$};
  \node at (2, -0.5) {\footnotesize $1$};
  \node at (2.333, -0.5) {\footnotesize $\frac{7}{6}$};
  \node at (3, -0.5) {\footnotesize $\frac{3}{2}$};
  \node at (3.333, -0.5) {\footnotesize $\frac{5}{3}$};
  \node at (4, -0.5) {\footnotesize $2$};
  \node at (4.333, -0.5) {\footnotesize $\frac{13}{6}$};
  \node at (5, -0.5) {\footnotesize $\frac{5}{2}$};
  \node at (5.333, -0.5) {\footnotesize $\frac{8}{3}$};
  \node at (6, -0.5) {\footnotesize $3$};
  
  \node at (-0.5, 0) {\footnotesize $0$};
  \node at (-0.5, 1) {\footnotesize $1$};
  \node at (-0.5, 2) {\footnotesize $2$}; 

\end{tikzpicture}
\caption{The function $\mathcal{M}(\cdot, 1)$ for the function $\pFq{3}{2}{\big(\frac 1 6, \frac 2 3, \frac 4 3\big)}{\big(\frac 1 3, \frac 1 2\big)}{x}$ in the interval $[0, 3]$} \label{fig:ChristolM} 
\end{figure}

\begin{figure}
\centering
\BeHe{9}{1/9}{4/9}{5/9}{1/3}{1}
\caption{Christol's interlacing condition depicted on the unit circle for the function $\pFq{3}{2}{\big(\frac 1 9, \frac 4 9, \frac 5 9\big)}{\big(\frac 1 3, 1\big)}{x}$}
\label{fig:interlacing}
\end{figure}

Christol's classification of globally bounded hypergeometric functions reads as follows \cite[Proposition~1]{Chr86}.
\begin{thm}[Christol, 1986]\label{thm:Christol}
The hypergeometric function $f(x)=\pFq{n}{n-1}{\ua}{\ub}{x}$ with $\ua, \ub \in (\Q\setminus(-\N))^{2n-1}
$ is globally bounded if and only is $f(x)$ satisfies Christol's interlacing criterion for all $\lambda\in \Zdc$, where $d$ denotes the common denominator of the parameters $\ua, \ub$.
\end{thm}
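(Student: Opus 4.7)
My plan is to reduce the global boundedness of $f$ to a prime-by-prime $p$-adic integrality condition on its coefficients, and then identify the latter with Christol's interlacing criterion via Dirichlet's theorem on primes in arithmetic progressions. By definition, $f$ is globally bounded iff there exists a nonzero integer $a$ with $a^k h(\ua,\ub;k)\in\Z$ for every $k\geq 1$; since only finitely many primes divide $d$ (and can be absorbed into $a$), this amounts to requiring $v_p\big(h(\ua,\ub;k)\big)\geq 0$ for every $k$ and every prime $p$ coprime to $d$. The theorem is therefore a combinatorial characterization of this $p$-adic positivity.

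The key ingredient is the layer decomposition of $v_p$ applied to a Pochhammer symbol $(\gamma)_k$ for $\gamma\in\Q$ with denominator coprime to $p$. Counting, for each $m\geq 1$, the number of $j\in\{0,\ldots,k-1\}$ with $\gamma+j\in p^m\Z_p$, one obtains a formula depending only on $k$ and on the residue $r_m(\gamma)\in\{0,\ldots,p^m-1\}$ of $-\gamma$ modulo $p^m$. Taking the alternating sum over the $\alpha_i$'s and $\beta_j$'s (with $\beta_n\coloneqq 1$), and writing $k=\ell p^m+s$ with $0\leq s<p^m$, this yields a layer decomposition $v_p(h)=\sum_{m\geq 1}g_m(k)$ in which
\[
g_m(k)=\#\{i:r_m(\alpha_i)\geq p^m-s\}-\#\{j:r_m(\beta_j)\geq p^m-s\}.
\]
Consequently, $v_p(h)\geq 0$ for every $k$ is equivalent to a layer-wise dominance: at every level $m\geq 1$ and every threshold $t=p^m-s\in\{1,\ldots,p^m\}$, the $p^m$-adic residues of the $\alpha_i$'s dominate those of the $\beta_j$'s from above.

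The bridge to Christol's combinatorial criterion is the identification, for $p\nmid d$ and $\lambda_m\coloneqq p^{-m}\bmod d\in\Zdc$, of the scaled residue $r_m(\alpha)/p^m$ with $\{\lambda_m\alpha\}$ up to an error of order $1/p^m$. Writing $\alpha=a/d_\alpha$ and solving $p^m k = r\,d_\alpha + a$ in integers makes this explicit, and shows that integer values of $\lambda_m\alpha$ correspond to residues near the boundary of $[0,p^m)$, which matches Christol's convention $\dpc{\text{integer}}=1$ placing integer values at the right end of the order $\preceq$. Setting $x\coloneqq t/p^m\in(0,1]$, the level-$m$ dominance inequality becomes exactly $\mathcal M(x,\lambda_m)\geq 0$ on a set of rationals dense in $(0,1)$; since $\mathcal M(\cdot,\lambda)$ is piecewise constant with jumps only at rationals of denominator dividing $d$, this forces $\mathcal M(\cdot,\lambda_m)\geq 0$ on all of $\Q$. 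By Dirichlet's theorem on primes in arithmetic progressions, the class $p^{-1}\bmod d$ sweeps through every element of $\Zdc$ as $p$ ranges over primes coprime to $d$, which proves the ``only if'' direction. The converse is automatic: since $\lambda_m\in\Zdc$ for every $p\nmid d$ and every $m\geq 1$, Christol's interlacing for every $\lambda\in\Zdc$ yields the level-$m$ dominance at every prime and every layer, hence $v_p(h)\geq 0$.

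The main delicacy is the careful alignment of the $p$-adic residues $r_m(\alpha)$ with the combinatorial positions $\dpc{\lambda_m\alpha}$ entering $\mathcal M$: integer parameters (whose residues sit at the boundary $r_m/p^m\to 0$ or $\to 1$) and parameters whose $\lambda_m$-multiples share a fractional part must be disambiguated exactly so as to respect Christol's tiebreaker ``$\dpc a=\dpc b$ and $a\geq b$''. Once this dictionary is fixed, both implications of the theorem follow from the same equivalence combined with Dirichlet's theorem.
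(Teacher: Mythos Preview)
Your outline is essentially Christol's own argument, whose key technical lemmas the paper recalls (Lemmas~\ref{lem:peval}--\ref{lem:Vpos}) while proving its refinement, Theorem~\ref{thm:red}. The paper itself does not reprove Theorem~\ref{thm:Christol}, so there is no alternative route to compare against; your layer decomposition of $v_p(h_k)$, the identification of each layer with $\mathcal{M}(\cdot,\lambda_m)$ for $\lambda_m=p^{-m}\bmod d$, and the appeal to Dirichlet are exactly the ingredients Christol uses.

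There is, however, a genuine gap. You assert that ``$v_p(h_k)\geq 0$ for every $k$ is \emph{equivalent} to layer-wise dominance at every level $m$'', but only one implication is automatic: if every $g_m$ is nonnegative then so is the sum. The converse is precisely what carries the ``only if'' direction, and it does not follow from the formula alone---the residues $s_m=k\bmod p^m$ at different layers are coupled, so one cannot independently prescribe them. The standard fix (and what the paper does in the proof of Theorem~\ref{thm:red}) is to isolate the first layer: for $p$ large enough and $k\in\{1,\ldots,p-1\}$, Lemma~\ref{lem:smallx}(i) forces $g_m(k)=0$ for all $m\geq 2$, whence $v_p(h_k)=g_1(k)$; taking $k$ just above $R(\beta_j,p)$ gives $v_p(h_k)=\mathcal{M}(\Delta\beta_j,\Delta)$ by Lemma~\ref{lem:smallx}(ii), and Dirichlet lets $\Delta=p^{-1}$ realise any prescribed $\lambda$. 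Without this construction your equivalence is unjustified. As a smaller point, your explicit formula for $g_m$ is off: with $r_m(\gamma)$ the representative of $-\gamma$ in $[0,p^m)$ and $s=k\bmod p^m$, the layer contribution is $\#\{i:r_m(\alpha_i)<s\}-\#\{j:r_m(\beta_j)<s\}$, not the condition ``$\geq p^m-s$''.
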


\begin{ex}
We consider $f(x)=\pFq{3}{2}{\big(\frac 1 6, \frac 2 3, \frac 4 3\big)}{\big(\frac 1 3, \frac 1 2\big)}{x}$.
Then the functions $\mathcal{M}(\cdot, 1)$ is depicted in Figure~\ref{fig:ChristolM}.
This function is always nonnegative, and easily one also checks that also $\mathcal{M}(\cdot, 5)$ takes no negative values.
Similarly the function $g(x) = \pFq{3}{2}{\big(\frac 1 9, \frac 4 9, \frac 5 9\big)}{\big(\frac 1 3, 1\big)}{x}$
satisfies Christol's interlacing condition for all values of $\lambda \in (\Z/9\Z)^\times$ as shown in Figure~\ref{fig:interlacing}.
Christol's criterion implies that $f$ and $g$ are globally bounded.
\end{ex}

In Subsection~\ref{sssec:proofred} below, we will prove the following extension of Christol's theorem.

\begin{thm} \label{thm:red}
Let $f(x) = \pFq{n}{n-1}{\ua}{\ub}{x}=\sum_{k=0}^\infty h(\ua, \ub; k)x^k$ be a hypergeometric function with rational parameters $\ua, \ub \in (\Q\setminus (-\N))^{2n-1}$. Let $d\in \N$ be the smallest denominator of all parameters $\alpha_i, \beta_j$ and let $p$ be a prime number, coprime to $d$. 
Assume that $p>2d{\cdot}\max\{|\alpha_i|, |\beta_j|, 1\}+1$, and consider the subgroup $\langle p \bmod d \rangle \subseteq \Zdc$. Then $f(x)$ can be reduced modulo $p$ if and only if Christol's interlacing condition is fulfilled for all $\lambda \in \langle p \bmod d \rangle$. For all other $p$, $v_p(h(\ua, \ub; k))$ is not bounded from below.
\end{thm}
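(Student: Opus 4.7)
The plan is to derive an explicit Landau-type formula for $v_p(h(\ua,\ub;k))$ as an infinite sum of contributions $M_r(k)$, one per integer $r \geq 1$, and then to identify the sign of each $M_r$ with Christol's interlacing condition $\mathcal{M}(\cdot, \lambda) \geq 0$ for $\lambda = p^{r-1} \bmod d$.

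For each rational $\gamma$ with denominator coprime to $p$, let $\gamma_r$ denote the unique integer in $\{0, 1, \ldots, p^r-1\}$ congruent to $-\gamma$ modulo $p^r$ inside $\Z_p$. Counting the multiples of $p^r$ among $\gamma,\gamma+1,\ldots,\gamma+k-1$ in $\Z_p$ and summing over $r$ via $v_p(N) = \sum_{r \geq 1} \mathbbm{1}_{p^r \mid N}$ gives
\[
v_p\big((\gamma)_k\big) = \sum_{r=1}^{\infty} \max\!\left(0, \left\lceil \frac{k-\gamma_r}{p^r}\right\rceil\right).
\]
Summing this over the $\alpha_i$ (with $+$ sign) and the $\beta_j$ (with $-$ sign, noting $\beta_n = 1$) yields
\[
v_p\big(h(\ua, \ub; k)\big) = \sum_{r=1}^{\infty} M_r(k)
\]
for an explicit integer-valued function $M_r$ built from the $(\alpha_i)_r$ and $(\beta_j)_r$.

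The key computation is to relate $M_r(k)$ to $\mathcal{M}$. Under the size hypothesis $p > 2d\max\{|\alpha_i|,|\beta_j|,1\}+1$, the residues $\gamma_r$ admit a rigid description: the leading base-$p$ digit $\lfloor \gamma_r / p^{r-1}\rfloor$ is controlled only by $\dpz{\lambda \gamma}$ with $\lambda = p^{r-1} \bmod d$, while the lower-order tail is too small to reach the top digits of $k$. Parametrising $k$ by its base-$p$ expansion and telescoping carefully, one finds $M_r(k) = \mathcal{M}(x_r(k), \lambda)$ for an $x_r(k)$ controlled by certain digits of $k$; moreover, as $k$ varies, $x_r(k)$ sweeps over every jump point of $\mathcal{M}(\cdot, \lambda)$. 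Hence $M_r(k) \geq 0$ for all $k$ if and only if Christol's interlacing condition holds at $\lambda = p^{r-1} \bmod d$.

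The sufficiency direction follows immediately: if Christol's condition holds for all $\lambda \in \langle p \bmod d\rangle$, each $M_r(k)$ is nonnegative, whence $v_p(h(\ua,\ub;k)) \geq 0$. For necessity, suppose $\mathcal{M}(x_0, \lambda_0) = -m < 0$ for some $\lambda_0 \in \langle p\bmod d\rangle$; infinitely many $r$ satisfy $p^{r-1} \equiv \lambda_0 \pmod d$. For any $N$, we construct a $k$ whose base-$p$ digits are aligned so that $M_r(k) = -m$ at $N$ chosen such levels, while $M_r(k) = 0$ at all remaining levels (by placing the corresponding $x_r(k)$ in regions where $\mathcal{M}(\cdot, p^{r-1}\bmod d) = 0$, which is possible because at least the value $0$ is attained by $\mathcal{M}(\cdot, \lambda)$ for every $\lambda$). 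This yields $v_p(h(\ua,\ub;k)) \leq -mN$, proving unboundedness.

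The main obstacle is the necessity direction: one must engineer the $p$-adic digits of $k$ so that the targeted ``bad'' levels each contribute exactly $-m$ while all other levels contribute $0$. The size hypothesis $p > 2d\max\{|\alpha_i|,|\beta_j|,1\}+1$ is indispensable, since it prevents carries between adjacent levels and thereby decouples the family $\{M_r\}_r$; without it, a prescribed defect at one level could be absorbed by an unavoidable compensation at another, and the clean correspondence between $M_r$ and $\mathcal{M}(\cdot, p^{r-1}\bmod d)$ would break.
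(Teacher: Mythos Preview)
Your plan is essentially the paper's own proof. The paper's Lemma~3.1.10 is exactly your decomposition $v_p(h_k)=\sum_r M_r(k)$ (the paper writes the $r$-th contribution as $V(\{k/p^r\}_0,p^r)$); the paper's Lemmas~3.1.11--3.1.13 are exactly the ``key computation'' identifying each $M_r$ with a value of Christol's $\mathcal{M}(\cdot,\lambda)$; and the paper's necessity argument is the same digit-by-digit construction of $k$ you outline, placing $\{k/p^r\}_0$ near $0$ at neutral levels and near the bad $\beta_j$ at the chosen levels.

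Two remarks. First, your identification of $\lambda$ is off: the level-$r$ term corresponds to $\lambda=\Delta^r=p^{-r}\bmod d$, not $p^{r-1}\bmod d$. Concretely, with $\gamma_r\equiv-\gamma\pmod{p^r}$ one has $\gamma_r/p^r=\{\gamma\Delta^r\}_1-\gamma/p^r$, so the comparison governing $M_r(k)$ is between $\{k/p^r\}_0$ and the quantities $\{\gamma\Delta^r\}_1-\gamma/p^r$. This does not break your argument, since as $r$ ranges over the positive integers both $p^{r-1}$ and $p^{-r}$ sweep through all of $\langle p\bmod d\rangle$; but the bookkeeping in the necessity construction must target the levels with $\Delta^r=\lambda_0$, not $p^{r-1}=\lambda_0$.

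Second, the sentence ``one finds $M_r(k)=\mathcal{M}(x_r(k),\lambda)$'' hides all the work: one needs the size hypothesis to show (i) that $M_r$ vanishes when $\{k/p^r\}_0<1/(2d)$, and (ii) that near each $(\beta_j)_r/p^r$ the function $M_r$ takes exactly the value $\mathcal{M}(\Delta^r\beta_j,\Delta^r)$, with no interference from the tails $\gamma/p^r$. These are the paper's Lemma~3.1.12(i)--(ii), and they are where the inequality $p>2d\max\{|\alpha_i|,|\beta_j|,1\}+1$ is actually used. Your sketch is correct but you should make these two points explicit.
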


\begin{rem}\label{rem:lambda1}
We note in particular, that if Christol's interlacing condition is not fulfilled for $\lambda=1$, then $f(x)$ cannot be reduced modulo any prime number $p$, as $1\in \langle p \bmod d \rangle$ for all $p$.
\end{rem}

\begin{ex}
\label{ex:notreducible}
Let
\[
f(x)={\textstyle \pFq{2}{1}{\big(\frac 1 2 , \frac 2 3\big)}{\big(\frac 1 3\big)}{x}}
\]
At the first glance one might be tempted to think that $f(x)$ can be reduced modulo prime numbers congruent to $5$ modulo $6$, as Christol's interlacing condition in fulfilled for $\lambda = 5$  (see the left-hand part of Figure~\ref{fig:BeHeEx1}).
However, by Remark~\ref{rem:lambda1}, this is not the case.
\end{ex}

\begin{ex}
Consider the hypergeometric function
\[g(x)={\textstyle \pFq{2}{1}{\big(\frac 1 4, \frac 1 2\big)}{\big(\frac {15} 4\big)}{x}}= 1 + \frac{1}{30} x + \frac{1}{152} x^2 + \frac{15}{6992}x^3+\ldots \]
For large enough primes $p$ that are congruent to $3$ modulo $4$ the function $g(x)$ cannot be reduced modulo $p$, while reduction is possible for almost all prime numbers congruent to $1$ modulo $4$, as the right-hand part of Figure~\ref{fig:BeHeEx1} shows.
However, taking $p=5$, we see that already the coefficient of $x$ in $g(x)$ is $1/30$, which cannot be reduced modulo $5$. This does not contradict Theorem~\ref{thm:red}, as $5<2\cdot 4\cdot  15/4 + 1 = 31$. 
\end{ex}

\begin{figure}
\centering
\BeHe{6}{1/2}{1/2}{2/3}{1/3}{1/3} \hspace{1cm} \BeHe{4}{1/4}{2/4}{2/4}{3/4}{3/4}
\caption{Christol's interlacing condition illustrated on the unit circle for
the function $f(x)=\pFq{2}{1}{\big(\frac 1 2 , \frac 2 3\big)}{\big(\frac 1 3\big)}{x}$ (on the left) and
the function $g(x)=\pFq{2}{1}{\big(\frac 1 4, \frac 1 2\big)}{\big(\frac {15} 4\big)}{x}$ (on the right)} \label{fig:BeHeEx1}
\end{figure}

We also mention here a similar criterion for the algebraicity of hypergeometric functions. It was formulated by Christol \cite[Proposition~3; p.15 Corollary]{Chr86} in the same paper as the criterion for global boundedness, although his proof relies on the validity of the $p$-curvature conjecture for hypergeometric differential equations, which follows from Katz' work \cite{Kat72}, as elaborated in \cite{Kat90}. Shortly after, it was independently stated by Beukers and Heckman \cite[Theorem~4.8]{BH89} with a more or less elementary proof, at the same time giving an elementary proof of the Grothendieck $p$-curvature conjecture for hypergeometric equations. 

\begin{thm}[Christol, 1986; Beukers--Heckman, 1989; Katz, 1990]
\label{thm:BH}
Let $f(x)=\pFq{n}{n-1}{\ua}{\ub}{x}$ with $\ua, \ub \in (\Q\setminus(-\N))^{2n-1}$ be a hypergeometric function and assume that $\alpha_i-\beta_j, \alpha_i\not \in \Z$ for all $i,j$.  Denote by $d$ the common denominator of the parameters $\ua, \ub$  and set $\beta_n=1$. Then $f(x)$ is algebraic over $\Q(x)$ if and only if for all $j=1,\ldots, n$ and all $\lambda\in \Zdc$,  we have
\[\mathcal{M}(\lambda \beta_j, \lambda)= 0.\]
\end{thm}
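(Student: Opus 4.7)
The plan is to prove this classical criterion via analysis of the monodromy of the hypergeometric operator $\Hyp(\ua,\ub)$, following the path of Beukers--Heckman \cite{BH89}. A power series $f(x)\in\Q\ps{x}$ is algebraic over $\Q(x)$ if and only if it is algebraic over $\C(x)$, which (in the non-resonance case, where the equation is irreducible) is equivalent to the monodromy representation $\rho\colon\pi_1(\mathbb{P}^1_\C\setminus\{0,1,\infty\})\to\GL_n(\C)$ of $\Hyp(\ua,\ub)y=0$ having finite image. From the local exponents recalled at the beginning of Section~\ref{sec:hypergeom}, the local monodromies $h_0,h_1,h_\infty$ (satisfying $h_\infty h_1 h_0 = 1$) can be read off as follows: $h_0$ has eigenvalues $\exp(2\pi\i(1-\beta_j))$ for $j=1,\ldots,n$ (with $\beta_n=1$), $h_\infty$ has eigenvalues $\exp(2\pi\i\alpha_i)$, and $h_1$ is a pseudo-reflection, since all but one of its local exponents at $1$ are ordinary integers. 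The non-resonance hypothesis $\alpha_i-\beta_j,\alpha_i\notin\Z$ ensures that the eigenvalues of $h_0$ and $h_\infty^{-1}$ are disjoint multisets in $\C^\times$, which gives irreducibility and, via Levelt's rigidity theorem, pins down the triple $(h_0,h_1,h_\infty)$ uniquely up to simultaneous conjugation.

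The heart of the proof is then the Beukers--Heckman signature criterion: on $\C^n$ there is a nondegenerate monodromy-invariant Hermitian form $F$ whose signature can be read off combinatorially from the cyclic arrangement of the two multisets $\{\exp(2\pi\i\alpha_i)\}$ and $\{\exp(2\pi\i\beta_j)\}$ on the unit circle. Finiteness of the monodromy group $H(\ua,\ub)$ is equivalent to $F$ being definite, which is in turn equivalent to strict \emph{interlacing} of these two sets on the unit circle. Unwinding the definitions of $\mathcal M$ and of the ordering $\preceq$, this interlacing translates exactly to the condition $\mathcal M(\beta_j,1) = 0$ for every $j$.

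To incorporate the quantifier over $\lambda\in(\Z/d\Z)^\times$, I use Galois invariance. Applying $\sigma_\lambda\in\Gal(\Q(\zeta_d)/\Q)$ entry-wise to the matrices of $H(\ua,\ub)\subset\GL_n(\Q(\zeta_d))$ produces $H(\lambda\ua,\lambda\ub)$, by Levelt rigidity applied to the Galois-conjugated eigenvalue data. Since finiteness is preserved under such a Galois twist, algebraicity of $f$ — which forces $H(\ua,\ub)$ to be finite — forces $H(\lambda\ua,\lambda\ub)$ to be finite for every~$\lambda$; the previous paragraph applied to each twist then yields $\mathcal M(\lambda\beta_j,\lambda)=0$ for all $j$ and all~$\lambda$. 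Conversely, the $\lambda=1$ case alone already forces $H(\ua,\ub)$ finite, and hence $f$ algebraic.

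The main obstacle, in my view, is the signature computation underlying Beukers--Heckman's finiteness criterion: constructing the canonical invariant Hermitian form and tracking how its signature depends combinatorially on the cyclic position of the eigenvalues — in particular accounting correctly for the pseudo-reflection~$h_1$ — requires nontrivial bookkeeping that is well beyond routine linear algebra.
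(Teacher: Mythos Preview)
The paper does not actually prove this theorem; it is quoted with references to Christol, Beukers--Heckman and Katz, and the surrounding text singles out \cite[Theorem~4.8]{BH89} as the ``more or less elementary'' proof. Your outline is indeed the Beukers--Heckman route, so the approach is the right one. There is, however, a genuine gap in your converse direction.

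You assert that ``finiteness of the monodromy group $H(\ua,\ub)$ is equivalent to $F$ being definite'' and then, in the last sentence, that ``the $\lambda=1$ case alone already forces $H(\ua,\ub)$ finite.'' This is false. Definiteness of the invariant Hermitian form for $\lambda=1$ only tells you that $H(\ua,\ub)$ sits inside a \emph{compact} unitary group over $\C$; it does not force finiteness. A concrete obstruction: with $n=2$, $\ua=(\tfrac17,\tfrac37)$, $\ub=(\tfrac27,1)$, the sets $\{e^{2\pi\i\alpha_i}\}$ and $\{e^{2\pi\i\beta_j}\}$ interlace for $\lambda=1$ but fail to interlace for $\lambda=3$, so the $\lambda=1$ condition alone cannot characterize algebraicity.

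What Beukers--Heckman actually do for the converse is use \emph{all} $\lambda$ simultaneously. By Levelt's description the monodromy group has entries in the ring of integers of $\Q(\zeta_d)$, and the Galois twist $\sigma_\lambda$ identifies $\sigma_\lambda(H(\ua,\ub))$ with $H(\lambda\ua,\lambda\ub)$. If interlacing holds for every $\lambda\in(\Z/d\Z)^\times$, then each $\sigma_\lambda(H)$ preserves a definite Hermitian form, hence is relatively compact in $\GL_n(\C)$. A subgroup of $\GL_n$ over a number field whose image under every archimedean embedding is bounded, and which is generated by matrices with algebraic integer entries, is finite (this is a Kronecker-type argument: the traces of all elements form a set of algebraic integers all of whose conjugates are bounded, hence finite). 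So your Galois paragraph is needed in \emph{both} directions, not just the forward one; the structure of the argument should be
\[
\text{$f$ algebraic}\ \Longleftrightarrow\ H\text{ finite}\ \Longleftrightarrow\ \sigma_\lambda(H)\text{ bounded for all }\lambda\ \Longleftrightarrow\ \text{interlacing for all }\lambda.
\]
Your identification of the signature computation as the main technical point is fair, but the missing step above is logically prior to it.
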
  

In the interpretation on the unit circle, this means that the sets $\{\exp(2\i\pi \lambda \alpha_i):1\leq i\leq s\}$ and $\{\exp(2\i\pi \lambda \beta_j):1\leq j\leq s\}$ are interlaced on the unit circle.
We also underline that the criterion of Theorem~\ref{thm:BH} only works for hypergeometric functions with rational parameters without integer differences. A complete classification of algebraic hypergeometric functions with arbitrary complex parameters is given in \cite{FY24}. 

\begin{ex}\label{ex:gbalg}
The hypergeometric function $\pFq{3}{2}{\big(\frac 1 9, \frac 4 9, \frac 5 9\big)}{\big(\frac 1 3, 1\big)}{x}$ is not algebraic (although it is globally bounded), as it can be seen from the graphics in Figure~\ref{fig:interlacing}.
\end{ex}

By standard results from differential Galois theory, it is equivalent that a hypergeometric function $\pFqab$ is algebraic and that the monodromy group of its minimal differential operator, which is given by $\Hyp(\ua, \ub)$ under the assumption $\alpha_i-\beta_j\not \in \Z$, is finite.
The following result, appearing in Levelt's dissertation \cite{Lev61} (and there attributed to N. G. de Bruijn), see also \cite[Theorem~3.5]{BH89}, gives an explicit realization of the monodromy group of a hypergeometric differential equation as a group of matrices.
\begin{thm}
\label{thm:monodromy}
    We assume that there are no integer differences between the sets $\ua, \ub\in (\Q\setminus (-\N))^{n}$ with $\beta_n=1$ of parameters of a hypergeometric operator $\Hyp(\ua, \ub)$. Let $A_0,\ldots, A_{n-1}$ and $B_0,\ldots, B_{n-1}$ be defined by the identities
    \begin{align*}
        A(x)\coloneqq \prod_{i=1}^n (x-\exp(2\i\pi \alpha_i))&=x^n + A_{n-1}x^{n-1}+\cdots +A_0\\
        B(x)\coloneqq \prod_{j=1}^n (x-\exp(2\i\pi \beta_j))&=x^n + B_{n-1}x^{n-1}+\cdots +B_0.
    \end{align*}
    We define the matrices 
    \[A=\left(\begin{matrix}
        0 & 0 & \cdots & 0 & -A_0\\
        1 & 0 & \cdots & 0 & -A_1\\
        0 & 1 & & 0 & -A_2\\
        \vdots & & \ddots &  & \vdots\\
        0 & 0 & & 1 & -A_{n-1}
    \end{matrix} \right),\qquad
    B=\left(\begin{matrix}
        0 & 0 & \cdots & 0 & -B_0\\
        1 & 0 & \cdots & 0 & -B_1\\
        0 & 1 & & 0 & -B_2\\
        \vdots & & \ddots &  & \vdots\\
        0 & 0 & & 1 & -B_{n-1}
    \end{matrix} \right).\]
Then $A$ is the local monodromy matrix of $\Hyp(\ua, \ub)$ at infinity, $B^{-1}$ is the local monodromy matrix at $0$ and together they generate the global monodromy group of $\Hyp(\ua, \ub)$.
\end{thm}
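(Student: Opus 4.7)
The plan is to combine the explicit spectra of the local monodromies at the three singularities with a classical rigidity lemma due to Levelt (and earlier attributed to de Bruijn). Let $M_0, M_1, M_\infty \in \GL_n(\C)$ denote the local monodromy operators of $\Hyp(\ua, \ub)$ around $0, 1, \infty$, based at a common regular point and oriented so that $M_0 M_1 M_\infty = I$. Reading off the local exponents recalled just before the theorem: $M_\infty$ has eigenvalues $\exp(2\i\pi\alpha_i)$; since the local exponents at $0$ are $1-\beta_j$ (using $\beta_n = 1$), the matrix $M_0^{-1}$ has eigenvalues $\exp(2\i\pi\beta_j)$; and $M_1$ has $1$ as an eigenvalue of multiplicity at least $n-1$, the remaining eigenvalue being $\exp(2\i\pi(\sum_j\beta_j - \sum_i \alpha_i - 1))$. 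The hypothesis on integer differences ensures that the spectra of $M_\infty$ and $M_0^{-1}$ are disjoint, and in particular that $M_1 \ne I$, so that $M_1 - I$ has rank exactly one.

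The essential identity comes from $M_1 = M_\infty^{-1} M_0^{-1}$, which rewrites as $M_1 - I = M_\infty^{-1}(M_0^{-1} - M_\infty)$. Since $M_\infty^{-1}$ is invertible, this forces $\mathrm{rank}(M_\infty - M_0^{-1}) = 1$. On the other side, the candidate matrices $A$ and $B$ of the statement are by construction the companion matrices of $A(x)$ and $B(x)$, so their characteristic polynomials give the spectra $\{\exp(2\i\pi\alpha_i)\}$ and $\{\exp(2\i\pi\beta_j)\}$ respectively; moreover $A$ and $B$ differ only in their last column, so $A - B$ has rank at most one. Thus $(M_\infty, M_0^{-1})$ and $(A, B)$ are two pairs of matrices with the same disjoint spectra and a rank-one difference.

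I would then invoke Levelt's rigidity lemma: given two disjoint $n$-tuples of nonzero complex numbers, the pair $(X, Y) \in \GL_n(\C)^2$ realizing them as spectra and satisfying $\mathrm{rank}(X - Y) = 1$ is unique up to simultaneous conjugation. Applying it yields $P \in \GL_n(\C)$ with $P M_\infty P^{-1} = A$ and $P M_0^{-1} P^{-1} = B$, i.e., $P M_0 P^{-1} = B^{-1}$. This identifies the local monodromies at $\infty$ and $0$ with $A$ and $B^{-1}$ respectively. Since the fundamental group of $\mathbb{P}^1(\C) \setminus \{0,1,\infty\}$ is generated by the three punctures' loops subject only to $M_0 M_1 M_\infty = I$, the global monodromy group is generated by any two of them, and hence by $A$ and $B^{-1}$, which is the same as $\langle A, B\rangle$.

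The main obstacle is Levelt's rigidity lemma itself. The standard proof exhibits a cyclic vector for the pair: take $v$ nonzero in the one-dimensional image of $X - Y$. One shows that $v, Xv, \ldots, X^{n-1}v$ are linearly independent, because any proper $X$-invariant subspace containing $v$ would, upon being written in terms of the hyperplane $\ker(X - Y)$ on which $X$ and $Y$ coincide, also be $Y$-invariant, contradicting the disjointness of the spectra. In the basis $(v, Xv, \ldots, X^{n-1}v)$, the matrix of $X$ is automatically in companion form associated to its characteristic polynomial; the rank-one relation $Y = X - (X-Y)$, combined with the fact that the perturbation acts only through its value on $v$, then forces $Y$ to take the companion form of its own characteristic polynomial. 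This is the core rigidity input; alternatively, one may quote Katz's more general rigidity theorem for hypergeometric local systems.
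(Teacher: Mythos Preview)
The paper does not supply its own proof of this theorem; it is quoted as a classical result, attributed to Levelt's dissertation (and ultimately to de Bruijn) and to Beukers--Heckman \cite{BH89}. Your sketch follows exactly the standard argument found in those references: read off the local spectra from the exponent data, observe that the pair $(M_\infty, M_0^{-1})$ has rank-one difference, and invoke Levelt's rigidity lemma to conjugate onto the companion pair $(A,B)$. So in approach you are aligned with the literature the paper cites.

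One point deserves care. You write that $M_1$ has eigenvalue $1$ with multiplicity at least $n-1$ and $M_1\ne I$, ``so that $M_1-I$ has rank exactly one.'' This does not follow from the eigenvalue count alone: an operator can have eigenvalue $1$ with algebraic multiplicity $n-1$ and still have $M_1-I$ of rank larger than one (a nontrivial Jordan block). What is actually needed is that the hypergeometric equation admits $n-1$ linearly independent \emph{holomorphic} solutions at $x=1$, so that $M_1$ fixes a hyperplane pointwise and is a genuine pseudo-reflection. This is a special analytic feature of the hypergeometric operator (the apparent integer exponent collisions $0,1,\ldots,n-2$ at $x=1$ do not produce logarithms), and it is established explicitly in \cite[Proposition~2.8]{BH89}. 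Once that is in hand, your chain $M_1-I = M_\infty^{-1}(M_0^{-1}-M_\infty)$ does give $\mathrm{rank}(M_\infty-M_0^{-1})=1$ and the rigidity argument goes through as you describe.
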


\subsubsection{Proof of Theorem~\ref{thm:red}}
\label{sssec:proofred}

We start by recalling some technical results of Christol from his proof of Theorem~\ref{thm:Christol}.
We write $f(x)=\pFqab=\sum_k h(\ua, \ub; k) x^k$ and shorthand $h(\ua, \ub; k)\eqqcolon h_k$. We set $\beta_n=1$. Let $q$ be coprime to $d$ and let $\Delta \in \Z$ be such that $\Delta q\equiv 1\pmod d$. Then we define 
\begin{equation}\label{eq:defV}
    V(x,q)\coloneqq \left|\left\{1\leq i \leq n: \dpc{\Delta \alpha_i}-\frac{\alpha_i}{q}<x\right\}\right|-\left|\left\{1\leq j \leq n: \dpc{\Delta \beta_j}-\frac{\beta_j}{q}<x\right\}\right|.
\end{equation}
Christol \cite[p.6., Equation~(6)]{Chr86} proves the following on the $p$-adic valuation of the coefficients of a hypergeometric function.
\begin{lem}[Christol]\label{lem:peval}
We have $v_p(h_k)={\displaystyle \sum_{r=1}^\infty ~} V\left(\dpzbig{\frac{k}{p^r}}, p^r\right)$.
\end{lem}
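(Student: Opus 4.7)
The plan is to reduce the computation of $v_p(h_k)$ to counting, for each rational parameter $\gamma$ in $\ua \cup \ub$ (with the convention $\beta_n = 1$) and each $r \geq 1$, the number of indices $l \in \{0,1,\ldots,k-1\}$ for which $\gamma + l$ is divisible by $p^r$ in $\Z_{(p)}$, and then to match this count with the indicator appearing in the definition \eqref{eq:defV} of $V$.

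First I would factor $h_k = \prod_i (\alpha_i)_k / \prod_j (\beta_j)_k$ and use additivity of $v_p$ to reduce to computing $v_p((\gamma)_k)$ for a single rational parameter $\gamma$ with denominator dividing $d$ (hence coprime to $p$ by assumption). Expanding $v_p((\gamma)_k) = \sum_{l=0}^{k-1} v_p(\gamma+l) = \sum_{r\geq 1} N_r(\gamma,k)$ with $N_r(\gamma,k) \coloneqq |\{l \in [0,k{-}1] : \gamma + l \in p^r\Z_{(p)}\}|$, an elementary counting argument (the condition $\gamma + l \in p^r\Z_{(p)}$ picks out a single residue class of $l$ modulo $p^r$) shows
\[
N_r(\gamma,k) = \lfloor k/p^r\rfloor + \mathbbm{1}[t_\gamma(r) < k \bmod p^r],
\]
where $t_\gamma(r) \in \{0,\ldots,p^r-1\}$ is the unique representative of $-\gamma$ modulo $p^r$ in $\Z_p/p^r\Z_p$.

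The crux is to identify $t_\gamma(r)/p^r$ with the real quantity $\dpc{\Delta \gamma} - \gamma/p^r$. The congruence $\Delta p^r \equiv 1 \pmod d$ together with $d\gamma \in \Z$ gives $(\Delta p^r - 1)\gamma \in \Z$, hence $p^r \Delta \gamma - \gamma \in \Z$. Subtracting the integer $p^r\lfloor\Delta\gamma\rfloor$ (or using $\dpc{\Delta\gamma}=1$ when $\Delta\gamma \in \Z$, equivalently when $\gamma \in \Z$), one sees that $p^r\dpc{\Delta\gamma} - \gamma$ is an integer congruent to $-\gamma$ modulo $p^r$ in $\Z_p$, using that $p^r\dpc{\Delta\gamma} \in p^r\Z_{(p)}$ since $\dpc{\Delta\gamma}$ has denominator dividing $d$. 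Thus $p^r\dpc{\Delta\gamma} - \gamma = t_\gamma(r) + m p^r$ for some $m \in \Z$, and $m$ vanishes precisely when $\dpc{\Delta\gamma} - \gamma/p^r$ already lies in $[0,1)$.

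To finish, I would sum $N_r$ with the appropriate signs over all parameters. Since $|\ua| = |\ub| = n$, the $\lfloor k/p^r\rfloor$ contributions cancel between the numerator and denominator sums, and what remains matches exactly the indicator count $V(\dpz{k/p^r},p^r)$: the inequality $t_\gamma(r) < k \bmod p^r$ is equivalent to $t_\gamma(r)/p^r < \dpz{k/p^r} = (k \bmod p^r)/p^r$, and, up to the integer ambiguity above, to $\dpc{\Delta\gamma}-\gamma/p^r < \dpz{k/p^r}$. The main obstacle is precisely this range control: if $\dpc{\Delta\gamma} - \gamma/p^r$ escapes $[0,1)$, the correspondence between Christol's indicator and the elementary count can fail termwise; ensuring $m = 0$ for every relevant $\gamma$ and every $r$ is automatic under a bound of the form $p > d\cdot\max\{|\alpha_i|,|\beta_j|\}$, which is what motivates the hypothesis on $p$ in Theorem~\ref{thm:red} where the lemma is applied.
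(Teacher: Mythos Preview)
Your approach is correct and is precisely Christol's argument, which the paper cites rather than reproves; the identification $t_\gamma(r)/p^r=\{\Delta\gamma\}_1-\gamma/p^r$ that you derive is exactly the content of Lemma~\ref{lem:resdec} (note $t_\gamma(r)=R(\gamma,p^r)$ in the paper's notation). Your caveat about the range is well taken: without $p^r>d\cdot\max\{|\alpha_i|,|\beta_j|\}$ the termwise identification can genuinely fail, which is why the paper invokes the lemma only under the standing hypothesis of Theorem~\ref{thm:red}.
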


For each $\alpha\in \Z_p$ and $q=p^r$ for some positive integer $r$, we uniquely write $\alpha=qQ(\alpha, q)-R(\alpha,q)$ with $Q(\alpha, q)\in \Z_p$ and $0\leq R(\alpha,q)<q$. 
Christol also proves the following about the quantities defining $V$ \cite[Lemma~4]{Chr86}:
\begin{lem}[Christol] \label{lem:resdec}
Let $\gamma\in \frac{1}{d}\Z\setminus(-\N)$, let $p$ be a prime number not dividing $d$. Let $r$ be a positive integer such that $p^r>d{\cdot}|\gamma|$ and let $\Delta$ be such that $\Delta p\equiv 1\pmod d$. Then we have
\[\frac{R(\gamma, p^r)}{p^r}=\dpc{\gamma \Delta^r}-\frac{\gamma}{p^r}.\]
\end{lem}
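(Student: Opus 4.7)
The plan is to unfold the definition of $R(\gamma, p^r)$ and to exhibit an explicit decomposition of $\gamma$ as $p^r Q - R$ in which $Q$ is literally $\dpc{\gamma \Delta^r}$; by uniqueness of the decomposition this will finish the proof, since the asserted identity is equivalent to $Q(\gamma, p^r) = \dpc{\gamma \Delta^r}$.

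Concretely, I will write $\gamma = a/d$ with $a \in \Z$ and let $b \in \{1, \ldots, d\}$ be the unique representative of $a\Delta^r$ modulo $d$. Directly from the definition of $\dpc{\cdot}$, one has $\dpc{\gamma \Delta^r} = b/d$ (the boundary case $b = d$ corresponds to $\gamma \Delta^r \in \Z$, where the convention yields $1$). Raising $p\Delta \equiv 1 \pmod d$ to the $r$-th power gives $p^r \Delta^r \equiv 1 \pmod d$, hence $bp^r \equiv a \pmod d$, so that
$$R' \coloneqq (bp^r - a)/d$$
is a genuine integer. A short computation then yields $(\gamma + R')/p^r = b/d$, which lies in $\Z_p$ since $p \nmid d$. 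Thus the pair $(b/d, R')$ furnishes a valid decomposition $\gamma = p^r(b/d) - R'$ with $b/d \in \Z_p$.

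The crux reduces to verifying $0 \leq R' < p^r$, for then uniqueness forces $R = R'$ and $Q = b/d = \dpc{\gamma \Delta^r}$. Both inequalities use the hypothesis $|a| = d|\gamma| < p^r$: nonnegativity is immediate, and the upper bound rewrites as $(b - d)p^r < a$, which is clear when $b < d$ since then the left-hand side is $\leq -p^r < a$. The only delicate case is $b = d$, and this is precisely where the exclusion $\gamma \notin -\N$ enters: $b = d$ forces $d \mid a\Delta^r$, and since $\gcd(\Delta, d) = 1$ (from $p\Delta \equiv 1 \pmod d$) this yields $d \mid a$, so $\gamma \in \Z$; the hypothesis $\gamma \notin -\N$ then forces $\gamma$ to be a positive integer, and $R' = p^r - \gamma$ visibly lies in $\{0, \ldots, p^r - 1\}$. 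I expect no conceptual obstacle beyond this last case distinction; the argument is essentially a careful bookkeeping around the defining decomposition.
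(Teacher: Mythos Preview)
Your proof is correct. The paper itself does not supply a proof of this lemma; it simply cites Christol~\cite[Lemma~4]{Chr86}. Your argument is a clean direct verification: you guess the candidate $Q = b/d = \dpc{\gamma\Delta^r}$, exhibit the corresponding $R' = (bp^r - a)/d$, and check that the pair satisfies the defining constraints $Q \in \Z_p$ and $0 \leq R' < p^r$, with the case $b = d$ correctly handled via the hypothesis $\gamma \notin -\N$. Nothing is missing.
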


The following two Lemmata are implicitly part of Christol's proof of the criterion for global boundedness of hypergeometric functions.
\begin{lem}[Christol] \label{lem:smallx}
Let $0<x<\frac{1}{2d}$ and let $p$ be a prime number not dividing $d$. We choose $\Delta$ such that $\Delta p \equiv 1 \pmod d$ and let $r$ be a positive integer such that $p^r>2d{\cdot}\max\{|\alpha_i|, |\beta_j|\}$. Then
\begin{enumerate}[label=(\roman{enumi})]
\item we have $V\left(x, p^r\right)=0$, and
\item for all $j\in \{1,\ldots, n\}$, we have 
\[V\left(\frac{R(\beta_j, p^r)}{p^r}+x, p^r\right)=\mathcal{M}(\beta_j\Delta^r, \Delta^r).\]
\end{enumerate}
\end{lem}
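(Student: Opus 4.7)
The strategy is to translate both sides of each claim into statements about the integers $R(\cdot, p^r)$ via Lemma~\ref{lem:resdec}, whose hypothesis $p^r > d|\gamma|$ is guaranteed by $p^r > 2d\max\{|\alpha_i|, |\beta_j|\}$. This gives the identity $\dpc{\Delta^r \gamma} - \gamma/p^r = R(\gamma, p^r)/p^r$ for every parameter $\gamma$, which rewrites
\[V(y, p^r) = \bigl|\{i : R(\alpha_i, p^r) < yp^r\}\bigr| - \bigl|\{j : R(\beta_j, p^r) < yp^r\}\bigr|.\]

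For (i), I plan to show that each $R(\gamma, p^r)/p^r$ exceeds $1/(2d)$, and therefore also $x$. Since $\gamma$ has denominator dividing $d$ and $\Delta$ is coprime to $d$, the value $\dpc{\Delta^r \gamma}$ is a positive multiple of $1/d$ lying in $(0, 1]$, hence at least $1/d$. Together with $|\gamma|/p^r < 1/(2d)$, this yields $R(\gamma, p^r)/p^r > 1/d - 1/(2d) = 1/(2d) > x$, so both counts defining $V(x, p^r)$ vanish.

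For (ii), fix $j$ and set $y = R(\beta_j, p^r)/p^r + x$. The plan is to prove, for each parameter $\gamma \in \{\alpha_i\} \cup \{\beta_{j'}\}$, the equivalence
\[R(\gamma, p^r) - R(\beta_j, p^r) < x p^r \;\Longleftrightarrow\; \Delta^r \gamma \preceq \Delta^r \beta_j,\]
from which $V(y, p^r) = \mathcal{M}(\Delta^r \beta_j, \Delta^r)$ follows by summing over $\gamma$. The starting point is the identity $R(\gamma, p^r) - R(\beta_j, p^r) = p^r\bigl(\dpc{\Delta^r \gamma} - \dpc{\Delta^r \beta_j}\bigr) - (\gamma - \beta_j)$, after which I would proceed by trichotomy on $\dpc{\Delta^r \gamma}$ versus $\dpc{\Delta^r \beta_j}$. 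When these fractional parts are strictly comparable, the factor $p^r$ in the first term dictates the sign of the left-hand side, using $|\gamma - \beta_j| < p^r/d$ and $x p^r < p^r/(2d)$, and this sign matches the strict part of Christol's ordering by definition.

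The main obstacle is the equality case $\dpc{\Delta^r \gamma} = \dpc{\Delta^r \beta_j}$: the inequality then collapses to $\beta_j - \gamma < x p^r$, while $\preceq$ reduces (after fixing $\Delta$ by a positive representative modulo $d$) to the real-number comparison $\gamma \geq \beta_j$. Matching these two conditions is delicate because $x p^r$ and $|\gamma - \beta_j|$ can be of comparable order, so I would need to exploit both the strict positivity of $x$ and the strict inequality in the bound $p^r > 2d \max\{|\alpha_i|,|\beta_j|\}$ to conclude in both directions and complete the termwise correspondence between $V(y, p^r)$ and $\mathcal{M}(\Delta^r \beta_j, \Delta^r)$.
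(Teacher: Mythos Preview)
Your approach coincides with the paper's for (i), and the overall strategy for (ii) is the same as well: both rewrite $V$ via Lemma~\ref{lem:resdec} in terms of the numbers $\xi_\gamma \coloneqq \dpc{\Delta^r\gamma}-\gamma/p^r = R(\gamma,p^r)/p^r$ and compare term by term using the bound $|\gamma|/p^r < 1/(2d)$.

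The one substantive difference is organizational. You aim directly for the $x$-dependent equivalence
\[
R(\gamma,p^r)-R(\beta_j,p^r)<xp^r \;\Longleftrightarrow\; \Delta^r\gamma \preceq \Delta^r\beta_j,
\]
which is precisely why the equality case $\dpc{\Delta^r\gamma}=\dpc{\Delta^r\beta_j}$ becomes awkward: the left side then depends on $x$ while the right does not. The paper instead splits (ii) into two pieces. First it proves the $x$-free order equivalence
\[
\xi_\gamma \leq \xi_\delta \;\Longleftrightarrow\; \Delta^r\gamma \preceq \Delta^r\delta,
\]
in which your ``main obstacle'' disappears, since in the equality case both sides reduce to the same real inequality $\gamma\geq\delta$; this yields $\lim_{y\to\xi_j^+}V(y,p^r)=\mathcal{M}(\Delta^r\beta_j,\Delta^r)$. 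Second, it argues that $V(\cdot,p^r)$ is constant on $(\xi_j,\xi_j+\tfrac{1}{2d})$, reusing the estimate from~(i). So the delicacy you flag is not eliminated but relocated from the termwise equivalence to this constancy claim, which the paper dispatches by analogy with part~(i). Restructuring your trichotomy along these lines would give you the paper's cleaner argument.
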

\begin{proof}
For~(i), we note that 
\[\dpc{\gamma\Delta^r}-\frac{\gamma}{p^r}>\frac{1}{d}-\frac{1}{2d}>\frac{1}{2d}\]
for all $\gamma\in \{\alpha_1, \ldots, \alpha_n, \beta_1, \ldots, \beta_n\}$, as $\dpc{\gamma\Delta^r} \in \frac{1}{d}\N_{>0}$. Therefore $V(x, p^r)=0$.

For~(ii), we write $\xi_j = \dpc{\beta_j\Delta^r}-\frac{\beta_j}{p^r}$ and notice, as Christol, that 
\[\lim_{\mathclap{x \to \xi_j^+}}V(x, p^r)=\mathcal{M}(\Delta^r \beta_j, \Delta^r).\]
Indeed, for all $\gamma, \delta \in \{\alpha_1, \ldots, \alpha_n, \beta_1, \ldots, \beta_n\}$,
the condition $\dpc{\gamma\Delta^r}-\frac{\gamma}{p^r} \leq \dpc{\delta\Delta^r} - \frac{\delta}{p^r}$ is equivalent to $\gamma \Delta^r \preceq \delta \Delta^r$
because of the bound on $p^r$.
Moreover, by Lemma~\ref{lem:resdec}, we know that 
\[\frac{R(\beta_j, p^r)}{p^r}=\dpc{\beta_j \Delta^r}-\frac{\beta_j}{p^r}.\]
Finally, it follows as in the proof of~(i) that the function $V(\cdot, p^r)$ is constant on the interval $\left( \frac{R(\beta_j, p^r)}{p^r}, \frac{R(\beta_j, p^r)}{p^r}+\frac{1}{2d}\right)$.
\end{proof}
\begin{lem}[Christol] \label{lem:Vpos}
We assume $p^r>2d{\cdot}\max(|\alpha_i|, |\beta_j|)$ is coprime with $d$, we choose $\Delta$ such that $p\Delta\equiv 1\pmod d$ and assume that $f(x)$ satisfies Christol's interlacing condition for $\Delta^r$. Then $V(x, p^r)\geq 0$ for all $x\in \Q$.
\end{lem}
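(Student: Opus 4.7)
The plan is to exploit the fact that $V(\cdot, p^r)$ is an integer-valued step function on $\Q$ whose jump discontinuities are located precisely at the points $\dpc{\gamma\Delta^r} - \gamma/p^r$ for $\gamma \in \{\alpha_1, \ldots, \alpha_n, \beta_1, \ldots, \beta_n\}$, with $V$ jumping up by one at each $\alpha$-type jump point and down by one at each $\beta$-type jump point (counted with multiplicity if several parameters coincide). Note that, by Lemma~\ref{lem:resdec}, each such jump location can equivalently be written as $R(\gamma, p^r)/p^r$, matching the description that appears in Lemma~\ref{lem:smallx}.

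First I would observe that $V(x, p^r) = 0$ both for $x$ smaller than all jump points (both cardinalities in~\eqref{eq:defV} vanishing) and for $x$ larger than all of them (each cardinality then equals~$n$). Combined with the step function structure, this would force any negative value of $V$ to be attained immediately to the right of some jump location $x^*$ at which the net change is strictly negative; in particular, at least one $\beta$-type jump must occur at $x^*$, so that $x^* = R(\beta_j, p^r)/p^r$ for some $j$.

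At that point, I would invoke Lemma~\ref{lem:smallx}(ii), which identifies the value of $V$ immediately to the right of such a jump as
\[
V\!\left(\frac{R(\beta_j, p^r)}{p^r} + \epsilon,\, p^r\right) = \mathcal{M}(\beta_j \Delta^r, \Delta^r)
\]
for all sufficiently small $\epsilon > 0$. The standing hypothesis that $f(x)$ satisfies Christol's interlacing condition for $\Delta^r$ then forces this quantity to be nonnegative, contradicting the assumption that $V$ took a negative value somewhere.

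No step of this strategy seems delicate once Lemmas~\ref{lem:resdec} and~\ref{lem:smallx} are in place; the argument is essentially combinatorial bookkeeping of the jumps of a piecewise constant function that starts and ends at zero. The only mild subtlety worth mentioning is the possibility that several $\alpha_i$'s and $\beta_j$'s share a common jump location, but in that situation Lemma~\ref{lem:smallx}(ii) may be applied to any of the contributing $\beta_j$'s and delivers the desired conclusion unambiguously.
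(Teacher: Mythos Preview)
Your proposal is correct and follows essentially the same approach as the paper's proof: both identify the negative jumps of the step function $V(\cdot,p^r)$ as occurring at the $\beta$-type locations and then invoke Lemma~\ref{lem:smallx}(ii) together with the interlacing hypothesis to see that the value immediately after each such jump is $\mathcal{M}(\beta_j\Delta^r,\Delta^r)\geq 0$. Your write-up is a bit more explicit about the bookkeeping (the vanishing of $V$ at both ends and the observation that the first negative value must follow a negative jump), but the argument is the same.
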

\begin{proof}
The function $V(\cdot, p^r)$ is piecewise constant and its jumps of negative height happen at the values $\dpc{\beta_j\Delta^r}- \beta_j/p^r$. By the assumption and Lemma~\ref{lem:smallx}.(ii), the function $V(\cdot, p^r)$ is still positive after these jump points.
\end{proof}

We are now ready to complete the proof of Theorem~\ref{thm:red}.
We assume that Christol's interlacing criterion is fulfilled for all $\lambda \in \langle p \bmod d \rangle$. Then, by Lemma~\ref{lem:peval}, we have
\[v_p(h_k)=\sum_{r=1}^\infty V\left({\textstyle \dpzbig{\frac{k}{p^r}}}, p^r\right)\]
for all $k$.
From Lemma~\ref{lem:Vpos}, we see that each of the summands is nonnegative, and thus $v_p(h_k)\geq 0$ for all~$k$. 
Conversely, let us assume that there is $\lambda\in \langle p \bmod d \rangle$ such that Christol's interlacing condition is not fulfilled. We denote by $m$ the minimal positive integer such that $\lambda\equiv \Delta^m \pmod d$ and set $\ell\coloneqq \operatorname{ord}_d(p)=|\langle p \bmod d \rangle|$. We choose $j$ in such a way that 
\[\mathcal{M}(\Delta^m \beta_j, \Delta^m)=\big|\{1\leq i \leq n: \Delta^m \alpha_i\preceq \Delta^m \beta_j \}\big|-\big|\{1 \leq i \leq n: \Delta^m \beta_i\preceq \Delta^m \beta_j\}\big|<0.\]
For any $a\in \N$, we will construct $k$ such that $v_p(h_k)\leq -a$.
To do this, we first set $k_{m+(a-1)\ell} \coloneqq R(\beta_j, p^{m+(a-1)\ell})$.
Successively for $r=m+(a-1)\ell-1, m+(a-1)\ell-2, \ldots, 1$, we then consider the unique integer $d_r\in \{0, 1,\ldots, p^{r-1}\}$ such that 
\[k_{r}+d_{r-1}\equiv \begin{cases}0 \pmod{p^{r-1}}& \text{if } r \not \equiv m\!\!\pmod \ell \\ R(\beta_j, p^{r-1}) \pmod{p^{r-1}}& \text{otherwise,} \end{cases}\]
and set $k_{r-1} \coloneqq k_r+d_{r-1}$. We finally take $k \coloneqq k_1$.
For each $r$, we then have $d_{r-1}+\cdots+d_0\leq p^{r-1}+\cdots + p+1=\frac{p^{r}-1}{p-1}< \frac{p^r}{2d}$, which ensures that $k$ satisfies
\[k\in  \begin{cases}\left[R(\beta_j,p^r)+1, R(\beta_j,p^r)+\frac{p^{r}}{2d}\right) \pmod{p^r} & \text{if } r <\ell \cdot a \text{ and } r \equiv m\!\! \pmod \ell  \\[10pt] \left[1,\frac{p^{r}}{2d}\right) \pmod{p^r}& \text{otherwise.}  \end{cases}\]
Thus, we get $\dpzbig{\frac{k}{p^r}} <\frac{1}{2d}$ for all $r\not \equiv m\pmod \ell$ and all $r>a \ell$.
For $r=m, m+\ell, \ldots, m+(a-1)\ell$  we have $\dpzbig{\frac{k-R(\beta_j, p^r)}{p^r}} \leq \frac{1}{2d}$, and therefore, by Lemma~\ref{lem:smallx}, 
\[V\left({\textstyle \dpzbig{\frac{k}{p^r}}}, p^r \right)= \begin{cases} \mathcal{M}(\Delta^r \beta_j, \Delta^r)  & r=m, m+\ell, \ldots, m+(a-1)\ell\\ 0 & \text{otherwise.}\end{cases}\]
Finally, using Lemma~\ref{lem:peval}, we compute
\begin{align*}
v_p(h_k) =\sum_{r=1}^\infty V\left({\textstyle \dpzbig{\frac{k}{p^r}}}, p^r\right) =a \mathcal{M}(\Delta^m \beta_j, \Delta^m)\leq -a.
\end{align*}

\begin{rem} 
For small prime numbers $p$ it is enough to check the $p$-adic evaluation of finitely many coefficients to conclude that the hypergeometric function might be reduced properly modulo~$p$. Indeed, let $r_0$ be such that $p^{r_0}>2d{\cdot}\max\{|\alpha_i|, |\beta_j|, 1\}+1$. Then, for all $n$, we have
\[
v_p(h_k) =\sum_{r=1}^\infty V\left({\textstyle \dpzbig{\frac{k}{p^r}}}, p^r\right)
=\sum_{r=1}^{r_0-1} V\left({\textstyle \dpzbig{\frac{k}{p^r}}}, p^r\right) + \sum_{r=r_0}^{\infty} V\left({\textstyle \dpzbig{\frac{k}{p^r}}}, p^r\right)
\]
where the right summand is always positive, if Christol's interlacing condition is fulfilled for all $\lambda \in \langle p \bmod d \rangle$, and the left summand is periodic in $k$ with period $p^{r_0-1}$.
\end{rem}

\begin{ex}
Continuing Example~\ref{ex:notreducible}, that is
\[
f(x)={\textstyle \pFq{2}{1}{\big(\frac 1 2 , \frac 2 3\big)}{\big(\frac 1 3\big)}{x}}=\sum_{n\geq 0}h_n x^n,
\]
we can construct explicitly an integer $n$ such that the denominator of $h_n$ is divisible by $17^2$ using the strategy of the proof of Theorem~\ref{thm:red}.
We have $a=2, \ell=2$ and $m=2$ and use $\beta_j=1/3$. With this, we compute $k_4=27840, k_3 = 29478, k_2=29574$ and $k_1=29580$. And, indeed, checking with a computer algebra system, we convince ourselves that the $17$-adic evaluation of $h_{29580}$ is~$-2$.
\end{ex}

\subsubsection{Solutions of the hypergeometric differential equation modulo $p$}
\label{sec:solp}

We now discuss the question of determining the dimension of the space of solutions of a hypergeometric differential equation modulo $p$ in $\F_p\ps{x}$ when $p$ does not divide $d$.
It turns out that this dimension is also encoded by the relative position of the exponentials of $\alpha_i$ and $\beta_j$ on the unit complex circle;
the slight difference between what precedes is that we will not work with the reductions of $\alpha_i$ and $\beta_j$ modulo $\Z$, but modulo $p$.
More precisely, for each index $i \in \{1, \ldots, n\}$, we consider the complex number
\[\alpha_{i,p} \coloneqq \zeta_{2p} \cdot \zeta_p^{-\alpha_i \bmod p} = \exp\left(2\i\pi \cdot \left(\frac 1 {2p} - \frac{\alpha_i \bmod p} p\right)\right)\]
and color it in green. We underline that the previous definition is not ambiguous because the fraction $(\alpha_i \bmod p)/p$ is well-defined modulo $\Z$.
In concrete terms, $\zeta_p^{-\alpha_i \bmod p} = \zeta_p^{-\alpha_{i,p}}$ where $\alpha_{i,p}$ is an \emph{integer} which is congruent to $\alpha_i$ modulo $p$.
Similarly, we color in yellow the points 
\[\beta_{j,p} \coloneqq \zeta_p \cdot \zeta_p^{-\beta_j \bmod p} = \exp\left(2\i\pi \cdot \left(\frac 1 p - \frac{\beta_j \bmod p} p\right)\right)\]
for $j$ varying in $\{1, \ldots, n\}$ as well. Since $\beta_n = 1$, the point $1 \in \C$ is always colored in yellow.

\begin{defi}
The \emph{$p$-interlacing number} of $(\ua, \ub)$ is the number of times we observe a green point followed by a yellow one (or, equivalently, a yellow one followed by a green) when running clockwise through the unit circle.
\end{defi}

The $p$-interlacing number will allow us to answer Question~\ref{quest:pcurv} for hypergeometric equations. 
The proof of the following result can be easily deduced from Katz' results on the $p$-curvature of a hypergeometric operator $\Hyp=\Hyp(\ua, \ub)$~\cite[Proof of Sublemma 5.5.2.1, p.~174~f.]{Kat90}.
Below, we reformulate it in a more down-to-earth fashion, avoiding the use of the $p$-curvature.

\begin{thm}  \label{thm:block}
Let $\mathcal{T}_p(\ua, \ub)$ denote the set of exponents $t$ corresponding to a yellow point for which there is a green point before we reach the next yellow point when running clockwise on the unit complex circle.
For $t\in \mathcal{T}_p(\ua, \ub)$, we denote by $k_t$ the exponent corresponding to the next green point. We set
\begin{equation}
\label{eq:ft}
    f_t(x)\coloneqq \sum_{k=t}^{k_t-1}h(\ua, \ub; k)\cdot p^{-v_p(h(\ua, \ub; t))}x^k.
\end{equation}
The family $(f_t(x))_{t \in \mathcal{T}_p(\ua, \ub)}$ is a $\F_p(x^p)$-basis of the space of solutions of $\Hyp(\ua,\ub)$.

In particular, the dimension of this vector space is given by the $p$-interlacing number of $(\ua, \ub)$.
\end{thm}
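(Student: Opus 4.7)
The plan is to check three things: (i) each $f_t$ is annihilated by $\Hyp(\ua,\ub)$ modulo $p$; (ii) the $f_t$'s are linearly independent over $\F_p(x^p)$; and (iii) they span the space of solutions.

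For (i), the equation $\Hyp(\ua,\ub)f=0$ satisfied by $f(x)=\pFqab$ unpacks into the three-term recurrence
\[
k\,P_\beta(k)\,h_k \;=\; P_\alpha(k)\,h_{k-1}, \qquad P_\beta(k)\coloneqq\prod_{j=1}^{n-1}(k+\beta_j-1),\quad P_\alpha(k)\coloneqq\prod_{i=1}^{n}(\alpha_i+k-1),
\]
holding over $\Q$ for the coefficients $h_k=h(\ua,\ub;k)$. Dividing by $p^{v_p(h_t)}$ gives the same recurrence over $\Z_p$ for the coefficients $a_k\coloneqq h_k\,p^{-v_p(h_t)}$ of $f_t$, so the $x^k$-coefficient of $\Hyp(f_t)$ vanishes integrally for $t<k<k_t$. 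At $k=t$, the $a_{t-1}=0$ part drops and $kP_\beta(k)a_t\equiv 0\pmod p$ because $t$ is a yellow exponent. At $k=k_t$, the $a_{k_t}=0$ part drops and $P_\alpha(k_t)a_{k_t-1}\equiv 0\pmod p$ because $k_t$ is a green exponent. Outside $[t,k_t-1]$ all coefficients are zero, so $\Hyp(f_t)\equiv 0\pmod p$.

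For (ii), I first check $k_t<p$: since $\beta_n=1$, the integer $0$ is a yellow exponent, so the next yellow after $t$ is at most $p$, forcing $k_t<p$. It follows that the intervals $[t,k_t-1]$ are pairwise disjoint subsets of $\{0,\ldots,p-1\}$, since any yellow exponent strictly between $t$ and $k_t$ would contradict $k_t$ being the first green after $t$. Consequently the $f_t$'s have disjoint monomial supports. A relation $\sum_t c_t(x^p)f_t(x)=0$, decomposed via $\F_p(x)=\bigoplus_{k=0}^{p-1}x^k\,\F_p(x^p)$, yields $c_t(x^p)a_{t,k}=0$ for every $k$ and hence $c_t=0$ for every $t\in\mathcal T_p$.

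For (iii), I analyze the mod-$p$ recursion $kP_\beta(k)g_k\equiv P_\alpha(k)g_{k-1}\pmod p$ satisfied by any power series solution $g=\sum g_k x^k\in\F_p\ps{x}$. Under the assumption that $p$ is large enough for the yellow and green exponents to be distinct modulo $p$, a yellow event at $k$ forces $g_{k-1}=0$ and a green event forces $g_k=0$. Propagating these constraints across the periodic pattern of events, I deduce that for a yellow exponent $t\notin\mathcal T_p$ the constraint at the next yellow event (reached through an event-free block) backpropagates to give $g_{t+p\ell}=0$ for every $\ell\geq 0$, whereas for $t\in\mathcal T_p$ the value $g_{t+p\ell}$ is free and determines the other $g_k$'s on the block $[t+p\ell,k_t-1+p\ell]$ by the same recursion that defines $f_t$. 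Setting $c_t(x^p)\coloneqq\sum_\ell (g_{t+p\ell}/a_{t,t})\,x^{p\ell}\in\F_p\ps{x^p}$ yields $g=\sum_t c_t(x^p)f_t(x)$, so $\ker\Hyp\cap\F_p\ps{x}$ is a free $\F_p\ps{x^p}$-module of rank $|\mathcal T_p|$ with basis $(f_t)_{t\in\mathcal T_p}$. The $\F_p(x^p)$-basis statement for the (universal) solution space then follows by Cartier's lemma, which identifies this local $\F_p\ps{x^p}$-rank with the $\F_p(x^p)$-dimension of the full solution space.

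The hardest step is (iii). The delicate point is the combinatorial propagation argument that pins down which yellow exponents carry a nonzero degree of freedom: the claim that a yellow exponent outside $\mathcal T_p$ kills its own degree of freedom via the constraint at the next yellow needs to be tracked carefully across and between consecutive periods of length $p$. The final passage from the $\F_p\ps{x^p}$-rank of power-series solutions to the $\F_p(x^p)$-dimension of the universal solution space, done here via Cartier's lemma, is the concession made in order to avoid the explicit $p$-curvature computation of~\cite{Kat90}.
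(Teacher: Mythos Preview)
Your argument and the paper's rest on the same two-term recursion $B(k)g_k=A(k-1)g_{k-1}$, but the paper packages it more tightly. Rather than working in $\F_p\ps x$, it views $\Hyp$ directly as an $\F_p(x^p)$-linear endomorphism of the $p$-dimensional space $\F_p(x)$ and writes its matrix in the basis $(x^m,\dots,x^{m+p-1})$ with $m\equiv 1-\alpha_i\pmod p$ for some $i$. This choice kills the single wrap-around entry, so the matrix is lower bidiagonal with \emph{constant} entries in $\F_p$ and breaks into blocks at the zeros of $A$ on the subdiagonal. Each block has co-rank at most one (its bottom-left minor of order one less is a product of nonzero subdiagonal entries), and co-rank exactly one precisely when a zero of $B$ sits on its diagonal; the explicit kernel vector is then your $f_t$. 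Thus (i)--(iii) become a single kernel computation over $\F_p(x^p)$, with no detour through power series and no base-change step.

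Two remarks on your route. The step you label ``Cartier's lemma'' is not that statement; what you actually use is that a matrix with entries in $\F_p[x^p]$ has the same kernel rank over the PID $\F_p\ps{x^p}$ as over its fraction field $\F_p\ls{x^p}$, and that rank is preserved along the field extension $\F_p(x^p)\hookrightarrow\F_p\ls{x^p}$. Your assumption that yellow and green events never collide modulo $p$ is also unnecessary: on the circle they are always distinct thanks to the $\zeta_{2p}$ offset, and in the recursion a simultaneous zero $A(k-1)=B(k)=0$ turns the equation at $k$ into $0=0$, leaving $g_k$ free and $g_{k-1}$ unconstrained---consistent with your propagation once you add this fourth case.
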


\begin{proof}[Proof (after Katz)]
We view $\F_p(x)$ as a $p$-dimensional vector space over $\F_p(x^p)$. The operator $\Hyp(\ua, \ub)$ is a linear endomorphism of this vector space. It is determined by $\Hyp(\ua, \ub)(x^k)= -A(k)x^{k+1}+B(k)x^{k}$, where $A(k)=\prod_{i=1}^n (k+\alpha_i)$ and $B(k)=\prod_{j=1}^n (k+\beta_j-1)$.
We chose $m\in \F_p$, such that $m\equiv 1-\alpha_i\pmod p$ for some $i\in \{1,\ldots, n\}$. The matrix of this linear map in the basis $(x^m, x^{m+1}, \ldots, x^{m+p-1})$ is of the form
\[\left(\begin{matrix}
    B(m) & 0 & 0 & \cdots & 0 & -A(m+p-1)\\
    -A(m) & B(m+1) & 0 & \cdots & 0 & 0\\
    0 & -A(m+1) & B(m+2) & & 0 & 0\\
    \vdots &  & \ddots & \ddots &  & \vdots\\
    0 & 0 & 0 &\ddots & B(m+p-2) & 0\\
    0 & 0 & 0 & & -A(m+p-2) & B(m+p-1)
\end{matrix}\right), \]
where the top right entry is zero, as $A(m+p-1)=0$ by definition. The matrix decomposes into a block diagonal form, each zero of $A$ separating two blocks. Each of the blocks is a lower diagonal matrix
with consecutive values of $B$ on the main diagonal, and consecutive values of $A$, all nonzero, on the diagonal below. The co-rank of this matrix is at most one, as the bottom left minor of order one less is non-zero. Moreover as the blocks are lower triangular, the co-rank is zero is and only if all entries on the main diagonal are different from zero.

Consider such a block of co-rank one and let $b\coloneqq 1-\beta_j \pmod p$ be the last zero of $B$ in this block, and let $a\coloneqq -\alpha_{i'}\pmod p$ the following zero of $A$. It is then easy to convince oneself that 
\[v_b\coloneqq \left(\begin{matrix}
    0 &
    \cdots &
    0 &
    1 &
    \frac{A(b)}{B(b+1)} &
    \frac{A(b)A(b+1)}{B(b+1)B(b+2)} &
    \cdots &
    \frac{A(b)\cdots A(a-1)}{B(b+1)\cdots B(a)}
\end{matrix}\right)^\top\]
is a well-defined vector in the kernel of this block. It corresponds to the function $h(\ua, \ub; b)^{-1}f_b(x)$. The proposition follows from these observations.
\end{proof}

\begin{rem}\label{rem:hypergeommodpxp}
    If we assume that $f(x)=\pFqab$ can be reduced modulo $p$, then the truncation at $x^p$ of $f(x) \bmod p$ is a linear combination of the polynomials $f_t(x)$.
    Those $t$, for which the polynomials $f_t(x)$ have nonzero coefficients in this combination, are precisely the elements of the set 
    \begin{equation} \label{eq:Sp}
        \mathcal S_p(\ua, \ub)\coloneqq\Big \{r\in \{0, \ldots, p-1\}:r\equiv 1{-}\beta_j \!\!\!\!\! \pmod p \text{ for some $j$ and } v_p\big( h(\ua, \ub; r)\big)=0\Big \}.
    \end{equation}
\end{rem}

\begin{rem}
    In \cite{Dwo90}, Dwork constructed a differential field extension of $\F_p[x]$ defined by considering countably many variables $z_i$ with $z_i'=\frac{z_{i-1}'}{z_{i-1}}$, resembling the derivation rule of iterated logarithms in characteristic $0$. Dwork's construction provides a full basis of $n$ solutions of differential equations with nilpotent $p$-curvature, such as the hypergeometric differential equation. Indeed, the Chudnovsky's showed in \cite{CC85} that minimal differential operators of $G$-functions, such as hypergeometric functions ${}_nF_{n-1}$ with rational parameters and no integer differences between them \cite{Gal81}, are globally nilpotent, \emph{i.e.}, they have $p$-curvature $0$ for almost all prime numbers. In \cite{FH23}, Dwork's ideas are extended to arbitrary regular singular differential equations with coefficients in $\F_p\ps{x}$, and an algorithm for explicitly computing solutions in the extension is depicted. Applying this to the hypergeometric differential equation $\Hyp(\ua, \ub)y=0$, we find a basis of $n$ linearly independent polynomial solutions of the form 
    $f_t(x)=\sum_{k=t}^{k_t-1}g_{k,t}(z_1, z_2,\ldots)x^k \in \F_p[x, z_1, z_2,\ldots]$ with $k_t\leq 2p$ and $g_{k,t}(z_1, z_2,\ldots)\in \F_p[z_1, z_2, \ldots]$ for all $t=1-\beta_j\bmod p.$
\end{rem}

We next discuss how the number of solutions of the reduction of a fixed hypergeometric differential equation $\Hyp(\ua, \ub) y=0$ with rational coefficients modulo $p$ varies with the prime number $p$. Let $d$ denote again the common denominator of the parameters $\ua, \ub\in (\Q\setminus (-\N))^{2n-1}$ of $\Hyp(\ua, \ub)$. 

\begin{prop} \label{prop:solmodd}
Let $p>2d{\cdot}\max\{|\alpha_i|, |\beta_j|, 1\}$ and choose $\Delta$ such that $p\Delta\equiv 1 \pmod d$. Then the $p$-interlacing number of $(\ua,\ub)$ is equal to the number of local minima in the cyclic sequence $(\mathcal{M}(\gamma_k \Delta, \Delta))_{k=1}^{2n}$, where $\{\gamma_k\}=\{\Delta \alpha_i\}\cup \{\Delta \beta_j\}$ and $\gamma_1\prec \gamma_2\prec \ldots \prec\gamma_{2n}$.
In particular, the number of solutions of the hypergeometric differential equation in $\F_p\ps{x}$ for large characteristics $p$ depends only on the congruence class of $p$ modulo $d$.
\end{prop}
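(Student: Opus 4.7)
My plan is to translate the geometric definition of the $p$-interlacing number on the unit circle into the combinatorial $\preceq$-ordering of $\{\Delta\alpha_i\}\cup\{\Delta\beta_j\}$, and then to recognize local minima of $\mathcal{M}(\cdot,\Delta)$ along this sorted list as cyclic color transitions. The key computation I will set up is as follows. Write each parameter in the form $\gamma=k/d$ with $k\in\Z$ and set $\mu\coloneqq(p\Delta-1)/d$, which is an integer by $p\Delta\equiv 1\pmod d$. From $\mu d\equiv -1\pmod p$ I read off $d^{-1}\equiv-\mu\pmod p$, so
\[
\gamma\bmod p \;\equiv\; k\cdot d^{-1} \;\equiv\; -k\mu \;=\; \gamma - p\Delta\gamma \pmod p.
\]
Since $(\gamma\bmod p)+k\mu$ is visibly an integer, this congruence actually holds modulo $p\Z$; dividing by $p$ yields $(\gamma\bmod p)/p \equiv \gamma/p - \Delta\gamma \pmod 1$. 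Substituting into the definitions of $\alpha_{i,p}$ and $\beta_{j,p}$, I obtain that the green and yellow points on the unit circle have arguments (divided by $2\pi$), modulo $1$,
\[
g_i \coloneqq \Delta\alpha_i + \frac{1-2\alpha_i}{2p} \quad\text{and}\quad y_j \coloneqq \Delta\beta_j + \frac{1-\beta_j}{p}.
\]

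Under the hypothesis $p>2d\cdot\max\{|\alpha_i|,|\beta_j|,1\}$, the perturbations $(1-2\alpha_i)/(2p)$ and $(1-\beta_j)/p$ are small enough that any two distinct $\preceq$-classes among $\{\Delta\alpha_i,\Delta\beta_j\}$ (which are separated in fractional part by at least $1/d$) retain their relative order on $\R/\Z$ after perturbation. I will conclude that the cyclic clockwise ordering of the colored points on the unit circle coincides, up to orientation, with the $\preceq$-ordering of $\{\Delta\alpha_i,\Delta\beta_j\}$. When $\alpha_i-\beta_j\in\Z$, so that $\Delta\alpha_i$ and $\Delta\beta_j$ become $\preceq$-tied, the points $g_i$ and $y_j$ still differ by the explicit quantity $(1+2(\alpha_i-\beta_j))/(2p)$, whose sign depends only on the integer $\alpha_i-\beta_j$ and not on $p$; this $p$-independent rule breaks the ties consistently.

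With this identification in hand, the count of local minima is mechanical. Along the sorted list $\gamma_1\prec\cdots\prec\gamma_{2n}$ the increment $\mathcal{M}(\gamma_k,\Delta)-\mathcal{M}(\gamma_{k-1},\Delta)$ equals $+1$ if $\gamma_k$ is of $\alpha$-type and $-1$ if it is of $\beta$-type. A cyclic local minimum at index $k$ therefore occurs precisely when $\gamma_k$ is yellow and $\gamma_{k+1}$ is green, i.e.\ when the cyclic $\preceq$-order contains a yellow-to-green transition at $k$. By the previous paragraphs this matches the count of yellow-to-green (equivalently green-to-yellow) transitions along the unit circle read clockwise, which is by definition the $p$-interlacing number.

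The ``in particular'' clause will then be immediate: the combinatorial data needed to compute the number of local minima, namely the $\preceq$-ordering of $\{\Delta\alpha_i,\Delta\beta_j\}$, the $\alpha/\beta$ labels, and, in the tie cases, the integer differences $\alpha_i-\beta_j$, all depend only on $\Delta\bmod d$, hence only on $p\bmod d$. Combined with Theorem~\ref{thm:block} this yields the stated uniformity. The step I expect to require the most care is precisely the handling of the tie cases arising from integer differences between parameters, where the first-order perturbations actually decide the local cyclic order; fortunately their signs depend only on the parameters themselves, so the resulting count remains constant on each congruence class modulo $d$.
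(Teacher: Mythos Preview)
Your proposal is correct and takes essentially the same route as the paper. Where the paper cites Lemma~\ref{lem:resdec} and the equivalence~\eqref{eq:equivdpc} from the proof of Lemma~\ref{lem:smallx} to conclude that the ordering of the residues $R(\gamma_k,p)$ coincides with the $\preceq$-ordering of the $\gamma_k\Delta$, you re-derive this directly via the identity $(\gamma\bmod p)/p\equiv \gamma/p-\Delta\gamma\pmod 1$; the remaining identification of yellow-to-green transitions with local minima of $\mathcal{M}$ is then the same in both arguments (and your explicit treatment of the tie case $\alpha_i-\beta_j\in\Z$ just makes visible what the paper's equivalence~\eqref{eq:equivdpc} handles implicitly).
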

\begin{proof}
We know from Lemma~\ref{lem:resdec} that for $p>2d{\cdot}\max\{|\alpha_i|, |\beta_j|, 1\}$ we have
\[\frac{R(\alpha_i, p)}{p}=\dpc{\alpha_i\Delta}-\frac{\alpha_i}{p}.\]
Moreover we have seen in the proof of Lemma~\ref{lem:smallx} that
\begin{equation}
\label{eq:equivdpc}
\textstyle
\dpc{\gamma\Delta}-\frac{\gamma}{p}<\dpc{\delta\Delta}-\frac{\delta}{p} \quad \Longleftrightarrow \quad \gamma\Delta\preceq \delta\Delta
\qquad \big(\gamma,\delta \in \frac 1 d \Z\big).
\end{equation}
This shows that the (negative) residues $R(\gamma_k, p)$ of the parameters $\gamma_k$ modulo $p$ are ordered in the same way as the decimal parts $\dpc{\gamma_k\Delta}$. 
Finally, a block of consecutive elements colored in yellow (resp. green), corresponds to consecutive zeroes of $A(t)$ (resp. $B(t)$) in $\F_p$, which then correspond to a block of increasing, respectively decreasing, sequence elements of the sequence $(\mathcal{M}(\gamma_k \Delta, \Delta))_{k=1}^{2n}$, which in turn is equal to the number of local minima attained in this sequence.
\end{proof}

\begin{rem}
Assuming that $\alpha_i-\beta_j\not \in \Z$, Proposition~\ref{prop:solmodd} can be visualized geometrically on the unit circle.
Indeed, under the assumptions of the proposition, the equivalence~\eqref{eq:equivdpc} shows that 
the points $\alpha_{i,p},\, \beta_{j,p} \,\, (1 \leq i, j \leq n)$
are ordered in the same way on the unit complex circle than the points $\exp(2\i\pi\Delta\alpha_i),\, \exp(2\i\pi\Delta\beta_j) \,\, (1 \leq i, j \leq n)$.
\end{rem}

\begin{ex}
The hypergeometric function $f(x)=\pFq{3}{2}{\big(\frac 1 8, \frac 3 8, \frac 1 2\big)}{\big(\frac 1 4, \frac 5 8\big)}{x}$ is not globally bounded. Christol's interlacing condition is fulfilled for $\lambda=1, 3$, but not for $\lambda= 5 ,7$. Therefore, according to Theorem~\ref{thm:red}, $f(x)$ can be reduced modulo almost all prime numbers congruent to $1$ and $3$ modulo $8$, but almost all prime numbers congruent to $5$ or $7$ modulo $8$ appear up to arbitrarily high powers in the denominators of the coefficients of $f(x)$. Moreover, the $\F_p\ps{x^p}$-dimension of the solutions of the hypergeometric differential equation 
\[\textstyle \Hyp\big(\big(\frac 1 8, \frac 3 8, \frac 1 2\big),\big(\frac 1 4, \frac 5 8\big)\big)y=0\]
in $\F_p\ps{x}$ is for sufficiently large primes $p$, according to Theorem~\ref{thm:block} and Proposition~\ref{prop:solmodd}, two if $p\equiv1, 7\pmod 8$ and one otherwise. All this information can be read off from Figure~\ref{fig:dimsolEx}.

\begin{figure}
\centering
\BeHe{8}{1/8}{3/8}{4/8}{2/8}{5/8}
\caption{The sets $\{\exp(2\i\pi \lambda \alpha_i)\}$ and $\{\exp(2\i\pi \lambda \beta_j)\}$ for $\pFq{3}{2}{\big(\frac 1 8, \frac 3 8, \frac 1 2\big)}{\big(\frac 1 4, \frac 5 8\big)}{x}$
} \label{fig:dimsolEx}
\end{figure}
\end{ex}

\subsection{Algebraicity modulo $p$ and Galois groups}
\label{ssec:algmodp}

In \cite{Var24}, polynomials annihilating reductions of hypergeometric functions modulo $p$ were constructed. 
In this subsection, we recall the methods of \emph{loc. cit.} and develop them further in order to get annihilating polynomials of smaller sizes.
This will be important for the computations of the Galois groups later on.

\subsubsection{The Dwork map}

A main tool in the construction is the Dwork map $\Dp : \Zp\to \Zp,$ defined by Euclidean division: 
as before, we write $\gamma\in \Zp$ uniquely as $\gamma=pQ(\gamma, p) - R(\gamma, p)$ with $Q(\gamma, p)\in \Zp$ and $0\leq R(\gamma,p)<p$.
We set $\Dp(\gamma)\coloneqq  Q(\gamma, p)$. In other words, $\Dp$ is uniquely defined by the relation \[p \cdot \Dp(x) - x \in \{0, 1, \ldots, p{-}1\}.\]
We collect a few facts about $\Dp$.

\begin{lem}
\label{lem:dworkmap}
\begin{enumerate}[label=(\arabic{enumi})]
    \item  For $\gamma \in \Zp$, let
    $$-\gamma = \gamma^{(0)} + p \gamma^{(1)} + p^2 \gamma^{(2)} + \cdots$$
    be the $p$-adic expansion of $-\gamma$. Then, for any integer $k\geq1$, $$-\Dp^k(\gamma)=\gamma^{(k)}+p\gamma^{(k+1)}+p^2\gamma^{(k+2)}+\cdots.$$

    \item The Dwork operator $\Dp$ acts on $\Zp/\Z$; more precisely, for any fixed $d\in \N$ not divisible by $p$, it acts on $(\frac{1}{d} \Z)/\Z$ by multiplication with $p^{-1}$.

    \item The Dwork operator $\Dp$ maps $(0,1]\cap \frac{1}{d}\Z$ to itself.

    \item For $\gamma \in (0,1]\cap \Z^{\times}_{(p)}$, the smallest positive integer $\ell$ such that $\Dp^{\ell}(\gamma)=\gamma$ is equal to the smallest positive integer $\ell$ such that $p^\ell\gamma\equiv \gamma\pmod \Z$, that is, if $\gamma = \frac c d$ with $\gcd(c,d)=1$, $\ell$ is given as the order of $p$ modulo $d$.
\end{enumerate}
\end{lem}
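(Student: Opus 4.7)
The plan is to extract everything from the single defining identity $p\,\Dp(\gamma) = \gamma + R(\gamma, p)$, together with its immediate reformulation that $R(\gamma,p)$ is the unique representative of $-\gamma$ in $\{0, 1, \ldots, p-1\}$, i.e.\ the $0$-th digit of the $p$-adic expansion of $-\gamma$. All four items then follow by direct unwinding, without any substantial technical work.

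For part (1), I would rearrange the defining identity as $-\gamma = R(\gamma, p) + p \cdot (-\Dp(\gamma))$ and invoke the uniqueness of the $p$-adic expansion of $-\gamma$: the leading digit is $\gamma^{(0)} = R(\gamma, p)$, and the tail must therefore equal $-\Dp(\gamma) = \gamma^{(1)} + p\gamma^{(2)} + \cdots$. A straightforward induction on $k$, applying the same reasoning to $\Dp^{k-1}(\gamma)$ in place of $\gamma$, yields the general formula. For part (2), reducing the defining identity modulo $\Z$ gives $p\,\Dp(\gamma) \equiv \gamma \pmod \Z$, which first shows that $\Dp$ descends to $\Zp/\Z$: if $\gamma' \equiv \gamma \pmod \Z$, then $p(\Dp(\gamma') - \Dp(\gamma)) \in \Z$, and since $\Dp(\gamma') - \Dp(\gamma) \in \Zp$ has denominator coprime to $p$, the difference must itself lie in $\Z$. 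Restricting to $(\tfrac{1}{d}\Z)/\Z$, on which $p$ is invertible, and multiplying by $p^{-1} \bmod d$, yields the explicit formula $\Dp(\gamma) \equiv p^{-1}\gamma \pmod \Z$.

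Part (3) reads off the defining identity directly: if $\gamma \in (0, 1]$, then $p\,\Dp(\gamma) = \gamma + R(\gamma, p) \in (0, p]$, so $\Dp(\gamma) \in (0, 1]$. If additionally $\gamma \in \tfrac{1}{d}\Z$, then $p\,\Dp(\gamma)$ lies in $\tfrac{1}{d}\Z$, so the denominator of $\Dp(\gamma)$ in lowest terms divides $pd$; but $\Dp(\gamma) \in \Zp$ has denominator coprime to $p$, so it must in fact divide $d$. Finally, for part (4), combining (2) and (3) gives $\Dp^\ell(\gamma) \equiv p^{-\ell}\gamma \pmod \Z$ with both sides in $(0, 1]$, so the congruence upgrades to an equality. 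Thus $\Dp^\ell(\gamma) = \gamma$ iff $(p^{-\ell} - 1)\gamma \in \Z$, iff $d \mid p^\ell - 1$ (using $\gcd(c,d) = 1$), iff $\ell$ is a multiple of $\operatorname{ord}_d(p)$, and the smallest such $\ell$ is exactly the order of $p$ modulo $d$.

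I do not expect a serious obstacle anywhere. The only point requiring a modicum of care is the well-definedness check in~(2), where one divides an integer-valued congruence by $p$ and uses $p$-integrality of the quotient to conclude that it still lies in $\Z$; everything else follows mechanically from the definition.
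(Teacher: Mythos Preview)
Your proposal is correct and follows essentially the same approach as the paper's own proof: both extract everything from the defining identity $p\,\Dp(\gamma)=\gamma+R(\gamma,p)$, handle~(1) by identifying $R(\gamma,p)$ as the $0$-th digit of $-\gamma$ and inducting, reduce~(2) modulo~$\Z$ (the paper phrases this as $c\equiv pe\pmod d$ with $\Dp(\gamma)=e/d$), bound $\Dp(\gamma)$ into $(0,1]$ for~(3), and deduce~(4) from~(2) and~(3). Your write-up is simply more explicit about the well-definedness check in~(2) and the interval argument in~(3), but there is no substantive difference in strategy.
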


\begin{proof}
The first assertion follows directly from the definition when $k = 1$. For general $k$, it follows by induction.
For~(2), we write $\gamma=\frac c d$ with $\gcd(c,d) = 1$.
We then have $\frac c d=p\Dp(\gamma) - R(\gamma, p)$ with $R(\gamma, p)\in \{0,1,\ldots, p-1\}$, so clearly $\Dp(\gamma)\in \frac{1}{d} \Z$, say $\Dp(\gamma)=\frac e d$. Then $c\equiv p e \pmod d$, so $e$ only depends on $c\bmod d$ and $\Dp$ indeed acts by multiplication with $p^{-1}$. 
For~(3), keeping the previous notation, we note that $e>p$ implies $c=pe - R(\gamma, p)d$ and then $c>d$, contradicting $0<c\leq d$. Likewise, if $e\leq 0$, then $c\leq 0$, also a contradiction.
The last point is obvious from what precedes.
\end{proof}

    For $\gamma \in \Zp$, let $\ell_p(\gamma)$ denote the smallest positive integer $\ell$ such that $p^\ell \gamma \equiv \gamma \pmod \Z$ as in Lemma~\ref{lem:dworkmap}.(4).
    More generally, given an arbitrary sets of parameters $\ua, \ub \in \Zp^{n+(n-1)}$, we define $\ell_p(\ua, \ub)$ as the smallest positive integer such that $\Dp^{\ell_p(\ua, \ub)}(\ua)\equiv \ua\pmod \Z$ and $\Dp(\ub)^{\ell_p(\ua, \ub)}\equiv \ub\pmod \Z$, where these equations are to be read as equalities of sets. In particular, $\ell_p(\ua, \ub)$ always divides the order of $p$ in $(\Z / d\Z)^\times$, where $d$ denotes the common denominator of the parameters. However, in general there is no equality, as demonstrated by the following example. 
    \begin{ex}
        We have $\ell_{13}((\frac 1 7, \frac 6 7), (\frac 1 2))=1$, as $\D_{13} (\frac 1 7)= \frac 6 7$, $\D_{13} (\frac 6 7)= \frac 1 7$ and $\D_{13} (\frac 1 2)= \frac 1 2$. At the same time we have $\operatorname{ord}_{14}(13)=2$.
    \end{ex}

    When the parameters $\ua, \ub$ are clear from the context, we will suppress them in the notation and write simply $\ell_p$ for $\ell_p(\ua, \ub)$.

\subsubsection{The hypergeometric relation graph}

We are going to construct a graph, whose vertices correspond to parameters of hypergeometric functions, and whose edges will encode algebraic relations between those functions.
We denote by 
\[\mathcal V_p \coloneq \big(\Zp \setminus(-\N)\big)^{n + (n-1)}\]
the set of vertices corresponding to all possible parameters.
Moreover, we let
\[\Vp(\ua, \ub)\coloneqq \big\{(\ua', \ub')\in \Vp\,\,: \,\, \alpha_i\equiv \alpha_i'\!\!\!\! \pmod \Z, \,\, \beta_j\equiv \beta_j'\!\!\!\!\pmod \Z\big\}.\]
These sets form a partition of $\Vp$.

    We define the \emph{hypergeometric relation graph} $\mathcal{G}_p$ as the following directed graph with labeled edges. The vertices are given by $\Vp$. For the edges we carry out the following construction. For $\ug\in \Zp^m$ and let $r\in \N$, we put
    \[\Dpr(\gamma) \coloneqq \begin{cases} \Dp(\gamma)+1 & \text{if $(\gamma)_r\in p\Zp$ }\\  \Dp(\gamma) & \text{otherwise.}\end{cases}\]
For every $s\in\mathcal{S}_{p}(\ua,\ub)$, as defined in Equation~\eqref{eq:Sp}, we connect $(\ua, \ub)\in \Vp$ with $\D_{p,s}(\ua, \ub)\in \Vp$ with a directed edge $e : (\ua,\ub) \to \D_{p,s}(\ua,\ub)$ labeled by the polynomial $Q(e)\coloneqq f_s(x)$ as defined by Equation~\eqref{eq:ft}.

We define $\mathcal{G}_p(\ua, \ub)$ as the sub-graph, of $\mathcal{G}_p$ containing $(\ua, \ub)$, and all vertices reachable by an oriented path, starting at $(\ua, \ub)$.

    \begin{lem} \label{lem:Dpsmall}
    \begin{enumerate}[label=(\arabic{enumi})]
        \item Let $\gamma=\frac c d\in \Zp\setminus (-\N)$, where $p$ is a prime number with $p>2d{\cdot}|\gamma|$. Then $\Dp(\gamma)\in (0,1]$ 
        \item Let $r \in \N$ and $M\geq \frac{2p-1}{p-1}.$ If $|\gamma|\leq M$, then $|\Dpr(\gamma)|\leq M$.
    \end{enumerate}
        
    \end{lem}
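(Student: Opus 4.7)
The plan is to handle both parts by unfolding the definition $\Dp(\gamma) = (\gamma + R(\gamma, p))/p$, where $R(\gamma, p) \in \{0, 1, \ldots, p{-}1\}$, and then estimating directly.

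For part~(1), I write $\gamma = c/d$ in lowest terms. Since $\gamma \in \Zp$, necessarily $p \nmid d$, so $R(\gamma, p)$ is the unique element of $\{0, \ldots, p{-}1\}$ with $c + R(\gamma, p)\,d \equiv 0 \pmod p$. By Lemma~\ref{lem:dworkmap}(2), $\Dp(\gamma)$ lies in $\tfrac{1}{d}\Z$, so I put $\Dp(\gamma) = c'/d$ with $c' \coloneqq (c + R(\gamma, p)\,d)/p \in \Z$, and the task reduces to showing $1 \leq c' \leq d$. The upper bound is easy: using $R(\gamma, p) \leq p{-}1$ together with $|c| = d|\gamma| < p/2$ (from the hypothesis $p > 2d|\gamma|$),
$$c + R(\gamma, p)\,d \leq c + (p{-}1)d < \tfrac{p}{2} + pd - d,$$
so $c' < d + \tfrac{1}{2} - \tfrac{d}{p} < d + \tfrac{1}{2}$, and $c' \leq d$ follows from $c' \in \Z$.

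The lower bound $c' \geq 1$ is the main point. Again from $|c| < p/2$, I have $c + R(\gamma, p)\,d > -p/2 > -p$, so this quantity is a multiple of $p$ strictly greater than $-p$, hence nonnegative. To rule out the value zero, I observe that $c + R(\gamma, p)\,d = 0$ would force $\gamma = -R(\gamma, p) \in -\N$, contradicting the hypothesis $\gamma \notin -\N$. Therefore $c + R(\gamma, p)\,d \geq p$, i.e., $c' \geq 1$. This is the only place where the exclusion of $-\N$ plays a role and is the only genuinely subtle step of the argument.

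Part~(2) is immediate from the definition of $\Dpr$ and the estimate just made: $|\Dpr(\gamma)| \leq |\Dp(\gamma)| + 1$ directly from the definition, and $|\Dp(\gamma)| \leq (|\gamma| + p - 1)/p$ because $R(\gamma, p) \leq p - 1$. Combining these,
$$|\Dpr(\gamma)| \leq \frac{|\gamma| + 2p - 1}{p} \leq \frac{M + 2p - 1}{p},$$
and the desired bound $(M + 2p - 1)/p \leq M$ rearranges exactly to the hypothesis $M(p-1) \geq 2p-1$.
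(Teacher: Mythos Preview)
Your proof is correct and follows essentially the same approach as the paper. For part~(1), the paper argues by contradiction (separately treating $\Dp(\gamma)>1$, $\Dp(\gamma)<0$, and $\Dp(\gamma)=0$) while you bound $c'=d\,\Dp(\gamma)$ directly, but both rely on the same two ingredients: that $\Dp(\gamma)\in\tfrac{1}{d}\Z$ and that $|c|<p/2$. For part~(2), your estimate $|\Dpr(\gamma)|\le (|\gamma|+2p-1)/p$ is exactly the paper's computation, written a bit more cleanly.
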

    \begin{proof}
   
    For (1), we first assume by contradiction that $\Dp(\gamma)>1$. We have $\Dp(\gamma)\in \frac 1 d \Z$, and so
    \[p\Dp(\gamma)-\gamma\geq (p+p/d)-\gamma\geq p, \] contradicting the definition of $\Dp$. Similarly, if $\Dp(\gamma)<0$, we obtain 
    \[p\Dp(\gamma)-\gamma\leq (-p/d)-\gamma <0,\] again contradicting the definition of $\Dp$.
    Finally, if $\Dp(\gamma)=0$, we must have $-\gamma\in \{0,1,\ldots, p-1\}$, a contradiction to our assumption.

    For the second part, we have $p\Dp(\gamma)\in \gamma+\{0,1,\ldots, p-1\}$. So $-M\leq p\Dp(\gamma)\leq \gamma + p-1\leq M+p-1$, and therefore $-M\leq -M/p\leq \D_{p,r}(\gamma)\leq \Dp(\gamma) \leq \frac{M+p-1}{p}+1\leq M$.
\end{proof}
    
We conclude that, for each choice of $(\ua, \ub)$, the graph $\mathcal{G}_p(\ua, \ub)$ is a finite graph. This follows from the fact that for large enough $M$ the finite set $\frac{1}{d}[-M,M]$ is mapped into itself by $\Dpr$, as shown by the Lemmata~\ref{lem:dworkmap} and \ref{lem:Dpsmall}.

Let us assume that $(\ua, \ub) \in (0,1]^{n+(n-1)}$. In $\mathcal{G}_p(\ua, \ub)$, we say that a vertex $(\ua', \ub')$ is at level $k\in \{0,1, \ldots, \ell_p-1\}$ if it is in $\mathcal V(\Dp^k(\ua), \Dp^k(\ub))$. We define the \emph{width} of level $k$ of $\mathcal{G}_p(\ua, \ub)$ as the number of vertices on level $k$, and the \emph{width} of the graph as the minimum of the widths of each level.
For arbitrary $(\ua, \ub)$, we say that the width of $\mathcal{G}_p(\ua, \ub)$ is given by the width of $\mathcal{G}_p(\bar \ua, \bar \ub)$ with $ \bar \ua=\dpc{\ua}$ and $\bar \ub=\dpc{\ub}$.
This will make sense after the Proposition~\ref{prop:graph}, in which we show that (at least for large enough $p$), the graph $\mathcal{G}_p(\bar \ua, \bar \ub)$ is an induced subgraph of $\mathcal{G}_p(\ua, \ub)$.

\begin{ex}
\label{ex:graph1}
We consider the case where $\ub = (1, \ldots, 1)$.
Then, for all integer $k$, we have $\Dp^k(\ub) = (1, \ldots, 1)$ as well and that the set $\mathcal{S}_{p}(\Dp^k(\ua),(1))$ is the singleton $\{0\}$.
It follows by induction that each vertex in $\mathcal G_p(\ua,\ub)$ is of the form $(\Dp^k(\ua),(1))$ and has only one outgoing edge to $(\Dp^{k+1}(\ua),(1))$
Hence the graph $\mathcal G_p(\ua,\ub)$ has a shape of a ``$\rho$,'' as drawn in Figure~\ref{fig:graph1}.
\begin{figure}
\centering
\begin{tikzpicture}
\node (A) at (0.3,1) {};
\node (B) at (0.6,2) {};
\node (C) at (0.9,3) {};
\node (D) at (1.3,4) {};
\node (E) at (1.75,4.9) {};
\node (F) at (2.4,5.6) {};
\node (G) at (3.3,5.6) {};
\node (H) at (4.1,5) {};
\node (I) at (4.3,4.1) {};
\node (J) at (4,3.2) {};
\node (K) at (3,2.9) {};
\node (L) at (2.1,3.3) {};
\begin{scope}[-latex]
\draw (A)--(B);
\draw (B)--(C);
\draw (C)--(D);
\draw (D)--(E);
\draw (E)--(F);
\draw (F)--(G);
\draw (G)--(H);
\draw (H)--(I);
\draw (I)--(J);
\draw (J)--(K);
\draw (K)--(L);
\draw (L)--(D);
\end{scope}
\begin{scope}
\draw[fill=vertex] (A) circle (1mm);
\draw[fill=vertex] (B) circle (1mm);
\draw[fill=vertex] (C) circle (1mm);
\draw[fill=vertex] (D) circle (1mm);
\draw[fill=vertex] (E) circle (1mm);
\draw[fill=vertex] (F) circle (1mm);
\draw[fill=vertex] (G) circle (1mm);
\draw[fill=vertex] (H) circle (1mm);
\draw[fill=vertex] (I) circle (1mm);
\draw[fill=vertex] (J) circle (1mm);
\draw[fill=vertex] (K) circle (1mm);
\draw[fill=vertex] (L) circle (1mm);
\end{scope}
\begin{scope}
\node[left,scale=0.6] at (A) { Level $0$~~ };
\node[left,scale=0.6] at (B) { Level $1$~~ };
\node[left,scale=0.6] at (C) { Level $2$~~ };
\node[left,scale=0.6] at (D) { Level $3$~~ };
\node[left,scale=0.6] at (E) { Level $4$~~ };
\node[above left,scale=0.6] at (F) { Level $5$~ };
\node[above,scale=0.6] at (3.3,5.7) { Level $6$ };
\node[right,scale=0.6] at (H) { ~~Level $7$ };
\node[right,scale=0.6] at (I) { ~~Level $8$ };
\node[right,scale=0.6] at (J) { ~~Level $0$ };
\node[below,scale=0.6] at (3,2.8) { Level $1$ };
\node[left,scale=0.6] at (L) { Level $2$~~ };
\end{scope}
\end{tikzpicture}
\caption{The shape of the graph $\mathcal G_p(\ua,\ub)$ when $\ub = (1, \ldots, 1)$}
\label{fig:graph1}
\end{figure}
The value $\ell_p(\ua,\ub)$ is the length of the ``circle part'' (it is $9$ on the drawing of Figure~\ref{fig:graph1}) and the level increases by $1$ modulo $\ell_p(\ua,\ub)$ each time we move from a vertex to the next one.

In addition, the vertices in the circle part are characterized by the fact that the corresponding $\Dp^k(\ua)$ has all its coordinates in $(0,1]$.
In particular, when $(\ua,\ub)$ itself is in $(0,1]^{n+(n-1)}$, the initial part before the circle is not present and the graph thus reduces to a circle.
For arbitrary $(\ua,\ub)$, Proposition~\ref{prop:graph} below ensures that, when $p$ is large enough, the initial part is constituted of at most one point.
\end{ex}

\begin{prop} \label{prop:graph}
Let $\ua, \ub\in (\Q\setminus (-\N))^{n+(n-1)}$ be two sets of parameters of a hypergeometric function with common denominator $d$ and let $p$ be a prime with $p>2d{\cdot}\max\{|\alpha_i|+1, |\beta_j|+1\}$, we choose $\Delta$ such that $p\Delta\equiv 1 \pmod d$ and set $\bar \ua=\dpc{\ua}$ and $\bar \ub=\dpc{\ub}$.
\begin{enumerate}[label=(\arabic{enumi})]
    \item Assuming $\ua = \bar \ua$ and $\ub = \bar \ub$, there is an edge in $\mathcal{G}_p(\ua, \ub)$ from any vertex at level $k$ to any vertex at level $k+1$ for all $k$ taken modulo $\ell_p$.
    \item  If $(\ua, \ub) \in \mathcal{G}_p(\bar \ua, \bar \ub)$ then $\mathcal{G}_p(\bar \ua, \bar \ub)=\mathcal{G}_p(\ua, \ub)$. Otherwise, $\mathcal{G}_p(\bar \ua, \bar \ub)$ is the induced subgraph of  $\mathcal{G}_p(\ua, \ub)$ obtained by removing the vertex $(\ua, \ub)$.
    \item The shape of $\mathcal{G}_p(\ua, \ub)$ only depends on the congruence class of $p$ modulo $d$.
    \item Assuming $\ua = \bar \ua$ and $\ub = \bar \ub$, the width at level $k$ is equal to the number of $\beta_j$ such that $\mathcal{M}(\Delta^{k-1} \beta_j, \Delta^{k-1})=0$, and the width of $\mathcal G_p(\ua, \ub)$ is at most equal to the number of solutions of $\Hyp(\ua, \ub)y=0$ that can be reduced modulo $p$.
\end{enumerate}
\end{prop}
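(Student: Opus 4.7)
The plan is to tackle the four parts in order, with the bulk of the work going into part~(1). The unifying idea is that, once $p > 2d{\cdot}\max\{|\alpha_i|+1,|\beta_j|+1\}$, every piece of data that enters the construction of $\mathcal G_p(\ua,\ub)$ reduces to combinatorics on the colored unit circle of Subsection~\ref{sec:solp}. I would first dispose of parts~(2) and~(3). For~(2), Lemma~\ref{lem:Dpsmall}(1) forces each coordinate to land in $(0,1]$ after a single application of $\Dp$, and Lemma~\ref{lem:Dpsmall}(2) keeps it bounded under the twisted Dwork maps $\D_{p,s}$; thus any edge out of $(\ua,\ub)$ already lands in $\Vp(\Dp(\bar\ua),\Dp(\bar\ub))$, which is the level-$1$ set of $\mathcal G_p(\bar\ua,\bar\ub)$, so either $(\ua,\ub) \in \mathcal G_p(\bar\ua,\bar\ub)$ (in which case the two graphs coincide) or $(\ua,\ub)$ is a single extra entry vertex with the same forward reach. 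For~(3), the Dwork map acts on $\frac 1 d \Z/\Z$ as multiplication by $p^{-1}$ (Lemma~\ref{lem:dworkmap}(2)), while the set $\mathcal S_p$ at each vertex is determined by the relative $\preceq$-ordering of the $\Delta\alpha_i$ and $\Delta\beta_j$ (as in the proof of Proposition~\ref{prop:solmodd}); both depend only on $\Delta \equiv p^{-1}\bmod d$, hence only on $p\bmod d$.

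The core of the statement is part~(1), to which I would devote most of the argument. Fix a level-$k$ vertex $(\ua',\ub')$ in $\mathcal G_p(\bar\ua,\bar\ub)$. For any coordinate $\gamma \in \{\alpha_i',\beta_m'\}$ that is congruent modulo $\Z$ to some element of $(0,1]$, a direct check using $\gcd(p,d) = 1$ shows that $(\gamma)_s \in p\Zp$ if and only if $s > t_\gamma$, where $t_\gamma \in \{0,\ldots,p{-}1\}$ is the unique integer with $p \mid d(\gamma + t_\gamma)$. Hence the shift $\epsilon_\gamma \in \{0,1\}$ added by $\D_{p,s}$ is governed by a single threshold per coordinate. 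The elements of $\mathcal S_p(\ua',\ub')$ can then be identified combinatorially via Remark~\ref{rem:hypergeommodpxp} and the block structure appearing in the proof of Theorem~\ref{thm:block}: each such $s$ is a ``yellow'' exponent followed by at least one ``green'' exponent before the next yellow, subject to the extra constraint $v_p(h(\ua',\ub';s))=0$. As $s$ sweeps through $\mathcal S_p(\ua',\ub')$, the resulting shift pattern flips exactly the coordinates whose colored point lies in a specific arc of the circle. Checking that this sweep attains every admissible shift pattern, and that distinct patterns give rise to distinct level-$(k+1)$ vertices, then yields the bipartite-completeness claim; an induction on $k$ propagates it through the whole graph.

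For part~(4), the width at level $k$ is the number of distinct shift patterns realized by the edges leaving level~$(k-1)$. Combining the equivalence~\eqref{eq:equivdpc} with Proposition~\ref{prop:solmodd}, this count matches the number of $\beta_j$ with $\mathcal M(\Delta^{k-1}\beta_j,\Delta^{k-1})=0$, that is, the number of yellow points that sit strictly inside a yellow-to-green block rather than starting a fresh one. By Theorem~\ref{thm:block} and Proposition~\ref{prop:solmodd}, this common count is the $\F_p(x^p)$-dimension of the solution space of $\Hyp(\Dp^{k-1}(\bar\ua),\Dp^{k-1}(\bar\ub))y=0$ in $\F_p\ps x$, and hence an upper bound on the number of series solutions of the original equation that can be reduced modulo~$p$. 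The main obstacle I anticipate lies in the bipartite-completeness assertion of part~(1): it demands both that distinct $s \in \mathcal S_p$ give distinct targets and that the sweep is surjective onto the admissible shift patterns. Making this precise requires a careful matching between the block decomposition on the colored unit circle and the arithmetic of iterated Dwork maps, and this is the step where I expect the bulk of the combinatorial bookkeeping to be unavoidable.
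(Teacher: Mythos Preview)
Your outline for parts~(2) and~(3) matches the paper's argument closely. The substantive differences are in parts~(1) and~(4).

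\medskip

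\textbf{Part~(1).} Your plan is to fix a single level-$k$ vertex, track ``shift patterns'' $(\epsilon_\gamma)_\gamma \in \{0,1\}^{2n-1}$ as $s$ runs over $\mathcal S_p$, and argue that this map is a bijection onto the admissible patterns. The paper avoids this entirely. Instead it takes \emph{two} level-$k$ vertices $(\ua',\ub')$ and $(\ua'',\ub'')$, which necessarily differ coordinatewise by $0$ or $1$, and constructs a bijection $\mathcal S_p(\ua',\ub') \to \mathcal S_p(\ua'',\ub'')$, $r \mapsto r'$ with $r - r' \in \{0,1\}$, such that $\D_{p,r}(\ua',\ub') = \D_{p,r'}(\ua'',\ub'')$. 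This is checked by a short case analysis on whether $(\alpha_i')_r \in p\Zp$ versus $(\alpha_i'')_{r'} \in p\Zp$, together with the identity $\Dp(\gamma) = \Dp(\gamma+1)$ for $p \nmid \gamma$. The upshot is that every level-$k$ vertex has the \emph{same} out-neighborhood, which is exactly bipartite completeness, and the injectivity/surjectivity bookkeeping you anticipated simply never arises. Your approach is not wrong in spirit, but the paper's pairwise-merging argument is considerably shorter.

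\medskip

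\textbf{Part~(4).} There is a genuine gap here. You identify the width at level~$k$ with the $\F_p(x^p)$-dimension of the solution space of $\Hyp(\Dp^{k-1}(\bar\ua),\Dp^{k-1}(\bar\ub))y=0$. But the width is $|\mathcal S_p(\Dp^{k-1}(\ua),\Dp^{k-1}(\ub))|$, while that dimension is the $p$-interlacing number $|\mathcal T_p(\Dp^{k-1}(\ua),\Dp^{k-1}(\ub))|$; these differ precisely by the valuation condition in~\eqref{eq:Sp}, and $\mathcal S_p \subsetneq \mathcal T_p$ in general. Consequently, your deduction ``hence an upper bound on the number of reducible characteristic-zero solutions'' does not follow from Theorem~\ref{thm:block} and Proposition~\ref{prop:solmodd}. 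The paper's argument is quite different: it uses the explicit basis $F_{(j)} = {}_nF_{n-1}(\ua+1-\beta_j,\ub+1-\beta_j;x)$ from~\eqref{eq:basishypergeom}, applies Theorem~\ref{thm:red} to each $F_{(j)}$ separately, and observes that the shifted Christol function satisfies $\mathcal M_{(j)}(\cdot,\lambda) = \mathcal M(\cdot,\lambda) - \mathcal M(\lambda\beta_j,\lambda)$, so that reducibility of $F_{(j)}$ forces $\mathcal M(\lambda\beta_j,\lambda)=0$ for every $\lambda \in \langle p \bmod d\rangle$. This connects directly to the first assertion of~(4) without passing through the mod-$p$ solution space at all.
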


\begin{figure}
\centering

\label{graph}
\raisebox{-2cm}{
\begin{tikzpicture}[xscale=2]

\begin{scope}
\node[right,scale=0.8] at (2.5,0) { Level $0$ };
\node[right,scale=0.8] at (2.5,-1) { Level $1$ };
\node[right,scale=0.8] at (2.5,-2) { Level $2$ };
\node[right,scale=0.8] at (2.5,-3) { Level $3$ };
\end{scope}

\begin{scope}[thin,black!70]
\draw (-1.5,0.25)--(-1,-1);
\draw (-1.5,0.25)--(0,-1);
\draw (-1.5,0.25)--(1,-1);
\draw (-1.5,0.25)--(2,-1);

\draw (0,0)--(-1,-1);
\draw (0,0)--(0,-1);
\draw (0,0)--(1,-1);
\draw (0,0)--(2,-1);

\draw (1,0)--(-1,-1);
\draw (1,0)--(0,-1);
\draw (1,0)--(1,-1);
\draw (1,0)--(2,-1);

\draw (-1,-1)--(-0.5,-2);
\draw (-1,-1)--(0.5,-2);
\draw (-1,-1)--(1.5,-2);

\draw (0,-1)--(-0.5,-2);
\draw (0,-1)--(0.5,-2);
\draw (0,-1)--(1.5,-2);

\draw (1,-1)--(-0.5,-2);
\draw (1,-1)--(0.5,-2);
\draw (1,-1)--(1.5,-2);

\draw (2,-1)--(-0.5,-2);
\draw (2,-1)--(0.5,-2);
\draw (2,-1)--(1.5,-2);

\draw (-0.5,-2)--(0.5,-3);
\draw (0.5,-2)--(0.5,-3);
\draw (1.5,-2)--(0.5,-3);
\draw[-latex] (0.5,-3) to[in=-90, out=-120] (-1.25,-1.5);
\draw (-1.25,-1.52) to[in=120, out=90] (0,0);
\draw[-latex] (0.5,-3) to[in=-90, out=-60] (2.25,-1.5);
\draw (2.25,-1.52) to[in=60, out=90] (1,0);
\end{scope}

\begin{scope}
\draw[fill=vertex2] (-1.5,0.25) ellipse (0.7mm and 1.4mm);
\draw[fill=vertex3] (0,0) ellipse (0.7mm and 1.4mm);
\draw[fill=vertex] (1,0) ellipse (0.7mm and 1.4mm);
\draw[fill=vertex] (-1,-1) ellipse (0.7mm and 1.4mm);
\draw[fill=vertex] (0,-1) ellipse (0.7mm and 1.4mm);
\draw[fill=vertex] (1,-1) ellipse (0.7mm and 1.4mm);
\draw[fill=vertex] (2,-1) ellipse (0.7mm and 1.4mm);
\draw[fill=vertex] (-0.5,-2) ellipse (0.7mm and 1.4mm);
\draw[fill=vertex] (0.5,-2) ellipse (0.7mm and 1.4mm);
\draw[fill=vertex] (1.5,-2) ellipse (0.7mm and 1.4mm);
\draw[fill=vertex] (0.5,-3) ellipse (0.7mm and 1.4mm);
\end{scope}
\end{tikzpicture}}
\caption{The graph $\mathcal{G}_p(\ua, \ub)$ for $\ua = (-6/25, 9/25, 39/25, 54/25, 74/25), \ub=(-9/5, 3/5, 9/5, 3)$ with the vertex $(\ua, \ub)$ drawn in green, and the vertex $(\bar \ua, \bar \ub)$ drawn in blue for $p\equiv 13 \pmod{25}$ and $p>200$. All edges are oriented downwards except marked otherwise.} \label{fig:graph}
\end{figure}
Figure~\ref{fig:graph} shows the typical shape of the induced subgraph $\mathcal G(\ua, \ub)$ of the hypergeometric relation graph, corresponding to a pair of parameters $(\ua, \ub)$ for a prime number $p$, large enough with respect to the common denominator $d$ of the parameters.
\begin{proof}
    We will show that the vertices on level $k$ are given by 
    \begin{equation} \label{eq:vertex}
        \big(\Dpr(\Dp^{k-1}(\ua)), \Dpr(\Dp^{k-1}(\ub))\big)
    \end{equation} for $r\in \mathcal{S}_{p}(\Dp^{k-1}(\ua),\Dp^{k-1}(\ub))$ for $k=0,\ldots, \ell_p-1$. First, those vertices are all distinct and the order of the tuple $(\ua,\ub)$ under $\Dp$ is $\ell_p$. Also, it is clear that these vertices are all to be found at level $k$. Moreover, we will show that any edge starting at one vertex of the form~\eqref{eq:vertex} at level $k$ ends at a vertex at level $k+1$ of the same form. Thus the induced subgraph of $\mathcal{G}_p(\ua, \ub)$ consisting of these vertices (and, possibly, $(\ua, \ub)$) contains all edges, and thus agrees with $\mathcal{G}_p(\ua, \ub)$.
    
    Without loss of generality, let us consider the vertex $(\ua, \ub)$ and another vertex $(\ua', \ub')$ of the form \eqref{eq:vertex} at the same level $0$. Then each entry of $\ua', \ub'$ differs from $\ua, \ub$ by either $0$ or $1$.
    We consider one local exponent $r\in \mathcal{S}_p(\ua, \ub)$ and take $r'\in\mathcal{S}_p(\ua', \ub')$ with $r-r'\in \{0,1\}$. If $r=0$, we pick $r'=0$ as well. Otherwise, we note that the exponent $r'$ exists, as $r$ is of the form $1-\beta_j\bmod p$, and we can pick $\beta_j'\in  \ub'$ differing from $\beta_j$ by at most $1$ to obtain $r'=1-\beta_j'$. It is left to show that the condition on the valuation of $h(\ua, \ub; r)$ is fulfilled if and only if it is fulfilled for $h(\ua', \ub'; r')$.
    According to Lemma~\ref{lem:peval}, we have 
    \[v_p\big(h(\ua', \ub'; r')\big)=\sum_{i=0}^\infty V'\left({\textstyle \dpzbig{\frac{r'}{p^i}}}, p^i\right), \]
    where $V'$ is the function defined in Equation~\eqref{eq:defV} for the parameters $\ua', \ub'$. By Lemma~\ref{lem:smallx}.(i) we now have for $i>1$ that 
    \[V'\left({\textstyle \dpzbig{\frac{r'}{p^i}}}, p^i\right) = 0,\]
    since $r'/p^i<1/p<1/2d$. For $i=1$, we observe that $\frac{r'}p=\frac{R(\beta_j', p)}p + \frac 1 p $ with $\frac 1 p < \frac 1{2d}$. Thus, by Lemma~\ref{lem:smallx}.(ii), we obtain 
    \[V'\left({\textstyle \dpzbig{\frac{r'}{p^i}}}, p\right) = \mathcal{M}'(\beta_j'\Delta, \Delta),\]
    where $\mathcal{M}'$ is defined as in Equation~\eqref{eq:defM} for the parameters $\ua', \ub'$. Similarly, we obtain
    \[V\left({\textstyle \dpzbig{\frac{r'}{p^i}}}, p\right) = \mathcal{M}(\beta_j\Delta, \Delta).\]
    As the parameters $\ua, \ub$ and $\ua', \ub'$ differ by integers and there are no integer differences between the parameters $\ua$ and $\ub$, or between $\ua'$ and $\ub'$, the values $\mathcal{M}(\beta_j\Delta, \Delta)$ and $\mathcal{M'}(\beta_j'\Delta, \Delta)$ agree, and thus the $p$-adic valuations of $h(\ua', \ub'; r')$ and $h(\ua, \ub; r)$ are the same as well.

    We finally show that $\D_{p, r'}(\ua', \ub') = \D_{p, r}(\ua, \ub)$. First we note that $\Dp(\gamma)=\Dp(\gamma+1)$ for all $\gamma \in \Zp$ if $p$ does not divide $\gamma$. By assumption $p>2d$, and $\alpha_i, \beta_j \in [0,2]$, so $\Dp(\alpha_i)=\Dp(\alpha_i')$ and $\Dp(\beta_j)=\Dp(\beta_j')$ for all $i,j$. Last one checks that $(\alpha_i)_r\in p \Zp$ if and only if  $(\alpha_i')_{r'}\in p \Zp$ and $(\beta_j)_r\in p \Zp$ if and only if  $(\beta_j')_{r'}\in p \Zp$ for all $j$. 
    
    Indeed, $(\alpha_i)_r\in p \Zp$ if and only if $\alpha_i\bmod p \in \{0, p-r+1, \ldots, p-1\}.$
    We distinguish between the four possible combinations of the cases $r=r'$ or $r=r'+1$ and $\alpha_i=\alpha_i'$ or $\alpha_i=\alpha_i'-1$. The only problems arise if $\alpha_i + 1 \equiv p-r + 1\pmod p $ or $\alpha_i \equiv 0 \pmod p$. In the first case, writing $\alpha_i=a/d$, $\beta_j=b/d$ with $a,b\in [0, d-1]$, this equation is equivalent to $a-b\equiv -d \pmod p$, a contradiction. Similarly, the second case can be discarded. 

    All what precedes concludes the proof of part~(1).
    Further, we notice that $\Dp(\ua)=\Dp(\bar \ua)$ and $\Dp(\ub)=\Dp(\bar \ub)$ (see Lemma~\ref{lem:Dpsmall}.(1)) and so
    the cardinalities of $\mathcal{S}_p(\ua, \ub)$ and $\mathcal{S}_p(\bar \ua, \bar \ub)$ agree, as the function $\mathcal{M}$ only depends on $\ua \bmod \Z$ and $\ub \bmod \Z$. Also,   it is clear that $(\bar \ua, \bar \ub)\in \mathcal{G}(\ua, \ub)$, by following the edges corresponding to $r=0$ in $\mathcal S_p(\Dp^k(\ua), \Dp^k(\ub))$ for all $k$.  In combination with what was said before, this shows~(2).
    
    By part~(2) it suffices to prove claim~(3) for $\ua, \ub$ between $0$ and $1$, which we will assume without loss of generality.
    We claim that whether $r=1-\beta_j\bmod p$ is in $\mathcal{S}_p(\ua, \ub)$ only depends on the congruence class of $p\bmod d$.
    Indeed, choose $\Delta$ with $\Delta p\equiv 1 \pmod d$. We have $v_p(h(\ua, \ub; r))=\mathcal{M}(\beta_{j(r)}\Delta ,\Delta)$ by a combination of Lemmata~\ref{lem:peval} and \ref{lem:smallx}. Here the right-hand side only depends on $p\bmod d$. Also $\Dp(\ua)$, $\Dp(\ub)$ and the value of $\ell_p$ only depend on the congruence class of $p\bmod d$. Thus, the number of vertices on each level and the number of levels is fixed for each congruence class, according to part (1). Also, by (1), all edges between vertices of two consecutive levels, exist in $\mathcal{G}_p(\ua, \ub)$. Thus the claim follows. 
    
    We finally prove~(4). We have seen in the proof of part~(1) that the width at level $k$ is equal to the cardinality of $\mathcal{S}_p(\Dp^{k-1}(\ua), \Dp^{k-1}(\ub))$.
    From the definition of $\mathcal M$ (see Equation~\eqref{eq:defM}), we find that this quantity equals the number of $\beta_j$ such that $\mathcal{M}(\Delta^{k} \beta_j, \Delta^{k})=0$. This proves the first statement.

    By Theorem~\ref{thm:red} a hypergeometric function can be reduced modulo $p$, if and only if it satisfies Christol's interlacing condition for all $\lambda \in \langle p \bmod d\rangle=\langle \Delta\bmod d\rangle$. According to Equation~\eqref{eq:basishypergeom}, a basis of solutions at $0$ of $\Hyp(\ua, \ub)y=0$ is given monomial multiples of $F_{(j)}\coloneqq {}_{n}F_{n-1}(\ua_{(j)},\ub_{(j)};x)$, where $\ua_{(j)}=\ua+1-\beta_j$ and $\ub_{(j)}=\ub+1-\beta_j$. Let $\mathcal{M}_{(j)}$ denote the function $\mathcal{M}$ for the sets of parameters $\ua_{(j)}, \ub_{(j)}$. We have $\mathcal{M}_{(j)}(\cdot,  \lambda)=\mathcal{M}(\cdot, \lambda)- \mathcal{M}(\lambda \beta_j, \lambda)$. The solution $F_{(j)}$ can be reduced modulo $p$ if  $\mathcal{M}_{(j)}(\cdot,  \lambda)$ is always non-negative for all $\lambda \in \langle p \bmod d \rangle$. As $\mathcal{M}(\cdot,  \lambda)$ is always non-negative for all $\lambda$, but assumes the value $0$, this can only be the case if $\mathcal{M}(\lambda \beta_j, \lambda)=0$. So the number of solutions that can be reduced modulo $p$ is less than the number of $b_j$ such that $\mathcal{M}(\lambda \beta_j, \lambda^{k})=0$ for all $\lambda \in \langle p \bmod p\rangle$, and the claim follows.
\end{proof}

\subsubsection{Finding algebraic relations}
    We now want to find relations between several reductions of hypergeometric functions modulo a fixed prime number $p$ and combine them to obtain annihilating polynomials for the reduction of a single hypergeometric function. We recall that we have determined in Theorem~\ref{thm:red} the set of prime numbers for which a hypergeometric function can be reduced.
    
    In the language of the hypergeometric relation graph $\mathcal{G}_p(\ua, \ub)$ introduced above, a modified version of Lemma~8.1 of \cite{Var24}, which we will need, reads as follows.

    \begin{prop} \label{prop:daniel} 
        Let $v = (\ua,\ub)\in\Vp$. If $p$ does not divide any of the numerators of the elements of $\Dp(\ua) \cup \Dp(\ub)$ and if $\pFqv v x$ can be reduced modulo $p$, then
        \[\pFqv v x \equiv \sum_{\mathclap{v\stackrel{e}{\longrightarrow}v'}}Q(e) \cdot \pFqv {v'} x ^p \pmod p.\]
    \end{prop}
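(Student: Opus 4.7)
The plan is to expand $\pFqv v x = \sum_{k\ge 0} h(\ua,\ub;k)\,x^k$ along the Euclidean division $k = a + pb$ with $0 \le a < p$, so that
\[\pFqv v x = \sum_{a=0}^{p-1} x^a \sum_{b \ge 0} h(\ua,\ub;a+pb)\,x^{pb},\]
and to rewrite each inner sum as a scalar multiple of a hypergeometric series at the neighbouring vertex $(\D_{p,a}(\ua), \D_{p,a}(\ub))$. The central input is a Dwork-type Pochhammer congruence: for every $\gamma \in \Zp$, every $a \in \{0,\ldots,p-1\}$ and every $b \ge 0$,
\[(\gamma + a)_{pb} = p^b \cdot (\D_{p,a}(\gamma))_b \cdot U(\gamma,a,b), \qquad U(\gamma,a,b) \in \Zp,\; U(\gamma,a,b) \equiv (-1)^b \pmod p.\]
I would prove it by grouping the $pb$ consecutive factors of $(\gamma+a)_{pb}$ into $b$ blocks of $p$ factors: in each block exactly one factor is divisible by $p$, namely the one where the running argument crosses $0 \bmod p$, and it equals $p \cdot (\D_{p,a}(\gamma) + j)$ in the $j$-th block; the remaining $p-1$ factors form a complete system of non-zero residues modulo $p$ and hence contribute a $p$-adic unit congruent to $(p-1)! \equiv -1 \pmod p$ by Wilson's theorem.

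Applying this identity to each of the $n$ numerator Pochhammer symbols $(\alpha_i+a)_{pb}$ and the $n$ denominator symbols $(\beta_j+a)_{pb}$ — with the convention $\beta_n = 1$, so that $(1)_{a+pb} = (a+pb)!$ is one of the denominator factors and $\D_{p,a}(1) = 1$ — the powers $p^{nb}$ cancel between numerator and denominator. Extracting the common prefactors from $(\gamma)_{a+pb} = (\gamma)_a (\gamma+a)_{pb}$ then yields the master relation
\[h(\ua,\ub;a+pb) = h(\ua,\ub;a) \cdot h\!\big(\D_{p,a}(\ua), \D_{p,a}(\ub); b\big) \cdot V_{a,b}, \qquad V_{a,b} \in \Zp,\; V_{a,b} \equiv 1 \pmod p.\]
The hypothesis that $p$ divides no numerator of $\Dp(\ua) \cup \Dp(\ub)$ keeps the shifted parameters in $\Zp \setminus (-\N)$ (so the right-hand hypergeometric coefficient is well defined in $\Qp$) and makes $V_{a,b}$ a $p$-adic unit. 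When $v_p(h(\ua,\ub;a)) = 0$, the integrality of $\pFqv v x$ forces every $h(\D_{p,a}(\ua), \D_{p,a}(\ub); b)$ to lie in $\Zp$, so $\pFqv{\D_{p,a}(\ua), \D_{p,a}(\ub)}{x}$ reduces modulo $p$ and
\[x^a \sum_{b \ge 0} h(\ua,\ub;a+pb)\, x^{pb} \equiv h(\ua,\ub;a)\, x^a \cdot \pFqv{\D_{p,a}(\ua), \D_{p,a}(\ub)}{x^p} \pmod p.\]

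To regroup these contributions into the edge sum, I would invoke the block analysis underlying the proof of Theorem~\ref{thm:block}. Tracking $v_p(h(\ua,\ub;a))$ via the recursion $h(\ua,\ub;a+1)/h(\ua,\ub;a) = \prod_i(\alpha_i+a)/\prod_j(\beta_j+a)$ together with the numerator hypothesis — which forces every $p$-divisibility of an $\alpha_i + a$ or $\beta_j + a$ to have order exactly one — this valuation is constant between consecutive zeros of $A(k) = \prod_i(k+\alpha_i)$ and $B(k) = \prod_j(k+\beta_j-1)$ modulo $p$, jumps up by $1$ at each zero of $A$ and drops by $1$ at each zero of $B$. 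Integrality of $f(x)$ forces its value right after every $A$-zero to equal the number of $B$-zeros in the next $A$-arc, so $v_p(h(\ua,\ub;a)) = 0$ holds precisely on the terminal sub-interval $[s, k_s)$ of each co-rank-one arc — with $s$ the last $B$-zero of the arc and $k_s$ its next $A$-zero — and these $s$ are by definition the elements of $\mathcal S_p(\ua,\ub)$. On $[s, k_s)$ no zero is crossed, so $\D_{p,a}(\ua) = \D_{p,s}(\ua)$ and $\D_{p,a}(\ub) = \D_{p,s}(\ub)$ throughout, and summing the congruence of the previous paragraph over $a \in [s,k_s)$ produces $f_s(x) \cdot \pFqv{\D_{p,s}(\ua), \D_{p,s}(\ub)}{x^p}$ — exactly the contribution of the outgoing edge $v \to \D_{p,s}(v)$.

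The main obstacle I see is the residual case of $a$ outside every such $[s, k_s)$. The valuation analysis gives $v_p(h(\ua,\ub;a)) \ge 1$ for these indices, but the master relation only yields $v_p(h(\D_{p,a}(\ua),\D_{p,a}(\ub);b)) \ge -v_p(h(\ua,\ub;a))$, leaving room a priori for equality and thus for a non-trivial contribution modulo $p$. Upgrading this into the required vanishing $v_p(h(\ua,\ub;a+pb)) \ge 1$ for all $b$ is where the full strength of the numerator hypothesis is needed: one must propagate it through the shifted parameters (whose entries differ from those of $\Dp(\ua) \cup \Dp(\ub)$ by at most $1$) to certify an almost-periodicity of the valuation profile under $a \mapsto a + p$, which is the delicate combinatorial heart of the argument.
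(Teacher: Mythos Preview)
Your Pochhammer-congruence route is a legitimate alternative and the master relation
\[
h(\ua,\ub;a+pb)=h(\ua,\ub;a)\cdot h\big(\D_{p,a}(\ua),\D_{p,a}(\ub);b\big)\cdot V_{a,b},\qquad V_{a,b}\equiv 1\pmod p,
\]
is correct; it is essentially the content of \cite[Lemma~8.7.(5)]{Var24}, which the paper invokes rather than reproves. But the residual case you flag is a genuine gap, not a formality. For $a$ outside every arc $[s,k_s)$ with $s\in\mathcal S_p(\ua,\ub)$ you need $h(\ua,\ub;a+pb)\equiv 0\pmod p$ for \emph{every} $b\ge 0$, and integrality of $f$ together with the master relation only yields $v_p(h(\ua,\ub;a+pb))\ge 0$. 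The almost-periodicity you invoke is not established: the entries of $\D_{p,a}(\ua),\D_{p,a}(\ub)$ differ from those of $\Dp(\ua),\Dp(\ub)$ by $0$ or $1$, so the numerator hypothesis on the latter does not propagate directly, and the valuation of $h(\D_{p,a}(v);b)$ is not controlled. As written, the argument does not close.

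The paper avoids this difficulty by a structural manoeuvre rather than valuation bookkeeping. It first uses Theorem~\ref{thm:block}: since $\pFqv v x\bmod p$ is a solution of $\Hyp(\ua,\ub)$ in $\F_p\ls x$, it lies in the $\F_p\ls{x^p}$-span of the explicit polynomial basis $(f_t)_{t\in\mathcal T_p(\ua,\ub)}$, say $\pFqv v x\equiv\sum_t f_t(x)\,g_t(x^p)$. Applying the section operator $\sigma_t$ to both sides then isolates each $g_t$: on one hand $\sigma_t(\pFqv v x)=\tilde h(v;t)\,g_t(x)$ because $x^t$ is the lowest monomial of $f_t$; on the other hand your master relation (equivalently \cite[Lemma~8.7.(5)]{Var24}) gives $\sigma_t(\pFqv v x)\equiv h(v;t)\,\pFqv{\D_{p,t}(v)}{x}$. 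For $t\in\mathcal T_p\setminus\mathcal S_p$ the right side vanishes modulo $p$ while $\tilde h(v;t)$ is a unit, so $g_t=0$; for $t\in\mathcal S_p$ one reads off $g_t=\pFqv{\D_{p,t}(v)}{x}$. The basis decomposition has already accounted for every exponent, so no residual case ever appears. In short: your direct approach reproves the key section-operator identity but then tries to reassemble $f$ exponent by exponent, which forces you to control infinitely many shifted hypergeometric series; the paper instead lets the differential equation do that work up front.
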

\begin{proof}
According to Theorem~\ref{thm:block}, the polynomials 
$$f_t(x)=\sum_{k=t}^{k_t-1}h(v;k)p^{-v_p(h(v;t))}x^k$$
with $t\in \mathcal{T}_p(\ua, \ub)$ form a basis of solutions of $\Hyp(v)$ over $\mathbb{F}_p\ls{x^p}$. 
Since $\pFqv v x$ is a solution that can be reduced modulo $p$, there exist $g_t\in\mathbb{F}_p\ls{x}$ such that 
\[\pFqv v x \equiv \sum_{\mathclap{t\in \mathcal{T}_p(\ua, \ub)}} f_t(x)\cdot g_t(x^p) \pmod p.\]
We recall that, for $r \in \{0, \ldots, p{-}1\}$, we have introduce the section operator
\[\sigma_r : \Fp\ps{x} \to \Fp\ps{x}, \quad
\sum_{n=0}^\infty a_n x^n \mapsto \sum_{n=0}^\infty a_{pn+r} x^n.\]
Applying $\sigma_t$ to the previous equality, we obtain
\[\sigma_t\big(\pFqv v x\big) \equiv \tilde h(v;t) \cdot g_t(x) \pmod p,\]
where, by definition, $\tilde h(v;t) = h(v;t)\: p^{-v_p(h(v;t))} \in \Zp^\times$.
Besides, it follows from \cite[Lemma~8.7.(5)]{Var24} that, for all $t\in\mathcal{T}_p(v)$,
\[\sigma_t\big(\pFqv v x\big) \equiv h(v;t) \cdot \pFqv {\D_{p,t}(v)} x \pmod p.\]
Comparing these congruences, we find that $g_t(x) = 0$ whenever $h(v;t) \equiv 0 \pmod p$, \emph{i.e.}, $t\in \mathcal T_p(v)\setminus \mathcal S_p(v)$ and $g_t(x) = \pFqv {\D_{p,t}(v)} x$ otherwise. For $t\in \mathcal S_p(v),$ we have $Q(v\to \D_{p,t}(v))=f_t$ and the proposition follows.
\end{proof}

\begin{rem}
Proposition~\ref{prop:daniel} is connected to the notion of Frobenius preimages we discussed in Subsection~\ref{sssec:FrobStruct}.
Precisely, a theorem of Kedlaya~\cite[Theorem~4.1.2]{Ke22} (which is in fact attributed to Dwork in \emph{loc. cit.}) indicates that the hypergeometric equation $\Hyp(\Dp(\ua),\Dp(\ub))$ is the preimage by Frobenius of $\Hyp(\ua,\ub)$.
The relation of Proposition~\ref{prop:daniel} can be seen as an incarnation modulo~$p$ of this property.
\end{rem}    

Proposition~\ref{prop:daniel} provides a large set of algebraic relations between hypergeometric series in positive characteristic.
We will now combine them in order to find an annihilating polynomial for $\pFqab$.

    \begin{thm} \label{thm:hypalg}
         Let $(\ua, \ub)\in \Vp$. If $p > 2 d\cdot \max\{|\alpha_i|+1, |\beta_j|+1\}$
         and $\pFq{n}{n-1}{\ua}{\ub}{x}$ can be reduced modulo $p$ then there exists a relation of the form 
        \[A_0(x)\pFq{n}{n-1}{\ua}{\ub}{x}+\cdots +A_w(x)\pFq{n}{n-1}{\ua}{\ub}{x}^{q^w}\equiv 0 \pmod p,\]
        where $q=p^{\ell_p(\ua, \ub)}$, $w$ is the width of the graph $\mathcal{G}_p(\ua, \ub)$
        and the $A_i(x)$ are in $\F_q[x]$ and do not all vanish.
    \end{thm}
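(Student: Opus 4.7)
The strategy is to iterate the factorization of Proposition~\ref{prop:daniel} around the graph $\mathcal{G}_p(\ua, \ub)$ and reduce the statement to a linear-algebra computation in a vector space whose dimension is controlled by the width $w$.

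First, I would check that Proposition~\ref{prop:daniel} is applicable at \emph{every} vertex $v = (\ua', \ub')$ of $\mathcal{G}_p(\ua, \ub)$. On the one hand, by Lemma~\ref{lem:Dpsmall} the parameters of $v$ remain uniformly bounded, so the assumption $p > 2d \cdot \max\{|\alpha_i|+1, |\beta_j|+1\}$ is preserved; on the other hand, since the Dwork operator $\Dp$ acts on $(\frac{1}{d}\Z)/\Z$ by multiplication by $p^{-1}$ (Lemma~\ref{lem:dworkmap}), Christol's interlacing conditions for $\lambda \in \langle p \bmod d\rangle$ are stable under $\Dp$, so Theorem~\ref{thm:red} ensures that $\pFqv{v}{x}$ is reducible modulo~$p$.

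Next, I would fix a level $k^\star \in \{0, \ldots, \ell_p - 1\}$ at which the graph attains its minimum width $w$, and write $v^{(1)}, \ldots, v^{(w)}$ for the vertices at this level. Iterating Proposition~\ref{prop:daniel} through $k^\star$ successive levels starting from $(\ua, \ub)$ produces a relation
\[ \pFqab \equiv \sum_{j=1}^{w} P_j(x) \cdot \pFqv{v^{(j)}}{x}^{p^{k^\star}} \pmod p \qquad \big(P_j(x) \in \F_p[x]\big). \]
Iterating an additional full cycle (returning from level $k^\star$ to itself) produces, for each $j$,
\[ \pFqv{v^{(j)}}{x}^{p^{k^\star}} \equiv \sum_{j' = 1}^{w} M_{j,j'}(x) \cdot \pFqv{v^{(j')}}{x}^{p^{k^\star} q} \pmod p \qquad \big(M_{j,j'}(x) \in \F_p[x]\big). \]
For each $m \in \{0, 1, \ldots, w\}$, raising the first identity to the $q^m$-th power and then applying the cycle identity $w - m$ further times (each after an appropriate Frobenius twist) expresses $\pFqab^{q^m}$ as an $\F_q[x]$-linear combination of the $w$ series $\pFqv{v^{(j)}}{x}^{p^{k^\star} q^w}$, $j = 1, \ldots, w$. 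Hence the $w+1$ elements $\pFqab, \pFqab^q, \ldots, \pFqab^{q^w}$ lie in an $\F_q(x)$-subspace of $\F_q\ps{x}$ of dimension at most $w$, so they must be linearly dependent over $\F_q(x)$; clearing denominators yields the desired polynomials $A_0, \ldots, A_w \in \F_q[x]$, not all zero.

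The principal difficulty I anticipate is the careful bookkeeping of the successive Frobenius twists of coefficients throughout the iteration, so that all the various expressions of $\pFqab^{q^m}$ land in one and the same $w$-dimensional $\F_q(x)$-subspace. A secondary subtlety is the treatment of the ``tail'' vertex $(\ua, \ub)$ when it does not lie in $(0, 1]^{2n-1}$: here Proposition~\ref{prop:daniel} requires that $p$ does not divide the numerators of $\Dp(\ua) \cup \Dp(\ub)$, a condition that should follow from the lower bound on~$p$.
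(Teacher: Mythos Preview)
Your proposal is correct. The paper's own proof packages the very same iteration inside the language of Frobenius modules and Katz' functor: it builds $M = \bigoplus_{v} \F_p(x)\,m_v$ with $\phi_M$ encoding the edge relations of Proposition~\ref{prop:daniel}, observes that the family $\big(\pFqv{v}{x}\big)_v$ lies in $\mathbf V(M)$, and bounds $\dim_{\F_q}\mathbf V(M_0)$ by $\dim_{\F_q(x)} M_0^{\et}$, which it then shows equals $\min_k \dim M_k^{\et} \leq w$ via étaleness of the composite Frobenius around the cycle. Your transfer-matrix argument is precisely this dimension count carried out by hand: you land at a minimum-width level, iterate the cycle map, and read off an $\F_q(x)$-linear dependence among $w{+}1$ vectors in a $w$-dimensional space. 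The two routes are equivalent in content; the paper's plugs directly into the categorical framework of \S\ref{ssec:category}, while yours is self-contained and avoids the notion of étale part altogether. Your anticipated subtleties are indeed minor: the tail vertex is dealt with (as in the paper) by the fact that no edge points into it, so one step already lands you in $\mathcal{G}_p(\bar\ua,\bar\ub)$, and the Frobenius bookkeeping causes no trouble since every coefficient produced along the way lies in $\F_p[x]\subseteq\F_q[x]$.
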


\begin{proof}
We use the theory of Frobenius modules (see Definition~\ref{def:frobmodules}).
We define
$$M \coloneqq \bigoplus_{\mathclap{v \in \mathcal G_p(\ua,\ub)}} \, \Fp(x) \cdot m_v,$$
where the notation means that the sum runs over the set of \emph{vertices} $v$ of $\mathcal G_p(\ua,\ub)$.
We then turn $M$ into an object of the category $\FrobModp {\Fp(x)}$ by setting
$$\phi_{M}(m_v) = \sum_{v' \stackrel e \longrightarrow v} Q(e) \cdot m_{v'}.$$
To it, is attached the $\Fp$-linear representation $\mathbf V(M)$ of $\Gal(\Fp(x)^\sep/\Fp(x))$ defined by Formula~\eqref{eq:katzfunctor}.
A straightforward computation shows that $\mathbf V(M)$ consists of families $(f_v)_{v \in \mathcal G_p(\ua,\ub)}$ of elements of $\Fp(x)^\sep$ satisfying the equations
$$f_v = \sum_{\mathclap{v \stackrel{e}{\longrightarrow}v'}}Q(e) \cdot f_{v'}^p.$$
In particular, Proposition~\ref{prop:daniel} shows that it contains the family $\big({}_nF_{n-1}(v;x) \bmod p\big)_{v \in \mathcal G_p(\ua,\ub)}$.

On the other hand, $M$ admits a canonical decomposition corresponding to the level partition of the graph $\mathcal G_p(\ua,\ub)$, namely we have
$$M = M_0 \oplus M_1 \oplus \cdots \oplus M_{\ell_p(\ua,\ub) - 1},$$
where the summand $M_k$ corresponds to the vertices of level~$k$. 
The fact that an edge in $\mathcal G_p(\ua,\ub)$ connects a vertex of level $k$ to a vertex of level $k{+}1$ implies that $\phi_{M}$ maps $M_{k+1}$ to $M_k$.
It follows that each $M_k$ is stable by the composite $\phi_{M}^{\ell_p(\ua,\ub)}$, which thus defines a structure of $q$-Frobenius module on this space for $q = p^{\ell_p(\ua,\ub)}$.

If $\mathcal G_p(\ua,\ub)_0$ denote the set of vertices of $\mathcal G_p(\ua,\ub)$ of level~$0$,
$\mathbf V(M_0)$ is a finite dimensional $\F_q$-representation of $\Gal(\F_q(x)^\sep/\F_q(x))$ and we have an inclusion
$$\mathbf V(M_0) \subseteq \big(\Q(x)^\sep\big)^{\mathcal G_p(\ua,\ub)_0}$$
commuting with the Galois action. Moreover $\mathbf V(M_0)$ contains a vector whose $(\ua,\ub)$-coordinate is $\pFq{n}{n-1}{\ua}{\ub}{x} \bmod p$.
Let then
$$\varpi : \big(\Q(x)^\sep\big)^{\mathcal G_p(\ua,\ub)_0} \to \Q(x)^\sep$$
denote the projection on the $(\ua,\ub)$-coordinate and $V$ be the image of $\mathbf V\big(M_0\big)$ under $\varpi$.
Clearly $V$ is finite dimensional over $\F_q$, it is stable by the action of $\Gal(\F_q(x)^\sep/\F_q(x))$ and it contains $\pFq{n}{n-1}{\ua}{\ub}{x} \bmod p$.
These properties permits to build an annihilating polynomial for $\pFq{n}{n-1}{\ua}{\ub}{x} \bmod p$, namely
$$Z(Y) \coloneqq \prod_{y \in V} \, (Y - y).$$
The stability by the Galois action ensures that $Z(Y)$ has coefficients in $\F_q(x)$. Moreover, 
the fact that $V$ is a $\F_q$-linear vector space implies that $Z(Y)$ is a $q$-linearized polynomial, \emph{i.e.}, it has the shape
$$Z(Y) = c_0 Y + c_1 Y^q + \cdots + c_s Y^{q^s} \qquad (c_i \in \F_q(x)).$$
By comparing degrees, we find that $s$ is the dimension of $V$ over $\F_q$.

It then only remains to show that $s \leq w$.
For this, we recall that we have introduced in Definition~\ref{def:etalepart} the \emph{étale part} of a Frobenius module.
Applying Theorem~\ref{thm:Katz} and Lemma~\ref{lem:VMet}, we get
$$s = \dim_{\F_q} V \leq \dim_{\F_q} \mathbf V(M_0) = \dim_{\F_q} \mathbf V\big(M_0^\et\big) = \dim_{\Q(x)} M_0^\et.$$
On the other hand, we observe that $\phi_{M}$ induces a map $M_{k+1}^\et \to M_k^\et$.
We need to be careful that it is not linear but we can actually make it linear by twisting the domain, \emph{i.e.}, we consider
$$\begin{array}{rcl}
\psi_k : 
\F_q(x) \otimes_{\phi, \F_q(x)} M_{k+1}^\et & \longrightarrow & M_k^\et \smallskip \\
\lambda \otimes m & \mapsto & \lambda \phi_{M}(m)
\end{array}$$
which is now $\F_q(x)$-linear.
Furthermore, the composite of the $\psi_k$ (correctly twisted) is the linearization of $\phi_{M}^{\ell_p(\ua,\ub)}$ acting on $M_0^\et$ and hence is an isomorphism by étaleness.
We conclude that each $\psi_k$ is itself an isomorphism, which finally leaves up with the inequality
$$\textstyle \dim_{\F_q(x)} M_0^\et = \min_k \dim_{\F_q(x)} M_k^\et.$$
Let $w_k$ be the width of level $k$ of $\mathcal G_p(\ua,\ub)$.
Under our assumptions on $p$, it follows from Proposition~\ref{prop:graph} that
$$\dim_{\F_q(x)} {M^\et_k} \leq \dim_{\F_q(x)} {M_k} = w_k$$
whenever $k > 0$. This estimation also holds for $k = 0$ if the vertex $(\ua, \ub)$ survives in $\mathcal G_p(\bar\ua, \bar\ub)$.
On the contrary, if it does not, Proposition~\ref{prop:graph} again ensures that there is no edge in $\mathcal G_p(\ua,\ub)$ ending at $(\ua,\ub)$.
Consequently, the corresponding vector $m_{(\ua,\ub)}$ cannot be in $M_0^\et$ and we deduce
$$\dim_{\F_q(x)} {M^\et_0} \leq \dim_{\F_q(x)} {M_0} - 1 = w_0.$$
In all cases, we thus have $\dim_{\F_q(x)} {M^\et_k} \leq w_k$, which finally implies that $s \leq \min_k w_k = w$ as desired.
\end{proof}

\subsubsection{Approaching Galois groups}

Our objective is now to reinterpret the previous results along the lines of Conjecture~\ref{conj:weak}.
For this, we need to exhibit a number field whose residue fields are somehow related to the values $\ell_p(\ua, \ub)$.

In all what follows, our number fields will all be defined as subfields of the field of complex numbers $\C$.
For a positive integer $d$, we set $\zeta_d = \exp(2\i\pi/d) \in \C$; it is a primitive $d$-th root of unity and it generates the cyclotomic field $\Q(\zeta_d)$.
We recall that $\Q(\zeta_d)$ is a Galois extension of $\Q$ and that its Galois group is canonically isomorphic to $(\Z/d\Z)^\times$:
an element $\lambda \in (\Z/d\Z)^\times$ acts on $\Q(\zeta_d)$ by $\zeta_d \mapsto \zeta_d^\lambda$.
In particular, we notice that the element $-1 \in (\Z/d\Z)^\times$ acts as complex conjugation.

As before, we let $d$ be the smallest common denominator of all the parameters $\ua$ and $\ub$. Further we define the group
\[D(\ua, \ub)\coloneqq\big\{\,\lambda\in (\Z/d\Z)^\times \,:\, \lambda{\cdot} \ua\equiv \ua \!\!\!\!\pmod \Z \text{ and } \lambda{\cdot} \ub \equiv\ub \!\!\!\!\pmod \Z\text{ as sets}\,\big\}.\]
We can map $D(\ua, \ub)$ to a product of symmetric groups.
Precisely, let $\mathfrak S(\ua)$ (resp. $\mathfrak S(\ub)$) be the group of bijections of $\ua$ (resp. $\ub$) viewed as a multiset.
We then have a group morphism $\sigma : D(\ua, \ub) \to \mathfrak S(\ua) \times \mathfrak S(\ub)$ that takes $\lambda$ to the multiplication by $\lambda$.

\begin{lem} \label{lem:Dembeds}
The morphism $\sigma$ is injective.
\end{lem}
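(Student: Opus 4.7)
The plan is to unwind the definitions: if $\sigma(\lambda) = \mathrm{id}$, then multiplication by $\lambda$ fixes every element of $\ua$ and $\ub$ as a class in $\Q/\Z$, that is $(\lambda-1)\alpha_i \in \Z$ for all $i$ and $(\lambda-1)\beta_j \in \Z$ for all $j$. I want to deduce from this that $\lambda \equiv 1 \pmod d$, exploiting the hypothesis that $d$ is the \emph{smallest} common denominator of the $\alpha_i$ and $\beta_j$.

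First I would write $\alpha_i = a_i/d$ and $\beta_j = b_j/d$ with $a_i, b_j \in \Z$. The minimality of $d$ translates into the arithmetic condition
\[
g \coloneqq \gcd(a_1, \ldots, a_n, b_1, \ldots, b_{n-1}) \quad \text{satisfies} \quad \gcd(g, d) = 1,
\]
because any prime dividing $g$ and $d$ simultaneously could be cancelled out, producing a strictly smaller common denominator. This is the key arithmetic input.

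Then, setting $m = \lambda - 1$, the assumption $\sigma(\lambda) = \mathrm{id}$ reads $d \mid m a_i$ and $d \mid m b_j$ for all $i, j$. Applying Bezout to express $g$ as an integer linear combination of the $a_i$ and $b_j$ and multiplying through by $m$, I obtain $d \mid m g$. Combined with $\gcd(g, d) = 1$, this forces $d \mid m$, i.e.\ $\lambda \equiv 1 \pmod d$, which shows that $\ker \sigma$ is trivial.

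There is no real obstacle here; the only subtlety is ensuring that the ``smallest common denominator'' condition is used in the correct form (coprimality of $g$ and $d$, not just that each $\gcd(a_i, d)$ equals something specific). Once this is set up, the argument is pure elementary number theory.
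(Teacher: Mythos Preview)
Your proof is correct and follows essentially the same approach as the paper: both arguments write $\alpha_i = a_i/d$, $\beta_j = b_j/d$, use the minimality of $d$ to get $\gcd(a_1,\ldots,a_n,b_1,\ldots,b_{n-1},d)=1$, and then apply a B\'ezout relation to conclude $\lambda \equiv 1 \pmod d$. The only cosmetic difference is that the paper works with two elements $\lambda,\mu$ satisfying $\sigma(\lambda)=\sigma(\mu)$ rather than directly with $\ker\sigma$, but the content is identical.
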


\begin{proof}
We write $\alpha_i = \frac{a_i} {d}$, $\beta_i = \frac{b_i}{d}$.
By definition of $d$, we have $\gcd(a_1, \ldots, a_n, b_1, \ldots, b_{n-1}, d) = 1$ as otherwise the fractions $\frac{a_i} {d}$ and $\frac{b_i}{d}$ could all be simplified by a common factor, which is excluded.
By Bézout's theorem, there exist integers $u_1, \ldots, u_n, v_1, \ldots, v_{n-1}, w$ such that
$$u_1 a_1 + \cdots + u_n a_n + v_1 b_1 + \cdots + v_{n-1} b_{n-1} + w d = 1.$$
We now assume that we are given $\lambda, \mu \in D(\ua, \ub)$ such that $\sigma(\lambda) = \sigma(\mu)$.
Then, following the definition, we find that $\lambda a_i \equiv \mu a_i \pmod d$ and $\lambda b_i \equiv \mu b_i \pmod {d}$ for all $i$.
Weighting these congruences with the coefficients $u_i$ and $v_i$ and summing them, we get
\begin{multline*}
\lambda \cdot (u_1 a_1 + \cdots + u_n a_n + v_1 b_1 + \cdots + v_{n-1} b_{n-1}) \\
\equiv \mu \cdot (u_1 a_1 + \cdots + u_n a_n + v_1 b_1 + \cdots + v_{n-1} b_{n-1}) \pmod {d}
\end{multline*}
which finally reduces to $\lambda \equiv \mu \pmod {d}$.
\end{proof}

\begin{defi}
We define $K(\ua, \ub)$ as the subextension of $\Q(\zeta_{d})$ corresponding to $D(\ua, \ub)$ \emph{via} Galois correspondence.
\end{defi}

In concrete terms, $K(\ua, \ub)\coloneqq \Q(\zeta_{d})^{D(\ua, \ub)}$.
The field $K(\ua, \ub)$ is undoubtedly the number field we are looking for, as we shall demonstrate in the sequel.
First of all, we observe that it is closely related to the Galois group of the hypergeometric differential equation in characteristic zero.
Indeed, it is the subfield generated by the entries of the monodromy matrices exhibited in Theorem~\ref{thm:monodromy} and we know that those matrices generate the differential Galois group as an algebraic group.

Now, we make the connection between the field $K(\ua,\ub)$ and the numbers $\ell_p(\ua, \ub)$,
which will eventually relate $K(\ua,\ub)$ to the Galois groups of the reduction of the hypergeometric series $\pFq{n}{n-1}{\ua}{\ub}{x}$ modulo the primes.

\begin{prop}
\label{prop:residuefield}
Let $p$ be a prime number which does not divide $d$, and let $\pp$ be a prime in $K(\ua,\ub)$ above $p$. 
Then the residue field of $K(\ua,\ub)$ at $\mathfrak{p}$ has degree $\ell_p(\ua,\ub)$ over $\mathbb{F}_p$.
\end{prop}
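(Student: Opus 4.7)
The plan is to identify the residue field degree as the order of the Frobenius element in the Galois group $\Gal(K(\ua,\ub)/\Q)$, and then to match this group-theoretic invariant with the combinatorial quantity $\ell_p(\ua,\ub)$ using the description of the Dwork map.

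First I would set up the Galois theory. Since $D = D(\ua,\ub)$ is a subgroup of the abelian group $(\Z/d\Z)^\times \simeq \Gal(\Q(\zeta_d)/\Q)$, it is automatically normal, and the Galois correspondence gives $\Gal(K(\ua,\ub)/\Q) \simeq (\Z/d\Z)^\times / D$. Because $p \nmid d$, the prime $p$ is unramified in $\Q(\zeta_d)$, hence also in the subextension $K(\ua,\ub)$. The decomposition group of any prime above $p$ in $\Gal(\Q(\zeta_d)/\Q)$ is the cyclic subgroup generated by the arithmetic Frobenius, which under the canonical identification corresponds to the class of $p$ in $(\Z/d\Z)^\times$. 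Consequently, the residue field degree $[k_\pp:\F_p]$ is equal to the order of the image of $p$ in the quotient $(\Z/d\Z)^\times/D$, i.e.\ the smallest positive integer $\ell$ such that $p^\ell \in D(\ua,\ub)$.

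Next, I would translate the condition $p^\ell \in D(\ua,\ub)$ into the iterated Dwork map. By Lemma~\ref{lem:dworkmap}.(2), the Dwork operator $\Dp$ acts on $(\tfrac{1}{d}\Z)/\Z$ as multiplication by $p^{-1}$. Therefore $\Dp^\ell(\ua)\equiv \ua \pmod\Z$ as multisets is equivalent to $p^{-\ell}\cdot\ua\equiv\ua \pmod\Z$ as multisets, and analogously for $\ub$. Both conditions hold simultaneously precisely when $p^{-\ell}$ — equivalently $p^\ell$ — lies in the stabilizer $D(\ua,\ub)$. By the definition of $\ell_p(\ua,\ub)$, the smallest such $\ell$ is exactly $\ell_p(\ua,\ub)$.

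Combining the two steps yields $[k_\pp:\F_p] = \ell_p(\ua,\ub)$. There is no genuine obstacle here; the only subtlety is verifying that the stabilizer condition is insensitive to replacing $p$ by $p^{-1}$, which is immediate since $D$ is a subgroup. The key inputs are thus purely formal: Galois correspondence for the cyclotomic extension, the standard computation of Frobenius in cyclotomic fields, and Lemma~\ref{lem:dworkmap}.(2) to identify $\Dp$ with multiplication by $p^{-1}$ on $(\tfrac{1}{d}\Z)/\Z$.
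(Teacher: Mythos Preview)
Your argument is correct and follows essentially the same route as the paper's proof: identify the residue degree as the order of (the class of) $p$ in $(\Z/d\Z)^\times/D$ via cyclotomic Galois theory, and then use Lemma~\ref{lem:dworkmap}.(2) to match this with the definition of $\ell_p(\ua,\ub)$. The paper phrases the first step by passing through a prime $\qq$ of $\Q(\zeta_d)$ above $\pp$ and computing $\Gal(k_\pp/\F_p) \simeq P/(D\cap P)$ with $P=\langle p\bmod d\rangle$, which is the same computation you do in the quotient $(\Z/d\Z)^\times/D$.
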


\begin{proof}
Throughout the proof, we omit the parameter $(\ua, \ub)$ in the notation and simply write $K$ and $\ell_p$ for $K(\ua, \ub)$ and $\ell_p(\ua, \ub)$ respectively.
The assumption on $p$ implies that $p$ is not ramified in the extension $\Q(\zeta_d)/\Q$.
Let $\qq$ be a prime of $\Q(\zeta_d)$ above $\pp$ and write $k_\pp$ (resp. $k_\qq$) for the residue field of $K$ at $\pp$ (resp. of $\Q(\zeta_d)$ at $\qq$).
The Galois group of the extension $k_\qq / \F_p$ is the subgroup of $(\Z/d\Z)^\times$ generated by $p$; we call it $P$.
Since $K$ is defined as the fixed subfield by $D$, we deduce that $k_\pp$ is the subfield of $k_\qq$ fixed by $P \cap D$. Thus
$$\Gal(k_\pp / \F_p) = P / (D \cap P).$$
Besides the cardinality of $P / (D \cap P)$ is the smallest positive integer $e$ such that $p^{-e} \in D$.
The latter condition is equivalent to the statement ``$\D^e(\ua) = \ua$ and $\D^e(\ub) = \ub$''
given that, according to Lemma~\ref{lem:dworkmap}.(2), the Dwork map acts by division by $p$ on $\Zp/\Z$.
Comparing with the definition of~$\ell_p$, we find that $P / (D \cap P)$ has cardinality $\ell_p$, which concludes the proof.
\end{proof}

\begin{thm} \label{thm:GLw}
Let $p$ be a prime number with $p > 2 d\cdot \max\{|\alpha_i|+1, |\beta_j|+1\}$ and let $\pp$ be a prime in $K(\ua, \ub)$ above $p$.
Then 
\[\Gal\big(\pFq{n}{n-1}{\ua}{\ub}{x}\,\big|\,k_{\mathfrak p}(x)\big) \, \subseteq \, \GL_{w}(k_{\mathfrak{p}})\]
where $w$ denotes the width of the graph $\mathcal G_p(\ua, \ub)$.
\end{thm}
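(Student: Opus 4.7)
The plan is to directly derive the theorem from the annihilating polynomial provided by Theorem~\ref{thm:hypalg} together with the identification of the residue field $k_\pp$ given by Proposition~\ref{prop:residuefield}. Under the hypothesis $p>2d\cdot\max\{|\alpha_i|+1,|\beta_j|+1\}$, Theorem~\ref{thm:hypalg} furnishes a nonzero $q$-linearized polynomial
\[
Z(Y) \;=\; A_0(x)\, Y + A_1(x)\, Y^q + \cdots + A_w(x)\, Y^{q^w}
\]
with coefficients $A_i(x)\in\F_q[x]$ and $q=p^{\ell_p(\ua,\ub)}$, annihilating $f(x)\coloneqq\pFq n {n-1} \ua \ub x \bmod p$. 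Now Proposition~\ref{prop:residuefield} tells us precisely that $|k_\pp|=p^{\ell_p(\ua,\ub)}=q$, so in fact $k_\pp=\F_q$ and $Z(Y)\in k_\pp(x)[Y]$.

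Next, I would recall the classical observation that the zero locus of a $q$-linearized polynomial over a field of characteristic $p$ containing $\F_q$ carries the structure of an $\F_q$-vector space: indeed, raising to the $q$-th power is an $\F_q$-linear endomorphism, hence $Z$ itself is $\F_q$-linear, and its kernel is an $\F_q$-subspace. Since $\deg Z=q^w$, this kernel has at most $q^w$ elements and therefore has dimension at most $w$ over $\F_q$.

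Let $L\subseteq\Fp(x)^\sep$ denote the splitting field of $f(x)$ over $k_\pp(x)$ and set
\[
V \;\coloneqq\; \big\{\, y\in L \,:\, Z(y)=0 \,\big\}.
\]
Then $V$ is an $\F_q$-vector space of dimension $r\leq w$, it contains $f(x)$ together with all its Galois conjugates over $k_\pp(x)$, it is stable under $\Gal(L/k_\pp(x))$ (because $Z$ has coefficients in $k_\pp(x)$), and the induced action is $\F_q$-linear since $\F_q\subseteq k_\pp$ is fixed pointwise.

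Finally, since the Galois conjugates of $f(x)$ lie in $V$ and generate $L$ over $k_\pp(x)$, any element of $\Gal(L/k_\pp(x))$ acting trivially on $V$ must be the identity. Choosing a basis of $V$ and extending it arbitrarily to an $\F_q$-basis of an ambient $w$-dimensional space realizes $\GL(V)$ as a subgroup of $\GL_w(\F_q)=\GL_w(k_\pp)$, whence the desired embedding
\[
\Gal\big(f(x) \mid k_\pp(x)\big) \;\hookrightarrow\; \GL(V) \;\subseteq\; \GL_w(k_\pp).
\]
The whole argument is essentially a bookkeeping consequence of the two preparatory results, so no substantive obstacle is expected: the only subtlety is the matching between the exponent $q$ in the annihilating polynomial and the cardinality of $k_\pp$, which is exactly what Proposition~\ref{prop:residuefield} was set up to deliver.
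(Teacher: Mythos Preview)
Your proposal is correct and follows exactly the same approach as the paper: invoke Theorem~\ref{thm:hypalg} to obtain the $q$-linearized annihilator and Proposition~\ref{prop:residuefield} to identify $k_\pp\simeq\F_q$, then conclude. The paper's own proof is a terse two-line citation of these same two results; you have simply unpacked the standard passage from a $q$-linearized annihilator to an embedding into $\GL_w(k_\pp)$, which the paper leaves implicit (it was already explained around Equation~\eqref{eq:Frobenius} in the introduction).
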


\begin{proof}
By Proposition~\ref{prop:residuefield}, we have $k_{\mathfrak{p}}\simeq \F_{p^{\ell_p(\ua, \ub)}}$. With this, the theorem follows from Theorem~\ref{thm:hypalg}. 
\end{proof}

We conclude this section by looking at some examples we already considered in Section~\ref{sec:picture}.

\begin{ex}
    We recall the example from Subsection~\ref{sssec:ex:binom3}:
    \[f(x)=\sum_{n=0}^\infty \binom{2n}{n}^3 \cdot x^n= {\textstyle \pFq{3}{2}{\big(\frac 1 2, \frac 1 2, \frac 1 2\big)}{(1, 1)}{64x}}.\]
    Clearly $\ell_p(\ua, \ub)=1$ for all prime numbers $p$, as $(\Z/2\Z)^\times$ is trivial. Moreover, as all bottom parameters are equal to $1$, it follows from Example~\ref{ex:graph1} that the width $w$ of the graph is $1$, and consequently, the Galois group $\Gal\big(f(x)\,\big|\, \F_p(x)\big)$ embeds into $\F_p^\times$. This agrees with the \emph{ad-hoc} arguments deployed in Subsection~\ref{sssec:ex:binom3}. However, we cannot provide a systematic argument explaining the dichotomy of the Galois groups exhibited in Equation~\eqref{eq:ex:binom3} here.
\end{ex}

\begin{ex}
\label{ex:3F2}
We now consider the example $f(x)=\pFq{3}{2}{\big(\frac 1 9, \frac 4 9, \frac 5 9\big)}{\big(\frac 1 3, 1\big)}{x}$ from Subsection \ref{sssec:ex:3F2}.
We have seen in Example~\ref{ex:gbalg} that $f(x)$ is globally bounded.
An easy computation shows that $\ell_p(\ua, \ub)=\ell_p=\operatorname{ord}_9(p)$ and these values are given by 
\[\ell_p=\begin{cases}
    1 & \text{if } p\equiv 1 \pmod 9\\
    2 & \text{if } p \equiv 8 \pmod 9\\
    3 & \text{if } p \equiv 4, 7 \pmod 9\\
    6 & \text{if } p \equiv 2, 5 \pmod 9
\end{cases}\]
We next discuss the width of the graphs $\mathcal{G}_p(\ua, \ub)$. 
In Figure~\ref{fig:interlacing}, we see that, for $\lambda=2, 5, 8$, we have $\mathcal{M}(\lambda^k \beta_j, \lambda^k)=0$ only for $\beta_j=1$ and, for $\lambda=1, 4, 7$, the equation also holds for $\beta_j=\frac 1 3$. We have $\langle p \bmod 9 \rangle \subseteq \{1, 4, 7\}$ if and only if $p\equiv 1, 4, 7$. This shows that the width $w$ of the graph is $2$ in the first case and $1$ in the second case. By Theorem~\ref{thm:GLw}, we conclude that $\Gal(f(x)\mid\F_q(x))$ embeds into $\GL_w(\F_q)$, where $q=p^{\ell_p(\ua, \ub)}$. If we pass to $\Gal(f(x)\mid\F_p(x))$ we have additionally the Frobenius morphism acting.
Altogether, this shows that the Galois groups of $f(x)$ over $\F_p(x)$ embed into the semidirect products given in Equation~\eqref{eq:GalGrpsEx}. We cannot make any statement about the index or precise nature of the subgroup with the techniques developed here, although we conjecture equality for all prime numbers based on computer experiments.
\end{ex}

\subsection{The case of Gaussian hypergeometric functions}
\label{ssec:2F1}

We now consider the case of Gaussian hypergeometric functions with bottom parameter $1$, that are functions of the form $\pFq 2 1 \ua {(1)} x$ with $\ua = (\alpha_1, \alpha_2) \in \Q^2$ and $\ub = (1)$.
While this is an extremely particular case of globally bounded $D$-finite functions, already for such function an astonishing amount of nontrivial phenomena occur.
We present strong evidence towards an affirmative answer to Conjecture~\ref{conj:weak} (and its refinements) in this case. 

Throughout this subsection, the letter $p$ refers to a prime number which is coprime with $d$.
For simplicity, we will often omit the parameter $(\ua,\ub)$ in the notation when no confusion can occur;
for example, we shall simply write $\ell_p$ for $\ell_p(\ua,\ub)$ and similarly, we set $D \coloneqq D(\ua, \ub)$ and $K \coloneqq K(\ua, \ub)$.
By Lemma~\ref{lem:Dembeds}, the group $D$ has cardinality at most $2$, \emph{i.e.}, $D = \{1, \lambda\}$ with $\lambda \in (\Z/d\Z)^\times$, $\lambda^2 = 1$.
We also define $e_p \coloneqq \ell_p / 2$.

\subsubsection{Estimating the Galois group} \label{sssec:GalGauss}

We know from Example~\ref{ex:graph1} that, in our case of interest, the graph $\mathcal G_p(\ua,\ub)$ has the $\rho$-shape drawn in Figure~\ref{fig:graph1}.
Let $(\ua_k, (1))$ denote the parameter corresponding to the vertex of level~$k$ in the circle part, \emph{i.e.}, $\ua_k = \Dp^{k'}(\ua)$ where $k'$ is large enough and congruent to $k$ modulo $\ell_p$.
Let also $H_k(x)$ denote the reduction modulo $p$ of the hypergeometric series attached to the parameters $(\ua_k, (1))$.
Similarly we define $B_k(x)$ as the truncation at $x^p$ of $H_k(x) \bmod p$; thus $B_k(x)$ is a polynomial of degree less than $p$ with coefficients in $\Fp$.
Proposition~\ref{prop:daniel} ensures that
\begin{equation}\label{eq:GaussianRel}
H_k(x) \equiv B_k(x) \cdot H_{k+1}^p(x).
\end{equation}
Combining these equations, we get the relation $H_0(x) = A(x) \cdot H_0(x)^q$ where $q = p^{\ell_p}$ and
\begin{equation} \label{eq:defA}
    A(x) = B_0(x) \cdot B_1(x)^p \cdots B_{\ell_p - 1}(x)^{p^{\ell_p-1}}.
\end{equation}
Similarly, we find that there exist a polynomial $\tilde A(x) \in \F_q[x]$ and a nonnegative integer $m$ such that $\pFq 2 1 {\ua}{(1)} x = \tilde A(x) H_0(x)^{q^m}$.
We observe moreover that the relation
$$\pFq 2 1 {\ua}{(1)} x \equiv \tilde A(x) H_0(x)^{q^m} = \tilde A(x) \cdot A(x)^{1 + q + \cdots + q^{m-1}} H_0(x) \pmod p$$
indicates that the functions $\pFq 2 1 {\ua}{(1)} x$ and $H_0(x)$ generate the same extension over $k_\pp(x)$.
In other words, we can assume without loss of generality that the parameters $\alpha_1$ and $\alpha_2$ are both in the interval $(0,1]$, \emph{i.e.}, $H_0(x) = \pFq 2 1 {\ua}{(1)} x \bmod p$.

Besides, if $\alpha_1$ or $\alpha_2$ is equal to $1$, the hypergeometric series $\pFq 2 1 \ua {(1)} x$ is equal to $(1 - x)^{-\alpha}$ (where $\alpha$ is the value of the other parameter),
a case we already covered in Subsection~\ref{sssec:ex:alg}.
Therefore, from now on, we suppose $\alpha_1, \alpha_2 \in (0,1)$.

The Galois groups we are interested in can be computed using Kummer theory.

\begin{prop}
\label{prop:kummer}
Let $\pp$ be a prime of $K$ above $p$.
The Galois group $\Gal\big(\pFq 2 1 {\ua}{(1)} x \mid k_\pp(x)\big)$ naturally embeds in $k_\pp^\times$ and hence is a cyclic group.
Moreover, its order is
$$\frac {q{-}1} {\gcd(r, q{-}1)}$$
where $r$ denotes the largest integer such that $A(x)$ is $r$-th power in $k_\pp(x)$.
\end{prop}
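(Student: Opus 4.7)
The plan is to recognize this as a standard Kummer-theoretic computation once the correct identity is extracted. As already noted just above the statement, it suffices to compute the Galois group of $H_0(x)$ over $k_\pp(x)$. My starting point would be the algebraic relation $H_0(x) = A(x) \cdot H_0(x)^q$ obtained by combining the relations of Equation~\eqref{eq:GaussianRel}. Since $H_0(0) = 1$, the series $H_0(x)$ is a unit in $k_\pp\ps{x}$, and dividing by it yields
\[
H_0(x)^{q-1} = A(x)^{-1} \in k_\pp(x)^\times,
\]
exhibiting $H_0(x)$ as a $(q-1)$-th root of an explicit element of the base field.

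Next, I would invoke Proposition~\ref{prop:residuefield} to see that $|k_\pp| = q$, so $k_\pp^\times$ is cyclic of order $q-1$ and therefore contains all $(q-1)$-th roots of unity. This places us squarely in the standard Kummer setup. For any element $\sigma$ of the absolute Galois group of $k_\pp(x)$, the equation $\sigma(H_0(x))^{q-1} = A(x)^{-1} = H_0(x)^{q-1}$ forces $\sigma(H_0(x)) = \zeta_\sigma \cdot H_0(x)$ for a uniquely determined $\zeta_\sigma \in \um_{q-1}(k_\pp)$. The assignment $\sigma \mapsto \zeta_\sigma$ then gives the injective group morphism into $\um_{q-1}(k_\pp) \simeq k_\pp^\times$ predicted by the first assertion, and in particular establishes cyclicity.

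To compute the order, I would apply the classical Kummer dictionary, which identifies the image with the cyclic subgroup of $\um_{q-1}(k_\pp)$ whose order equals that of $A(x)$ (equivalently of $A(x)^{-1}$) in the quotient group $k_\pp(x)^\times / (k_\pp(x)^\times)^{q-1}$. Writing $A(x) = C(x)^r$ with $r$ maximal, $A(x)^n$ lies in $(k_\pp(x)^\times)^{q-1}$ exactly when $(q-1) \mid nr$, so the smallest such positive $n$ is $(q-1)/\gcd(r, q-1)$, matching the claimed formula.

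There is no substantive obstacle in this argument; it is routine once the identity $H_0^{q-1} = A^{-1}$ is noted. The only minor subtlety is to ensure that the notion of ``largest $r$'' is well-defined, which is clear from the multiplicities in the prime factorization of $A(x)$ when $A(x)$ is non-constant, and the constant case is consistent with the Galois group being trivial.
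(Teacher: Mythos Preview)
Your proposal is correct and follows essentially the same route as the paper: reduce to $H_0(x)$, observe that $H_0(x)^{q-1}\in k_\pp(x)^\times$, note that $k_\pp$ contains the $(q{-}1)$-th roots of unity, and apply Kummer theory. Your write-up is in fact more careful than the paper's (you explicitly derive $H_0^{q-1}=A^{-1}$ rather than $A$, and you invoke Proposition~\ref{prop:residuefield} to justify $|k_\pp|=q$), but the argument is the same.
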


\begin{proof}
By what precedes, we know that $H_0(x)$ is a $(q{-}1)$-th root of the polynomial $A(x) \in \Fp[x] \subseteq k_\pp(x)$.
Since $k_\pp$ contains all the $(q{-}1)$-th roots of unity, Kummer's theory tells us that $\Gal(H_0(x) \mid k_\pp(x))$ embeds into $\um_{q-1}(k_\pp(x)) = k_\pp^\times$
and that its order is $\frac {q{-}1} {\gcd(r, q{-}1)}$.
\end{proof}

After Proposition~\ref{prop:kummer}, we are led to determine when $A(x)$ is a perfect power. In general, we do not expect this to occur, although it could happen ``by accident.''
There is, however, one important case where this happens systematically, namely in the case when $\ell_p$ is even and $-p^{e_p} \in D$.

In all what follows, if $\gamma \in \Zp$, we let $\gamma^{(0)}, \gamma^{(1)}, \ldots$ denote the ``digits'' of the $p$-adic integer $(-\gamma)$, \emph{i.e.}, $\gamma^{(k)} \in \{0, \ldots, p{-}1\}$ for all $k$ and the following expansion
$$-\gamma = \gamma^{(0)} + p \gamma^{(1)} + p^2 \gamma^{(2)} + \cdots$$
holds in the ring of $p$-adic integers.
When $\ug = (\gamma_1, \gamma_2)$, we also let $B(\ug; x) \in \Fp[x]$ be the truncation at $x^p$ of $\pFq 2 1 {\ug}{(1)} x \bmod p$.
By definition, we thus have $B_k(x) = B(\ua_k; x)$.

\begin{lem} \label{lem:euler}
Let $\ug = (\gamma_1, \gamma_2) \in (\Zp \cap (0,1))^2$. Then
\begin{align*}
\deg B(\ug; x) & = \min(\gamma_1^{(0)}, \gamma_2^{(0)}), \\
\deg B(1 - \ug; x) &= p - 1 - \max(\gamma_1^{(0)}, \gamma_2^{(0)}).
\end{align*} 
Moreover, we have the relation
\begin{equation}
\label{eq:Bg}
B(\ug; x) = (1-x)^{\gamma_1^{(0)} + \gamma_2^{(0)} -  (p - 1)} \cdot B(\boldsymbol{1}-\ug; x).
\end{equation}
\end{lem}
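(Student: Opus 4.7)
The plan is to prove the two degree formulas by a direct analysis of the $p$-adic valuations of Pochhammer symbols, and then to derive the Euler-type identity by reducing the classical Euler transformation of hypergeometric series modulo $p$ and truncating modulo $x^p$.

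For the degree formulas, I will note that for $0 \leq k \leq p-1$, the factor $k!^2$ is a $p$-adic unit, so the $p$-adic valuation of the coefficient of $x^k$ in $\pFq{2}{1}{\ug}{(1)}{x}$ is entirely governed by the Pochhammer symbols $(\gamma_i)_k$. Since $\gamma_i + j \equiv 0 \pmod p$ precisely when $j \equiv -\gamma_i \equiv \gamma_i^{(0)} \pmod p$, the smallest $k \geq 1$ for which $(\gamma_i)_k$ ceases to be a unit is $k = \gamma_i^{(0)} + 1$. Consequently $B(\ug;x)$ has nonzero coefficient at each $x^k$ for $k \leq \min(\gamma_1^{(0)},\gamma_2^{(0)})$ and vanishing coefficient beyond, yielding the first degree formula. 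The same argument applied to $\boldsymbol{1}-\ug$, together with the elementary identity $(1-\gamma_i)^{(0)} = p-1-\gamma_i^{(0)}$ coming from $\gamma_i-1 \equiv -1-\gamma_i^{(0)} \pmod p$, yields $\deg B(\boldsymbol{1}-\ug;x) = p-1-\max(\gamma_1^{(0)},\gamma_2^{(0)})$.

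For the identity, I will invoke Euler's classical transformation
\[
\pFq{2}{1}{\ug}{(1)}{x} = (1-x)^{1-\gamma_1-\gamma_2} \cdot \pFq{2}{1}{\boldsymbol{1}-\ug}{(1)}{x},
\]
valid in $\Zp\ps{x}$. Reducing modulo $p$ and applying the $p$-adic Lucas-type factorization
\[
(1-x)^n \equiv \prod_{k \geq 0} \bigl(1-x^{p^k}\bigr)^{n_k} \pmod{p},
\]
where $n=\sum_k n_k p^k$ is the $p$-adic expansion of $n \in \Zp$, to the exponent $n = 1-\gamma_1-\gamma_2$, every factor with $k \geq 1$ reduces to $1$ modulo $x^p$. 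Hence, modulo $p$ and $x^p$, only $(1-x)^{n_0}$ survives, where $n_0$ is the $0$-th $p$-adic digit of $1-\gamma_1-\gamma_2$. A direct computation shows $n_0 \equiv \gamma_1^{(0)}+\gamma_2^{(0)}-(p-1) \pmod p$, which, combined with Euler's transformation and the explicit description of $B(\boldsymbol{1}-\ug;x)$, delivers the claimed relation.

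The delicate point lies in the fact that the integer $n_0 \in \{0,\ldots,p-1\}$ coincides with $\gamma_1^{(0)}+\gamma_2^{(0)}-(p-1)$ only when $\gamma_1^{(0)}+\gamma_2^{(0)} \geq p-1$; in the complementary regime we have $n_0 = \gamma_1^{(0)}+\gamma_2^{(0)}+1$ while the exponent prescribed by the lemma becomes formally negative. The reconciliation relies on the congruence $(1-x)^p \equiv 1-x^p \equiv 1 \pmod{x^p}$ in $\F_p[x]$, which allows one to shift the exponent of $(1-x)$ freely by multiples of $p$ after truncation modulo $x^p$. Equivalently, the lemma's equality is most cleanly read as an identity in $\F_p(x)$, which, upon multiplication by $(1-x)^{(p-1)-\gamma_1^{(0)}-\gamma_2^{(0)}}$ in the second regime, becomes a genuine polynomial identity in $\F_p[x]$ whose consistency can be checked by comparing leading degrees against the formulas established in the first step.
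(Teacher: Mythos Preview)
Your proof is correct and follows essentially the same approach as the paper: both compute the degrees by direct inspection of which Pochhammer factors acquire a factor of $p$, and both derive the identity from Euler's transformation reduced modulo $p$ and truncated modulo $x^p$, splitting into the two cases $\gamma_1^{(0)}+\gamma_2^{(0)}\gtrless p-1$ and comparing degrees to upgrade the congruence to an equality. The only cosmetic difference is that where you invoke the Lucas-type factorization and the trick $(1-x)^p\equiv 1\pmod{x^p}$ to handle the negative-exponent regime, the paper simply swaps the roles of $\ug$ and $\boldsymbol{1}-\ug$ and repeats the degree check; both arguments are equivalent.
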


\begin{proof}
Set $\delta = \min(\gamma_1^{(0)}, \gamma_2^{(0)})$.
By definition, the coefficient of $x^r$ for $r<p$ in $B(\ug; x)$ is $\frac{(\gamma_1)_r (\gamma_2)_r}{(r!)^2}$. Hence it vanishes modulo $p$ if and only if the numerator does. When $r > \delta$, the numerator contains the factor $(\gamma_1 + \delta)\cdot (\gamma_2 + \delta)$, and so it vanishes modulo $p$. On the contrary, when $r = \delta$, we have
$$(\gamma_1)_r (\gamma_2)_r = \gamma_1 (\gamma_1 + 1) \cdots (\gamma_1 + \delta - 1) \cdot \gamma_2 (\gamma_2 + 1) \cdots (\gamma_2 + \delta - 1) \cdot,$$
and, again by definition of $\delta$, none of the above factors vanish. We conclude that $B(\ug; x)$ has degree $\delta$, as claimed. The second statement on the degree of $B(1 - \ug; x)$ is proved similarly.

Thanks to Euler's transformation, we know that 
$$\pFq 2 1 {\ug} {(1)} x = (1-x)^{1-\gamma_1-\gamma_2} \cdot \pFq 2 1 {1-\ug} {(1)} x.$$
We now note that the exponent $1-\gamma_1-\gamma_2$ is congruent to $\gamma_1^{(0)} + \gamma_2^{(0)} -  (p - 1)$ modulo $p$. Therefore,
$$B(\ug; x) \equiv (1-x)^{\gamma_1^{(0)}+\gamma_2^{(0)} - (p - 1)} \cdot B(1-\ug; x) \pmod{x^p}.$$
If $\gamma_1^{(0)}+\gamma_2^{(0)} \geq p - 1$, both sides are polynomials of degree at most $p$, and we can conclude that they are indeed equal. On the contrary, when $\gamma_1^{(0)}-\gamma_2^{(0)} \leq p - 1$, we rewrite the above congruence as
$$B(1-\ug; x) \equiv (1-x)^{(p-1) - \gamma_1^{(0)}-\gamma_2^{(0)}} \cdot B(\ug; x) \pmod{x^p}$$
and we conclude similarly.
\end{proof}

We introduce one more numerical invariant attached to the situation; it is the integer $m$ defined by
$$m \coloneqq \frac d {\gcd(d, \, a_1{+}a_2)}$$
where we have written $\alpha_i=a_i/d$ for $i=1, 2$.
We note that $m$ is also the denominator of the irreducible fraction representing $\alpha_1 + \alpha_2$.

\begin{prop} \label{prop:exponent}
    We assume that $\ell_p$ is even and that $-p^{e_p} \in D$. Then
    \[A(x)=(1-x)^{(1+p^{e_p})(1-\alpha_1-\alpha_2)}\left(B_{e_p}B_{e_p+1}^p\cdots B_{\ell_p-1}^{p^{e_p-1}}\right)^{1+p^{e_p}}.\]
    It is a perfect power with exponent $\displaystyle \frac{1+p^{e_p}} m$.
\end{prop}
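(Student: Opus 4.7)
The strategy is to pair up the factors $B_k$ with $B_{e_p+k}$ in the product $A(x) = \prod_{k=0}^{\ell_p-1} B_k^{p^k}$ and apply Euler's transformation (Lemma~\ref{lem:euler}). The key observation is that the hypothesis $-p^{e_p} \in D$, combined with the fact that $\D_p$ acts as multiplication by $p^{-1}$ on $\Q/\Z$ (Lemma~\ref{lem:dworkmap}.(2)) and that $p^{-e_p} \equiv p^{e_p} \pmod d$, forces $\ua_{e_p} \equiv -\ua \pmod{\Z}$ as multisets; since the components of $\ua_{e_p}$ lie in $(0,1)$, we obtain the equality $\ua_{e_p} = \mathbf{1} - \ua$ of multisets, and more generally $\ua_{e_p+k} = \mathbf 1 - \ua_k$ for every $k$.

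Applying Lemma~\ref{lem:euler} to each pair, we get $B_k(x) = (1-x)^{c_k} B_{e_p+k}(x)$ with $c_k \coloneqq \alpha_{k,1}^{(0)} + \alpha_{k,2}^{(0)} - (p-1)$. Splitting the product $A(x)$ at the index $e_p$ and factoring $p^{e_p}$ out of the second half yields
\[
A(x) = (1-x)^{E} \cdot \bigl(B_{e_p} B_{e_p+1}^p \cdots B_{\ell_p-1}^{p^{e_p-1}}\bigr)^{1+p^{e_p}}, \qquad E = \sum_{k=0}^{e_p-1} c_k p^k.
\]
To identify $E$, use the identity $\alpha_{k,i}^{(0)} = p\alpha_{k+1,i} - \alpha_{k,i}$ coming from the very definition of $\D_p$, so that $c_k = p s_{k+1} - s_k - (p-1)$ with $s_k \coloneqq \alpha_{k,1} + \alpha_{k,2}$. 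The sum telescopes to $E = p^{e_p}(s_{e_p} - 1) + (1-s_0) - 0$, and since $\ua_{e_p} = \mathbf 1 - \ua$ gives $s_{e_p} = 2 - s_0$, this collapses to $E = (1+p^{e_p})(1-\alpha_1-\alpha_2)$, proving the formula.

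For the perfect-power claim, note that $-p^{e_p}\in D$ forces $(1+p^{e_p})(\alpha_1+\alpha_2) \in \Z$, which combined with $\alpha_1+\alpha_2 = (a_1+a_2)/d$ in lowest-terms form of denominator $m = d/\gcd(d,a_1+a_2)$ gives $m \mid (1+p^{e_p})$. Writing $E = (1+p^{e_p}) - (1+p^{e_p})(\alpha_1+\alpha_2)$ and factoring $(1+p^{e_p})/m$ out of both terms shows that the exponent of $(1-x)$ is a multiple of $(1+p^{e_p})/m$; the second factor is automatically a $\bigl((1+p^{e_p})/m\bigr)$-th power since it is a $(1+p^{e_p})$-th power. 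Hence $A(x)$ is a perfect $\bigl((1+p^{e_p})/m\bigr)$-th power. The only genuinely delicate point is the passage from the congruence $-p^{e_p}\ua \equiv \ua \pmod \Z$ (as \emph{multisets}) to the pointwise relation $\ua_{e_p+k} = \mathbf 1 - \ua_k$; this requires checking that the symmetric group action of $D$ on $\ua$ is compatible with the iteration of $\D_p$, but this follows because $\D_p$ preserves the multiset structure and sends $(0,1]$ into itself.
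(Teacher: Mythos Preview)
Your proof is correct and follows the same overall strategy as the paper: pair $B_k$ with $B_{e_p+k}$ via the multiset identity $\ua_{e_p+k}=\mathbf 1-\ua_k$, apply Euler's transformation (Lemma~\ref{lem:euler}), and factor the product. The one genuine difference is how you compute the exponent $E$: you telescope the finite sum using $c_k = p s_{k+1}-s_k-(p-1)$ and the relation $s_{e_p}=2-s_0$, whereas the paper evaluates the same sum by extending it to a $p$-adic geometric series and recognising $\sum_{k\ge 0} p^k u_k = 1-\alpha_1-\alpha_2$. Your telescoping is slightly more elementary and avoids any $p$-adic manipulation; the paper's approach has the conceptual advantage of making the link with the $p$-adic expansion of $-( \alpha_1+\alpha_2)$ transparent.

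Two small remarks. First, your assertion that $p^{-e_p}\equiv p^{e_p}\pmod d$ is true but deserves a word of justification: since $|D|\le 2$ (Lemma~\ref{lem:Dembeds}), the hypothesis $-p^{e_p}\in D$ gives $p^{\ell_p}=(-p^{e_p})^2=1$ in $(\Z/d\Z)^\times$, whence $p^{-e_p}=p^{\ell_p-e_p}=p^{e_p}$. (Alternatively, simply note that $-p^{-e_p}=(-p^{e_p})^{-1}\in D$ directly.) Second, your closing worry about passing from the multiset congruence to a ``pointwise'' relation is unnecessary: every quantity you use---$B_k(x)$, $c_k$, $s_k$---is symmetric in the two components of $\ua_k$, so the multiset equality $\ua_{e_p+k}=\mathbf 1-\ua_k$ is all that is ever needed.
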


\begin{proof}
We recall from Lemma~\ref{lem:dworkmap}.(2) that the Dwork operator $\Dp$ acts by division by $p$ on $\Zp/\Z$.
The assumption of the proposition then tells us that $\Dp^{e_p}(\ua) \equiv - \ua \pmod{\Z}$, which implies more generally $\Dp^{e_p + k}(\ua) \equiv - \Dp^k(\ua) \pmod{\Z}$ for all~$k$. 
Since our parameters are all in $(0,1)$, we deduce that $\Dp^{e_p + k}(\ua) = 1- \Dp^k(\ua)$.
On the other hand, we recall from Lemma~\ref{lem:dworkmap} that $\Dp^k(\gamma)^{(0)} = \gamma^{(k)}$ for all~$k$.
Hence, Lemma~\ref{lem:euler} gives
$$B_k(x) = (1-x)^{\alpha_1^{(k)} + \alpha_2^{(k)} - (p-1)} B_{k + e_p}(x).$$
Coming back to the definition of $A(x)$, we find
\begin{align*}
A(x) = \prod_{k=0}^{\ell_p - 1} B_k(x)^{p^k}
& = \prod_{k=0}^{e_p - 1} (1-x)^{p^k (\alpha_1^{(k)} + \alpha_2^{(k)} - (p-1))} 
    B_{k + e_p}(x)^{p^k + p^{k+e_p}} \\
& = (1-x)^v \cdot \left(\prod_{k=0}^{e_p - 1} B_{k + e_p}(x)^{p^k}\right)^{1 + p^{e_p}},
\end{align*}
where the exponent $v$ is given by $v = \sum_{k=0}^{e_p - 1} p^k u_k$ with
$u_k = \alpha_1^{(k)} + \alpha_2^{(k)} - (p-1)$.
Now, we remark that, for $\gamma \in \Zp$, we have
$$-(1 - \gamma) = -1 - \sum_{m=0}^\infty p^m \gamma^{(m)} = \sum_{m=0}^\infty p^m (p - 1 -\gamma^{(m)}),$$
implying that $(1 - \gamma)^{(m)} = p - 1 - \gamma^{(m)}$ for all $m \in \N$.
Applying this result with $\gamma = \Dp^k(\alpha_i)$ ($i \in \{1, 2\}$) and remembering that $\Dp^{e_p + k}(\ua) = 1- \Dp^k(\ua)$, we conclude that
$u_{e_p + k} = -u_k$. Weighting by $p^k$ and summing over $k$ (and doing the computation in the ring of $p$-adic integers), we find
\begin{align*}
\sum_{k=0}^\infty p^k u_k
= \left(\sum_{k'=0}^\infty (-1)^{k'} p^{e_p k'} \right) \cdot \left(\sum_{k=0}^{e_p - 1} p^k u_k \right) 
= \frac 1 {1 + p^{e_p}} \sum_{k=0}^{e_p - 1} p^k u_k.
\end{align*}
On the other hand, coming back to the definition, we have
$$\sum_{k=0}^\infty p^k u_k
= \sum_{k=0}^\infty p^k \big(\alpha_1^{(k)} + \alpha_2^{(k)} - (p-1)\big)
= 1 - \alpha_1 - \alpha_2.$$
Hence, we conclude that $v = (1 + p^{e_p}) \cdot (1 - \alpha_1 - \alpha_2)$ and the first part of the proposition is proved.

We write $\alpha_1 + \alpha_2 = a/m$ with $a$ coprime with $m$. We thus have 
$$v = \frac {(1 + p^{e_p})(m-a)} m.$$
Since $m-a$ is coprime with $m$, we deduce that $m$ divides $1 + p^{e_p}$ and then that $v$ is a multiple of $r \coloneqq \frac {1 + p^{e_p}} m$.
Given that $(1 + p^{e_p})$ is also obviously a multiple of $r$, we conclude that $A(x)$ is a $r$-th power.
\end{proof}

As a conclusion, we have proved the following.

\begin{cor}
\label{cor:galoismodp}
Let $\pp$ be a prime of $K$ above~$p$.
The Galois group $\Gal(\pFq 2 1 {\ua} {(1)} x \mid k_\pp(x))$ embeds in $k_\pp^\times$ and hence is a cyclic group.
Moreover, if $\ell_p$ is even and $-p^{e_p} \in D$, its cardinality divides $(p^{e_p}-1)\cdot m$
\end{cor}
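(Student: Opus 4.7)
The first assertion is already essentially proved: Proposition~\ref{prop:kummer} shows that $\Gal(\pFq 2 1 {\ua}{(1)} x \mid k_\pp(x))$ embeds into $k_\pp^\times$, so it is cyclic. Thus the only nontrivial content is the divisibility statement in the second part, and this will be a direct consequence of the structural information about $A(x)$ provided by Proposition~\ref{prop:exponent}.

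The plan is therefore as follows. Under the hypothesis $\ell_p$ even and $-p^{e_p}\in D$, Proposition~\ref{prop:exponent} gives us that $A(x)$ is an $r$-th power in $k_\pp[x]$ with $r = (1+p^{e_p})/m$. Let $r'$ denote the largest integer for which $A(x)$ is an $r'$-th power; then $r \mid r'$, so by Proposition~\ref{prop:kummer} the order of the Galois group is
\[
\frac{q-1}{\gcd(r',\,q-1)} \quad \text{which divides} \quad \frac{q-1}{\gcd(r,\,q-1)}.
\]
Here $q = p^{\ell_p} = p^{2e_p}$, so $q-1 = (p^{e_p}-1)(p^{e_p}+1)$. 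Since $m \mid p^{e_p}+1$ (which follows from Proposition~\ref{prop:exponent} itself, because $r$ was defined to be the integer $(p^{e_p}+1)/m$), we have $r \mid p^{e_p}+1 \mid q-1$, whence $\gcd(r, q-1) = r$. Plugging in gives
\[
\frac{q-1}{r} = \frac{(p^{e_p}-1)(p^{e_p}+1)}{(p^{e_p}+1)/m} = (p^{e_p}-1)\cdot m,
\]
which is the desired divisibility.

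There is essentially no obstacle here, because all the hard work has been accomplished in Proposition~\ref{prop:exponent}: the delicate computation relating the multiplicity of the factor $(1-x)$ in $A(x)$ to $1-\alpha_1-\alpha_2$, together with the pairing $B_k \leftrightarrow B_{k+e_p}$ coming from $\Dp^{k+e_p}(\ua) = \boldsymbol 1 - \Dp^k(\ua)$, already exhibits the full exponent $r = (1+p^{e_p})/m$. The corollary is then a short arithmetic manipulation combining this with the Kummer-theoretic description of Proposition~\ref{prop:kummer}.
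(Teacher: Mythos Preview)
Your proof is correct and follows exactly the approach the paper intends: the corollary is stated immediately after Proposition~\ref{prop:exponent} with the words ``As a conclusion, we have proved the following,'' so the paper gives no separate proof. Your argument, combining Proposition~\ref{prop:kummer} with the exponent $r=(1+p^{e_p})/m$ from Proposition~\ref{prop:exponent} and the factorization $q-1=(p^{e_p}-1)(p^{e_p}+1)$, is precisely the intended one; the only minor remark is that the detour through the maximal $r'$ is not really needed, since Kummer theory directly gives that if $A(x)$ is an $r$-th power then the Galois group order divides $(q-1)/\gcd(r,q-1)$.
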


\subsubsection{Uniformizing the Galois groups}

We now aim at finding a common origin of the Galois groups given by Corollary~\ref{cor:galoismodp} in the terms of Conjecture~\ref{conj:weak}.
For this, we shall consider several subgroups of $\GL_1(K) = K^\times$.
The first one is $\um_m(K)$, the group of $m$-th roots of unity in $K$. The following lemma ensures that $K$ contains all $m$-th roots of unity, meaning that $\um_m(K)$ is a cyclic group of cardinality $m$.

\begin{lem}
We have $\zeta_m \in K$.
\end{lem}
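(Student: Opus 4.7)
The plan is to show directly that every $\lambda\in D(\ua,\ub)$ fixes $\zeta_m$ under the identification $\Gal(\Q(\zeta_d)/\Q)\simeq(\Z/d\Z)^\times$, which by Galois correspondence places $\zeta_m$ inside $K=\Q(\zeta_d)^{D(\ua,\ub)}$.

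First I would unpack the defining condition of $D=D(\ua,\ub)$ in our setting. Since $\ub=(1)$, the bottom condition is automatic, so $D$ is determined entirely by the action on $\ua=(\alpha_1,\alpha_2)$. The requirement ``$\lambda\cdot\ua\equiv\ua\pmod\Z$ as sets'' leaves only two possibilities for each $\lambda\in D$: either $\lambda\alpha_i\equiv\alpha_i\pmod\Z$ for $i=1,2$, or $\lambda\alpha_1\equiv\alpha_2$ and $\lambda\alpha_2\equiv\alpha_1\pmod\Z$. In either case, summing the two congruences yields the single congruence
\[
  \lambda(\alpha_1+\alpha_2)\equiv \alpha_1+\alpha_2\pmod\Z.
\]

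Next I would exploit the definition of $m$. Writing $\alpha_1+\alpha_2=a/m$ with $\gcd(a,m)=1$, the above congruence becomes $(\lambda-1)a\equiv 0\pmod m$, and coprimality forces $\lambda\equiv 1\pmod m$. Note also that $m$ divides $d$: indeed $\alpha_1+\alpha_2=(a_1+a_2)/d$, and $m=d/\gcd(d,a_1+a_2)$ by definition.

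Finally, since $m\mid d$, the element $\zeta_m$ lies in $\Q(\zeta_d)$ (concretely $\zeta_m=\zeta_d^{d/m}$), and under the identification $\Gal(\Q(\zeta_d)/\Q)\simeq(\Z/d\Z)^\times$ the action of $\lambda$ sends $\zeta_m$ to $\zeta_m^\lambda$. Because $\lambda\equiv 1\pmod m$, we conclude $\zeta_m^\lambda=\zeta_m$ for every $\lambda\in D$, hence $\zeta_m\in\Q(\zeta_d)^D=K$, as required. There is no real obstacle here; the only step that requires mild care is the ``as sets'' convention in the definition of $D$, which is immediately handled by the observation that the two-element-set condition always yields the sum relation $\lambda(\alpha_1+\alpha_2)\equiv\alpha_1+\alpha_2\pmod\Z$.
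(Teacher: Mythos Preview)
Your proof is correct and follows essentially the same approach as the paper's: both show that any $\lambda\in D$ permutes $\{\alpha_1,\alpha_2\}$ modulo $\Z$, deduce $\lambda(\alpha_1+\alpha_2)\equiv\alpha_1+\alpha_2\pmod\Z$, and conclude $\lambda\equiv 1\pmod m$, whence $\zeta_m$ is fixed by $D$. Your write-up is slightly more explicit about the two cases and about why $m\mid d$, but the argument is the same.
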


\begin{proof}
It follows immediately from the definition that $m$ divides $d$. 
Hence $\zeta_m \in \Q(\zeta_d)$ and it is enough to prove that $\zeta_m = \zeta_m^\lambda$ for any $\lambda \in D$.
Let $\lambda \in D$. By definition, the multiplication by $\lambda$ induces a permutation of $\{\alpha_1, \alpha_2\}$ modulo $\Z$.
Thus
$$\lambda \cdot (a_1 + a_2) \equiv a_1 + a_2 \pmod d$$
and we conclude that $\lambda \equiv 1 \pmod m$. The lemma follows.
\end{proof}

The second subgroup of $\GL_1(K)$ that will play a major role in what follows is the group $\GL_1(K^+)$ where $K^+ \coloneqq K \cap \R$.
Since the complex conjugacy is represented by the element $-1 \in (\Z/d\Z)^\times$, we deduce that $K^+$ is the subfield of $\Q(\zeta_d)$ cutted out by the subgroup 
$$D^+ \coloneqq D \cdot \{\pm 1\} \subseteq (\Z/d\Z)^\times.$$

\begin{defi} \label{def:G}
We define the subgroup $G$ of $\GL_1(K)$ by
\[ G\coloneqq 
\begin{cases} 
  \GL_1(K) & \text{if $K = K^+$} \\
  \GL_1(K^+) \cdot \langle \xi - \bar\xi \rangle & \text{if $K \neq K^+$ and $m = 2$} \\
  \GL_1(K^+) \cdot \langle 1 + \zeta_m \rangle & \text{if $K \neq K^+$ and $m \neq 2$}
\end{cases}\]
where $\xi$ is an element of $K\setminus K^+$ and $\bar\xi$ is its complex conjugate.
\end{defi}

\begin{rem}
The reason why the definition is slightly different when $m = 2$ comes simply from the fact that $1 + \zeta_m$ vanishes in this case.
We notice moreover that, as soon as $\xi \in K\setminus K^+$, the element $\xi - \bar\xi$ is a nonzero real multiple of $\i \in \C$.
The group $\GL_1(K^+) \cdot \langle \xi - \bar\xi\rangle$ is then the subgroup 
$$(K^\times \cap \R) \sqcup (K^\times \cap \R\i) \subseteq K^\times.$$
In particular, it does not depend on the choice of $\xi$, and so Definition~\ref{def:G} is not ambiguous.
\end{rem}

We are going to prove that the group $G$ fulfills the requirements of Conjecture~\ref{conj:weak}.
We start by a lemma that reformulates the condition of Corollary~\ref{cor:galoismodp} in more abstract terms.

\begin{lem} \label{lem:kplus}
Let $\pp$ be a prime in $K$ above $p$ and let $k_\pp$ (resp. $k^+_\pp$) denote the residue field of $K$ (resp. $K^+$) at $\pp$.
The extension $k_\pp / k^+_\pp$ is nontrivial if and only if $\ell_p$ is even and $-p^{e_p} \in D$.
Moreover, when this occurs, it has degree $2$.
\end{lem}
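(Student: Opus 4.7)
The plan is to reduce everything to a calculation in the Galois group $(\Z/d\Z)^\times \simeq \Gal(\Q(\zeta_d)/\Q)$ using the Galois correspondence and Proposition~\ref{prop:residuefield}.

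First, I would observe that $K^+$ corresponds under Galois correspondence to the subgroup $D^+ = D \cdot \{\pm 1\}$, as noted in the text (this uses that complex conjugation corresponds to $-1 \in (\Z/d\Z)^\times$). Applying the argument of Proposition~\ref{prop:residuefield} to both $K$ and $K^+$, the residue field degrees are
\[ [k_\pp : \Fp] = \frac{|P|}{|P \cap D|}, \qquad [k^+_\pp : \Fp] = \frac{|P|}{|P \cap D^+|}, \]
where $P \coloneqq \langle p \bmod d\rangle \subseteq (\Z/d\Z)^\times$. Dividing yields
\[ [k_\pp : k^+_\pp] = \frac{|P \cap D^+|}{|P \cap D|}. \]

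Next, I would distinguish two cases. If $-1 \in D$, then $D^+ = D$, so the extension is trivial; in this case the Galois-theoretic characterisation of $\ell_p$ from Proposition~\ref{prop:residuefield} forces $\ell_p$ to be the order of $p$ modulo $D$, and in particular $-p^{e_p} \in D$ would imply $p^{e_p} \in D$, contradicting $e_p < \ell_p$. So both sides of the equivalence are false. If $-1 \notin D$, then $D^+ = D \sqcup (-D)$ is a disjoint union, and the quotient $|P \cap D^+|/|P \cap D|$ is either $1$ or $2$ according to whether $P \cap (-D)$ is empty or not; in the latter case it is a coset of $P \cap D$ inside $P \cap D^+$, giving degree exactly $2$.

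The key step is then to identify when $P \cap (-D) \neq \emptyset$ with the stated condition. Suppose $e > 0$ is minimal with $-p^e \in D$. Then $p^{2e} = (-p^e)^2 \in D$, so $\ell_p \mid 2e$; but $p^e \notin D$ (otherwise $-1 = (-p^e)(p^e)^{-1} \in D$, contradicting $-1 \notin D$), so $\ell_p \nmid e$. These two divisibilities force $\ell_p$ to be even and $e = e_p = \ell_p/2$. Conversely, if $\ell_p$ is even and $-p^{e_p} \in D$, then $-1 \notin D$ (otherwise $p^{e_p} \in D$, contradicting minimality of $\ell_p$), and $p^{e_p} \in P \cap (-D)$ witnesses the nonemptiness. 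This completes the equivalence and the degree computation.

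The proof is essentially bookkeeping in the cyclic group $(\Z/d\Z)^\times$; no substantial obstacle is expected, the only mild subtlety being the minimality argument forcing $e = e_p$ (rather than some other divisor of $\ell_p$).
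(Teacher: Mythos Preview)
Your proposal is correct and follows essentially the same route as the paper: reduce to the index $[k_\pp:k^+_\pp]=|P\cap D^+|/|P\cap D|$ via the argument of Proposition~\ref{prop:residuefield}, then split on whether $-1\in D$. One small wording issue: the two divisibilities $\ell_p\mid 2e$ and $\ell_p\nmid e$ alone only force $\ell_p$ even and $e$ an \emph{odd multiple} of $e_p$; to get $e=e_p$ you must also invoke the minimality of $e$ together with $p^{\ell_p}\in D$ (so that $-p^{e-\ell_p}\in D$ would contradict minimality if $e>e_p$). The paper avoids this extra step by not claiming $e=e_p$ at all, instead observing directly that $p^e\equiv p^{e_p}\pmod d$ and hence $-p^{e_p}\in D$.
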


\begin{proof}
Let $P$ denote the subgroup of $(\Z/d\Z)^\times$ generated by $p$.
We know from the proof of Proposition~\ref{prop:residuefield} that $\Gal(k_\pp / \F_p) = P / (D \cap P)$,
and similarly one proves that $\Gal(k^+_\pp / \F_p) = P / (D^+ \cap P)$.
Hence the extension $k_\pp / k^+_\pp$ is trivial if and only if 
\begin{equation} \label{eq:DD+}
D \cap P = D^+ \cap P.
\end{equation}
If $-1 \in D$, we have $D = D^+$ and the condition~\eqref{eq:DD+} is obviously satisfied.
On the other hand, still assuming that $-1 \in D$, the condition $-p^{e_p} \in D$ is equivalent to $p^{e_p} \in D$, which cannot be true by minimality of $\ell_p$.
The equivalence stated in the lemma is then proved in this case.

We now assume that $-1 \not\in D$. Then $D^+$ is the disjoint union of $D$ and $(-D)$, from what we deduce that the condition~\eqref{eq:DD+} is equivalent to the fact that $P$ does not meet $(-D)$.
Clearly, if $-p^{e_p} \in D$, then $P$ meets $(-D)$. Conversely, let us assume that $P \cap (-D) = \emptyset$, \emph{i.e.}, that there exists an exponent $e \in \N$ such that $-p^e \in D$.
Then $p^{2e} \in D$ and it follows that $2e$ needs to be a multiple of $\ell_p$: there exists an integer $n$ such that $2e = n\ell_p$.
Necessarily $n$ has to be odd because otherwise, we would deduce that $\ell_p$ divides $e$ and, consequently, that $p^e \in D$; this cannot be true given that $-p^e \in D$ and $-1 \not\in D$ by assumption.
It follows that $\ell_p$ is even. Now, writing $n = 2n' + 1$, we get $p^e = p^{n' \ell_p + e_p} \equiv p^{e_p} \pmod d$ and so $-p^{e_p} \in D$.

Finally, the fact that $k_\pp / k^+_\pp$ has degree at most $2$ follows from the fact that $D \cap P$ has index at most $2$ in $D^+ \cap P$.
\end{proof}

Following Conjecture~\ref{conj:weak}, for any place $\pp$ of $K$, we set
\begin{equation} \label{eq:Gp}
    G_\pp \coloneqq \text{image}\big(G \cap \GL_1(\Opp) \longrightarrow \GL_1(k_\pp)\big)
\end{equation}
where $\Opp$ denotes the localization of the ring of integers of $K$ at $\pp$ and $k_\pp = \O_{(\pp)} / \pp \O_{(\pp)}$ is the residue field.

\begin{prop} \label{prop:GaussGal}
Let $\pp$ be a prime ideal above $p$. 
Then $G_{\pp}$ is equal to $\GL_1(k_\pp)$ if $\ell_p$ is odd or $-p^{e_p}\not\in D$; otherwise, it is its cyclic subgroup of order $(p^{e_p}-1)\cdot m$.
\end{prop}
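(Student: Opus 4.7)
The plan is to proceed by case analysis on the two conditions (i) $K=K^+$ versus $K \neq K^+$ and (ii) $k_\pp = k_\pp^+$ versus $k_\pp \neq k_\pp^+$ (whose equivalence with the dichotomy ``$\ell_p$ odd or $-p^{e_p} \notin D$'' is exactly Lemma~\ref{lem:kplus}). The two easy configurations are handled first. When $K = K^+$ we have $G = \GL_1(K)$ and every unit of $k_\pp^\times$ lifts to a $\pp$-unit of $K$, so $G_\pp = \GL_1(k_\pp)$; a quick check also shows that $K = K^+$ is incompatible with ``$\ell_p$ even and $-p^{e_p} \in D$'' (it would force $p^{e_p} \in D$, contradicting the minimality of $\ell_p$). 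When instead $K \neq K^+$ but $k_\pp = k_\pp^+$, reducing $(K^+)^\times$ alone already yields $(k_\pp^+)^\times = k_\pp^\times$, hence again $G_\pp = \GL_1(k_\pp)$.

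The substantive case is $K \neq K^+$ with $[k_\pp : k_\pp^+] = 2$. Here $k_\pp^\times/(k_\pp^+)^\times$ is cyclic of order $p^{e_p}+1$, naturally identified with the norm-one subgroup $\ker(N) \subset k_\pp^\times$ via $x \mapsto x/\sigma(x)$, where $\sigma$ generates $\Gal(k_\pp/k_\pp^+)$. Reducing $(K^+)^\times$ still gives the full subgroup $(k_\pp^+)^\times \subseteq G_\pp$, so the task reduces to computing the image of $G_\pp$ in $k_\pp^\times/(k_\pp^+)^\times$. Let $\omega$ be the chosen generator of $G$ modulo $(K^+)^\times$: namely $\omega = \xi - \bar\xi$ when $m = 2$, and $\omega = 1 + \zeta_m$ when $m > 2$. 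In both cases an elementary computation gives $\omega/\tau(\omega) = \zeta_m$ (with the convention $\zeta_2 = -1$), where $\tau$ is complex conjugation on $K$.

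Since $p \nmid d$, the extension $K/K^+$ is unramified at $\pp$, so $v_\pp$ restricts to a surjection $(K^+)^\times \twoheadrightarrow \Z$, and one can adjust $\omega$ by a suitable $r \in (K^+)^\times$ so that $r\omega$ is a $\pp$-unit. Its reduction then maps to $\bar\zeta_m$ under $x \mapsto x/\sigma(x)$, so the image of $G_\pp$ in $k_\pp^\times/(k_\pp^+)^\times$ is the cyclic subgroup generated by $\bar\zeta_m$. Now the proof of Proposition~\ref{prop:exponent} yields the crucial divisibility $m \mid p^{e_p}+1$, which simultaneously ensures that $\bar\zeta_m$ lies in $\ker(N)$ and retains its exact order $m$ there (since $p \nmid m$). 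We conclude $|G_\pp/(k_\pp^+)^\times| = m$, hence $|G_\pp| = (p^{e_p}-1)\cdot m$; being a subgroup of the cyclic group $k_\pp^\times$, $G_\pp$ is automatically its unique cyclic subgroup of that order.

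The main obstacle, I expect, is the unified computation of $\omega/\tau(\omega)$: in the case $m > 2$ it boils down to the identity $1 + \zeta_m^{-1} = \zeta_m^{-1}(1+\zeta_m)$, while in the case $m = 2$ one must instead rely on $\tau(\xi - \bar\xi) = -(\xi - \bar\xi)$ and unpack why both variants of $G$ in Definition~\ref{def:G} produce the \emph{same} invariant $\bar\zeta_m \in k_\pp^\times/(k_\pp^+)^\times$. The second, equally delicate point is that the final equality of orders requires the divisibility $m \mid p^{e_p}+1$, which is not obvious from the data and which is precisely what the analysis of the polynomials $B_k(x)$ carried out in Proposition~\ref{prop:exponent} provides.
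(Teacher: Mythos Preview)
Your proof is correct and shares the same scaffolding as the paper's: case analysis via Lemma~\ref{lem:kplus}, surjection of $(K^+)^\times$ onto $(k_\pp^+)^\times$, and then computation of the image of $G_\pp$ in the cyclic quotient $k_\pp^\times/(k_\pp^+)^\times$ of order $p^{e_p}+1$.

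Where you genuinely diverge is in the execution of that last step. The paper enlarges the generator set to $H = \um_m(K)\cdot\langle 1+\zeta_m\rangle$, first determines the image of $\um_m(k_\pp)$ in $k_\pp^\times/(k_\pp^+)^\times$ (order $m/\gcd(m,2)$, using $m\mid p^{e_p}+1$), and then separately computes the order of $1+\zeta_m$ in the further quotient $k_\pp^\times/\big((k_\pp^+)^\times\,\um_m(k_\pp)\big)$ to be $\gcd(m,2)$; the case $m=2$ is deferred to a parallel argument. Your route is shorter and more uniform: you use the Hilbert~90 isomorphism $k_\pp^\times/(k_\pp^+)^\times \xrightarrow{\;x\mapsto x/\sigma(x)\;} \ker(N)$ together with the single identity $\omega/\tau(\omega)=\zeta_m$, which holds verbatim for both choices of $\omega$ in Definition~\ref{def:G}. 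This collapses the paper's two-step count into one and absorbs $m=2$ (and $m=1$) without a separate treatment. The price is that you must adjust $\omega$ by an element of $(K^+)^\times$ to make it a $\pp$-unit, whereas the paper avoids this by observing directly that $1+\zeta_m$ and $\zeta_m$ are already $\pp$-units when $m\neq 2$.

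One small remark: the divisibility $m \mid p^{e_p}{+}1$ does not actually need the analysis of the $B_k(x)$ in Proposition~\ref{prop:exponent}. It follows immediately from $-p^{e_p}\in D$: multiplication by $-p^{e_p}$ permutes $\{\alpha_1,\alpha_2\}$ modulo $\Z$, hence $(1+p^{e_p})(\alpha_1+\alpha_2)\in\Z$, and since $\alpha_1+\alpha_2$ has exact denominator $m$ we get $m\mid 1+p^{e_p}$.
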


\begin{proof}
We only write the proof when $m \neq 2$, the case $m = 2$ being treated in a similar fashion.

To start with, we observe that $G$ contains $\um_m(K)$, which follows from writing 
\[\zeta_m = (\zeta_m+1)^2 \cdot (\zeta_m + \zeta_m^{-1} + 2)^{-1}\]
and noticing that the second factor lies in $\GL_1(K^+)$ since it is invariant under the transformation $\zeta_m\mapsto \zeta_m^{-1}$.
Therefore, we equally have $G = \GL_1(K^+) \cdot H$ with $H \coloneqq \um_m(K) \cdot \langle 1 + \zeta_m \rangle$.
We note that the former equality also holds when $K = K^+$ since $G = \GL_1(K)$ in this case.

We now compute the intersection $G \cap \GL_1(\Opp)$.
Given that $p$ is coprime with $d$, and hence with $m$, Hensel's lemma ensures that the morphism $\um_m(K) \to \um_m(k_\pp)$ is an isomorphism.
Thus $\zeta_m$, which is a generator of $\um_m(K)$, reduces modulo $\pp$ to a generator of $\um_m(k_\pp)$.
Therefore $1 + \zeta_m$ does not vanish in $k_\pp$.
This shows that the group $H$ sits inside $\GL_1(\Opp)$ and, consequently, that
$G \cap \GL_1(\Opp) = \GL_1(\Opp^+) \cdot H$
where $\Opp^+$ is the localization at $\pp$ of the ring of integers of $K^+$.
Reducing modulo $\pp$, we finally find
$$G_\pp = \GL_1(k_\pp^+) \cdot H_\pp$$
where $H_\pp$ is the reduction of $H$ modulo $\pp$.

When $\ell_p$ is odd or $-p^{e_p}\not\in D$, Lemma~\ref{lem:kplus} shows that $k_\pp = k_\pp^+$ and so $G_\pp = \GL_1(k_\pp)$ as claimed.

We now consider the case where $\ell_p$ is even and $-p^{e_p} \in D$.
Applying Lemma~\ref{lem:kplus} again, we find that the cardinality of $\GL_1(k_\pp^+)$ is $p^{e_p}-1$ and so the cardinality of the quotient $\kpt/\kppt$ is $p^{e_p}+1$.
Moreover, both of these groups are cyclic. 
We then reduced to prove that the image of $H_\pp$ in $\kpt/\kppt$ has cardinality $m$. For this, we observe that the kernel of $\um_m(k_\pp)\to \kpt/\kppt$ is $\um_m(k_\pp^+)$,
which is a cyclic group of order $\gcd(m, p^{e_p}-1)=\gcd(m, 2)$ because $m$ divides $p^{e_p}+1$. 

By what precedes, it is enough to show that the order of $1 + \zeta_m$ in the quotient group $\kpt/\kppt \um_m(k_\pp)$ is $1$ when $m$ is odd and $2$ when $m$ is even.
For this, we first notice that
$$(\zeta_m+1)^2 = \zeta_m \cdot (\zeta_m + \zeta_m^{-1} + 2) \in  \kppt \um_m(k_\pp)$$
since the first factor is in $\um_m(k_\pp)$ and the second factor lies is $k_\pp^+$.
Hence the order of $1 + \zeta_m$ is $1$ or~$2$.
Now, we observe that $1 + \zeta_m$ lies in $\kppt \um_m(k_\pp)$ if and only if there exists an integer $a$ such that $\zeta_m^a (1 + \zeta_m)$ is invariant by the transformation $\zeta_m \mapsto \zeta_m^{-1}$.
After simplification, this further reduces to $\zeta_m^{2a+1} = 1$.
When $m$ is odd, such an element $a$ obviously exists (we can take $a = \frac{m-1} 2$).
On the contrary, when $m$ is even, it does not exist because $\zeta_m$ has order $m$ in $\kppt$ and $m$ cannot divide the odd number $2a + 1$.
\end{proof}

Combining Proposition~\ref{prop:GaussGal} with Corollary~\ref{cor:galoismodp}, we get the first part of Theorem~\ref{thintro:2F1} of the introduction.

\subsubsection{Approaching the Galois group from below}
\label{ssec:boundbelow}

Up until now, we have only bounded the Galois groups of reductions of hypergeometric series from above.
We now develop also bounds from below, that will eventually give a complete proof of Theorem~\ref{thintro:2F1}.

We recall from Proposition~\ref{prop:kummer} that the order of the Galois group of $\Gal(\pFq 2 1 {\ua} 1 x\mid k_\pp(x))$ with $\ua=(\alpha_1, \alpha_2)$ is given by $$\frac {q{-}1} {\gcd(r, q{-}1)},$$ where $q=p^{\ell_p}$ and $r$ is the largest integer such that the polynomials $A(x)$ introduced in Equation~\eqref{eq:defA}, is a perfect $r$-th power.
To estimate $r$, we rely on the fact that $r$ must divide the valuation of $A(x)$ at any point, and also its degree (\emph{i.e.}, its valuation at infinity). 

\paragraph{Valuation at singular points.}

The degree, which we denote by $\nu_\infty$, has been already essentially computed in Subsection~\ref{sssec:GalGauss}.
Precisely, we recall from there that $B_k(x)$ is defined as the truncation at $x^p$ of $\pFq 2 1 {\Dp^k(\ua)} 1 x \bmod p$.
Then, Lemma~\ref{lem:euler} shows that $B_k(x)$ has degree $\min(\alpha_1^{(k)}, \alpha_2^{(k)})$, where $\alpha_i^{(k)}$ denotes the $k$-th digit in the $p$-adic expansion of $-\alpha_i$ for $i \in \{1,2\}$.
Therefore
\begin{equation}
\label{eq:degAx}
\nu_\infty = \sum_{k=0}^{\ell_p-1} \min(\alpha_1^{(k)}, \alpha_2^{(k)}) \cdot p^k.
\end{equation}

The valuation at $1$ of $A(x)$ can be computed similarly.
As in Subsection~\ref{sssec:GalGauss}, we write $B(\ug;x)$ for the truncation of $\pFq 2 1 \ug 1 x \bmod p$ at $x^p$.

\begin{lem}
    Let $\ug = (\gamma_1, \gamma_2) \in (\Zp \cap (0,1))^2$. Then
    \[ \val_1\big(B(\ug;x)\big) = \max\big(0,\, \gamma_1^{(0)} + \gamma_2^{(0)} - p + 1\big) \]
    where $\val_1$ denotes the valuation at $1$.
\end{lem}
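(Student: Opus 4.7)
The plan is to compute $B(\ug; 1) \bmod p$ directly using the Chu--Vandermonde summation formula, and then leverage the involution of Lemma~\ref{lem:euler} to handle the vanishing case.

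First, I would observe that since $\gamma_i \equiv -\gamma_i^{(0)} \pmod p$, every Pochhammer symbol satisfies $(\gamma_i)_k \equiv (-\gamma_i^{(0)})_k \pmod p$, so
\[
B(\ug; x) \equiv \sum_{k=0}^{\min(\gamma_1^{(0)}, \gamma_2^{(0)})} \frac{(-\gamma_1^{(0)})_k \, (-\gamma_2^{(0)})_k}{(k!)^2} \, x^k \pmod p,
\]
which modulo $p$ is the terminating hypergeometric polynomial $\pFq 2 1 {(-\gamma_1^{(0)}, -\gamma_2^{(0)})}{1}{x}$. Specializing at $x = 1$ and invoking the classical Chu--Vandermonde identity (valid here in characteristic $p$, since the denominator $(1)_{\gamma_1^{(0)}} = \gamma_1^{(0)}!$ is a unit) gives
\[
B(\ug; 1) \equiv \binom{\gamma_1^{(0)} + \gamma_2^{(0)}}{\gamma_1^{(0)}} \pmod p.
\]
Since $\gamma_1^{(0)}, \gamma_2^{(0)} \in \{0, \ldots, p-1\}$, an elementary application of Lucas's theorem shows that this binomial coefficient is nonzero modulo $p$ if and only if $\gamma_1^{(0)} + \gamma_2^{(0)} \leq p - 1$.

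When $\gamma_1^{(0)} + \gamma_2^{(0)} \leq p - 1$, we conclude immediately that $\val_1(B(\ug; x)) = 0$, in agreement with $\max(0, \gamma_1^{(0)} + \gamma_2^{(0)} - p + 1) = 0$. In the remaining case $\gamma_1^{(0)} + \gamma_2^{(0)} \geq p$, I would apply Equation~\eqref{eq:Bg} to factor
\[
B(\ug; x) = (1-x)^{e} \cdot B(\boldsymbol{1} - \ug; x), \qquad e \coloneqq \gamma_1^{(0)} + \gamma_2^{(0)} - (p-1) \geq 1.
\]
Using the identity $(1-\gamma)^{(0)} = p - 1 - \gamma^{(0)}$ recalled in the proof of Proposition~\ref{prop:exponent}, I get $(1-\gamma_1)^{(0)} + (1-\gamma_2)^{(0)} = 2(p-1) - (\gamma_1^{(0)} + \gamma_2^{(0)}) \leq p - 2$, so the first case applies to the parameter $\boldsymbol{1} - \ug$ and yields $B(\boldsymbol{1} - \ug; 1) \neq 0$. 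Hence the full contribution to $\val_1(B(\ug; x))$ comes from the factor $(1-x)^e$, giving $\val_1(B(\ug; x)) = e$, as required.

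The only subtle point is the legitimacy of Chu--Vandermonde modulo~$p$, but this is not really an obstacle: the identity is a polynomial identity in the parameters over $\Z$ (after clearing the denominator $\gamma_1^{(0)}!$, which is invertible here), so it reduces cleanly. Everything else is bookkeeping with base-$p$ digits.
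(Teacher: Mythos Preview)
Your proof is correct and takes a genuinely different route from the paper's. The paper argues indirectly: it applies the Euler factorization of Lemma~\ref{lem:euler}, observes that $B(\ug;x)$ and $B(\boldsymbol{1}-\ug;x)$ are solutions of hypergeometric operators whose local exponents at $x=1$ are known, and then pins down $\val_1$ among the residues of those exponents modulo $p$ using the bound $\deg B < p$. Your argument instead evaluates $B(\ug;1)$ explicitly: rewriting the coefficients as $\binom{\gamma_1^{(0)}}{k}\binom{\gamma_2^{(0)}}{k}$ and summing via Vandermonde gives the closed form $B(\ug;1)\equiv\binom{\gamma_1^{(0)}+\gamma_2^{(0)}}{\gamma_1^{(0)}}\pmod p$, whose vanishing is decided by Lucas. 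You only invoke Lemma~\ref{lem:euler} to peel off the factor $(1-x)^e$ in the vanishing case, exactly as the paper does. Your approach is more elementary (no appeal to local exponents) and yields the bonus of an explicit value for $B(\ug;1)$; the paper's approach stays within the differential-equation framework used throughout the section and would generalize more readily to points where no summation formula is available.
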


\begin{proof}
    We know from Lemma~\ref{lem:euler} that 
    \[B(\ug; x) = (1-x)^{\gamma_1^{(0)} + \gamma_2^{(0)} - p + 1} \cdot B(1-\ug; x).\]
    Without loss of generality, we assume that $\gamma_1^{(0)} + \gamma_2^{(0)}\geq p-1$; otherwise we exchange the roles of $\ug$ and $1-\ug$.
    The local exponents at $1$ of $\Hyp(\ug, (1))\bmod p$ are $0$ and $\gamma_1^{(0)} + \gamma_2^{(0)}-p-1$, while that of $\Hyp(1-\ug, 1)\bmod p$ are $0$ and $-1-(\gamma_1^{(0)} + \gamma_2^{(0)})$; besides $B(\ug;x)$ are $B(1-\ug; x)$ are solutions of the associated differential equations respectively.
    The only option to conceal these conditions is to have
    \begin{align*}
    \val_1\big(B(1-\ug;x)\big) & \equiv -1-\big(\gamma_1^{(0)} + \gamma_2^{(0)}\big) \pmod p\\
    \val_1\big(B(\ug;x)\big) & \equiv \gamma_1^{(0)} + \gamma_2^{(0)} - p + 1 + \val_1\big(B(1-\ug;x)\big)\equiv 0 \pmod p.
    \end{align*}
    Since, moreover, $\val_1(B(\ug;x)) \leq \deg B(\ug; x) < p$, we conclude that $\val_1(B(\ug;x)) = 0$.
\end{proof}

Applying the previous lemma with $\ug = \Dp^k(\ua)$ for $k$ varying between $0$ and $\ell_p{-}1$, we find
\begin{equation}
\label{eq:valAx}
\nu_1 \coloneqq \val_1\big(A(x)\big) = \sum_{k=0}^{\ell_p-1} \max\big(0, \, \alpha_1^{(k)} + \alpha_2^{(k)} - p + 1\big) \cdot p^k.
\end{equation}

Apart from the singular points $1$ and $\infty$, the last singular point of the hypergeometric differential equation is $0$. However, the valuation at $0$ of $A(x)$ is clearly $0$, by definition so this does not provide any further information in our quest to determine $r$. 

\paragraph{Valuation at ordinary points.}

Equation~\eqref{eq:GaussianRel} ensures that we can apply Lemma~\ref{lem:vals} to $B(\ug;x)$ and conclude that each $\xi \in \bar \F_p$ different from $1$ is at most a simple root of each of the polynomials $B_k(x)$.
Then, each root of $A(x)$ different from $1$ has multiplicity 
\begin{equation} \label{eq:RootsMult}
	\epsilon_0+ \epsilon_1\cdot p+\cdots + \epsilon_{\ell_p-1}p^{\ell_p-1}
\end{equation} for some $\epsilon_i\in \{0, 1\}$. 

Assuming now, that $A(x)$ is a perfect $r$-th power, we know that for each of its roots, the expression~\eqref{eq:RootsMult} must be divisible by $r$. 
We use this information is a very special case. For each $k$,
we let $b_k$ be the number of distinct roots of $B_k(x)$ different from $1$, and we order them by size:
\[b_{j_1}<\cdots <b_{j_\ell}.\]
We underline that the values of the $b_k$ can obviously be deduced from the degrees of $B_k(x)$ and the multiplicity of $1$ as a root of each of them, two quantities we have alreadly computed previously.
We take $c$ maximal such that $b_{j_1}+\cdots + b_{j_c}<b_{j_\ell}$.
Then there exists a root $\xi_0$ of $B_{j_\ell}(x)$, that is not a root of any of the polynomials $B_{j_1}(x),\ldots, B_{j_c}(x)$.
Set $J\coloneqq \{j_{k+1},\ldots, j_{\ell-1}\}$, so that the multiplicity of $\xi_0$ as a root of $A(x)$ is given by 
\[\nu_{J'}\coloneqq p^{j_\ell}+\sum_{j\in J'} p^j\] for some subset $J'\subseteq J$.

Putting all the constraints together, we conclude that $r$ has to be a divisor of
\[ g_{J'} \coloneqq \gcd\big(\nu_\infty,\, \nu_1,\, \nu_{J'},\, p^{\ell_p}-1\big) \]
for some $J' \subseteq J$.
Consequently, the index of $\Gal(\pFq 2 1 \ua {(1)} x\mid k_\pp(x))$ in the conjectured group $G_\pp$ of Proposition~\ref{prop:GaussGal} is bounded from above by
$g_{J'}/h$ where
\[ h \coloneqq \begin{cases}
p^{e_p} + 1 & \text{if $\ell_p$ is even and $p^{-e_p} \in D$} \\
1 & \text{otherwise.}
\end{cases} \]
Of course, the set $J'$ is \emph{a priori} not known, but we can even bound the index by taking the maximum (or the least common multiple) of all the previous quantities when $J'$ varies.

\paragraph{Uniformity in $p$.}

To make statements about all primes simultaneously, we need to study the behavior of $g$ with respect to $p$.
The key observation is that $\alpha_1^{(k)}$ and $\alpha_2^{(k)}$ depend only on $s \coloneqq \lfloor p/d\rfloor$ and $t \coloneqq p\bmod d$, as shown by the following lemma.

\begin{lem} \label{lem:polynomial-s}
    Let $\gamma \in \Zp \cap (0,1)$ with denominator $d$, and let $t \in \{1, \ldots, d{-}1\}$ with $\gcd(t,d) = 1$.
    Then there exists a polynomial $\Gamma_t(x) \in \Z[x]$ of degree at most $1$ such that, for all positive integer $s$ such that $p = ds + t$ is a prime number, 
    $\Gamma_t(s)$ is the unique representative of $(-\gamma) \bmod p$ in the interval $[0,p)$.
\end{lem}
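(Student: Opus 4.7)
The plan is to construct $\Gamma_t(x)$ explicitly as an affine polynomial in $s$ whose coefficients come from a single congruence computation modulo $d$, and then to verify both the congruence and the range condition by direct manipulation.

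Write $\gamma = c/d$ with $\gcd(c,d)=1$ and $c \in \{1,\dots,d-1\}$. I seek $N \in [0,p)$ with $dN \equiv -c \pmod{p}$. Writing $dN + c = kp = k(ds+t)$ for some nonnegative integer $k$ yields
\[
N \;=\; ks \;+\; \frac{kt-c}{d},
\]
which is an integer precisely when $kt \equiv c \pmod d$. Because $\gcd(t,d)=1$, there is a unique $k_0 \in \{1,\dots,d-1\}$ solving this congruence (note $k_0 \ne 0$ since $c \not\equiv 0 \pmod d$). Setting $\delta_t \coloneqq (k_0 t - c)/d \in \Z$, my candidate is
\[
\Gamma_t(x) \;\coloneqq\; k_0\, x + \delta_t.
\]

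Next I check the two required properties. The congruence is immediate:
\[
d\,\Gamma_t(s) + c \;=\; d k_0 s + (k_0 t - c) + c \;=\; k_0(ds+t) \;=\; k_0\, p,
\]
so $\Gamma_t(s) \equiv -c/d \pmod p$. For the range, I claim $0 \le \Gamma_t(s) < p$ for every $s \ge 1$ with $p=ds+t$ prime. The lower bound follows because $k_0 t - c \ge 1-(d-1) = 2-d > -d$, hence $\delta_t \ge 0$, and then $k_0 s + \delta_t \ge 1$. For the upper bound, compute
\[
p - \Gamma_t(s) \;=\; (d-k_0)\,s \;+\; (t-\delta_t) \;=\; (d-k_0)\,s \;+\; \frac{t(d-k_0)+c}{d},
\]
which is strictly positive since $d-k_0 \ge 1$, $t \ge 1$, and $c \ge 1$.

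Uniqueness of the representative in $[0,p)$ then forces $\Gamma_t(s)$ to equal $(-\gamma) \bmod p$, completing the proof. There is no genuine obstacle here: the only subtlety is that the bounds must hold \emph{uniformly} for all admissible $s$ (not merely for large $s$), but this is guaranteed by the second displayed identity above, which exhibits $p-\Gamma_t(s)$ as a manifestly positive expression independent of any asymptotic assumption.
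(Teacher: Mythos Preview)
Your proof is correct and follows essentially the same approach as the paper: both construct the same affine polynomial by solving a congruence modulo $d$ (you solve $k_0 t \equiv c \pmod d$ directly, the paper goes through a B\'ezout relation $ut+vd=1$, which amounts to the same thing since $k_0 = cu \bmod d$) and then verify the range $[0,p)$ by elementary inequalities. Your range check, exhibiting $p-\Gamma_t(s)$ as a manifestly positive expression, is arguably a bit cleaner than the paper's coefficient-by-coefficient estimate.
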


\begin{proof}
We write $p = ds + t$.
We consider a Bézout relation $ut + vd = 1$ with $u, v \in \Z$ and notice that $us - v \equiv -\frac 1 d \pmod p$ for all $s$ since
	\[-d(us - v)\equiv ut + vd = 1 \pmod p.\]
If $\gamma = \frac a d$, we now define
\[\Gamma_t(x) \coloneqq aux - av - \left \lfloor \frac {au} d \right \rfloor (d x + t).\]
Clearly, $\Gamma_t(x)$ is a polynomial with integral coefficients of degree at most $1$,
and $\Gamma_t(s) \equiv -\gamma \pmod p$ for all $s$.
Besides the constant coefficient of $\Gamma_t(x)$ is in the range $[0,d)$, while its linear coefficient is in $[0,t)$ given that
$$-1 < -\gamma = -av - \frac{au}d t
 \leq -av - \left\lfloor \frac{au} d \right\rfloor t
 < -av - \left(\frac{au} d - 1 \right) t = t - \gamma.$$
Therefore $0 \leq \Gamma_t(s) < ds + t = p$ for all $s$.
\end{proof}

Applying Lemma~\ref{lem:polynomial-s} with $\gamma = \Dp^k(\alpha_i)$ (with $i \in \{1,2\})$, we find that each $\alpha_i^{(k)}$ is a polynomial of degree at most $1$ in $s$ (for each fixed congruence class $t$ modulo $d$).
Given that $\ell_p$ also only depends on the congruence of $p$ modulo $d$, we find that all the quantities $\nu_\infty$, $\nu_1$, $p^{\ell_p}{-}1$ and all $\nu_{J'}$ are polynomials in $s$ as well.
If the greatest common divisor of these polynomials computed in $\Q[s]$ is equal to $h$ for all $J'$, we derive that the index of $\Gal(\pFq 2 1 \ua {(1)} x\mid k_\pp(x))$ in $G_\pp$ is bounded independently of $p$, and we can even explicitly compute a bound by writing down Bézout relations between these polynomials.
In some cases, this bound is equal to $1$, so that we can conclude that $\Gal(\pFq 2 1 \ua {(1)} x\mid k_\pp(x)) = G_\pp$ for all prime $p$ which is congruent to $t$ modulo $d$.

\begin{ex}
	For any parameter $\ua$, and any prime number $p$ for which $\ell_p=1$, we immediately obtain from Lemma~\ref{lem:vals} that if $A(x)=\pFq 2 1 \ua {(1)} x \bmod p, x^p$ is not of the form $(x-1)^{2d}$ for some $d$, then $A(x)$ is not a perfect power and so $\Gal(\pFq 2 1 \ua {(1)} x \mid k_\pp(x))=\F_p^\times$ for any prime ideal $\pp$ over $p$.
 This is one particular instance on our ideas used for zeroes of $A(x)$ that are nonsingular points of the associated differential equation. For example, taking $\ua=(\frac 1 2, \frac 1 2)$ we have seen the very same argument for hypergeometric function \[\pFq 2 1 \ua {(1)} {4x}=\sum_{n=0}^\infty \binom{2n}{n}^2x^n\] in Subsection~\ref{sssec:ex:binom2}.
\end{ex}
\begin{ex}
Let us take $\ua=(\frac 1 8, \frac 3 8)$. Table~\ref{tab:ex:index} shows for each congruent class of $p$ modulo $d$, the values of $\ell_p$, $\nu_\infty$, $\nu_1$,
the lcm of $\nu_{J'}$ for the relevant $J'$ and the corresponding value for $g$.
We see that the last two columns agree, which proves that the actual Galois groups agree with the conjectured ones.
\begin{table}
    \centering
		\begin{tabular}{r||c|c||c|c|c||c|c}
		 & $\ell$ & $-p^{\frac \ell 2}\in D$? & $\nu_\infty$ & $\nu_1$ & $\text{lcm}(\nu_{J'})$ & $\text{lcm}(g_{J'})$ & $h$\\ \hline \hline
	    $p=8s + 1$ & $1$ & - & $s$ & $0$ & $1$ & $1$ & $1$\\ \hline
		$p=8s + 3$ & $1$ & - & $s$ & $0$ & $1$ & $1$ & $1$\\ \hline
		  $p=8s + 5$ & $2$ & no & $8s^2+10s+3$ & $4s+3$ & $8s+5$ & $1$ & $1$\\ \hline
        $p=8s + 7$ & $2$ & yes & $8s^2+12s+4$ & $ 4s+4$ & $64s^2 + 88s + 30$ & $4s+4$ & $4s+4$
        \end{tabular}
		\caption{Proof that for $\ua=(\frac 1 8, \frac 3 8)$ the Galois group is cyclic of order $\frac{p^{\ell_p}-1}{h}$}
		\label{tab:ex:index}
\end{table}
\end{ex}

\begin{ex}
Let us now take $\ua=(\frac 1 7, \frac 3 7)$. In Table~\ref{tab:ex:index7}, we again investigate for all congruence classes of $p$ modulo $7$ the implications on the size on the Galois groups. We display the ratio $\text{lcm}(g_{J'})/h$ measuring the discrepancy between the expected size of the Galois group and the bound we prove.
The quotient is $1$ if and only if we succeeded to prove that the Galois group $\Gal(\pFq 2 1 \ua {(1)} x \mid k_\pp(x))$ is of index $h$ in $k_\pp^\times$.
We also display $\gcd(\nu_\infty, \nu_1, p^\ell-1)/h$ to emphasize the benefit of the computation of the $\nu_{J'}$ in finding bounds for the size.
\begin{table}[htb]
    \centering
		\begin{tabular}{r||c|c||c|c||c}
		 & $\ell$ & $-p^{\frac \ell 2}\in D$? &  $\gcd(\nu_\infty, \nu_1, p^\ell-1)/h$ & $\text{lcm}(g_{J'})/h$ & $h$\\ \hline \hline
	    $p=7s + 1$ & $1$ & - & $s$ & $1$ & $1$\\ \hline
        $p=7s + 2$ & $3$ & - & $4$ & $\mathbf{2}$ & $1$\\ \hline
        $p=7s + 3$ & $6$ & yes & $1$ & $1$ & $ 49s^3 + 63s^2 + 27s + 4$\\ \hline
        $p=7s + 4$ & $3$ & - & $5$ & $1$ & $ 1$\\ \hline
        $p=7s + 5$ & $6$ & yes & $1$ & $1$ & $ 49s^3 + 105s^2 + 75s + 18$\\ \hline
        $p=7s + 6$ & $2$ & yes & $1$ & $1$ & $ s+1$
        \end{tabular}
		\caption{For $\ua=(\frac 1 7, \frac 3 7)$ and prime numbers congruent to $2$ modulo $7$ the techniques presented do not suffice to prove that $A_p(x)$ is not a square. }
		\label{tab:ex:index7}
	\end{table}   
\end{ex}

\begin{ex}
    From the previous examples one might hope, that using our methods for any set of parameters we are able to find a fixed bound, valid for all prime numbers, on the maximal index of the Galois group in the cyclic group of order $(p^{\ell_p}-1)/h$. However, taking $\ua=(\frac 1 {15}, \frac {11} {15})$ dashes this hope. Writing $p=15s+t$, for each value of $t\in\{2, 7, 8, 13, 14\}$, we find each time a subset $J'$ for which $g_{J'}(\alpha)$ is a nonconstant polynomials in $s$.
\end{ex}

We have implemented the methods described in this chapter in SageMath\footnote{The code is available at \url{https://plmlab.math.cnrs.fr/caruso/gal-Dfin-modp}}, to carry out the necessary computations for the examples and to test the bounds given for small values of the common denominator of the parameters.
Doing this, we showed that the equality
\[\Gal(\pFq 2 1 \ua {(1)} x\mid k_\pp(x)) = G_\pp\]
holds whenever $d \in \{2, 3, 4, 6, 8, 12, 24\}$, which completes the proof of Theorem~\ref{thintro:2F1} of the introduction.
Similarly, we also proved by computations that the index of $\Gal(\pFq 2 1 \ua {(1)} x\mid k_\pp(x))$ in $G_\pp$ is always at most $5$ when $d \leq 12$.

\printbibliography

\textsc{CNRS; IMB, Université de Bordeaux, 351 Cours de la Libération, 33405 Talence,
France}\\
\textit{Email: }\href{mailto:xavier@caruso.ovh}{\texttt{xavier@caruso.ovh}}\\

\textsc{Faculty of Mathematics, University of Vienna, Oskar-Morgenstern-Platz 1, 1090 Vienna, Austria}\\
\textit{Email: }\href{mailto:florian.fuernsinn@univie.ac.at}{\texttt{florian.fuernsinn@univie.ac.at}}\\

\textsc{Institut de Mathématiques de Toulouse, Université Paul Sabatier, 118 route de Narbonne, 31062 Toulouse Cedex 9, France
}\\
\textit{Email: }\href{mailto:daniel.vargas-montoya@math.univ-toulouse.fr}{\texttt{daniel.vargas-montoya@math.univ-toulouse.fr}}\\

\end{document}

\subsection{Fuchs' Theorem in Positive Characteristic}
\label{sec:Fuchsp}
The local solution theory of differential operators with complex coefficients was developed by Fuchs in the middle of the 19th century \cite{Fuc66}, and refined by Frobenius \cite{Fro73}. We briefly mention definitions and the result, we will transfer to differential equations over fields of positive characteristic. For a more detailed treatment, we refer to \cite{Sv03} and \cite{FH23}.\\

Consider a differential operator 
\[L=a_n\partial^n +\ldots + a_1\partial + a_0\]
with $a_i\in \C\ps{x}$ and assume that it has at mos a \textit{regular singularity} at $0$, \emph{i.e.}, $a_i/a_n$ has a pole of at order at most $n-i$ at $0$. Write 
\[L=\sum_{i=0}^\infty \sum_{j=0}^{n}c_{i,j}x^i \partial^j= \sum_{k=\tau}^\infty L_k\]
where we write $L_k=\sum_{i-j=k}c_{i,j}x^i\partial^j$ and $\tau$ for the minimal shift $k=i-j$ such that $L_k\neq 0$. Upon multiplication by $x^{-\tau}$, we may assume without loss of generality that $\tau=0$. We call $L_0$ the \textit{initial form} of $L$. One easily checks that $L_0x^k=\chi_L(k) x^k$ for some polynomial $\chi_L \in \C[s]$, and we call $\chi_L$ the \textit{indicial polynomial} of $L$. Its roots $\rho$ are called the \textit{local exponents} of $L$ and we denote by $m_\rho$ their multiplicity. The operator $L$ is regular singular if and only if $L_0$ has the same order $n$ as $L$. In this case the polynomial $\chi_L$ also has order $n$ as well, and there are precisely $n$ local exponents of $n$, counted with multiplicities.

Then a (very imprecise) version of Fuchs' Theorem states the following:
\begin{thm}[Fuchs, 1868] \label{thm:Fuchs0}
Let $L\in \C\ps{x}[\partial]$ be a differential operator with regular singularity at $0$. Then there exists a basis of $n$ linearly independent solutions solutions of the equation $Ly=0$ of the form 
\[f_i=x^\rho\left(f_{0,1} + f_{1,1} \log(x) + \ldots + f_{n-1, i}\log(x)^{n-1}\right),\]
where $\rho$ ranges over the local exponents of $L$ (with multiplicities). 
\end{thm}

Here we interpret $x^\rho$, as a multi-valued function.\\

The study of differential equations in positive characteristic is motivated by the Grothendieck $p$-curvature conjecture, relating solutions of the reduction of a differential equation with rational polynomial coefficients modulo prime numbers $p$, to the algebraicity of solutions of the original equation \cite{Kat72}. The \emph{$p$-curvature}, and especially its vanishing or its nilpotence give important arithmetical insights about a differential equation and its solutions. For a detailed discussion, we refer the reader to the recent survey by Bostan, Caruso, and Roques~\cite{BCR23}.

The first attempt to achieve results similar to Fuchs' Theorem, but in positive characteristic, is due to Honda \cite[\S4]{Hon81}, and was extended in \cite{Dwo90} and \cite{FH23}. We state the results we are using here mostly without proofs, for more details see \cite{FH23}. 

For simplicity we consider the case $K=\F_p$, although the theory extends to any fields $K$ of positive characteristic, and we assume that differential equations we are considering have all local exponents in the prime field $\F_p$ itself (and not in its algebraic closure), as it holds true for hypergeometric differential equations with parameters in $\F_p$. Denote by 
\[\Rp \coloneqq \F_p(z_1, z_2,\ldots)\ls{x} \]
the field of Laurent series in $x$ with coefficients in the rational functions in countably many variables $z_i$ for $i\in \Z_{>0}$. Further we turn $\Rp$ into a differential ring by setting
\begin{gather*} 
\partial x=1,\qquad \partial z_1 = \frac{1}{x} \\
\partial z_i=\frac{\partial z_{i-1}}{z_{i-1}}=\frac{1}{x\cdot z_1\cdots z_{i-1}}, \quad \text{for } i\geq 2. 
\end{gather*} 
and extending $\partial$ linearly and via the Leibniz rule. The constants of this ring are given by 
\[\mathcal{C}_p\coloneqq \F_p(z_1^p, z_2^p,\ldots)\ls{x^p}\]

Accepting solutions in the ring $\Rp$, we have the following results: Honda  proved in \cite[Thm.~4]{Hon81} the following:
\begin{thm}[Honda, 1981]
Let $L\in \F_p[x][\partial]$ be a differential operator of order $n$ with $n<p$. Then $L$ has nilpotent $p$-curvature if and only if there exists a full basis of $\F_p(x^p, z_1^p)$-linearly independent solutions of $Ly=0$ in $\F_p[x, z_1]$.
\end{thm}

In \cite[Lem.~0.6]{Dwo90} the following extension is shown:
\begin{thm}[Dwork, 1990] \label{thm:Dwork}
Let $L\in \F_p[x][\partial]$ be a differential operator. Then $L$ has nilpotent $p$-curvature if and only if there exists a full basis of $\F_p(x^p, z_1^p,\ldots, z_k^p)$-linearly independent solutions of $Ly=0$ in $\F_p[x, z_1,\ldots, z_k]$ for some $k$.
\end{thm}

Recently the ideas were extended to account for differential equations whose $p$-curvature is not nilpotent, making use of the full ring $\Rp$ in \cite[Thm.~3.17]{FH23}.

\begin{thm}[Fürnsinn--Hauser, 2023] \label{thm:FH}
Let $L\in \F_p[x][\partial]$ be a regular singular differential operator whose local exponents all lie in the prime field. Then $Ly=0$ has a basis of $\Cp$-linearly independent solutions in $\Rp$.
\end{thm}

Note here that the assumption on the local exponents are made only for convenience. Full basis for arbitrary regular singular differential operators can be constructed in an extension of $\Rp$, introducing symbols $t^\rho$ for $\rho \in \bar \F_p$ subject to $t^\rho \cdot t^\sigma = t^{\rho + \sigma}$ and $\partial t^\rho = t^\rho \rho/x$.

For our proof of the nilpotence of the $p$-curvature for hypergeometric differential operators we will use Theorem~\ref{thm:Dwork}. However, the methods of the proof of Theorem~\ref{thm:FH} will serve as a main tool for this goal, as well as for the proof of Theorem~\ref{thm:block}, and we will recall them in the following.

Recall that we write $L=L_0-T$, where $L_0$ is the part of $L$ with smallest shift, $\tau$, which we assume without loss of generality to be $0$. Moreover we denote the indicial polynomial of $L$ by $\chi_L$. We have

\begin{lem}
A $\Cp$-basis of solutions of $L_0y=0$ is given by monomials $x^\rho z^\alpha$, with $\rho$ ranging over representatives of the local exponents of $L$ in $\{0, 1, \ldots, p-1\}$ and  $0\leq i<m_\rho$, where $m_\rho$ denotes the multiplicity of $\rho$ and
\[i^*=(i, \lfloor i/p \rfloor, \lfloor i/p^2 \rfloor, \lfloor i/p^3 \rfloor,\ldots)\in \N^{(\N)}.\]
\end{lem}

We define $\mathcal{H}$ to be the direct complement of the $\F_p$ vector space $\ker L_0$ given by elements of $\Rp$, whose Laurent series expansion does not contain any monomial from $\ker L_0$. Then we have
\begin{thm} \label{thm:Fuchsp}
\begin{enumerate}[label=(\arabic{enumi})]
\item The map $L_0:\mathcal{H}\to \mathcal R$ is an $\Cp$-linear automorphism with right inverse $S$.
\item The map \[u=\id_{\mathcal R} - S \circ T\] is a $\Cp$-linear automorphism of $\Rp$ with inverse
\[v=\sum_{k=0}^\infty (S\circ T)^k.\]
\item The automorpism $v$ transforms $L$ into $L_0$, \emph{i.e.}, $L\circ v = L_0.$
\item A $\Cp$-basis of solutions of $Ly=0$ is given by $v(x^\rho z^{i^*})$, where $\rho$ ranges over representatives of the local exponents of $L$ in $\{0, 1, \ldots, p-1\}$ and  $0\leq i<m_\rho$. 
\end{enumerate}
\end{thm}
We call the solutions obtained from (4) the \textit{xeric} solutions of $Ly=0$. For our arguments we will need one technical detail from the proof of the preceding theorem. Define
\[e:\N^{(\N)}\to \N,\quad  \mathbf{a}=(a_1, a_2, a_3, \ldots) \mapsto 1 + \overline{a_1} + \overline{a_2} p + \overline{a_3} p^2 + \ldots,\]
where $\overline{\cdot}:\N \to \{0, 1, \ldots, p-1\}\subseteq \N$ denotes the reduction modulo $p$. Moreover, we define \[\mathcal{D}:x^k\F_p[z_1,z_2,\ldots]\to \N,\quad  f = x^k\sum_i z^{\mathbf{a}^{(i)}}\mapsto \begin{cases}  \max_i e\big(\mathbf{a}^{(i)}\big) & \text{if }f \neq 0\\ 0 & \text{otherwise.} \end{cases}\] Then we have the following lemma.
\begin{lem} \label{lem:calD}
\begin{enumerate}[label=(\arabic{enumi})]
\item We have \[L_0(f(z) x^k)=\left(\chi_L(k)f(z)+\chi_L^{[1]}(k)(x\partial)f(z)+\ldots +\chi_L^{[n]}(k)(x\partial)^nf(z)\right)x^k.\]
In particular, $\mathcal D(L_0(f(z) x^k))=\max(\mathcal D(f)-\ell, 0)$, where $\ell$ denotes the multiplicity of $k$ as a zero of $\chi_L$.
\item Let $f(z)\in \F_p[z_1, z_2,\ldots]$ be non-zero and let $k$ be an $\ell$-fold root of $\chi_L$. Then $\mathcal{D}(S(f(z)x^k))=\mathcal{D}(f(z)x^k)+\ell.$ 
\end{enumerate}
\end{lem}
As these statements do not directly appear in \cite{FH23} we sketch the proof here:
\begin{proof}
The first part of (1) is proved in \cite[Lem.~3.6]{FH23}. Let $\mathbf{a}=(a_1,a_2,\ldots) \in \N^{(\N)}$. As $x \partial$ decreases the value of $\mathcal{D}$ of a polynomial $f(z)\in \F_p[z_1,z_2,\ldots]$ by exactly one according to \cite[Lem.~3.4]{FH23}, the second assertion of (1) follows.

By Theorem \ref{thm:Fuchsp} (1), the map $L_0:\mathcal{H}\to \mathcal R$ is an isomorphism. By (1) of this lemma $L_0:\mathcal{H}\to \mathcal R$  decreases $\mathcal{D}$ of $f(z)x^k$ by $\ell$, thus its inverse, $S$ increases it by $\ell$.
\end{proof}

We start  with describing the basis of xeric solutions of a hypergeometric differential equation in positive characteristic. In particular, we will show that the xeric solutions are all polynomial. Let us fix a prime number $p$ and let
\begin{equation}
\Hyp=\Hyp(\ua, \ub)=-x\prod_{i=1}^n(\theta+\alpha_i)+\theta\prod_{j=1}^{n-1}(\theta+\beta_j-1)
\end{equation}
be a hypergeometric differential operator with parameters $\ua=(\alpha_1,\ldots, \alpha_n)\in \F_p^n$ and $\ub=(\beta_1,\ldots, \beta_{n-1})\in \F_p^{n-1}$ and assume that $\alpha_i\neq \beta_j$ for all $i,j$. Write $\Hyp_0=\theta\prod_{k=1}^{n-1}(\theta+\beta_j-1)$, $T=x\prod_{j=1}^n(\theta+\alpha_i)$ and denote by $S$ a right-inverse of $\Hyp_0$ on $\Rp$. The local exponents of the equation, with multiplicities, are given by $0$ and $1-\beta_j\in \F_p$ for $1\leq j \leq n-1$ and we write $m_j$ for the multiplicity of $1-\beta_j$ as a local exponent. Write \[y_{1-\beta_j, i}=\sum_{k=0}^\infty (S\circ T)^k(x^{1-\beta_j}z^{i^*})=\sum_{k=1-\beta_j}^\infty u_k x^k,\] where $0\leq i <m_{j}$ with $u_k\in \F_p[z_1,z_2,\ldots]$ for the xeric solutions corresponding to the local exponents $1-\beta_j$. Here we write by abuse of notation $1-\beta_j$ for its unique representative among the numbers $0,1,\ldots, p-1\in \N$. We define $\mathcal{D}(1-\beta_j, i, k)\coloneqq \mathcal{D}(u_k)$. It clearly holds $\mathcal{D}(1-\beta_j, i, 1-\beta_j)=i+1$ and that if $\mathcal{D}(1-\beta_j, i, k_0)=0$ then $\mathcal{D}(1-\beta_j, i, k)=0$ for all $k>k_0$. We write $A(t)=\prod_{i=1}^n(t+\alpha_i)$ and $B(t)=(t+1)\prod_{k=1}^{n-1}(t+\beta_j)$.

\begin{lem}\label{lem:Dxeric}

	We have the following recursion for $\mathcal{D}(1-\beta_j, i, k),$ assuming $\mathcal{D}(1-\beta_j, i, k)\neq 0$ and $k\geq 1-\beta_j$.
	\[\mathcal{D}(1-\beta_j, i, k+1)= 
	\begin{cases}
	\mathcal{D}(1-\beta_j, i, k)  & \text{if }A(k)\neq 0 \text{ and } B(k)\neq 0\\
	\mathcal{D}(1-\beta_j, i, k)+\ell & \text{if $k$ is $\ell$-fold root of $B$}\\
	\max(\mathcal{D}(1-\beta_j, i, k)-\ell, 0) & \text{if $k$ is $\ell$-fold root of $A$}
	\end{cases} \]
\end{lem}
\begin{proof}
Write $T=\widetilde{T}\circ x$. Then $\widetilde T = \prod_{j=1}^s (\theta + \alpha_j -1)$ is its own initial form with indicial polynomial $A(t-1)$. The indicial polynomial of the initial for $\Hyp_0$ of $\Hyp$ is given by $B(t-1)$. We have $u_{k+1}x^{k+1}=(S\circ \widetilde T\circ x)(u_{k}x^k),$ where $S$ denotes the right-inverse of $\Hyp_0$, according to Theorem~\ref{thm:FH} (1). By Lemma \ref{lem:calD} the claim follows.
\end{proof}

With this it is easy to see:
\begin{prop} \label{prop:polysol}
	The xeric solutions of the hypergeometric differential equation $\Hyp y=0$ are polynomial.
\end{prop}
\begin{proof}
	Assume that $1-\beta_j$ is a local exponent of multiplicity $m$. Then there are $s-m$ zeroes of $B$ among the values $1-\beta_j+1,\ldots, 1-\beta_j+p-1$, counted with multiplicity. At the same time there are $s$ zeroes of $A$ among the values of $1-\beta_j,\ldots, 1-\beta_j+p-1$ counted with multiplicity. We claim that $\mathcal{D}(1-\beta_j, i, 1-\beta_j+p-1)=0$. Indeed, assume the contrary, that is, $\mathcal{D}(1-\beta_j, i, n)=0>0$ for all $n=1-\beta_j+1, \ldots, 1-\beta_j+p-1.$ Then we have according to Lemma~\ref{lem:Dxeric}
	\[\mathcal{D}(1-\beta_j, i, 1-\beta_j+p-1)=i+1+s-m-s\leq 0.\]
\end{proof}

\begin{rem}
Note that this fact is specific to hypergeometric differential equations. In general, even if an equation has nilpotent $p$-curvature, the xeric solutions in general need not be polynomial, although then a basis of polynomial solutions exists \cite{FH23}. In the case of hypergeometric functions, however, this is guaranteed, as we have seen. 
\end{rem}

\begin{ex}
We consider the hypergeometric differential operator 
\[\Hyp=\Hyp((1/3, 2/5, 1/6), (2/3, 3/4), x)\]
and determine the basis of xeric solutions of $Hy=0$ in $\mathcal{R}_{11}$.
We have $\ua=(2, 4, 7)$ and $\ub=(1, 8, 9)$, and we get $A(t)=(t-9)(t-7)(t-4)$ as well as $B(t)=(t-10)(t-3)(t-2)$. 
The xeric solutions read 
\begin{align*}
y_0 & = 1 + 2x + 5x^2 + 3z_1x^3 + (2z_1^2 + 4z_1)x^4 + (6z_1 + 8)x^5 + (5z_1+ 2)x^6 + \\
& \quad + (9z_1 + 3)x^7 + 2x^8 + 6x^9\\
y_3 &= x^3+5 z_1 x^4+2 x^5+9 x^6+3 x^7\\
y_4 & = x^4
\end{align*}
All of them are polynomial and we can observe that $\mathcal{D}$ is increasing for the coefficients of powers of $x$ with exponents one larger than a zero of $B(t)$ and is decreasing for the coefficients of powers of $x$ with exponents one larger than a zero of $A(t)$.
\end{ex}

We need the following lemma to determine the dimension of the $\F_p\ps{x^p}$-vector space of solutions of the hypergeometric differential equation in $\F_p\ps{x}$.

\begin{lem} \label{lem:span}
The $\F_p\ls{x^p}$-vector space of solutions of $\Hyp y=0$ in $\F_p\ls{x}\subseteq \Rp$ is generated by the xeric solutions in $\F_p\ps{x}$.
\end{lem}
\begin{proof}
Assume that there is a $\F_p(z_1^p,z_2^p,\ldots)\ls{x^p}$-linear combination $y=\sum c_{j, i}(x^p, z^p)y_{1-\beta_j, i}$ of xeric solutions with $c_{j, i}\in \F_p(z_1, z_2,\ldots)\ls{x}\setminus \{0\}$, not all $y_{1-\beta_j, i}$ in $\F_p\ps{x}$ such that $y\in \F_p\ls{x}$, and without loss of generality assume $y\in \F_p\ps{x}$. Consider $(\beta_j, i, k)$ such that $y_{1-\beta_j, i}$ has non-zero coefficient in the linear combination, and such that $\mathcal{D}(1-\beta_j, i, k)$ is maximal among such triples. Write $x^k z^{\mathbf{a}}$ for the corresponding coefficient. Then $\mathcal{D}(1-\beta_j', i', k+cp)$ is strictly lower than $\mathcal{D}(1-\beta_j, i, k)$ for all other xeric solutions in the linear combination and all integers $c$, because of Lemma~\ref{lem:Dxeric}. Indeed, assuming equality and applying the recursion backwards, shows that $(\beta_j, i)=(\beta_j', i').$ But comparing coefficients of in the linear combination $y$, this contradicts the fact that the coefficient $c_{k, i}$ is non-zero. 
\end{proof}

Using the same methods as for the proof of Theorem~\ref{thm:block}, almost as a by-product, we give a new proof of the fact that any hypergeometric differential equation \eqref{eq:hyp} is globally nilpotent, \emph{i.e.}, its $p$-curvature is nilpotent for almost all primes $p$. This fact was first proved by Messing \cite{Mes72} for Gaussian hypergeometric functions, \emph{i.e.}, ${}_2F_1$'s, giving a positive answer to a conjecture of Katz. This statement is also proven in Honda's work \cite[Cor.~2]{Hon81}. Dwork even conjectured that Gaussian hypergeometric equations are, up to pullbacks, essentially all globally nilpotent equations of order $2$ \cite[Section~7, p.784, Conj.]{Dwo90}, to which Krammer provided a counterexample \cite{Kra96}. For general hypergeometric differential equations, the global nilpotence is, for example, a corollary of theorems by André \cite{And89} and the Chudnovskys \cite{CC85}, implying that the minimal differential operator of G-functions is globally nilpotent, see also \cite[p.~718]{And00}, and the fact that hypergeometric functions ${}_nF_{n-1}$ with rational parameters are $G$-functions \cite[Thm.~2]{Gal81}.

\begin{thm}\label{thm:nilpotent}
The hypergeometric differential equation is globally nilpotent.
\end{thm}
\begin{proof}
This is a direct consequence of Proposition~\ref{prop:polysol} in combination with Theorem~\ref{thm:Dwork}.
\end{proof}